\def\bd{\pmb{d}}
\def\be{\pmb{e}}
\def\bp{\pmb{p}}
\def\bx{\pmb{x}}
\def\by{\pmb{y}}
\def\bbI{\mathbb{I}}
\def\bbP{\mathbb{P}}
\def\bbR{\mathbb{R}}
\def\bbZ{\mathbb{Z}}
\def\scrA{\mathscr{A}}
\def\scrD{\mathscr{D}}
\def\scrG{\mathscr{G}}
\def\scrH{\mathscr{H}}
\def\scrM{\mathscr{M}}
\def\scrR{\mathscr{R}}
\def\cP{{\cal P}}
\def\cR{{\cal R}}
\def\cX{{\cal X}}
\def\cY{{\cal Y}}
\def\wtd{\widetilde}
\def\what{\widehat}
\newcommand\munderbar[1]{%
  \underaccent{\bar}{#1}}
\newcommand\STM[2]{{\rm St}(#1,#2)}
\newcommand\STMnbr{{\rm St}_{\delta}(k,n)}
\DeclareMathOperator{\diag}{diag}
\DeclareMathOperator{\dist}{dist}
\DeclareMathOperator{\eig}{eig}
\DeclareMathOperator{\grad}{grad}
\DeclareMathOperator*{\opt}{opt}
\DeclareMathOperator{\rank}{rank}
\DeclareMathOperator{\sym}{sym}
\DeclareMathOperator{\tr}{tr}
\DeclareMathOperator{\F}{F}
\DeclareMathOperator{\HH}{H}
\DeclareMathOperator{\T}{T}
\DeclareMathOperator{\KKT}{KKT}
\DeclareMathOperator{\NEPv}{NEPv}
\def\scrR{\mathscr{R}}
\def\Blue{\textcolor{blue}}
\newtheorem{theorem}{Theorem}[section]
\newtheorem{lemma}{Lemma}[section]
\newtheorem{corollary}{Corollary}[section]
\theoremstyle{definition}
\newtheorem{definition}{Definition}[section]
\newtheorem{remark}{Remark}[section]
\newtheorem{example}{Example}[section]
\numberwithin{equation}{section}
\title{A Theory of the NEPv Approach for Optimization \\
         On the Stiefel Manifold}
\author{Ren-Cang Li%
\thanks{Department of Mathematics, University of Texas at Arlington, Arlington, TX 76019-0408, USA.
        Supported in part by NSF DMS-2009689 and DMS-2407692.
        Email: {\tt rcli@uta.edu}.}
}
\date{
      October 19, 2024 \\
      September 17, 2025 \\
      April 29, 2026
}
\begin{document}

\maketitle

\begin{abstract}
The NEPv approach has been increasingly used lately for optimization on the Stiefel manifold arising
from machine learning. General speaking, the approach first turns the first order optimality condition
into a nonlinear eigenvalue problem with eigenvector dependency (NEPv) and then solve the nonlinear problem
via some variations of the self-consistent-field (SCF) iteration. The difficulty, however, lies in designing
a proper SCF iteration so that a maximizer is found at the end.
Currently, each use of the approach is very much individualized, especially in its convergence analysis phase
to show that the approach does work or otherwise. Related, the NPDo approach is recently
proposed for the sum of coupled traces and it seeks to
turn  the first order optimality condition into a nonlinear polar decomposition
with orthonormal polar factor dependency (NPDo).  In this paper, two unifying frameworks are established,
one for each approach.
Each framework is built upon
a basic assumption, under which globally convergence to a stationary point is guaranteed
and during the SCF iterative process that leads to the stationary point, the objective function increases monotonically.
Also the notion of atomic function for each approach is proposed, and the atomic functions include commonly used matrix traces of linear and quadratic forms  as special ones. It is shown
that the basic assumptions of the approaches are satisfied by their respective atomic functions and, more importantly, by convex compositions
of their respective atomic functions. Together they provide a large collection of objectives
for which either one of approaches or both
are guaranteed to work, respectively.

\bigskip
\noindent
{\bf Keywords:}
Nonlinear eigenvalue problem with eigenvector dependency, nonlinear polar decomposition with orthonormal polar factor dependency, NEPv, NPDo, atomic function,
convergence, self-consistent-field iteration, SCF

\smallskip
\noindent
{\bf Mathematics Subject Classification}  58C40; 65F30; 65H17; 65K05; 90C26; 90C32
\end{abstract}

\clearpage
{\scriptsize
\tableofcontents
}

\clearpage
\section{Introduction}\label{sec:intro}
Optimization on the Stiefel and Grassmann manifold is  constrained optimization
with orthogonality constraints, and optimization problems as such can be and often are handled
by the method of Lagrange multipliers.
In a milestone paper, Edelman, Ariasz, and Smith \cite{edas:1999} in 1999 advocated to treat orthogonality constraints
from the geometrical point of view and established a unifying framework to adapt standard optimization techniques, such as Newton's method and
conjugate gradient methods, for better
understanding and computational efficiency. Since then, there have been a long list of research articles on
optimization on matrix manifolds seeking the benefit of the view and  extending most generic optimization techniques
such as gradient descent/ascent methods, trusted region methods, and many others, to optimization on matrix manifolds \cite{abms:2008}.
Most conveniently, there are software toolboxes
{\tt manopt} \cite{manopt} and {\tt STOP} \cite{galc:2018,wagl:2021}  for optimization on manifolds that have been made available online
to allow anyone to try out.

By and large, aforementioned progresses, while successful, are about skillful adaptations of classical optimization techniques
for optimization on Riemannian manifolds
(see \cite{abms:2008,manopt,edas:1999,galc:2018,wagl:2021,weyi:2013} and references therein), following
the geometrical point of view \cite{edas:1999}. Recently, we witnessed several  optimization problems on the Stiefel and Grassmann manifolds emerging from
data science applications. Prominent examples include the orthogonal linear discriminant analysis (OLDA)
and several others that will be listed momentarily in Table~\ref{tbl:obj-funs}. In those problems, matrices of large/huge sizes may be involved
and objective functions are made from one or more matrix traces to serve various modeling objectives for underlying applications.
Apart from the trend of adapting generic optimization techniques, efforts and progresses have been made along a different route of designing
customized optimization methods through taking advantage of structures in objective
functions  and leveraging mature numerical linear algebra (NLA)
techniques and software packages so as to gain even more efficiency
(see \cite{wazl:2023,zhys:2020,zhli:2014a,zhli:2014b,zhwb:2022} and references therein). This new route is
the NEPv approach, where NEPv stands for {\em nonlinear eigenvalue problem with eigenvector dependency\/} coined by \cite{cazb:2018},
and has been successfully demonstrated on several machine learning applications in these papers, where theoretical analysis seems to be much
individualized.
The goal of this paper is to establish a unifying framework that streamlines the NEPv approach among these papers
and guides new applications of the approach to  emerging optimization on Riemannian manifolds from
data science and other disciplines. In addition, we will also establish another unifying framework for the NPDo
approach, where NPDo stands for {\em nonlinear polar decomposition with orthonormal polar factor  dependency}, along the line of
\cite{wazl:2022a}.

A maximization problem on the Stiefel manifold in its generality takes the form
\begin{equation}\label{eq:main-opt}
\max_{P^{\T}P=I_k}f(P),
\end{equation}
where $P\in\bbR^{n\times k}$ with $1\le k\le n$ (usually $k\ll n$), $I_k$ is the $k\times k$ identity matrix, and
objective function $f(P)$ is defined on some neighborhood of the Stiefel manifold
\begin{equation}\label{eq:STM}
\STM{k}{n}=\{P\in\bbR^{n\times k}\,:\,P^{\T}P=I_k\}\subset\bbR^{n\times k}
\end{equation}
and is differentiable in the neighborhood. Specifically,
$f$ is well defined and differentiable on some neighborhood
\begin{equation}\label{eq:O-delta}
\STMnbr:=\{P\in\bbR^{n\times k}\,:\,\|P^{\T}P-I_k\|<\delta\}
\end{equation}
 of $\STM{k}{n}$,
where $0<\delta$ is a constant and $\|\cdot\|$ is some matrix norm.

\begin{table}[t]
\renewcommand{\arraystretch}{1.0}
\caption{Objective functions in the literature}\label{tbl:obj-funs}
\centerline{
\framebox{
\parbox{0.96\textwidth}{ 
\vspace{-0.4cm}
\begin{itemize}
  \item $\tr(P^{\T}AP)$, the symmetric eigenvalue problem (SEP)
        \cite{demm:1997,govl:2013,parl:1998,saad:1992},
        where $\tr(\cdot)$ is the matrix-trace function;
  \item $\frac {\tr(P^{\T}AP)}{\tr(P^{\T}BP)}$, the orthogonal linear discriminant analysis (OLDA)
        \cite{cazb:2018,fish:1936,ngbs:2010,zhln:2010,zhln:2013};
  \item $\frac {\tr(P^{\T}AP)}{\tr(P^{\T}BP)}+\tr(P^{\T}CP)$,  the sum of the trace ratios (SumTR)
        \cite{zhli:2014a,zhli:2014b};
  \item $\frac {\tr(P^{\T}D)}{\sqrt{\tr(P^{\T}BP)}}$, the orthogonal canonical correlation analysis (OCCA)
        \cite{cugh:2015,zhwb:2022};
  \item $\frac {\tr(P^{\T}AP+P^{\T}D)}{[\tr(P^{\T}BP)]^{\theta}}$ for $0\le\theta\le 1$, the $\theta$-trace ratio problem
        ($\Theta$TR)
        \cite{wazl:2023};
  \item $\tr(P^{\T}AP+P^{\T}D)$, the MAXBET subproblem (MBSub) \cite{liww:2015,tebe:1988,vdge:1984,wazl:2023,wazl:2022a,zhys:2020};
  \item $\sum_{i=1}^N\tr(P_i^{\T}A_iP_i+P_i^{\T}D_i)$,  the sum of coupled traces (SumCT)
        \cite{bacr:2004,bicc:2019,bomt:1998,rapc:2002,wazl:2022a}, where $P$ is column-partitioned as
        $[P_1,P_2,\ldots,P_N]$;
  \item $\phi(\bx)$ with $\bx=[\tr(P^{\T}A_1P),\ldots,\tr(P^{\T}A_NP)]^{\T}$, trace composition (TrCP), where $\phi(\bx)$ is a scalar function in $\bx\in\bbR^n$;
  \item $\sum_{i=1}^N\|P^{\T}A_iP\|_{\F}^2$, the uniform multidimensional scaling (UMDS) \cite{zhzl:2017};
  \item $\tr(P^{\T}AP)+\phi(\diag(PP^{\T}))$ \cite{edas:1999}, the density functional theory (DFT) of Hohenberg and Kohn \cite{hoko:64}
        and Kohn and Sham \cite{kosh:1965}, where $\phi(\bx)$ is a scalar function in $\bx\in\bbR^n$, and
        $\diag(PP^{\T})$ extracts the diagonal entries of $PP^{\T}$ into a vector.
\end{itemize}
\vspace{-0.4cm}
}}}
\centerline{\small * $A$, $B$, and all $A_i$ are symmetric and may or may not be
positive semidefinite.}
\end{table}

Although in general objective function $f$ can be any differentiable function
that is well-defined on $\STMnbr$, in practical applications often $f$ is a composition of
matrix traces of linear or quadratic forms in $P$. A partial list of most commonly used ones in the literature is given
in Table~\ref{tbl:obj-funs}, where
$A$, $B$, $D$, all $A_i$ and $D_i$ are constant matrices, and $A$, $B$, and all $A_i$ are at least symmetric and may or may not be
positive semidefinite.
All but the last two in the table  are clearly composed of one or more matrix traces depending on $P$, and
the last two are no exceptions! To see that, we notice
$$
\|P^{\T}A_iP\|_{\F}^2=\tr((P^{\T}A_iP)^2), \quad
[\diag(PP^{\T})]_{(i)}=\tr(\be_i^{\T}PP^{\T}\be_i)=\tr(P^{\T}\be_i\be_i^{\T}P),
$$
where $[\diag(PP^{\T})]_{(i)}$ is the $i$th entries of vector $\diag(PP^{\T})$ and
 $\be_i$ is the $i$th column of the identity matrix.
TrCP is included in Table~\ref{tbl:obj-funs} to represent a broad class of objective functions some of which may have possibly appeared in the past literature, for example,
the monotone nonlinear eigenvector problem (mNEPv) \cite{balu:2024}: $\sum_{i=1}^N\psi_i(\bp^{\T}A_i\bp)$
        where $\bp\in\bbR^n$ and each $\psi_i(\cdot)$ is a single-variable convex function in $\bbR$.

Beyond Table~\ref{tbl:obj-funs}, there are matrix optimization problems
 that can be reduced to one alike for numerical purposes. For example,
the following least-squared minimization
 \begin{equation}\label{eq:OPLS}
\min_{P\in \STM{k}{n}}\|CP-B\|_{\F}^2
\end{equation}
can be reformulated into the MAXBET subproblem in the table with $A=-C^{\T}C$ and $D=C^{\T}B/2$.
It can be found in many real world applications
including
the orthogonal least squares regression (OLSR)
for feature extraction \cite{zhwn:2016,nizl:2017}, the  multidimensional similarity structure analysis (SSA) \cite[chapter 19]{boli:1987}, and  the unbalanced Procrustes problem \cite{chtr:2001,edas:1999,elpa:1999,godi:2004,huca:1962,zhys:2020,zhdu:2006}.

\subsection{Review of the NEPv and NPDo Approach}
Maximizing trace $\tr(P^{\T}AP)$, at the top of  Table~\ref{tbl:obj-funs}, has an explicit solution in terms
of the eigenvalues and eigenvectors of symmetric matrix $A$,
known as Fan's trace maximization principle \cite{fan:1949} \cite[p.248]{hojo:2013} (see also \cite{lilb:2013,liwz:2023,lili:2024} for later extensions). For that reason, it is often regarded indistinguishably as the symmetric eigenvalue problem (SEP)
that is ubiquitous throughout mathematics, science, engineering, and especially today's data sciences.
It has been well studied theoretically and numerically in NLA
\cite{demm:1997,govl:2013,li:2015,parl:1998,rutt:1994,saad:1992}
and often serves as the most distinguished illustrating example for optimization on the Stiefel and Grassmann manifolds
\cite{abms:2008,edas:1999}.
For the rest of
the objective functions, the so-called
NEPv approach and NPDo
approach have been investigated for numerically solving the associated optimization problems.

The basic idea of the NEPv approach \cite{zhys:2020,zhwb:2022,wazl:2023} is as follows:
\begin{enumerate}[(1)]
  \item establish  an NEPv
      \begin{equation}\label{eq:NEPv-form:intro}
      H(P)P=P\Omega, \,\, P\in\STM{k}{n}
      \end{equation}
      that either is or can be made equivalent to the first order optimality condition,
      also known as the KKT condition
      (see section~\ref{sec:KKT} for detail),
      where $H(P)\in\bbR^{n\times n}$ is a symmetric matrix-valued function dependent of $P$;
  \item solve NEPv \eqref{eq:NEPv-form:intro} by the self-consistent-field (SCF) iteration: given $P_0$, iteratively
      \begin{equation}\label{eq:SCF-form:NEPv:intro}
      \framebox{
      \parbox{12.0cm}{
      compute partial eigendecomposition $H(P_{i-1})\what P_i=\what P_i\Omega_i$
      associated with the $k$ largest (or smallest) eigenvalues of $H(P_{i-1})$
                  for $\what P_i\in\STM{k}{n}$,  and postprocess $\what P_i$ to $P_i$.
      }}
      \end{equation}
\end{enumerate}
While the idea of SCF seems rather natural, its convergence analysis is not and often has to be done on a case-by-case basis where
novelty lies \cite{cazb:2018,ball:2022,luli:2024,zhwb:2022,wazl:2023}. In particular, it is critical to know
what part of the spectrum of $H(P_{i-1})$ whose partial eigendecomposition in \eqref{eq:SCF-form:NEPv:intro} is about so as to move the objective function
$f$ up.
The SCF iteration \eqref{eq:SCF-form:NEPv:intro} differs from
the classical SCF  for solving the discretized
Kohn-Sham equations in its postprocessing from $\what P_i$ to $P_i$, which is not needed in the classical SCF for NEPv
that is {\em right-unitarily invariant\/} (see Definition~\ref{dfn:UIF} in the next section).
Indiscriminately, we use SCF to
refer to both the classical SCF and SCF \eqref{eq:SCF-form:NEPv:intro} when no confusion arises.

SCF, in connection with the NEPv approach, has been one of the default methods
for solving the discretized Kohn-Sham equations in the density function theory \cite{sacs:2010,yaml:2009}.
Since then, the same idea has been proven effective in several data science applications (see Table~\ref{tbl:obj-funs}):
OLDA \cite{zhln:2010,zhln:2013}, OCCA \cite{zhwb:2022}, MBSub \cite{wazl:2023,wazl:2022a}, and
$\Theta$TR \cite{wazl:2023}. Later, we will show that the approach will work on UMDS \cite{zhzl:2017} and TrCP, too.

Related, in \cite{wazl:2022a}, the NPDo approach is proposed
to numerically maximize SumCT.
A similar idea appeared before in \cite{bomt:1998} where each $P_i$ is a vector and $D_i=0$.
The basic idea of the NPDo approach is as follows:
\begin{enumerate}[(1)]
  \item establish the first order optimality condition, which takes the form
      \begin{equation}\label{eq:NPDo-form:intro}
      \scrH(P)=P\Lambda, \,\, P\in\STM{k}{n},
      \end{equation}
      where $\scrH(P)\in\bbR^{n\times k}$ is the Euclidean gradient of $f(P)$ and,
       provably, $\Lambda$ is positive semidefinite at optimality;
  \item solve NPDo \eqref{eq:NPDo-form:intro} by the self-consistent-field iteration: given $P_0$, iteratively
      \begin{equation}\label{eq:SCF-form:NPDo:intro}
      \framebox{
      \parbox{10.0cm}{
      compute polar
      decomposition\footnotemark
      $\scrH(P_{i-1})=\what P_i\Lambda_i$ of $\scrH(P_{i-1})$ for $\what P_i\in\STM{k}{n}$, and postprocess $\what P_i$ to $P_i$.
      }}
      \end{equation}
      \footnotetext {Throughout this paper, a polar decomposition of $B\in\bbR^{n\times k}$ $(k\le n)$ refers to $B=P\Omega$
         with $P\in\STM{k}{n}$ and positive semidefinite $\Omega\in\bbR^{k\times k}$.
         $\Omega=(B^{\T}B)^{1/2}$ is always unique, but $P\in\STM{k}{n}$
         is unique if and only if $\rank(B)=k$ \cite{li:1995}.
         The matrix $P$ in the decomposition is called an {\em orthonormal polar factor\/} of $B$.}
\end{enumerate}
A key prerequisite of the NPDo approach is that, at an optimality $P_*$, \eqref{eq:NPDo-form:intro} is a polar decomposition of $\scrH(P_*)$.
This is proved in \cite{wazl:2022a} for
SumCT under the condition that all $A_i$ are positive semidefinite, and later in this paper, we will prove it for more optimization problems, including those in Table~\ref{tbl:obj-funs}
that do not appear in ratio forms.
As $\scrH(P)$ to be decomposed also depends on orthonormal polar factor $P$, we call it a
{\em nonlinear polar decomposition with orthonormal polar factor dependency}, or NPDo in short.
Polar decomposition is often computed via SVD \cite{govl:2013}
which can be viewed as a special SEP \cite{demm:1997}. For that reason, NPDo may also be regarded as a special NEPv.

\subsection{Contributions}\label{ssec:contribute}
We observe that all objective functions in Table~\ref{tbl:obj-funs} are compositions of some scalar functions,
matrix traces such as $\tr(P^{\T}AP)$ and $\tr(P^{\T}D)$ in fact.
For example,
in $\Theta$TR \cite{wazl:2023},
$f(P)$ can be expressed as a composition of three functions $\tr(P^{\T}BP)$, $\tr(P^{\T}AP)$, and $\tr(P^{\T}D)$  by $\phi$:
\begin{equation}\label{eq:ThetaTR:cvx}
f(P)=\phi\circ T(P)
\quad\mbox{with}\,\, T(P)=\begin{bmatrix}
                             \tr(P^{\T}BP) \\
                             \tr(P^{\T}AP) \\
                             \tr(P^{\T}D)
                           \end{bmatrix}, \,\,
\phi(x_1,x_2,x_3)
    =\frac {x_2+x_3}{x_1^{\theta}},
\end{equation}
i.e., it is a composition of the 3-variable function $\phi$ with three matrix traces: $\tr(P^{\T}BP)$, $\tr(P^{\T}AP)$,
and $\tr(P^{\T}D)$. Each trace serves as a singleton unit of function in $P$ that does seem to be decomposable into finer units
for any benefit of study and numerical computations. For that reason, later in this paper, we shall call
such a  singleton unit of function in $P$ an {\em atomic\/} function. In its generality, an atomic function
is defined upon satisfying two basic conditions but may not necessarily be in a matrix trace form.

Unfortunately, $\phi$ in \eqref{eq:ThetaTR:cvx} is not convex in $\bx$, but $\phi^2$ for $0\le\theta\le 1/2$ is (more detail can be found in Remark~\ref{rk:app-NEPv-cvx} in section~\ref{sec:CVX-comp:NEPv}).
Except  for OLDA and SumTR,
all objective functions in Table~\ref{tbl:obj-funs}, either themselves or squared (for OCCA and $\Theta$TR with $0\le\theta\le 1/2$), are
convex compositions of atomic functions, assuming  $\phi$ for both TrCP and DFT are convex.

Our main contributions of this paper are summarized as follows:
\begin{enumerate}[(1)]
  \item creating two unifying frameworks of the NEPv and NPDo approaches, respectively, to numerically solve \eqref{eq:main-opt}
        by their corresponding SCF iterations,
        with guaranteed convergence to a KKT point that satisfies certain necessary conditions to be
        established for a maximizer;
  \item introducing the notion of {\em atomic functions\/} in $P$ with respect to both approaches, and showing that,
        \begin{equation}\label{eq:concrete-AF}
        [\tr((P^{\T}AP)^m)]^s,\quad
        [\tr((P^{\T}D)^m)]^s,
        \end{equation}
        are concrete atomic functions, where $m\ge 1$ is an integer, $s\ge 1$ is a scalar, and $A$ is symmetric but may or may not be a positive semidefinite matrix depending on the circumstances;

  \item  showing the  NEPv and NPDo approaches
         work on each individual atomic function  for the approach and, more importantly, any convex composition $\phi\circ T$ of their respective atomic functions, where
         $\phi(\bx)$ for $\bx\in\mathfrak{D}\subseteq\bbR^N$ is convex, each entry of $T(P)\in\bbR^N$
         is an atomic function,
         and the partial derivative of $\phi$
         with respect to an entry may be required nonnegative, depending on the particular atomic function that occupies the entry.
\end{enumerate}
Although the two approaches look very much parallel to each other in presentation, there are differences in applicabilities
and numerical implementations, making them somewhat complementary to each other.
A brief comparison of the two approaches is given in section~\ref{sec:NPDo-vs-NEPv}.

\subsection{Organization and Notation}
After stating the KKT condition of maximization problem \eqref{eq:main-opt} in section~\ref{sec:KKT}, we divide
the rest of this paper into two parts. With maximization problem \eqref{eq:main-opt} in mind, in Part~I we
focus on the NPDo approach to solve the KKT condition  in three sections:
section~\ref{sec:NPDo-theory} creates a unifying NPDo framework, including {\bf the NPDo Ansatz}
to guarantee that the KKT condition is an NPDo at optimality of \eqref{eq:main-opt},
 the global convergence of the SCF iteration \eqref{eq:SCF-form:NPDo:intro}; section~\ref{sec:AF-NPD} develops a general theory that
governs atomic functions for NPDo and show that matrix-trace functions, $\tr((P^{\T}D)^m)$ and $\tr((P^{\T}AP)^m)$
and their powers of order higher than $1$, are atomic functions;
finally in section~\ref{sec:CVX-comp:NPDo}, we investigate the NPDo approach for convex compositions $\phi\circ T(P)$ of atomic functions and elaborate on a few  $T(P)\in\bbR^N$ of common matrix-trace functions that include some of
those appearing in Table~\ref{tbl:obj-funs}.
In Part~II, we focus on the NEPv approach for the same purpose. It also has three sections to address
the corresponding issues: a unifying framework built upon an ansatz -- {\bf the NEPv Ansatz},
the global convergence of the SCF iteration \eqref{eq:SCF-form:NEPv:intro}, atomic functions for NEPv, and their convex compositions $\phi\circ T(P)$  along with a few  $T(P)\in\bbR^N$ of common matrix-trace functions.
Both frameworks are very similar in appearance, but there are subtle differences in requirements
and ease to use, making each have advantages
over the other in circumstances.
A brief comparison to highlight the major differences between the two approaches is made in section~\ref{sec:NPDo-vs-NEPv}.
Concluding remarks are drawn in section~\ref{sec:conclu}. There are six appendices at the end to supplement necessary material.
Appendix~\ref{sec:angle-space} reviews
the canonical angles between subspaces of equal dimensions;
appendix~\ref{sec:prelim} cites a couple of well known inequalities for scalars and
establishes a few new ones for matrices to serve the main body of the paper;
appendixes~\ref{sec:Proof4NPDoCVG} and~\ref{sec:Proof4NEPvCVG} contain the proofs of main convergence theorems for NPDo
and NEPv, respectively;
appendix~\ref{sec:M-IN-Prod} briefly outlines the idea to
extend our developments to the case under the $M$-inner product, i.e., for the $M$-orthogonal constraint $P^{\T}MP=I_k$ instead of $P^{\T}P=I_k$;
and appendix~\ref{sec:f-inc-theta-TR:pf} refines \cite[Theorem 2.2]{wazl:2023} in the form of   {\bf the NEPv Ansatz}.


For notation, we follow the following convention:
\begin{itemize}
  \item $\bbR^{m\times n}$  is the set of $m\times n$ real matrices,  $\bbR^n=\bbR^{n\times 1}$, and $\bbR=\bbR^1$;
  \item $\STM{k}{n}$ in \eqref{eq:STM} denotes the Stiefel manifold and
        $\STMnbr$ in \eqref{eq:O-delta} is  some neighborhood of it; Also frequently, given  $D\in\bbR^{n\times k}$,
        $$
        \STM{k}{n}_{D+}:=\{X\in\STM{k}{n}\,:\, X^{\T}D\succeq 0\};
        $$
  \item $I_n\in\bbR^{n\times n}$ is the identity matrix or simply $I$ if its size is clear from the context, and $\be_j$ is the $j$th column of $I$ of an apt size;
  \item $B^{\T}$ stands for the transpose of a matrix/vector $B$;
  \item $\cR(B)$ is the column subspace of a matrix $B$, spanned by its columns, whose dimension is $\rank(B)$, the rank of $B$;
  \item For $B\in\bbR^{m\times n}$, unless otherwise explicitly stated, its SVD
        refers to the one $B=U\Sigma V^{\T}$, also known as the {\em thin\/} SVD of $B$,  with
        $$
        \Sigma=\diag(\sigma_1(B),\sigma_2(B),\ldots,\sigma_s(B))\in\bbR^{s\times s},
        \,\,
        U\in\STM{s}{m},\,\,
        V\in\STM{s}{n},
        $$
        where $s=\min\{m,n\}$,  the singular values $\sigma_j(B)$  are always arranged decreasingly as
        $$
        \sigma_1(B)\ge\sigma_2(B)\ge\cdots\ge\sigma_s(B)\ge 0,
        $$
        and $\sigma_{\min}(B)=\sigma_s(B)$;
        Accordingly, $\|B\|_2$, $\|B\|_{\F}$, and $\|B\|_{\tr}$ are the spectral, Frobenius, and
        trace         norms of $B$:
        $$
        \|B\|_2=\sigma_1(B),\,\,
        \|B\|_{\F}=\Big(\sum_{i=1}^s[\sigma_i(B)]^2\Big)^{1/2},\,\,
        \|B\|_{\tr}=\sum_{i=1}^s\sigma_i(B),
        $$
        respectively; The trace norm is  also known as the nuclear norm;
  \item For a symmetric matrix $A\in\bbR^{n\times n}$, $\eig(A)=\{\lambda_i(A)\}_{i=1}^n$ denotes the set of its
        eigenvalues (counted by multiplicities)
        arranged in the decreasing order:
        $$
        \lambda_1(A)\ge\lambda_2(A)\ge\cdots\ge\lambda_n(A),
        $$
        and $\tr_{\max,k}(A)=\sum_{i=1}^k\lambda_i(A)$ and $\tr_{\min,k}(A)=\sum_{i=1}^k\lambda_{n-i+1}(A)$,
        the sum of the $k$ largest  eigenvalues and that of the $k$ smallest  eigenvalues of $A$, respectively;
  \item A matrix $A\succ 0\, (\succeq 0)$ means that it is symmetric and positive definite (semi-definite), and
        accordingly
        $A\prec 0\, (\preceq 0)$ if $-A\succ 0\, (\succeq 0)$.
\end{itemize}





\section{KKT Condition}\label{sec:KKT}
Consider maximization problem \eqref{eq:main-opt} on the Stiefel manifold $\STM{k}{n}$ in its generality.
For $P=[p_{ij}]\in\STMnbr$ defined in \eqref{eq:O-delta}, denote by
\begin{equation}\label{eq:f-gradR(nk)}
\scrH(P):=\frac{\partial f(P)}{\partial P} \in\bbR^{n\times k}
\quad\mbox{with its $(i,j)$th entry}\,\,
\frac{\partial f(P)}{\partial p_{ij}},
\end{equation}
the partial derivative of $f(P)$ with respect to $P$ as a matrix variable in $\bbR^{n\times k}$, where
all entries of $P$ are treated as independent. It is also known as the {\em Euclidean gradient\/} in recent literature.
Throughout this paper, notation $\scrH(P)$ is reserved for the Euclidean gradient of objective function $f$ within the context.

As an optimization problem on the Stiefel manifold, the first order optimality condition
\eqref{eq:main-opt}, also known as the KKT condition, is given by setting the Riemannian gradient of $f$ with respect to the Stiefel manifold $\STM{k}{n}$ at $P$ to $0$.
It is well-known that the Riemannian gradient of a smooth function $f$  with respect to the Stiefel manifold at $P\in\STM{k}{n}$
can be calculated according to (see, e.g., \cite[(3.37)]{abms:2008})
\begin{equation}\label{eq:gradOnStM}
\grad f_{|{\STM{k}{n}}}(P)
  =\Pi_P\big(\scrH(P)\big)
  =\scrH(P)-P \cdot \sym\!\big(P^{\T}\scrH(P)\big),
\end{equation}
where the projection $\Pi_P(Z):=P-P\sym(P^{\T}Z)$ with $\sym(P^{\T}Z)=(P^{\T}Z+Z^{\T}P)/2$. 
Setting $\grad f_{|{\STM{k}{n}}}(P)=0$ yields
the first-order optimality condition:
\begin{equation}\label{eq:KKT}
\scrH(P)=P\Lambda
\quad \mbox{with} \quad
\Lambda^{\T}=\Lambda\in\bbR^{k\times k},\quad P\in\STM{k}{n},
\end{equation}
where
$\Lambda=\sym(P^{\T}\scrH(P))$.
The exact form of $\Lambda$, however, is not important, but its symmetry is, for example, it implies that $P^{\T}\scrH(P)$ is symmetric at any KKT point $P$.
The KKT condition \eqref{eq:KKT} can also be inferred from
treating $P\in\STM{k}{n}$ as the  orthogonality constraint $P^{\T}P=I_k$ and then
using the classical method of Lagrange multipliers for constrained optimization \cite{nowr:2006}.
Geometrically, the condition \eqref{eq:KKT} simply asks for that the Euclidean gradient belongs to the normal space to the Stiefel manifold
at $P$.

For simple functions, $f(P)=\tr(P^{\T}D)$ or $\tr(P^{\T}AP)$, the KKT condition~\eqref{eq:KKT} can be considered as solved.
In fact, for the two functions, \eqref{eq:KKT} becomes
$D=P\Lambda$ or $2AP=P\Lambda$, respectively, which, in consideration of \eqref{eq:main-opt},
tell us that a maximizer can be taken to be an orthonormal polar factor of $D$ \cite{hojo:2013,zhwb:2022},
or an orthonormal basis matrix of the eigenspace of $A$ associated with its $k$ largest eigenvalues \cite{lili:2024,lilb:2013,liwz:2023,stsu:1990}, respectively. In both cases, the maximizer as described
is considered a close form solution to the respective problem because of the numerical maturity by  existing NLA techniques and software
\cite{abbd:1999,bddrv:2000,demm:1997,govl:2013,li:2015,parl:1998}.

In general, equation \eqref{eq:KKT} is not an easy equation to solve in searching for a maximizer of \eqref{eq:main-opt} with guarantee. For example,
in the MAXBET subproblem, simply $f(P)=\tr(P^{\T}D)+\tr(P^{\T}AP)$, the sum of the two simple matrix-trace functions
and \eqref{eq:KKT} becomes $2AP+D=P\Lambda$ for which there is no existing
NLA technique that yields a solution to maximize $f(P)$ with guarantee. Having said that, we point out that
the eigenvalue-based method
\cite{zhys:2020}, which falls into the NEPv approach, has been demonstrated to be numerically efficient
\cite{zhys:2020} and often produces global maximizers.
The MAXBET subproblem is a special case of SumCT. As such, in \cite{wazl:2022a}, the NPDo approach has also been
successfully applied.

We now formally define the notion of a function being right-unitarily invariant, originally introduced in \cite{luli:2024}.
It is an important concept that we will frequently
refer to in the rest of this paper. However, our definition here differs from \cite[Definition 2.1]{luli:2024} slightly
in that we limit
the domain  to some neighborhood $\STMnbr$ of the Stiefel manifold $\STM{k}{n}$, rather than the entire
space $\bbR^{n\times k}$ used in \cite{luli:2024}. Carefully going through \cite{luli:2024}, one can see that
our definition here is actually sufficient for the development in \cite{luli:2024} as it is here.

\begin{definition}\label{dfn:UIF}
A function $F\,:\,\STMnbr\to\bbR^{p\times q}$ is said {\em right-unitarily invariant\/} if
$$ 
F(PQ)\equiv F(P)\quad\mbox{for $P\in\STMnbr$ and $Q\in\STM{k}{k}$}.
$$ 
\end{definition}

\begin{remark}\label{rk:right-UI}
When objective $f$ in \eqref{eq:main-opt} is right-unitarily invariant, $f$ is constant in the entire orbit
$\{P_0Q\,:\, Q\in\STM{k}{k}\}$ for any given $P_0\in\STM{k}{n}$. Hence
if $P_*\in\STM{k}{n}$ is a maximizer of \eqref{eq:main-opt} then any member of the orbit
$\{P_*Q\,:\, Q\in\STM{k}{k}\}$ is a maximizer, too, and $P_*$ is just a representative. In this sense, optimization
problem \eqref{eq:main-opt} is a problem on the Grassmann manifold $\scrG_k(\bbR^n)$, the collection of all $k$-dimensional subspaces in $\bbR^n$, equipped with some proper metric (see appendix~\ref{sec:angle-space}).
Numerically, while we attempt to compute some approximation to $P_*$, we actually compute an approximation to
some representative $\wtd P$ in the orbit $\{P_*Q\,:\, Q\in\STM{k}{k}\}$. As far as error/convergence analysis is concerned,
bounding some metric between subspaces $\cR(\wtd P)$ and $\cR(P_*)$ is an appropriate and right thing to do.
\end{remark}

\clearpage
\part{The NPDo Approach}

\section{The NPDo Framework}\label{sec:NPDo-theory}
In \cite{wazl:2022a}, an NPDo approach is proposed to
numerically maximize the sum of coupled traces (SumCT) in Table~\ref{tbl:obj-funs}. It is an SCF iterative procedure \eqref{eq:SCF-form:NPDo:intro}
that solves  the KKT condition \eqref{eq:KKT} for its solution with an eye on maximizing the sum.
Our general framework in this section is inspired by and bears similarity to the developments there, but in more abstract terms.

\subsection{The NPDo Ansatz}\label{ssec:NPDAssum}
The success of the NPDo approach in \cite{wazl:2022a} rests on  a monotonicity lemma
which motivates us to formulate the following ansatz to build our framework upon.
The key point of the assumption is the ability to generate an improved approximate maximizer $\wtd P$
from a given one $P$, where both the given $P$ and the improved $\wtd P$ may have to come out of possibly a
        strict subset $\bbP$ of $\STM{k}{n}$.
What $\bbP$ to use
depends on the underlying optimization problem \eqref{eq:main-opt} at hand,
as we will repeatedly demonstrate later by concrete examples.

\smallskip\noindent
{\bf The NPDo Ansatz.}
{\em
Let function $f$ be defined in some neighborhood $\STMnbr$ of $\STM{k}{n}$,
and denote by $\scrH(P)=\frac{\partial f(P)}{\partial P}$.
Given $P\in\bbP\subseteq\STM{k}{n}$ and $\what P\in\STM{k}{n}$,
if
\begin{equation}\label{eq:NPDo-assume}
\tr(\what P^{\T}\scrH(P))\ge\tr(P^{\T}\scrH(P))+\eta
\quad\mbox{for some $\eta\in\bbR$},
\end{equation}
then
there exists $Q\in\STM{k}{k}$ such that $\wtd P=\what PQ\in\bbP$ and
$f(\wtd P)\ge f(P)+\omega\eta$,
where $\omega$ is some positive constant, independent of $P$ and $\what P$.
}

\smallskip
For any given $P\in\STM{k}{n}$, by Lemma~\ref{lm:polar2max}, there is always
$\what P\in\STM{k}{n}$ such that \eqref{eq:NPDo-assume} holds with some $\eta>0$, unless for that given $P$,
\eqref{eq:KKT} holds with $\Lambda\succeq 0$. In fact, we can take
$\what P\in\STM{k}{n}$ to be an orthonormal polar factor of $\scrH(P)$, which also maximizes
$\tr(X^{\T}\scrH(P))$ over $X\in\STM{k}{n}$ to $\|\scrH(P)\|_{\tr}$  again by Lemma~\ref{lm:polar2max}, in which
case $\eta=\|\scrH(P)\|_{\tr}-\tr(P^{\T}\scrH(P))$.
Hence, for the purpose of solving \eqref{eq:main-opt},
we may relax the ansatz to $\eta\ge 0$ only.
As far as verifying this ansatz is concerned, it is the desirable aim, $f(\wtd P)\ge f(P)+\omega\eta$,
that  needs to be checked.
The necessity of also involving $\bbP$, a subset of $\STM{k}{n}$, can be best justified by Example~\ref{eg:motivation:NPDo} below.

\begin{example}\label{eg:motivation:NPDo}
Consider $f(P)=\tr(P^{\T}AP)+\tr((P^{\T}D)^2)$
where $0\preceq A\in\bbR^{n\times n}$ and
$D\in\bbR^{n\times k}$. It can be verified that $\scrH(P)=2AP+2DP^{\T}D$ (see also \eqref{eq:partD-lin-pow} in the next section). Suppose now that
\eqref{eq:NPDo-assume} holds for $P,\, \what P\in\STM{k}{n}$, or equivalently,
\begin{equation}\label{eq:motivation:NPDo-1}
2\tr(\what P^{\T}AP)+2\tr(\what P^{\T}DP^{\T}D)\ge 2\tr(P^{\T}AP)+2\tr((P^{\T}D)^2)+\eta.
\end{equation}
The right-hand side of this inequality seems relatable to $f(P)$, but $P$ and $\what P$ are coupled together in
its left-hand side. Somehow we have to separate them in order to establish the desired inequality
$f(\wtd P)\ge f(P)+\omega\eta$ as demanded by {\bf the NPDo Ansatz}. Indeed this is what we will do next.
Let $X=A^{1/2}\what P$ and $Y=A^{1/2}P$ where $A^{1/2}$ is the unique positive semidefinite square root of $A$. By Lemma~\ref{lm:vN-tr-ineq-ext2},
we get
\begin{equation}\label{eq:motivation:NPDo-2}
2\tr(\what P^{\T}AP)=2\tr(X^{\T}Y)\le\tr(X^{\T}X)+\tr(Y^{\T}Y)=\tr(\what P^{\T}A\what P)+\tr(P^{\T}AP),
\end{equation}
successfully separating $P$ and $\what P$ from their coupling by $\tr(\what P^{\T}AP)$.
Turning to $\tr(\what P^{\T}DP^{\T}D)$,
we assume that $P^{\T}D\succeq 0$, i.e., $P\in\bbP=\STM{k}{n}_{D+}$,
and let $Q\in\STM{k}{k}$ be an orthonormal polar factor
of $\what P^{\T}D$ and hence $Q^{\T}(\what P^{\T}D)\succeq 0$, implying $\wtd P=\what PQ\in\bbP$.
We get
\begin{align}
2\tr(\what P^{\T}DP^{\T}D)
  &\le 2\|\what P^{\T}DP^{\T}D\|_{\tr} \qquad (\mbox{by Lemma~\ref{lm:maxtrace}})\nonumber \\
  &\le\tr((Q^{\T}\what P^{\T}D)^2)+\tr((P^{\T}D)^2) \qquad (\mbox{by Lemma~\ref{lm:vN-tr-ineq-ext1}}) \nonumber \\
  &=\tr((\wtd P^{\T}D)^2)+\tr((P^{\T}D)^2).  \label{eq:motivation:NPDo-3}
\end{align}
Combine \eqref{eq:motivation:NPDo-1}, \eqref{eq:motivation:NPDo-2}, and \eqref{eq:motivation:NPDo-3} to get
$f(\wtd P)\ge f(P)+\eta$ upon noticing $\tr(\wtd P^{\T}A\wtd P)=\tr(\what P^{\T}A\what P)$.
We observe the critical conditions: $P^{\T}D\succeq 0$ and $Q^{\T}\what P^{\T}D\succeq 0$
that ensure \eqref{eq:motivation:NPDo-3}, which we use to
separate $P$ and $\what P$ from their coupling by $\tr(\what P^{\T}DP^{\T}D)$.
The first condition $P^{\T}D\succeq 0$ can be fulfilled by simply starting with $P\in\bbP$, while the second
condition $Q^{\T}\what P^{\T}D\succeq 0$ is made possible by the chosen $Q$ and, as a byproduct, $\wtd P\in\bbP$, too.
Besides this role of making $Q^{\T}\what P^{\T}D\succeq 0$, $Q$ also
increases the objective value as a result of the two inequality signs in the derivation of
\eqref{eq:motivation:NPDo-3}.
In our later use of
\eqref{eq:NPDo-assume}, we begin with some $P\in\bbP\subseteq\STM{k}{n}$ and then find some $\what P\in\STM{k}{n}$
such that $\eta>0$ in \eqref{eq:NPDo-assume}, and therefore having a flexibility of
judiciously choosing a proper $Q$ becomes a logical necessity.
\end{example}

\begin{remark}\label{rk:NPDo-assume}
A few comments on {\bf the NPDo Ansatz} are in order.
\begin{enumerate}[(i)]


  \item When $f(P)$ is right-unitarily invariant,
        it suffices to take $Q=I_k$ and $\wtd P=\what P$ because $f(\what P)=f(\wtd P)$
        regardless of $Q$.
        Introducing subset $\bbP$ of $\STM{k}{n}$ and judiciously choosing $Q$
        are for generality in order to deal with the case when $f(P)$
        is not right-unitarily invariant, e.g., the one in Example~\ref{eg:motivation:NPDo} and those from  Table~\ref{tbl:obj-funs} in section~\ref{sec:intro} that involve
        $D$ or $D_i$.
        Throughout this paper, we will assume that $\bbP$ is
        as inclusive as necessary to allow our proving arguments to go through. In particular, at the minimum, $\bbP$ should contain one or more maximizers of the associated optimization problem~\eqref{eq:main-opt}.

  \item For computational purposes, it is necessary to have an efficient way
        to construct $Q$ in the ansatz. That is often the case when
        it comes to common concrete objective functions $f$ that are in use today.
        In our later development, either
        a proper $\bbP$ can maximally increase the value of objective function $f$, e.g.,
        when $\tr((P^{\T}D)^m)$  for $m\ge 1$ is involved, or
        we have to have it for our theoretical proofs to go through. In fact for $\tr((P^{\T}D)^m)$,
        we may take $\bbP=\STM{k}{n}_{D+}$,
        and let $Q\in\STM{k}{k}$ be an orthonormal polar factor of
        $\what P^{\T}D$ to ensure
        $\wtd P^{\T}D=Q^{\T}(\what P^{\T}D)\succeq 0$.
        As a consequence, $(\wtd P^{\T}D)^m\succeq 0$ and $\|(\wtd P^{\T}D)^m\|_{\tr}=\tr((\wtd P^{\T}D)^m)$
        by Lemma~\ref{lm:maxtrace} and hence
        an orthonormal polar factor $Q$ of
        $\what P^{\T}D$ maximizes $\tr([(\what PZ)^{\T}D]^m)$ over $Z\in\STM{k}{k}$.
        Calculating this $Q$ via the SVD of $\what P^{\T}D\in\bbR^{k\times k}$ is efficient since $k$ is usually small
        (in the tens or no more than a couple of hundreds).
%
  \item It is tempting to stipulate  $f(\what P)\ge f(P)+\omega\eta$, but that is either false or just hard to prove, e.g.,
        for  the one in Example~\ref{eg:motivation:NPDo}. Often in our algorithms to solve~\eqref{eq:main-opt} iteratively, with $P$ being
        the current approximate maximizer, assuming {\bf the NPDo Ansatz}, we naturally
        compute $\what P$ that maximizes $\tr(X^{\T}\scrH(P))$ over $X\in\STM{k}{n}$. With that $\what P$, settling
        whether $f(\what P)\ge f(P)+\omega\eta$ or not can be a hard or even impossible task, for example,
        in Example~\ref{eg:motivation:NPDo} it is not clear if $f(\what P)\ge f(P)+\omega\eta$ at all.
\end{enumerate}
\end{remark}

\begin{table}[t]
\renewcommand{\arraystretch}{1.4}
\caption{\small {\bf The NPDo Ansatz} on objective functions in Table~\ref{tbl:obj-funs}}\label{tbl:obj-funs:NPD}
\centerline{\small
\begin{tabular}{|c|c|c|c|}
  \hline
    & $\scrH(P)$ & conditions &  by  \\ \hline\hline
SEP & $2AP$ & $A\succeq 0$ &  Thm.~\ref{thm:main-npd-cvx:T1a}  \\ \hline
MBSub & $2AP+D$ & $A\succeq 0$ & Thm.~\ref{thm:main-npd-cvx:T1a}, \cite{wazl:2022a}  \\ \hline
SumCT & $2[A_1P_1,\ldots,A_NP_N]+D$ & $A_i\succeq 0\,\forall i$ &  Thm.~\ref{thm:main-npd-cvx:T1a}, \cite{wazl:2022a}  \\ \hline
\multirow{2}{*}{TrCP} & \multirow{2}{*}{$2\sum_{i=1}^N\phi_i(\bx)A_iP$} & $A_i\succeq 0,\,\phi_i\ge 0\,\forall i$ &  \multirow{2}{*}{Thm.~\ref{thm:main-npd-cvx:T1a}}  \\
     &                                & convex $\phi$ &   \\ \hline
UMDS & $4\sum_{i=1}^NA_iPP^{\T}A_iP$ & $A_i\succeq 0\,\forall i$ &  Expl.~\ref{eg:T2}  \\ \hline
\multirow{2}{*}{DFT} & \multirow{2}{*}{$2AP+2\sum_{i=1}^n\phi_i(\bx)\be_i\be_i^{\T}P$} & $A\succeq 0,\,\phi_i\ge 0\,\forall i$ &  \multirow{2}{*}{Thm.~\ref{thm:main-npd-cvx:T1a}}  \\
     &                                & convex $\phi$ &   \\
  \hline
\multicolumn{4}{l}{\small * $\phi_i(\bx):=\partial\phi(\bx)/\partial x_i$  for $\bx=[x_i]$.} \\
\end{tabular}
}
\end{table}
As to the validity of  {\bf the NPDo Ansatz} on the objective functions in Table~\ref{tbl:obj-funs},
it holds for all, except for those
that involve quotients,  under reasonable conditions on the constant matrices and function $\phi$. Table~\ref{tbl:obj-funs:NPD}
details conditions under which
{\bf the NPDo Ansatz} holds, where the last column points to the places for justifications.
We point out that  we can take $\bbP=\STM{k}{n}$, $Q=I_k$, and $\omega=1$ for all  in Table~\ref{tbl:obj-funs:NPD}
but judicious choices of $\bbP$ and $Q$ can increase the values of objective functions more than $\omega\eta$ as stipulated by
{\bf the NPDo Ansatz} for MBSub and SumCT \cite[Theorem 5.2]{wazl:2022a}.


The first immediate consequence of {\bf the NPDo Ansatz} is the following theorem that provides a characterization of
the maximizers of the associated optimization problem~\eqref{eq:main-opt}.

\begin{theorem}\label{thm:maxers-NPD}
Let $P_*\in\STM{k}{n}$ be a maximizer of \eqref{eq:main-opt}.
Suppose that {\bf the NPDo Ansatz} holds and $P_*\in\bbP$. Then
\eqref{eq:KKT} holds for $P=P_*$ and $\Lambda=\Lambda_*:=P_*^{\T}\scrH(P_*)\succeq 0$.
\end{theorem}


\begin{proof}
Any maximizer is a KKT point, and hence \eqref{eq:KKT} holds for $P=P_*$ and $\Lambda=\Lambda_*$.
Assume, to the contrary, that
$\Lambda_*=P_*^{\T}\scrH(P_*)\not\succeq 0$
(which means either $\Lambda_*$ is not symmetric or it is symmetric but indefinite or negative semidefinite).
Then by Lemma~\ref{lm:polar2max}, we have
$\tr(\what P^{\T}\scrH(P_*))=\|\scrH(P_*)\|_{\tr}\ge \tr(P_*^{\T}\scrH(P_*))+\eta$ for some $\eta>0$, where $\what P$ is an orthonormal polar factor of $\scrH(P_*)$.
By {\bf the NPDo Ansatz}, we can find $\wtd P=\what PQ\in\bbP$ such that
$f(\wtd P)\ge f(P_*)+\omega\eta> f(P_*)$, contradicting that $P_*$ is a maximizer.
\end{proof}

What this theorem says is that at a maximizer $P_*$, \eqref{eq:KKT} is a polar decomposition of $\scrH(P_*)$. Hence
solving \eqref{eq:main-opt} through its KKT condition is necessarily looking for
some $P_*$ so that \eqref{eq:KKT} is a polar decomposition. Since the matrix of which we are seeking
a  polar decomposition is a matrix-valued function that depends on its orthonormal polar factor, we
naturally call \eqref{eq:KKT} a {\em nonlinear polar decomposition with orthonormal polar factor dependency\/} (NPDo) of $\scrH(\cdot)$.

We note that $\scrH(P_*)$ has a unique polar decomposition if $\rank(\scrH(P_*))=k$ \cite{li:1995};
but it is not unique if $\rank(\scrH(P_*))<k$ \cite{high:2008,li:1993b,li:2014HLA}. However in the latter case,
it does not mean that any orthonormal polar factor of $\scrH(P_*)$, other than $P_*$, also satisfies \eqref{eq:KKT}, unless
$\scrH(\cdot)$ is right-unitarily invariant.

\subsection{SCF Iteration and Convergence}\label{ssec:SCF4NPD}
The second immediate consequence of {\bf the NPDo Ansatz} is the global convergence of
an SCF iteration for solving optimization problem~\eqref{eq:main-opt} as outlined in Algorithm~\ref{alg:SCF4NPDo}.
\begin{algorithm}[t]
\caption{NPDoSCF: NPDo \eqref{eq:KKT} solved by SCF} \label{alg:SCF4NPDo}
\begin{algorithmic}[1]
\REQUIRE Function $\scrH(P)\equiv\partial f(P)/\partial P$ satisfying {\bf the NPDo Ansatz}, $P^{(0)}\in\bbP$;
\ENSURE  an approximate maximizer of \eqref{eq:main-opt}. 
\FOR{$i=0,1,\ldots$ until convergence}
    \STATE compute $H_i=\scrH(P^{(i)})\in\bbR^{n\times k}$ and its thin SVD: $H_i=U_i\Sigma_iV_i^{\T}$;
    \STATE $\what P^{(i)}=U_iV_i^{\T}\in\STM{k}{n}$,
           an orthonormal polar factor of  $\scrH(P^{(i)})$;
    \STATE calculate $Q_i\in\STM{k}{k}$ whose existence is stipulated by {\bf the NPDo Ansatz}
           and let $P^{(i+1)}=\what P^{(i)}Q_i\in\bbP$;
\ENDFOR
\RETURN the last $P^{(i)}$.
\end{algorithmic}
\end{algorithm}
This algorithm is similar to \cite[Algorithm 3.1]{wazl:2022a}, but the latter has more details that are dictated by
the particularity of objective function $f$ there.
We have a few general comments regarding the implementation of
Algorithm~\ref{alg:SCF4NPDo} (NPDoSCF):
\begin{enumerate}[(1)]
  \item At Line 4 it refers to  {\bf the NPDo Ansatz} for the calculation of $Q_i$.
        Exactly how it is computed depends on the structure of $f$ at hand.
        If $f$ is right-unitarily invariant, we can simply take $Q_i=I_k$ as we commented in Remark~\ref{rk:NPDo-assume}(i).
        In Remark~\ref{rk:NPDo-assume}(ii),
        we commented on  the issue in the case when $f(P)$ involves and increases with $\tr((P^{\T}D)^m)$, e.g., the one in
        Example~\ref{eg:motivation:NPDo}, $Q_i$ can be taken to be
        an orthonormal polar factor of  $(\what P^{(i)})^{\T}D$. Later in section~\ref{sec:CVX-comp:NPDo} we will elaborate
        on how to choose $Q_i$ for a few convex compositions of  matrix-trace functions.
  \item A reasonable stopping criterion at Line 1 is
         \begin{equation}\label{eq:stop-1}
         \varepsilon_{\KKT}+\varepsilon_{\sym}:=\frac {\|\scrH(P)-P[P^{\T}\scrH(P)]\|_{\F}}{\xi}+\frac {\big\|[P^{\T}\scrH(P)]-[P^{\T}\scrH(P)]^{\T}\|_{\F}}{\xi}\le\epsilon,
         \end{equation}
         where $\epsilon$ is a given tolerance, and $\xi$ is some normalization quantity that should be
         designed according to the underlying $\scrH(P)$, but generically, $\xi=\|\scrH(P)\|_{\F}$, or any reasonable estimate of it,
         should work well.
         The significance of both $\varepsilon_{\KKT}$ and $\varepsilon_{\sym}$ is rather self-explanatory. In fact, we
         will call $\varepsilon_{\KKT}$ and $\varepsilon_{\sym}$ the {\em normalized residual for the KKT equation\/} \eqref{eq:KKT}
         and the {\em normalized residual for the symmetry\/} in $\Lambda=P^{\T}\scrH(P)$, respectively.
  \item Let us investigate the computational complexity per iterative step. Since how $H_i=\scrH(P^{(i)})$ and $Q_i$ are computed is generally problem-dependent, we will only examine the cost for all other operations.
      At Line 2, the thin SVD of $H_i\in\bbR^{n\times k}$ is often computed in two steps: compute a thin QR decomposition
      $H_i=WR$ and then the SVD of $R\in\bbR^{k\times k}$ followed by the product of $W$ with the left singular vector matrix
      of $R$. Hence the overall cost per SCF iterative step, stemming from the SVD of $R$ and three matrix products of
      an $n$-by-$k$ matrix with an $k$-by-$k$ matrix, is about $6nk^2+20k^3$ flops \cite[p.493]{govl:2013} which is linear in $n$ for small $k$.
\end{enumerate}

Next, we will state our convergence theorems for Algorithm~\ref{alg:SCF4NPDo} under {\bf the NPDo Ansatz}.
It is shown that as the SCF iteration progresses, the value of the objective function
monotonically increases, any accumulation point of the generated approximation sequence
satisfies the necessary conditions in Theorem~\ref{thm:maxers-NPD} for a global maximizer, and
under certain conditions, the accumulation point can be proved to be the limit point of the entire approximation sequence.
In short, the NPDo approach is guaranteed to work.

\begin{theorem}\label{thm:cvg4SCF4NPDo}
Suppose that {\bf the NPDo Ansatz} holds, and let the sequence $\{P^{(i)}\}_{i=0}^{\infty}$ be generated by Algorithm~\ref{alg:SCF4NPDo}.
The following statements hold.
\begin{enumerate}[{\rm (a)}]
  \item The sequence $\{f(P^{(i)})\}_{i=0}^{\infty}$ is monotonically increasing and convergent;
  \item Any accumulation point $P_*$ of the sequence $\{P^{(i)}\}_{i=0}^{\infty}$ satisfies
        the necessary conditions in Theorem~\ref{thm:maxers-NPD} for a global maximizer, i.e.,
        \eqref{eq:KKT} holds for $P=P_*$ with $\Lambda_*=P_*^{\T}\scrH(P_*)\succeq 0$;
  \item We have two convergent series
        \begin{subequations}\label{eq:cvg4SCF4NPDo:series}
        \begin{align}
         \sum_{i=1}^{\infty}\sigma_{\min}(\scrH(P^{(i)}))\,\big\|\sin\Theta\big(\cR(P^{(i+1)}),\cR(P^{(i)})\big)\big\|_{\F}^2
                      &<\infty,    \label{eq:cvg4SCF4NPDo:series-1} \\
        \sum_{i=1}^{\infty}\sigma_{\min}(\scrH(P^{(i)}))\,
                  \frac {\big\|\scrH(P^{(i)})-P^{(i)}\big([P^{(i)}]^{\T}\scrH(P^{(i)})\big)\big\|_{\F}^2}
                        {\big\|\scrH(P^{(i)})\big\|_{\F}^2}
                      &<\infty,                 \label{eq:cvg4SCF4NPDo:series-2}
        \end{align}
        \end{subequations}
        where $\Theta(\cdot,\cdot)$ is the diagonal matrix of the canonical angles between two subspaces (see appendix~\ref{sec:angle-space}).
\end{enumerate}
\end{theorem}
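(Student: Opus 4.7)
My plan hinges on the scalar
$$
\eta_i := \|\scrH(P^{(i)})\|_{\tr} - \tr\bigl([P^{(i)}]^{\T}\scrH(P^{(i)})\bigr),
$$
which is non-negative because the orthogonal polar factor $\what P^{(i)}$ of $H_i := \scrH(P^{(i)})$ maximizes $\tr(X^{\T}H_i)$ over $X\in\bbO^{n\times k}$ to $\|H_i\|_{\tr}$ by Lemma~\ref{lm:polar2max}. Applying \textbf{the NPDo Ansatz} with this $\what P^{(i)}$ produces $P^{(i+1)}=\what P^{(i)}Q_i\in\bbP$ and $f(P^{(i+1)})\ge f(P^{(i)})+\omega\eta_i$. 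Continuity of $f$ on the compact set $\bbO^{n\times k}$ bounds $\{f(P^{(i)})\}$ above, and monotonicity then gives (a); telescoping also yields $\sum_{i\ge 0}\eta_i<\infty$, so $\eta_i\to 0$.

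For (b), along any convergent subsequence $P^{(i_j)}\to P_*$, the continuity of $\scrH$, $\|\cdot\|_{\tr}$, and $\tr(\cdot)$ together with $\eta_{i_j}\to 0$ forces $\|\scrH(P_*)\|_{\tr}=\tr(P_*^{\T}\scrH(P_*))$. The equality case of Lemma~\ref{lm:polar2max} then identifies $P_*$ as an orthogonal polar factor of $\scrH(P_*)$, i.e., \eqref{eq:KKT} holds at $P=P_*$ with $\Lambda_*=P_*^{\T}\scrH(P_*)\succeq 0$, matching Theorem~\ref{thm:maxers-NPD}.

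The core work is (c), where I would lower-bound each $\eta_i$ by the corresponding summand. Writing the polar decomposition $H_i=\what P^{(i)}M_i$ with $M_i\succeq 0$ whose eigenvalues are $\sigma_j(H_i)$, and letting $C_i:=[P^{(i)}]^{\T}\what P^{(i)}$, whose singular values $c_{j,i}\in[0,1]$ are the cosines of the canonical angles between $\cR(P^{(i)})$ and $\cR(\what P^{(i)})=\cR(P^{(i+1)})$, von Neumann's trace inequality combined with $1-c\ge(1-c^2)/2$ delivers
$$
\eta_i=\tr\bigl(M_i(I-C_i)\bigr)\ge\sum_{j=1}^k\sigma_j(H_i)(1-c_{j,i})\ge\tfrac{1}{2}\sigma_{\min}(H_i)\,\bigl\|\sin\Theta\bigl(\cR(P^{(i+1)}),\cR(P^{(i)})\bigr)\bigr\|_{\F}^2,
$$
which upon summation yields \eqref{eq:cvg4SCF4NPDo:series-1}. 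The identity
$$
\bigl\|H_i-P^{(i)}[P^{(i)}]^{\T}H_i\bigr\|_{\F}^2=\tr\bigl((I-C_i^{\T}C_i)M_i^2\bigr)\le\|H_i\|_2^2\,\|\sin\Theta\|_{\F}^2\le\|H_i\|_{\F}^2\,\|\sin\Theta\|_{\F}^2
$$
then reduces \eqref{eq:cvg4SCF4NPDo:series-2} to \eqref{eq:cvg4SCF4NPDo:series-1} after dividing by $\|H_i\|_{\F}^2$ and multiplying by $\sigma_{\min}(H_i)$. I expect the main obstacle to be precisely this quantitative estimate: one must carefully align the spectral orderings of $M_i$ and $C_i$ so that von Neumann applies in the tight direction, and exploit the invariance $\cR(\what P^{(i)}Q_i)=\cR(\what P^{(i)})$ so that the bound is independent of the particular $Q_i$ produced by \textbf{the NPDo Ansatz}; the degenerate cases $\sigma_{\min}(H_i)=0$ or $\|H_i\|_{\F}=0$ are absorbed trivially.
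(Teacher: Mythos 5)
Your proposal is correct and follows essentially the same route as the paper: the nonnegative gap $\eta_i=\|\scrH(P^{(i)})\|_{\tr}-\tr([P^{(i)}]^{\T}\scrH(P^{(i)}))$, the Ansatz-driven increase $f(P^{(i+1)})\ge f(P^{(i)})+\omega\eta_i$, summability of $\eta_i$, and the von Neumann plus half-angle estimate you derive inline for (c) is exactly the content of the paper's Lemma~\ref{lm:polar2max'}(a). The only cosmetic differences are that the paper proves (b) by a $\delta/3$-contradiction argument rather than your direct passage to the limit via $\eta_i\to 0$ and the equality case of Lemma~\ref{lm:polar2max}, and that it invokes Lemma~\ref{lm:polar2max'}(a) for (c) instead of reproving it.
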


\begin{proof}
See appendix~\ref{sec:Proof4NPDoCVG}.
\end{proof}

Both  Theorem~\ref{thm:cvg4SCF4NPDo}(b,c) have useful consequences.
As a corollary of Theorem~\ref{thm:cvg4SCF4NPDo}(b), we find that
{\bf the NPDo Ansatz} is a sufficient condition for NPDo \eqref{eq:KKT} to have a solution because
there always exists an accumulation point $P_*$ of the sequence $\{P^{(i)}\}_{i=0}^{\infty}$
in $\STM{k}{n}$.

\begin{corollary}\label{cor:cvg4SCF4NPDo}
Under {\bf the NPDo Ansatz},
NPDo \eqref{eq:KKT} is solvable, i.e., there exists $P\in\STM{k}{n}$ such that $\Lambda=P^{\T}\scrH(P)\succeq 0$ and \eqref{eq:KKT} holds.
\end{corollary}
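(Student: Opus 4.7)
The plan is to derive the corollary as an immediate consequence of Theorem~\ref{thm:cvg4SCF4NPDo}(b), simply by actually running Algorithm~\ref{alg:SCF4NPDo} and extracting an accumulation point. No fresh construction or new estimate is needed; the existence of a solution to NPDo \eqref{eq:KKT} comes essentially for free once the convergence theorem is in hand.

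Concretely, I would first pick any starting point $P^{(0)}\in\bbP$; such a point exists because {\bf the NPDo Ansatz} presumes $\bbP$ is nonempty (otherwise the ansatz itself and Algorithm~\ref{alg:SCF4NPDo} would be vacuous, and in fact the paper has already remarked that, at the minimum, $\bbP$ should contain one or more maximizers of \eqref{eq:main-opt}). Running Algorithm~\ref{alg:SCF4NPDo} from $P^{(0)}$ then produces an infinite sequence $\{P^{(i)}\}_{i=0}^{\infty}\subset\bbO^{n\times k}$. Since $\bbO^{n\times k}$ is closed (it is the preimage of the single matrix $I_k$ under the continuous map $P\mapsto P^{\T}P$) and bounded (every $P\in\bbO^{n\times k}$ satisfies $\|P\|_{\F}=\sqrt{k}$), it is compact, and the Bolzano--Weierstrass theorem guarantees at least one accumulation point $P_*\in\bbO^{n\times k}$. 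Applying Theorem~\ref{thm:cvg4SCF4NPDo}(b) to $P_*$ yields \eqref{eq:KKT} with $\Lambda_*=P_*^{\T}\scrH(P_*)\succeq 0$, exactly as asserted.

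There is really no hard step here: the corollary is an almost tautological repackaging of Theorem~\ref{thm:cvg4SCF4NPDo}(b). The one subtlety worth flagging is that the accumulation point $P_*$ is only guaranteed to lie in $\bbO^{n\times k}$, not in $\bbP$, because $\bbP$ is not assumed closed and the iterates $P^{(i)}\in\bbP$ could in principle cluster at a boundary point of $\bbP$. However, the statement of the corollary only asks for a solution in $\bbO^{n\times k}$, so this causes no trouble. If one wished to additionally conclude $P_*\in\bbP$, it would suffice to assume $\bbP$ is closed in $\bbO^{n\times k}$, which holds in the concrete cases the paper considers (for instance $\bbO_{D+}^{n\times k}$ is closed as the preimage of the closed cone of positive semidefinite matrices under $P\mapsto P^{\T}D$).
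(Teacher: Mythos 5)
Your argument is exactly the paper's: the corollary is obtained by running Algorithm~\ref{alg:SCF4NPDo}, invoking compactness of $\bbO^{n\times k}$ to extract an accumulation point $P_*$, and applying Theorem~\ref{thm:cvg4SCF4NPDo}(b). The extra remarks on the nonemptiness of $\bbP$ and on $P_*$ possibly falling outside $\bbP$ are sensible but not needed for the statement as given.
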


As a corollary of Theorem~\ref{thm:cvg4SCF4NPDo}(c),
if $\sigma_{\min}(\scrH(P^{(i)}))$ is eventually bounded below away from $0$
uniformly\footnote {By which we mean that there exist a constant $\tau>0$ and an integer $K$ such that
  $\sigma_{\min}(\scrH(P^{(i)}))\ge\tau$ for all $i\ge K$.},
then
$$
\lim_{i\to\infty}\frac {\big\|\scrH(P^{(i)})-P^{(i)}\big([P^{(i)}]^{\T}\scrH(P^{(i)})\big)\big\|_{\F}}
                       {\big\|\scrH(P^{(i)})\big\|_{\F}} =0,
$$
namely, increasingly $\scrH(P^{(i)})\approx P^{(i)}\big([P^{(i)}]^{\T}\scrH(P^{(i)})\big)$ towards
a polar decomposition of $\scrH(P^{(i)})$, which means that $P^{(i)}$ becomes a more and more accurate approximate solution
to NPDo \eqref{eq:KKT}, even in the absence of knowing whether the entire sequence $\{P^{(i)}\}_{i=0}^{\infty}$ converges or not.
The latter does require additional conditions to establish in the next theorem.

\begin{theorem}\label{thm:cvg4SCF4NPDo'}
Suppose that {\bf the NPDo Ansatz} holds, and let the sequence $\{P^{(i)}\}_{i=0}^{\infty}$ be generated by Algorithm~\ref{alg:SCF4NPDo} and $P_*$ be an accumulation
point of the sequence.
The following statements hold.
\begin{enumerate}[{\rm (a)}]
  \item $\cR(P_*)$ is an accumulation point of the sequence $\{\cR(P^{(i)})\}_{i=0}^{\infty}$;
  \item Suppose that $\cR(P_*)$ is an isolated accumulation point of $\{\cR(P^{(i)})\}_{i=0}^{\infty}$. If
        \begin{equation}\label{eq:full-rank:assume}
        \rank(\scrH(P_*Q))=k\quad\mbox{for any $Q\in\STM{k}{k}$},
        \end{equation}
        then
        the entire sequence $\{\cR(P^{(i)})\}_{i=0}^{\infty}$ converges to $\cR(P_*)$;
  \item 
        Suppose that $P_*$ is an isolated accumulation point of $\{P^{(i)}\}_{i=0}^{\infty}$ and that
        \begin{equation}\label{eq:full-rank:assume'}
        \rank(\scrH(P_*))=k.
        \end{equation}
        Then the entire sequence $\{P^{(i)}\}_{i=0}^{\infty}$ converges to $P_*$ if one of the following
        two assumptions holds:
        \begin{enumerate}[{\rm (c1)}]
          \item $Q_i$ converges to $I_k$ as $i\to\infty$;
          \item $f(P_*)>f(P)$ for any $P\ne P_*$ and $\cR(P)=\cR(P_*)$, i.e.,
                $f(P)$ has a unique maximizer in the orbit $\{P_*Q\,:\,Q\in\STM{k}{k}\}$.
        \end{enumerate}
\end{enumerate}
\end{theorem}

\begin{proof}
See appendix~\ref{sec:Proof4NPDoCVG}.
\end{proof}

In the case when objective function $f$ is right-unitarily invariant, assumption (c2) of
Theorem~\ref{thm:cvg4SCF4NPDo'}(c) clearly does not hold.
In such a case, computing
$\cR(P_*)$ may be the ultimate goal because each maximizer $P_*$ is really
a representative from the orbit $\{P_*Q\,:\,Q\in\STM{k}{k}\}$.
Given $Q\in\bbR^{k\times k}$, let $g(P)=f(PQ)$. It can be verified that
$$
\frac {\partial g(P)}{\partial P}=\left.\frac {\partial f(\what P)}{\partial\what P}\right|_{\what P=PQ}\,Q^{\T}=\scrH(PQ)\,Q^{\T}.
$$
Thus if $f$ is right-unitarily invariant, then $g(P)\equiv f(P)$ and thus $\scrH(P)=\scrH(PQ)\,Q^{\T}$; if also
$Q\in\STM{k}{k}$, then we get
$$
\scrH(PQ)=\scrH(P)\,Q
$$
and as a result, condition \eqref{eq:full-rank:assume} is equivalently
to $\rank(\scrH(P_*))=k$. Also if $f$ is right-unitarily invariant, then there is no need to
choose $Q_i$ other than simply as $I_k$, which makes assumption (c1) in Theorem~\ref{thm:cvg4SCF4NPDo'}(c) automatically
satisfied, yielding

\begin{corollary}\label{cor:cvg4SCF4NPDo'}
Suppose that {\bf the NPDo Ansatz} holds and that objective $f$ is right-unitarily invariant, and let the sequence $\{P^{(i)}\}_{i=0}^{\infty}$ be generated by Algorithm~\ref{alg:SCF4NPDo} with $Q_i=I_k$ for all iterative steps and $P_*$ be an accumulation
point of the sequence. If $P_*$ is an isolated accumulation point of $\{P^{(i)}\}_{i=0}^{\infty}$ and if
\eqref{eq:full-rank:assume'} holds, then the entire sequence $\{P^{(i)}\}_{i=0}^{\infty}$ converges to $P_*$.
\end{corollary}

So far, we have assumed, in Algorithm~\ref{alg:SCF4NPDo} and its major convergence results in
Theorems~\ref{thm:cvg4SCF4NPDo} and~\ref{thm:cvg4SCF4NPDo'}, that
$\what P^{(i)}$ is computed as an orthonormal polar factor of $\scrH(P^{(i)})$ via its thin SVD, which nowadays
is considered a direct method and is efficient too particularly in the case $k\ll n$. It ensures
$[\what P^{(i)}]^{\T}\scrH(P^{(i)})\succeq 0$ and $\tr\big([\what P^{(i)}]^{\T}\scrH(P^{(i)})\big)=\|\scrH(P^{(i)})\|_{\tr}$
in the working precision, making the corresponding $\eta$ in {\bf the NPDo Ansatz} the largest possible, namely,
$\|\scrH(P^{(i)})\|_{\tr}-\tr\big([P^{(i)}]^{\T}\scrH(P^{(i)})\big)$. It turns out
that the results in Theorem~\ref{thm:cvg4SCF4NPDo}, except series \eqref{eq:cvg4SCF4NPDo:series-1} which
has to be modified slightly, remain
valid, under a weaker assumption that
\begin{align}
\eta_i:=\,&\tr\big([\what P^{(i)}]^{\T}\scrH(P^{(i)})\big)-\tr\big([P^{(i)}]^{\T}\scrH(P^{(i)})\big)
             \label{eq:NPDo:eta(i):dfn} \\
      \ge\,&c\Big[\big\|\scrH(P^{(i)})\big\|_{\tr}-\tr\big([P^{(i)}]^{\T}\scrH(P^{(i)})\big)\Big]
             \label{eq:NPDo:eta(i):weak}
\end{align}
for some constant $c>0$, independent of $i$. Current $\what P^{(i)}$ as stated in Algorithm~\ref{alg:SCF4NPDo} makes $c=1$.
We state such a stronger version of Theorem~\ref{thm:cvg4SCF4NPDo}
in Theorem~\ref{thm:cvg4SCF4NPDo:strong} below.
This version will become useful when it comes to analyze the convergence of
the LOCG-accelerated Algorithm~\ref{alg:SCF4NPDo} in the next subsection.

\begin{theorem}\label{thm:cvg4SCF4NPDo:strong}
Suppose that {\bf the NPDo Ansatz} holds, and let the sequence $\{P^{(i)}\}_{i=0}^{\infty}$ be generated
by a variation of Algorithm~\ref{alg:SCF4NPDo}, in which $\what P^{(i)}$ at Line 3 is approximately computed, not necessarily by the thin SVD of $H_i$ but by any method such that $\what P^{(i)}$ is computed to the point that \eqref{eq:NPDo:eta(i):weak} is satisfied.
The following statements hold.
\begin{enumerate}[{\rm (a)}]
  \item The sequence $\{f(P^{(i)})\}_{i=0}^{\infty}$ is monotonically increasing and convergent;
  \item Any accumulation point $P_*$ of the sequence $\{P^{(i)}\}_{i=0}^{\infty}$ satisfies
        the necessary conditions in Theorem~\ref{thm:maxers-NPD} for a global maximizer, i.e.,
        \eqref{eq:KKT} holds for $P=P_*$ with $\Lambda_*=P_*^{\T}\scrH(P_*)\succeq 0$;
  \item We have two convergent series: \eqref{eq:cvg4SCF4NPDo:series-2} and
        \begin{equation}\tag{\ref{eq:cvg4SCF4NPDo:series-1}$'$}
         \sum_{i=1}^{\infty}\sigma_{\min}(\scrH(P^{(i)}))\,\big\|\sin\Theta\big(\cR(\scrH(P^{(i)})),\cR(P^{(i)})\big)\big\|_{\F}^2
                      <\infty.
        \end{equation}
\end{enumerate}
\end{theorem}

\begin{proof}
See appendix~\ref{sec:Proof4NPDoCVG}.
\end{proof}

It is noted that the difference between \eqref{eq:cvg4SCF4NPDo:series-1} and
(\ref{eq:cvg4SCF4NPDo:series-1}$'$) is the appearance of $\cR(P^{(i+1)})$ in the former and
$\cR(\scrH(P^{(i)}))$ in the latter. In the original Algorithm~\ref{alg:SCF4NPDo}, both are the same
because $\cR(P^{(i+1)})=\cR(\what P^{(i)})=\cR(\scrH(P^{(i)}))$ if $\rank(\scrH(P^{(i)}))=k$ where the second equality
is due to the fact that $\scrH(P^{(i)})=\what P^{(i)}(V_i\Sigma_iV_i^{\T})$, whereas
in the variation of Algorithm~\ref{alg:SCF4NPDo} as described in
Theorem~\ref{thm:cvg4SCF4NPDo:strong}, $\what P^{(i)}$ is computed to make $\tr\big([\what P^{(i)}]^{\T}\scrH(P^{(i)})\big)$
large enough so that \eqref{eq:NPDo:eta(i):weak} is satisfied.

It is tempting to ask if we could create a version of Theorem~\ref{thm:cvg4SCF4NPDo'} for the same
variation of Algorithm~\ref{alg:SCF4NPDo} as described in Theorem~\ref{thm:cvg4SCF4NPDo:strong}.
The answer is unlikely, except Theorem~\ref{thm:cvg4SCF4NPDo'}(a), because in proving
Theorem~\ref{thm:cvg4SCF4NPDo'}(b,c) in appendix~\ref{sec:Proof4NPDoCVG}, the decomposition
$\scrH(P^{(i)})=\what P^{(i)}\Lambda_i$ is used to conclude $\cR(\what P^{(i)})=\cR(\scrH(P^{(i)}))$
if $\rank(\scrH(P^{(i)}))=k$
and $\Lambda_i=V_i\Sigma_iV_i^{\T}\succeq 0$.

\subsection{Acceleration by LOCG and Convergence}\label{ssec:LOCG:NPD}
Although  Algorithm~\ref{alg:SCF4NPDo}, an SCF iteration for solving
NPDo \eqref{eq:KKT}, is proved always convergent to KKT points under {\bf the NPDo Ansatz},
it may take many SCF iterations to converge to a solution with desired accuracy and
that can be costly for large scale problems, even though the complexity per SCF iterative step is linear in $n$. In fact, for $f(P)=\tr(P^{\T}AP)$ with $A\succeq 0$, Algorithm~\ref{alg:SCF4NPDo}
is simply the subspace iteration which converges
linearly at the rate of $\lambda_{k+1}(A)/\lambda_k(A)$. This rate is $1$  if $\lambda_{k+1}(A)=\lambda_k(A)$, indicating possible divergence, but strictly less than $1$ otherwise. In the latter case, although the convergence is guaranteed,
it can be slow when $\lambda_{k+1}(A)<\lambda_k(A)$ only slightly such that $\lambda_{k+1}(A)/\lambda_k(A)\approx 1$ \cite{demm:1997,govl:2013}.
In \cite{wazl:2022a}, acceleration by a locally optimal conjugate gradient technique (LOCG) was demonstrated
to be rather helpful to speed things up for maximizing SumCT. The same idea can be used to speed up
Algorithm~\ref{alg:SCF4NPDo}, too.
In this subsection, we will explain the idea, which draws inspiration from optimization \cite{poly:1987,taka:1965} and has been
increasingly used in NLA for linear systems and eigenvalue problems \cite{beli:2022,imlz:2016,knya:2001,li:2015,yali:2021}.

\subsubsection*{A variant of LOCG for Acceleration}
Without loss of generality, let $P^{(-1)}\in\STM{k}{n}$ be the approximate maximizer of \eqref{eq:main-opt}
from the very previous iterative step, and $P\in\STM{k}{n}$ the current approximate maximizer.
We are now looking for the next approximate maximizer
$P^{(1)}$, along the line of LOCG, according to
\begin{equation}\label{eq:LOCG}
P^{(1)}=\arg\max_{Y\in\STM{k}{n}}f(Y),\,\,\mbox{s.t.}\,\, \cR(Y)\subseteq\cR([P,\scrR(P),P^{(-1)}]),
\end{equation}
where
\begin{equation}\label{eq:R(P)}
\scrR(P):=\grad f_{|{\STM{k}{n}}}(P)=\scrH(P)-P\cdot\sym(P^{\T}\scrH(P)) 
\end{equation}
by \eqref{eq:gradOnStM}.
Initially for the first iteration, we don't have $P^{(-1)}$ and
it is understood that $P^{(-1)}$ is absent from \eqref{eq:LOCG}, i.e.,
simply $\cR(Y)\subseteq\cR([P,\scrR(P)])$.

We still have to numerically solve \eqref{eq:LOCG}. For that purpose, let $W\in\STM{\hat n}{n}$ be an orthonormal basis matrix of subspace
$\cR([P,\scrR(P),P^{(-1)}])$. Generically, $\hat n=3k$ but $\hat n<3k$ can happen.
It can be implemented by the Gram-Schmidt orthogonalization process, starting with orthogonalizing the columns of $\scrR(P)$ against $P$ since
$P\in\STM{k}{n}$ already. In MATLAB, to fully take advantage of its optimized functions, we simply set
$W=[\scrR(P),P^{(-1)}]$ (or $W=\scrR(P)$ for the first iteration) and then  do
$$
\framebox{
\begin{minipage}{4.9cm}
\tt  W=W-P*(P'*W); W=orth(W); \\
\tt W=W-P*(P'*W); W=orth(W);\\
\tt W=[P,W];
\end{minipage}
}
$$
where the first two lines  perform the classical Gram-Schmidt orthogonalization twice to almost ensure that
the resulting  columns of $W$ are fully orthogonal to the columns of $P$ at the end of the second line,
and {\tt orth} is a MATLAB function for
orthogonalization\footnote{Another option is to use MATLAB's thin {\tt qr}:
    {\tt [W,$\sim$]=qr(W,0)}.
    }.
It is important to note that the first $k$ columns of
the final $W$ are the same as those of $P$.

Now it follows from $\cR(Y)\subseteq\cR([P,\scrR(P),P^{(-1)}])=\cR(W)$ that in \eqref{eq:LOCG}
\begin{subequations}\label{eq:LOCGsub}
\begin{equation}\label{eq:LOCGsub:Y}
Y=WZ\quad\mbox{for $Z\in\STM{k}{\hat n}$}.
\end{equation}
Problem \eqref{eq:LOCG} becomes
\begin{equation}\label{eq:LOCGsub-1}
Z_{\opt}=\arg\max_{Z\in\STM{k}{m}} \wtd f(Z)\quad\mbox{with}\quad\wtd f(Z):=f(WZ),
\end{equation}
\end{subequations}
and $P^{(1)}=WZ_{\opt}$ for \eqref{eq:LOCG}.
It can verified that
\begin{subequations}\label{eq:KKT-reduced}
\begin{equation}\label{eq:KKT-reduced-1}
\wtd\scrH(Z):=\frac {\partial \wtd f(Z)}{\partial Z}
   =\left.W^{\T}\frac {\partial f(Y)}{\partial Y}\right|_{P=WZ}=W^{\T}\scrH(WZ),
\end{equation}
and the first order optimality condition for \eqref{eq:LOCGsub-1} is
\begin{equation}\label{eq:KKT-reduced-2}
\wtd\scrH(Z)=Z\wtd\Lambda
\quad \mbox{with} \quad
\wtd\Lambda^{\T}=\wtd\Lambda\in\bbR^{k\times k},\quad Z\in\STM{k}{\hat n}.
\end{equation}
\end{subequations}

\begin{lemma}\label{lm:NPDoAssump.-reduced}
Suppose that {\bf the NPDo Ansatz} holds for $f$, and let $\bbZ:=W^{\T}\bbP\subseteq\STM{k}{\hat n}$. If
$W\bbZ\subseteq\bbP$, then {\bf the NPDo Ansatz} holds for $\wtd f$ defined in \eqref{eq:LOCGsub-1}.
\end{lemma}

\begin{proof}
Let  $Z\in\bbZ$ and $\what Z\in\STM{k}{m}$ such that
\begin{equation}\label{eq:NPDo-assume:reduced}
\tr(\what Z^{\T}\wtd\scrH(Z))\ge\tr(Z^{\T}\wtd\scrH(Z))+\eta
\quad\mbox{for some $\eta\in\bbR$}.
\end{equation}
Set $P=WZ\in\bbP$ (because of $W\bbZ\subseteq\bbP$) and $\what P=W\what Z\in\STM{k}{n}$. Noticing that $\wtd\scrH(Z))=W^{\T}\scrH(WZ)$, we have
\eqref{eq:NPDo-assume} from \eqref{eq:NPDo-assume:reduced}. By {\bf the NPDo Ansatz} for $f$, there exists $Q\in\STM{k}{k}$ such that
$\wtd P=\what PQ=W(\what ZQ)=:W\wtd Z\in\bbP$ and
$f(\wtd P)\ge f(P)+\omega\eta$. Hence,
$$
\wtd f(\wtd Z)=\wtd f(\what ZQ)=f(\wtd P)\ge f(P)+\omega\eta
  =f(WZ)+\omega\eta
  =\wtd f(Z)+\omega\eta.
$$
Note also $\wtd Z=W^{\T}\wtd P\in\bbZ$. Hence {\bf the NPDo Ansatz} holds for $\wtd f$.
\end{proof}

As a consequence of this lemma and the results in subsections~\ref{ssec:NPDAssum} and \ref{ssec:SCF4NPD}, Algorithm~\ref{alg:SCF4NPDo}
is applicable to compute $Z_{\opt}$ of \eqref{eq:LOCGsub-1} via NPDo \eqref{eq:KKT-reduced-2}.
We outline the resulting
method in Algorithm~\ref{alg:SCF4NPDo+LOCG}, which is an inner-outer iterative scheme for \eqref{eq:main-opt},
where at Line~4
any other method, if known, can also be inserted to replace Algorithm~\ref{alg:SCF4NPDo} to
solve \eqref{eq:LOCGsub-1}.

\begin{algorithm}[t]
\caption{NPDoLOCG:  NPDo \eqref{eq:KKT} solved by LOCG
  } \label{alg:SCF4NPDo+LOCG}
\begin{algorithmic}[1]\REQUIRE Function $\scrH(P)\equiv\partial f(P)/\partial P$ satisfying {\bf the NPDo Ansatz}, $P^{(0)}\in\bbP$;
\ENSURE  an approximate maximizer of \eqref{eq:main-opt}.
\STATE $P^{(-1)}=[\,]$; \% null matrix
\FOR{$i=0,1,\ldots$ until convergence}
    \STATE compute $W_i\in\STM{\hat n}{n}$ such that $\cR(W_i)=\cR([P^{(i)},\scrR(P^{(i)}),P^{(i-1)}])$
           and $P^{(i)}$ occupies
           the first $k$ columns of $W_i$;
    \STATE solve \eqref{eq:LOCGsub-1} via NPDo \eqref{eq:KKT-reduced} for $Z_{\opt}$ by  Algorithm~\ref{alg:SCF4NPDo}
           with initially $Z^{(0)}$ being the first $k$ columns of $I_{\hat n}$, or
           approximately such that
           $\wtd f(Z_{\opt})\ge\wtd f(Z^{(0)})+\omega\eta$ for some $\eta>0$;
    \STATE $P^{(i+1)}=W_iZ_{\opt}$;
\ENDFOR
\RETURN the last $P^{(i)}$.
\end{algorithmic}
\end{algorithm}

\begin{remark}\label{rk:SCF4NPDo+LOCG}
A few  comments regarding Algorithm~\ref{alg:SCF4NPDo+LOCG} are in order.
\begin{enumerate}[(i)]
  \item It is important to compute $W$ at Line~4 in such a way, as explained moments ago, that its first $k$ columns are exactly the same as those of $P^{(i)}$.
        As $P^{(i)}$ converges, conceivably $P^{(i+1)}$ changes little from $P^{(i)}$ and hence $Z_{\opt}$
        is increasingly close to the first $k$ columns of $I_{\hat n}$. This explains the choice of $Z^{(0)}$
        at Line 4.
%
  \item Another area of improvement is to solve \eqref{eq:LOCGsub-1} with an accuracy, comparable but fractionally better than the
        current $P^{(i)}$ as an approximate solution of \eqref{eq:main-opt}.  Specifically, if we use
        \eqref{eq:stop-1} at Line 2 to stop the for-loop: Lines 2--6 of  Algorithm~\ref{alg:SCF4NPDo+LOCG}, with tolerance $\epsilon$, then instead of using the same
        $\epsilon$ for Algorithm~\ref{alg:SCF4NPDo} at its line 1,
        we can use a fraction, say $1/4$,
        of $\varepsilon_{\KKT}+\varepsilon_{\sym}$ evaluated at the current approximation $P=P^{(i)}$
        as stopping tolerance,
        when Algorithm~\ref{alg:SCF4NPDo} is called upon at Line 4 of Algorithm~\ref{alg:SCF4NPDo+LOCG}.
\end{enumerate}
\end{remark}

Whether Algorithm~\ref{alg:SCF4NPDo+LOCG}
speeds up Algorithm~\ref{alg:SCF4NPDo} depends on two factors at the runtime:
1) it takes significantly fewer the number of outer iterative steps than the number of SCF iterative steps
by Algorithm~\ref{alg:SCF4NPDo} as it does without acceleration, and 2) the cost per SCF step on NPDo \eqref{eq:KKT-reduced} is significantly less than that on NPDo \eqref{eq:KKT}. Both factors are materialized for SumCT (see \cite[Example 4.1]{wazl:2022a}).

\subsubsection*{Convergence Analysis}
We will perform a convergence analysis for Algorithm~\ref{alg:SCF4NPDo+LOCG}, assuming that
at least one SCF iteration is carried out in Algorithm~\ref{alg:SCF4NPDo} when it is called
        to compute $Z_{\opt}$ at Line~4 of Algorithm~\ref{alg:SCF4NPDo+LOCG}.
This assumption is rather realistic.
In actual computation, as we explained
in Remark~\ref{rk:SCF4NPDo+LOCG}(ii), the computed $Z_{\opt}$ should be sufficiently more accurate
as an approximate solution for \eqref{eq:LOCGsub} than $P^{(i)}$ as an approximate solution for the original
problem \eqref{eq:main-opt} at that moment.

\begin{theorem}\label{thm:cvg4SCFvLOCG}
Suppose that {\bf the NPDo Ansatz} holds, and let sequence $\{P^{(i)}\}_{i=0}^{\infty}$ be generated by Algorithm~\ref{alg:SCF4NPDo+LOCG} in which, it is assumed that
$Z_{\opt}$ is computed by at least one SCF iteration within the variation of
Algorithm~\ref{alg:SCF4NPDo}, as described in Theorem~\ref{thm:cvg4SCF4NPDo:strong},
when it is called to compute $Z_{\opt}$ at Line 4 of Algorithm~\ref{alg:SCF4NPDo+LOCG}.
The following statements hold.
\begin{enumerate}[{\rm (a)}]
  \item The sequence $\{f(P^{(i)})\}_{i=0}^{\infty}$ is monotonically increasing and convergent;
  \item Any accumulation point $P_*$ of the sequence $\{P^{(i)}\}_{i=0}^{\infty}$ is a KKT point of \eqref{eq:main-opt} and
        satisfies the necessary conditions in Theorem~\ref{thm:maxers-NPD} for a global maximizer, i.e.,
        \eqref{eq:KKT} holds for $P=P_*$ and $\Lambda=\Lambda_*:=P_*^{\T}\scrH(P_*)\succeq 0$;
  \item We still have the two convergent series {\rm (\ref{eq:cvg4SCF4NPDo:series-1}$'$)} and \eqref{eq:cvg4SCF4NPDo:series-2} for $\{P^{(i)}\}_{i=0}^{\infty}$ here.
\end{enumerate}
Furthermore, if $Z_{\opt}$ is computed to
        be an exact maximizer of \eqref{eq:LOCGsub} in each outer iterative step, then
$(P^{(i)})^{\T}\scrH(P^{(i)})\succeq 0$ for $i\ge 1$.
\end{theorem}

\begin{proof}
See appendix~\ref{sec:Proof4NPDoCVG}.
\end{proof}



\section{Atomic Functions for NPDo}\label{sec:AF-NPD}

Armed with the general theoretical framework for the NPDo approach in
section~\ref{sec:NPDo-theory},
in this section, we introduce the notion of atomic functions for NPDo, which serves as
a singleton unit of function on $\STMnbr$
for which {\bf the NPDo Ansatz} holds and thus the NPDo approach is guaranteed to work  for solving \eqref{eq:main-opt}, and more importantly,
the NPDo approach works on any convex composition of atomic functions.

In what follows, we will first formulate two conditions that define
atomic functions  and explain why the NPDo approach will work  on the atomic functions, and then we give concrete examples of
atomic functions that encompass nearly all practical ones that are in use today. We leave
the investigation of how the NPDo approach works on convex compositions of  atomic functions to section~\ref{sec:CVX-comp:NPDo},
along with a few convex compositions of our concrete atomic functions to guide the use of
the general result.

Combining the results in this section and the next section will yield
a large collection of objective functions, including those in Table~\ref{tbl:obj-funs:NPD}, for which
{\bf the NPDo Ansatz} holds and therefore the NPDo framework as laid out in section~\ref{sec:NPDo-theory}
works on them.


\subsection{Conditions on Atomic Functions}\label{ssec:AF-NPDo}
We are interested in functions $f$ defined on some neighborhood $\STMnbr$  of the Stiefel manifold $\STM{k}{n}$
that satisfy
\begin{subequations}\label{eq:cond4AF-NPDo}
\begin{align}
\tr\left(P^{\T}\frac{\partial f(P)}{\partial P}\right)&=\gamma\, f(P)\quad\mbox{for $P\in\bbP\subseteq\STM{k}{n}$},
              \label{eq:cond4AF-NPDo-a} \\
\intertext{and given $P\in\bbP$ and $\what P\in\STM{k}{n}$, there exists $Q\in\STM{k}{k}$ such
that $\wtd P=\what PQ\in\bbP$ and}
\tr\left(\what P^{\T}\frac{\partial f(P)}{\partial P}\right)
   &\le\alpha f(\wtd P)+\beta f(P), 
      \label{eq:cond4AF-NPDo-b}
\end{align}
\end{subequations}
where $\alpha>0,\,\beta\ge 0$,  and $\gamma=\alpha+\beta$ are constants that are dependent of $f$.
Some subset $\bbP$ of $\STM{k}{n}$ is also involved and, as we commented in Remark~\ref{rk:NPDo-assume}(i) for
{\bf the NPDo Ansatz}, $\bbP$ should be sufficiently inclusive to serve the purpose of
solving \eqref{eq:main-opt} with the function as objective, and for the case of $\bbP=\STM{k}{n}$,
$Q$ can be taken to be $I_k$. More comments on this are in Remark~\ref{rk:cond4AF-NPDo}(ii) below.

\begin{definition}\label{defn:AF:NPD}
A function $f$ defined on some neighborhood $\STMnbr$ of $\STM{k}{n}$ is an {\em atomic function\/}
for NPDo
if there are constants $\alpha>0,\,\beta\ge 0$, and $\gamma=\alpha+\beta$  such that both conditions in
\eqref{eq:cond4AF-NPDo} hold.
\end{definition}

The constants in the definition may vary with the atomic function in question. The descriptive word ``{\em atomic\/}'' is
used here to loosely suggest that such a function is somehow ``{\em unbreakable\/}'', such as the concrete
ones in the next subsection. Having said that, we also find that
{\em for two atomic functions $f_1$ and $f_2$, if they
share the same $\bbP$, the same constants $\alpha,\,\beta,\,\gamma$, and the same $Q$ for \eqref{eq:cond4AF-NPDo-b},
then any linear combination $f:=c_1f_1+c_2f_2$ with $c_1,\,c_2\ge 0$ but $c_1+c_2>0$
also satisfies \eqref{eq:cond4AF-NPDo}, and hence an atomic function as well.}
In fact, it can be verified that
\begin{align*}
\tr\left(P^{\T}\frac{\partial f(P)}{\partial P}\right)
  &=c_1\tr\left(P^{\T}\frac{\partial f_1(P)}{\partial P}\right)+c_2\tr\left(P^{\T}\frac{\partial f_2(P)}{\partial P}\right) \\
  &=c_1\gamma f_1(P)+c_2\gamma f_2(P) \\
  &=\gamma f(P), \\
\tr\left(\what P^{\T}\frac{\partial f(P)}{\partial P}\right)
  &=c_1\tr\left(\what P^{\T}\frac{\partial f_1(P)}{\partial P}\right)+c_2\tr\left(\what P^{\T}\frac{\partial f_2(P)}{\partial P}\right) \\
  &\le c_1[\alpha f_1(\wtd P)+\beta f_1(P)]+c_2[\alpha f_2(\wtd P)+\beta f(_2P)] \\
  &=\alpha f(\wtd P)+\beta f(P).
\end{align*}
Evidently, $f=c_1f_1+c_2f_2$ is ``{\em breakable\/}''. Nonetheless,
``{\em atomic\/}'' still seems to be suitably descriptive despite of what we just discussed.

Throughout this paper, we actually define two types of atomic functions. One type is what we just defined
in Definition~\ref{defn:AF:NPD}. It is for the NPDo approach.
The other type will come in Part~II later for the NEPv approach.

\begin{remark}\label{rk:cond4AF-NPDo}
There are two comments regarding Definition~\ref{defn:AF:NPD}.
\begin{enumerate}[(i)]
  \item Theoretically, each of the two conditions in \eqref{eq:cond4AF-NPDo} has interest of its own. For example, equation \eqref{eq:cond4AF-NPDo-a}
        is a partial differential equation (PDE) in its own right. In that regard, a natural question arises:
        does it have a close form solution, given $\gamma\in\bbR$ (that is not necessarily nonnegative)?
        In this paper, we group the two together because later we need both  to show that
        together they imply {\bf the NPDo Ansatz} for an atomic function and thus the NPDo approach works.
        Also importantly, we need $\gamma=\alpha+\beta$.
  \item How inclusive should $\bbP$ as a subset of $\STM{k}{n}$ be?
        Often certain
        necessary conditions for the maximizers of \eqref{eq:main-opt} with given atomic function as objective can be derived to limit the extent of searching. For example, as has been
        extensively exploited in \cite{zhwb:2022,wazl:2023,wazl:2022a,zhys:2020,luli:2024}, any maximizer $P_*$ must satisfy
        $P_*^{\T}D\succeq 0$ in the case where $f(P)$ contains $\tr(P^{\T}D)$ and increases as $\tr(P^{\T}D)$ does. In such a case,
        searching a maximizer can be naturally limited among those $P\in\STM{k}{n}$ such that $P^{\T}D\succeq 0$, i.e.,
        $P\in\bbP=\STM{k}{n}_{D+}$.
        As a result, it suffices to just require that the equality and inequality in \eqref{eq:cond4AF-NPDo} hold
        for all $P,\,\wtd P\in\bbP=\STM{k}{n}_{D+}$.
        In our later concrete examples in subsection~\ref{ssec:AF-concrete-NPD}, equation \eqref{eq:cond4AF-NPDo-a} even holds
        for all $P\in\bbR^{n\times k}$ for some atomic functions.
\end{enumerate}
\end{remark}

The next  theorem shows that if $f$ is an atomic function for NPDo, then so
is any of its positive powers of order higher than $1$, if well-defined, and moreover the $\alpha$-constant does not change but the $\beta$-constant will.

\begin{theorem}\label{thm:cond4set1-pow-induced}
Given function $f$ satisfying \eqref{eq:cond4AF-NPDo}, suppose that $f(P)\ge 0$ for $P\in\bbP$.
Let $g(P)=c[f(P)]^s$ where $c>0$ and $s>1$.
Then
\begin{subequations}\label{eq:cond4set1-pow-induced}
\begin{align}
\tr\left(P^{\T}\frac{\partial g(P)}{\partial P}\right)&=s\gamma\, g(P)\quad\mbox{for $P\in\bbP\subseteq\STM{k}{n}$},
          \label{eq:induced4pow-diff-eq}\\
\intertext{and given $P\in\bbP$ and $\what P\in\STM{k}{n}$, there exists $Q\in\STM{k}{k}$ such
that $\wtd P=\what PQ\in\bbP$ and}
\tr\left(\what P^{\T}\frac{\partial g(P)}{\partial P}\right)
  &\le \alpha g(\wtd P)+(s\gamma-\alpha) g(P). 
          \label{eq:induced-regularity4pow-diff-eq}
\end{align}
\end{subequations}
\end{theorem}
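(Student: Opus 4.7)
The proof rests on the chain-rule identity
$$
\frac{\partial g(P)}{\partial P} = cs\,[f(P)]^{s-1}\,\frac{\partial f(P)}{\partial P},
$$
after which both conclusions reduce to pulling the scalar factor $cs[f(P)]^{s-1}$ out of the trace, invoking the hypothesis \eqref{eq:cond4AF-NPDo} on $f$, and applying one scalar inequality. For \eqref{eq:induced4pow-diff-eq} I would just substitute and apply \eqref{eq:cond4AF-NPDo-a}:
$$
\tr\!\left(P^{\T}\frac{\partial g(P)}{\partial P}\right)
= cs[f(P)]^{s-1}\cdot\gamma f(P) = s\gamma\cdot c[f(P)]^s = s\gamma\,g(P).
$$

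For \eqref{eq:induced-regularity4pow-diff-eq} I would invoke \eqref{eq:cond4AF-NPDo-b} on $f$ at the given $P\in\bbP$ and $\what P\in\bbO^{n\times k}$ to produce $Q\in\bbO^{k\times k}$ with $\wtd P=\what P Q\in\bbP$ and the bound $\tr(\what P^{\T}\,\partial f(P)/\partial P)\le \alpha f(\wtd P)+\beta f(P)$. Since $P,\wtd P\in\bbP$ imply $f(P),f(\wtd P)\ge 0$, the scalar factor $cs[f(P)]^{s-1}$ is nonnegative, so multiplication yields
$$
\tr\!\left(\what P^{\T}\frac{\partial g(P)}{\partial P}\right)
\le cs\alpha\,[f(P)]^{s-1}f(\wtd P) + cs\beta\,[f(P)]^s.
$$
What remains is the scalar Young-type inequality
$$
s\,a^{s-1}b \le b^s+(s-1)\,a^s\qquad\text{for } a,b\ge 0,\ s>1,
$$
which is just weighted AM--GM with exponents $1/s$ and $(s-1)/s$ and should be available from appendix~\ref{sec:prelim}; applying it with $a=f(P)$ and $b=f(\wtd P)$, multiplying by $c\alpha$, and combining with the leftover $cs\beta[f(P)]^s$ turns the coefficient of $[f(\wtd P)]^s$ into $c\alpha$ and the coefficient of $[f(P)]^s$ into $c[(s-1)\alpha+s\beta]=c[s\gamma-\alpha]$. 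This is precisely $\alpha g(\wtd P)+(s\gamma-\alpha)g(P)$, giving \eqref{eq:induced-regularity4pow-diff-eq}.

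Finally, I would verify that the induced triple $(\alpha,\ s\gamma-\alpha,\ s\gamma)$ satisfies the constraints in Definition~\ref{defn:AF:NPD}: $s\gamma-\alpha=(s-1)\alpha+s\beta\ge 0$ thanks to $\alpha>0,\ \beta\ge 0,\ s>1$, and the two constants sum to $s\gamma$, as required. I do not anticipate a real obstacle; the only subtle point is to deploy Young's inequality in exactly the asymmetric form above (with $s$ on the left and $\alpha$, not $s\alpha$, on the right), which is what keeps the $\alpha$-constant unchanged under passage to the power $s$ and forces the entire $(s-1)$ weight to be absorbed into the $\beta$-constant.
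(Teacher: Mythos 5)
Your proposal is correct and follows essentially the same route as the paper's proof: the chain rule to pull out the scalar $cs[f(P)]^{s-1}$ (whose nonnegativity, guaranteed by $f\ge 0$ on $\bbP$, preserves the inequality), then \eqref{eq:cond4AF-NPDo-b}, then the weighted Young inequality (Lemma~\ref{lm:YoungIneq-ext} with exponents $1$ and $s-1$) applied to $f(\wtd P)[f(P)]^{s-1}$, and finally the bookkeeping $\alpha(s-1)+\beta s=s\gamma-\alpha$. Your closing remark that the induced constants $(\alpha,\,s\gamma-\alpha)$ remain admissible is a small addition the paper leaves implicit.
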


\begin{proof}
It can be seen that
$
\frac{\partial g(P)}{\partial P}=c\,s[f(P)]^{s-1}\frac{\partial f(P)}{\partial P},
$
and thus
$$
\tr\left(P^{\T}\frac{\partial g(P)}{\partial P}\right)=c\,s[f(P)]^{s-1}\tr\left(P^{\T}\frac{\partial f(P)}{\partial P}\right)
  =c\,s[f(P)]^{s-1}\gamma\, f(P)
  =s\gamma\, g(P),
$$
yielding \eqref{eq:induced4pow-diff-eq}. On the other hand, for $P\in\bbP$ and $\what P\in\STM{k}{n}$,
we have
\begin{align}
\tr\left(\what P^{\T}\frac{\partial g(P)}{\partial P}\right)
   &=cs\,[f(P)]^{s-1}\,\tr\left(\what P^{\T}\frac{\partial f(P)}{\partial P}\right)
              \notag  \\
   &\le cs\,[f(P)]^{s-1}\,\Big\{\alpha [f(\wtd P)]+\beta [f(P)]\Big\} \qquad (\mbox{by \eqref{eq:cond4AF-NPDo-b}})\notag\\
   &= cs\alpha\,[f(\wtd P)][f(P)]^{s-1}+\beta s c[f(P)]^s  \notag\\
   &\le cs\alpha\,\left\{\frac 1s [f(\wtd P)]^s+\frac {s-1}s [f(P)]^s\right\}+\beta s g(P) \label{eq:cond4set1-pow-induced:pf-1}\\
   &=\alpha g(\wtd P)+\alpha (s-1) g(P)+\beta s g(P) \notag \\
   &=\alpha g(\wtd P)+[\alpha (s-1)+\beta s] g(P), \notag
\end{align}
yielding \eqref{eq:induced-regularity4pow-diff-eq}, where we have used Lemma~\ref{lm:YoungIneq-ext} on
$[f(\wtd P)][f(P)]^{s-1}$ to get \eqref{eq:cond4set1-pow-induced:pf-1}.
\end{proof}

%


\begin{remark}\label{rk:induced4tr-diff-eq:2}
In Theorem~\ref{thm:cond4set1-pow-induced}, $g(P)=h(f(P))$ where $h(t)=ct^s$, i.e., $g=h\circ f$ is a composition function.
We claim that this is the only composition function that  satisfies the same type of PDE as $f$ does
in \eqref{eq:cond4AF-NPDo-a}.
Here is why. Given function $f$ satisfying \eqref{eq:cond4AF-NPDo-a}, let $g=h\circ f$. Suppose that $g$
also satisfies \eqref{eq:cond4AF-NPDo-a}, i.e.,
\begin{equation}\label{eq:tr-diff-eq:circ}
\tr\left(P^{\T}\frac{\partial g(P)}{\partial P}\right)=\tilde\gamma\, g(P),
\end{equation}
where $\tilde\gamma$ is a constant.
We claim that $h(t)=ct^s$. In fact, it follows from
$$
\frac{\partial g(P)}{\partial P}=h'(f(P))\frac{\partial f(P)}{\partial P}
$$
and \eqref{eq:cond4AF-NPDo-a} and \eqref{eq:tr-diff-eq:circ} that
$$
\tilde\gamma\, g(P)
  =\tr\left(P^{\T}\frac{\partial g(P)}{\partial P}\right)
  =h'(f(P))\tr\left(P^{\T}\frac{\partial f(P)}{\partial P}\right)
  =h'(f(P))\gamma f(P).
$$
Namely,
$$
\tilde\gamma h(f)=\gamma h'(f) f
\quad\Rightarrow\quad
\frac {h'(f)}{h(f)}=\frac {\tilde\gamma}{\gamma}\cdot\frac 1f
\quad\Rightarrow\quad
h(f)=c\,f^s,
$$
as expected, where $s=\tilde\gamma/\gamma$ and $c$ is some constant.
\end{remark}

\begin{theorem}\label{thm:NPDo-assume:AF}
{\bf The NPDo Ansatz} holds with $\omega=1/\alpha$ for atomic function $f$ that satisfies the conditions in \eqref{eq:cond4AF-NPDo}.
\end{theorem}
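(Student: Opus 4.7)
The plan is to directly chain the two defining conditions of an atomic function \eqref{eq:cond4AF-NPDo-a} and \eqref{eq:cond4AF-NPDo-b} against the hypothesis of \textbf{the NPDo Ansatz}. Recall that the hypothesis supplies $P\in\bbP$ and $\what P\in\bbO^{n\times k}$ satisfying
\[
\tr\!\left(\what P^{\T}\scrH(P)\right)\ge\tr\!\left(P^{\T}\scrH(P)\right)+\eta,
\]
with $\scrH(P)=\partial f(P)/\partial P$, and the conclusion demands some $Q\in\bbO^{k\times k}$ with $\wtd P=\what PQ\in\bbP$ and $f(\wtd P)\ge f(P)+\omega\eta$.

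First I would invoke \eqref{eq:cond4AF-NPDo-a} to rewrite the right-hand side of the hypothesis as $\gamma f(P)+\eta$. Next, \eqref{eq:cond4AF-NPDo-b} furnishes exactly the $Q\in\bbO^{k\times k}$ needed by the ansatz, with $\wtd P=\what PQ\in\bbP$, and produces the upper bound
\[
\tr\!\left(\what P^{\T}\scrH(P)\right)\le\alpha f(\wtd P)+\beta f(P).
\]
This is the same $\wtd P$ we will return for the conclusion.

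Combining the two displays gives
\[
\alpha f(\wtd P)+\beta f(P)\;\ge\;\gamma f(P)+\eta\;=\;(\alpha+\beta)f(P)+\eta,
\]
after using $\gamma=\alpha+\beta$. Cancelling $\beta f(P)$ from both sides and dividing by $\alpha>0$ yields $f(\wtd P)\ge f(P)+\eta/\alpha$, which is precisely the desired inequality with $\omega=1/\alpha$.

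There is essentially no obstacle: the definition of atomic function was engineered so that (a) converts the ``baseline'' $\tr(P^{\T}\scrH(P))$ into $\gamma f(P)$, while (b) supplies a compatible upper bound on the ``improved'' trace together with the membership $\wtd P\in\bbP$. The relation $\gamma=\alpha+\beta$ is exactly what is needed so that the $\beta f(P)$ terms cancel cleanly and the gain $\eta$ translates linearly into a gain in $f$. If anything were delicate, it would be checking that the same $Q$ supplied by \eqref{eq:cond4AF-NPDo-b} serves both as the $Q$ required by the ansatz and in the bound above; but since \eqref{eq:cond4AF-NPDo-b} itself is stated in terms of a single $Q$, this is automatic.
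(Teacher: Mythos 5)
Your proposal is correct and follows essentially the same argument as the paper: condition \eqref{eq:cond4AF-NPDo-a} converts $\tr(P^{\T}\scrH(P))$ into $\gamma f(P)$, condition \eqref{eq:cond4AF-NPDo-b} supplies the $Q$ and the upper bound $\alpha f(\wtd P)+\beta f(P)$, and the relation $\gamma=\alpha+\beta$ cancels the $\beta f(P)$ terms to give $f(\wtd P)\ge f(P)+\eta/\alpha$. No gaps.
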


\begin{proof}
Given $P\in\bbP\subseteq\STM{k}{n}$ and $\what P\in\STM{k}{n}$, suppose  that \eqref{eq:NPDo-assume} holds,
i.e., $\tr(\what P^{\T}\scrH(P))\ge\tr(P^{\T}\scrH(P))+\eta$. We have by \eqref{eq:cond4AF-NPDo}
$$
\eta+\gamma f(P)=\eta+\tr(P^{\T}\scrH(P))
   \le\tr(\what P^{\T}\scrH(P))
   \le\alpha f(\wtd P)+\beta f(P)
$$
yielding $\eta/\alpha+ f(P)\le  f(\wtd P)$, as was to be shown.
\end{proof}

As a corollary of Theorem~\ref{thm:NPDo-assume:AF}, the NPDo approach as laid out in section~\ref{sec:NPDo-theory}
works on any atomic function for NPDo.

\subsection{Concrete Atomic Functions}\label{ssec:AF-concrete-NPD}
We will show  that
\begin{equation}\label{eq:concrete-AF:NPD}
[\tr((P^{\T}D)^m)]^s, \quad
        [\tr((P^{\T}AP)^m)]^s\quad
\mbox{for integer $m\ge 1$, $s\ge 1$, and $A\succeq 0$}
\end{equation}
satisfy \eqref{eq:cond4AF-NPDo}
and hence are atomic functions for NPDo.
Therefore, by Theorem~\ref{thm:NPDo-assume:AF}, {\bf the NPDo Ansatz} holds for them.
We point out that the results we will prove in this subsection are actually for more general
$P$, $\what P$ and $\wtd P$ than required in Definition~\ref{defn:AF:NPD}.

We start by considering $\tr((P^{\T}D)^m)$ and its power.

\begin{theorem}\label{thm:lin4tr-diff-eq}
Let $D\in\bbR^{n\times k}$, and let $m\ge 1$ be an integer.
\begin{enumerate}[{\rm (a)}]
  \item For $P\in\bbR^{n\times k}$, we have
        \begin{equation}\label{eq:lin-pow4tr-diff-eq}
        \tr\left(P^{\T}\frac{\partial \tr((P^{\T}D)^m)}{\partial P}\right)=m\, \tr((P^{\T}D)^m).
        \end{equation}
  \item Let $P,\,\what P\in\bbR^{n\times k}$.
        \begin{enumerate}[{\rm (i)}]
          \item For $m=1$, we have
        \begin{equation}\label{eq:regularity4lin4tr-diff-eq}
        \tr\left(\what P^{\T}\frac{\partial \tr(P^{\T}D)}{\partial P}\right)=\tr(\what P^{\T}D);
        \end{equation}
          \item For $m\ge 1$, if  $P^{\T}D\succeq 0$, then
        \begin{equation}\label{eq:regularity4lin-pow4tr-diff-eq}
        \tr\left(\what P^{\T}\frac{\partial \tr((P^{\T}D)^m)}{\partial P}\right)\le\tr((\wtd P^{\T}D)^m)+(m-1)\,\tr((P^{\T}D)^m),
        \end{equation}
        where $\wtd P=\what PQ$ for $Q\in\STM{k}{k}$ such that $\wtd P^{\T}D\succeq 0$.
        \end{enumerate}
\end{enumerate}
In particular, the conditions in \eqref{eq:cond4AF-NPDo} hold with $\bbP=\STM{k}{n}_{D+}$,
$\alpha=1$ and $\beta=m-1$,
and thus $\tr((P^{\T}D)^m)$ is an atomic function for NPDo.

Just for the case $m=1$, we can also take
$\bbP=\STM{k}{n}$ and $\wtd P=\what P$ in \eqref{eq:cond4AF-NPDo}, and then \eqref{eq:cond4AF-NPDo-b} becomes an equality.
Therefore, $\tr(P^{\T}D)$ is also an atomic function for NPDo with $\bbP=\STM{k}{n}$ and $Q=I_k$ in the definition.
\end{theorem}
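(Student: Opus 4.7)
The plan is to first compute the Euclidean gradient $\partial\tr((P^{\T}D)^m)/\partial P$ in closed form, which will instantly yield (a) and the $m=1$ case (b)(i). Part (b)(ii), where the real work lies, will follow from combining the von Neumann trace inequality with the scalar Young inequality of Lemma~\ref{lm:YoungIneq-ext} applied singular-value-wise, after choosing $Q$ to be an orthogonal polar factor of $\what P^{\T}D$.

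First I would differentiate. Using the cyclic property of the trace on each summand of
\[
d\tr((P^{\T}D)^m)=\sum_{j=1}^{m}\tr\bigl((P^{\T}D)^{j-1}(dP^{\T}\!D)(P^{\T}D)^{m-j}\bigr),
\]
all $m$ summands collapse to $m\,\tr\bigl(D(P^{\T}D)^{m-1}\,dP^{\T}\bigr)$, so
\[
\frac{\partial\tr((P^{\T}D)^m)}{\partial P}=m\,D(P^{\T}D)^{m-1}.
\]
Left-multiplying by $P^{\T}$ and taking the trace gives $m\,\tr((P^{\T}D)^m)$, which is (a). Replacing $P^{\T}$ by $\what P^{\T}$ gives $m\,\tr\bigl(\what P^{\T}D(P^{\T}D)^{m-1}\bigr)$; for $m=1$ this is simply $\tr(\what P^{\T}D)$, so (b)(i) holds with $Q=I_k$ and $\wtd P=\what P$.

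For (b)(ii), I would choose $Q\in\bbO^{k\times k}$ to be an orthogonal polar factor of $\what P^{\T}D$, so that $\wtd P^{\T}D=Q^{\T}\what P^{\T}D\succeq 0$ has eigenvalues equal to the singular values $\sigma_i(\what P^{\T}D)$, giving $\tr((\wtd P^{\T}D)^m)=\sum_i\sigma_i(\what P^{\T}D)^m$. Since $P\in\bbO_{D+}^{n\times k}$, the factor $(P^{\T}D)^{m-1}$ is also PSD with ordered eigenvalues $\sigma_i(P^{\T}D)^{m-1}$. The von Neumann trace inequality (Lemma~\ref{lm:vN-tr-ineq-ext2}) gives
\[
\tr\bigl(\what P^{\T}D(P^{\T}D)^{m-1}\bigr)\le\sum_{i=1}^{k}\sigma_i(\what P^{\T}D)\,\sigma_i(P^{\T}D)^{m-1},
\]
and then applying Lemma~\ref{lm:YoungIneq-ext} (the scalar inequality $ab^{m-1}\le a^m/m+(m-1)b^m/m$ for $a,b\ge 0$) term by term and summing yields $\tfrac1m\tr((\wtd P^{\T}D)^m)+\tfrac{m-1}m\tr((P^{\T}D)^m)$. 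Multiplying by $m$ delivers exactly \eqref{eq:regularity4lin-pow4tr-diff-eq}.

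Finally, to pin down the atomic-function statement, I would set $\bbP=\bbO_{D+}^{n\times k}$, $\alpha=1$, $\beta=m-1$, and $\gamma=\alpha+\beta=m$: then (a) is \eqref{eq:cond4AF-NPDo-a} and (b) is \eqref{eq:cond4AF-NPDo-b}, so $\tr((P^{\T}D)^m)$ is an atomic function for NPDo. For $m=1$ the chain of inequalities degenerates into the identity in (b)(i), so $\wtd P=\what P$ works for arbitrary $\what P\in\bbO^{n\times k}$ and $\bbP$ can be enlarged to the entire Stiefel manifold. I expect the one subtle point to be identifying $\sum_i\sigma_i(\what P^{\T}D)^m$ with $\tr((\wtd P^{\T}D)^m)$ rather than with a coarser object like $\|\what P^{\T}D\|_{\tr}^m$; this equality is precisely what the polar-factor choice of $Q$ buys, and is what lets the von Neumann/Young chain close in terms of $\wtd P$ as required by the statement.
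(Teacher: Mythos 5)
Your proposal is correct and follows essentially the same route as the paper: compute $\partial\tr((P^{\T}D)^m)/\partial P=m\,D(P^{\T}D)^{m-1}$, take $Q$ to be an orthogonal polar factor of $\what P^{\T}D$ so that $\wtd P^{\T}D\succeq 0$, and bound $m\,\tr(\what P^{\T}D(P^{\T}D)^{m-1})$ by von Neumann's trace inequality followed by a singular-value-wise Young inequality — your inline von Neumann/Young chain is precisely the content of the paper's Lemma~\ref{lm:vN-tr-ineq-ext1}, which the paper simply cites at that step. The only blemish is a citation slip: the inequality you invoke is Lemma~\ref{lm:vN-tr-ineq} (von Neumann), not Lemma~\ref{lm:vN-tr-ineq-ext2}, though the inequality you actually write down is the correct one.
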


\begin{proof}
Consider perturbing $P\in\bbR^{n\times k}$ to $P+E$ where $E\in\bbR^{n\times k}$ with
$\|E\|$  sufficiently tiny. We have
\begin{align}
[(P+E)^{\T}D]^m&=[P^{\T}D+E^{\T}D]^m \nonumber\\
  &=(P^{\T}D)^m+\sum_{i=0}^{m-1}(P^{\T}D)^iE^{\T}D(P^{\T}D)^{m-1-i}+O(\|E\|^2), \label{eq:lin-pow:pertb}\\
\tr([(P+E)^{\T}D]^m)
  &=\tr((P^{\T}D)^m)+m\,\tr(E^{\T}D(P^{\T}D)^{m-1})+O(\|E\|^2). \label{eq:tr-lin-pow:pertb}
\end{align}
Immediately, it follows from \eqref{eq:tr-lin-pow:pertb} that
\begin{equation} \label{eq:partD-lin-pow}
\frac{\partial \tr((P^{\T}D)^m)}{\partial P}=m\,D(P^{\T}D)^{m-1}.
\end{equation}
Equation \eqref{eq:lin-pow4tr-diff-eq} is a direct consequence
of \eqref{eq:partD-lin-pow}. This proves item (a).

Now, we prove item (b). 
Equation \eqref{eq:regularity4lin4tr-diff-eq}  which is for $m=1$ is easily verified.
In general, for $m>1$,
noticing the assumption $P^{\T}D\succeq 0$ and $\wtd P^{\T}D=Q^{\T}(\what P^{\T}D)\succeq 0$ for the case,
we have, by Lemma~\ref{lm:vN-tr-ineq-ext1},
\begin{align}
\tr\left(\what P^{\T}\frac{\partial \tr((P^{\T}D)^m)}{\partial P}\right)
  &=m\,\tr(\what P^{\T}D(P^{\T}D)^{m-1}) \nonumber\\
  &\le\tr((\wtd P^{\T}D)^m)+(m-1)\,\tr((P^{\T}D)^m),
  \label{eq:lin4tr-diff-eq:pf-1}
\end{align}
which is \eqref{eq:regularity4lin-pow4tr-diff-eq}.
\end{proof}

\begin{remark}\label{rk:lin4tr-diff-eq}
In obtaining \eqref{eq:lin4tr-diff-eq:pf-1} by Lemma~\ref{lm:vN-tr-ineq-ext1}, it is needed that
$P^{\T}D\succeq 0$ and $\wtd P^{\T}D=Q^{\T}(\what P^{\T}D)\succeq 0$. This explains the necessity of  having a strict subset $\bbP$ of
$\STM{k}{n}$ and aligning $\what P$ to $\wtd P\in\bbP$ by some $Q$ in \eqref{eq:cond4AF-NPDo} for defining atomic function for NPDo in general.
For the purpose of maximizing $\tr((P^{\T}D)^m)$, given $P\in\bbP=\STM{k}{n}_{D+}$ being the current approximation, to compute the next and hopefully improved
approximation, the NPDo approach will seek $\what P$ to maximize
$\tr\left(X^{\T}\frac{\partial \tr((P^{\T}D)^m)}{\partial P}\right)$, or equivalently,
$\tr(X^{\T}D(P^{\T}D)^{m-1})$, over $X\in\STM{k}{n}$, and hence $\what P$ is taken to be an orthonormal polar factor of $D(P^{\T}D)^{m-1}$.
For that $\what P$, likely $\what P^{\T}D\not\succeq 0$, and hence necessarily $\what P$ needs to be aligned to $\wtd P=\what PQ\in\bbP$
so that $\wtd P^{\T}D\succeq 0$.
\end{remark}


For any $s>1$,  $f(P)=[\tr((P^{\T}D)^m)]^s$ is  well-defined for any $P\in\bbR^{n\times k}$
such that $\tr((P^{\T}D)^m)\ge 0$.
In particular, $[\tr((P^{\T}D)^m)]^s$ is well-defined
for
$$
P\in\bbR_{D+}^{n\times k}:=\{X\in\bbR^{n\times k}\,:\,X^{\T}D\succeq 0\}.
$$
With Theorem~\ref{thm:lin4tr-diff-eq}, a minor modification to the proof of Theorem~\ref{thm:cond4set1-pow-induced}
leads to

\begin{corollary}\label{cor:pow(lin)4tr-diff-eq}
Let $D\in\bbR^{n\times k}$, integer $m\ge 1$, $s>1$, $g(P)=[\tr((P^{\T}D)^m)]^s$.
\begin{subequations}\label{eq:pow(lin)4tr-diff-eq}
\begin{enumerate}[{\rm (a)}]
  \item For $P\in\bbR^{n\times k}$ at which $g(P)$ is well defined, we have
        \begin{equation}\label{eq:lin-pow4trpow-diff-eq}
          \tr\left(P^{\T}\frac{\partial [\tr((P^{\T}D)^m)]^s}{\partial P}\right)
             =sm\, [\tr((P^{\T}D)^m)]^s;
        \end{equation}
  \item Let $P\in\bbR_{D+}^{n\times k}$, $\what P\in\bbR^{n\times k}$, and let $\wtd P=\what PQ$, where
        $Q\in\STM{k}{k}$ is an orthonormal polar factor of $\what P^{\T}D$. We have
        $\wtd P\in\bbR_{D+}^{n\times k}$ and
        \begin{equation}\label{eq:lin-pow-regularity4pow-diff-eq}
        \tr\left(\what P^{\T}\frac{\partial [\tr((P^{\T}D)^m)]^s}{\partial P}\right)
             \le [\tr((\wtd P^{\T}D)^m)]^s+(sm-1)[\tr((P^{\T}D)^m)]^s.
        \end{equation}
\end{enumerate}
\end{subequations}
In particular, the conditions in \eqref{eq:cond4AF-NPDo} hold with $\bbP=\STM{k}{n}_{D+}$, $\alpha=1$ and $\beta=sm-1$,
and thus $[\tr((P^{\T}D)^m)]^s$  for $s>1$  is an atomic function for NPDo.
\end{corollary}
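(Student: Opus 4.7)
The plan is to essentially mimic the proof of Theorem~\ref{thm:cond4set1-pow-induced}, but with $f(P)=\tr((P^{\T}D)^m)$ playing the role of the underlying atomic function; the key inputs are Theorem~\ref{thm:lin4tr-diff-eq}, a nonnegativity check, and Young's inequality via Lemma~\ref{lm:YoungIneq-ext}.

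First I would handle part (a). Writing $f(P)=\tr((P^{\T}D)^m)$ so that $g(P)=[f(P)]^s$, the chain rule gives
\[
\frac{\partial g(P)}{\partial P}=s\,[f(P)]^{s-1}\,\frac{\partial f(P)}{\partial P}.
\]
Taking the trace against $P^{\T}$ and applying Theorem~\ref{thm:lin4tr-diff-eq}(a), namely $\tr(P^{\T}\partial f/\partial P)=m f(P)$, collapses the right-hand side to $sm\,[f(P)]^s=sm\,g(P)$, yielding \eqref{eq:lin-pow4trpow-diff-eq}. The only wrinkle is that $[f(P)]^{s-1}$ must be meaningful, but the hypothesis that $g(P)$ is well-defined takes care of that.

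Next, for part (b), I would first verify the auxiliary claim $\wtd P\in\bbR_{D+}^{n\times k}$. Since $Q$ is an orthogonal polar factor of $\what P^{\T}D$, by definition $\what P^{\T}D=Q\Omega$ for some $\Omega\succeq 0$, so $\wtd P^{\T}D=Q^{\T}\what P^{\T}D=\Omega\succeq 0$. In particular $(\wtd P^{\T}D)^m\succeq 0$ and $\tr((\wtd P^{\T}D)^m)\ge 0$, making $[\tr((\wtd P^{\T}D)^m)]^s$ well-defined; likewise $f(P)\ge 0$ since $P\in\bbR_{D+}^{n\times k}$. Then, invoking Theorem~\ref{thm:lin4tr-diff-eq}(b) (whose hypotheses $P^{\T}D\succeq 0$ and $\wtd P^{\T}D\succeq 0$ are exactly what we just verified), I get
\[
\tr\!\left(\what P^{\T}\frac{\partial g(P)}{\partial P}\right)
=s\,[f(P)]^{s-1}\tr\!\left(\what P^{\T}\frac{\partial f(P)}{\partial P}\right)
\le s\,[f(P)]^{s-1}\bigl\{\tr((\wtd P^{\T}D)^m)+(m-1)f(P)\bigr\}.
\]
Finally, I would apply Lemma~\ref{lm:YoungIneq-ext} to the cross-term $\tr((\wtd P^{\T}D)^m)\cdot[f(P)]^{s-1}$ with exponents $s$ and $s/(s-1)$, obtaining $s\,\tr((\wtd P^{\T}D)^m)[f(P)]^{s-1}\le[\tr((\wtd P^{\T}D)^m)]^s+(s-1)[f(P)]^s$. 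Combining this with the $s(m-1)[f(P)]^s$ term produces $(s-1)+s(m-1)=sm-1$ as the coefficient of $g(P)$, which is precisely \eqref{eq:lin-pow-regularity4pow-diff-eq}. The conclusion that \eqref{eq:cond4AF-NPDo} holds with $\bbP=\bbO_{D+}^{n\times k}$, $\alpha=1$, $\beta=sm-1$, $\gamma=sm$ is then read off from parts (a) and (b) directly (for $P\in\bbO_{D+}^{n\times k}\subseteq\bbR_{D+}^{n\times k}$).

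There is no real obstacle here; the only non-routine step is making sure the nonnegativity conditions that allow Young's inequality and the power $s$ are inherited from the hypothesis $P\in\bbR_{D+}^{n\times k}$ and from the specific choice of $Q$ as a polar factor. Everything else is a faithful rescaling of the argument already executed for Theorem~\ref{thm:cond4set1-pow-induced}.
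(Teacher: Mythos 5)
Your proposal is correct and follows exactly the route the paper intends: it explicitly derives the corollary as "a minor modification to the proof of Theorem~\ref{thm:cond4set1-pow-induced}" using Theorem~\ref{thm:lin4tr-diff-eq}, which is precisely what you did, including the verification that $\wtd P^{\T}D=Q^{\T}\what P^{\T}D\succeq 0$ so that Theorem~\ref{thm:lin4tr-diff-eq}(b) and Lemma~\ref{lm:YoungIneq-ext} apply, and the bookkeeping $(s-1)+s(m-1)=sm-1$.
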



Next we consider $\tr((P^{\T}AP)^m)$ and its power.

\begin{theorem}\label{thm:quad4tr-diff-eq}
Let symmetric $A\in\bbR^{n\times n}$, and let $m\ge 1$ be an integer.
\begin{enumerate}[{\rm (a)}]
  \item For $P\in\bbR^{n\times k}$, we have
\begin{equation}\label{eq:quad-pow4tr-diff-eq}
\tr\left(P^{\T}\frac{\partial \tr((P^{\T}AP)^m)}{\partial P}\right)=2m\, \tr((P^{\T}AP)^m).
\end{equation}
  \item For $P,\,\what P\in\bbR^{n\times k}$, if $A\succeq 0$, then
        \begin{equation}\label{eq:regularity4quad-pow4tr-diff-eq}
        \tr\left(\what P^{\T}\frac{\partial \tr((P^{\T}AP)^m)}{\partial P}\right)\le\tr((\what P^{\T}A\what P)^m)+(2m-1)\,\tr((P^{\T}AP)^m).        \end{equation}
\end{enumerate}
In particular, the conditions in \eqref{eq:cond4AF-NPDo} hold with $\bbP=\STM{k}{n}$, $Q=I_k$ and
$\wtd P=\what P$ , $\alpha=1$ and $\beta=2m-1$,
and thus $\tr((P^{\T}AP)^m)$ is an atomic function for NPDo.
\end{theorem}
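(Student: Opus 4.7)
The proof will parallel that of Theorem~\ref{thm:lin4tr-diff-eq}, with the symmetry of $A$ and the assumption $A\succeq 0$ playing the roles analogous to the assumption $P^{\T}D\succeq 0$ used there. First I would compute the Euclidean gradient by the same kind of first‑order perturbation argument as in \eqref{eq:lin-pow:pertb}--\eqref{eq:partD-lin-pow}: writing $(P+E)^{\T}A(P+E)=P^{\T}AP+(P^{\T}AE+E^{\T}AP)+O(\|E\|^2)$, expanding $[(P+E)^{\T}A(P+E)]^m$ to first order in $E$, taking trace, and then using the cyclic property of $\tr(\cdot)$ together with $A^{\T}=A$ to collapse the $m$ terms pairwise. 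The outcome is
\begin{equation*}
\frac{\partial \tr((P^{\T}AP)^m)}{\partial P}=2m\,AP(P^{\T}AP)^{m-1}.
\end{equation*}
Part~(a) then follows at once: multiplying on the left by $P^{\T}$ and taking trace yields $2m\,\tr(P^{\T}AP(P^{\T}AP)^{m-1})=2m\,\tr((P^{\T}AP)^m)$.

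For part~(b) I would exploit $A\succeq 0$ to write $A=A^{1/2}A^{1/2}$ and set $X=A^{1/2}\what P$, $Y=A^{1/2}P$. Then $\what P^{\T}A\what P=X^{\T}X\succeq 0$, $P^{\T}AP=Y^{\T}Y\succeq 0$, and $\what P^{\T}AP=X^{\T}Y$, so the inequality to be established becomes
\begin{equation*}
2m\,\tr\big(X^{\T}Y(Y^{\T}Y)^{m-1}\big)\le \tr((X^{\T}X)^m)+(2m-1)\,\tr((Y^{\T}Y)^m).
\end{equation*}
This is the matrix analogue of the scalar Young inequality and is exactly the kind of bound supplied by the extended von Neumann trace inequalities (Lemma~\ref{lm:vN-tr-ineq-ext2}, whose $m=1$ case was already used in Example~\ref{eg:motivation:NPDo}). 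Concretely, von Neumann's trace inequality gives $\tr(X^{\T}Y(Y^{\T}Y)^{m-1})\le\sum_i\sigma_i(X)\sigma_i(Y)^{2m-1}$, and then the scalar Young inequality with exponents $p=2m$, $q=2m/(2m-1)$ (i.e., Lemma~\ref{lm:YoungIneq-ext}) yields $2m\,\sigma_i(X)\sigma_i(Y)^{2m-1}\le \sigma_i(X)^{2m}+(2m-1)\sigma_i(Y)^{2m}$ termwise, and summing in $i$ together with $\sum_i\sigma_i(X)^{2m}=\tr((X^{\T}X)^m)$ (and similarly for $Y$) produces the claim.

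The point to emphasize, and the stylistic contrast with Theorem~\ref{thm:lin4tr-diff-eq}, is that no orthogonal alignment $Q$ is required here: the substitution $X=A^{1/2}\what P$, $Y=A^{1/2}P$ automatically produces PSD matrices $X^{\T}X$ and $Y^{\T}Y$ regardless of where $P$ and $\what P$ sit in $\bbR^{n\times k}$, so we may take $\bbP=\bbO^{n\times k}$ and $\wtd P=\what P$, i.e., $Q=I_k$. Comparing \eqref{eq:quad-pow4tr-diff-eq} and \eqref{eq:regularity4quad-pow4tr-diff-eq} with \eqref{eq:cond4AF-NPDo-a}--\eqref{eq:cond4AF-NPDo-b} then reads off $\alpha=1$, $\beta=2m-1$, $\gamma=\alpha+\beta=2m$, certifying $\tr((P^{\T}AP)^m)$ as an atomic function for NPDo.

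The only slightly delicate step is the matrix trace inequality in part~(b); the gradient computation and the reduction in part~(a) are routine. I anticipate that invoking the correct extended trace inequality from the appendix and checking that the Young exponents produce precisely the stated constants $\alpha=1$, $\beta=2m-1$ will be the main bookkeeping hurdle, but no conceptual obstacle is expected.
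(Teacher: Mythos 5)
Your proposal is correct and follows essentially the same route as the paper: the same first-order perturbation computation gives $\partial\,\tr((P^{\T}AP)^m)/\partial P=2m\,AP(P^{\T}AP)^{m-1}$, and part (b) is exactly the paper's application of Lemma~\ref{lm:vN-tr-ineq-ext2} (with $\mu=0$, $\nu=m-1$) to $X=A^{1/2}\what P$, $Y=A^{1/2}P$. The only cosmetic difference is that you unpack that lemma's von~Neumann--plus--Young proof inline, whereas the paper simply cites it.
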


\begin{proof}
Consider perturbing $P\in\bbR^{n\times k}$ to $P+E$ where $E\in\bbR^{n\times k}$ with
$\|E\|$ sufficiently tiny. We have
\begin{align}
[(P+E)^{\T}A(P+E)]^m&=[P^{\T}AP+E^{\T}AP+P^{\T}AE+E^{\T}AE]^m \nonumber\\
  &=(P^{\T}AP)^m+\sum_{i=0}^{m-1}(P^{\T}AP)^i(E^{\T}AP+P^{\T}AE)(P^{\T}AP)^{m-1-i} \nonumber\\
  &\quad+O(\|E\|^2), \label{eq:quad-pow:pertb}\\
\tr([(P+E)^{\T}A(P+E)]^m)
  &=\tr((P^{\T}AP)^m)+m\,\tr(E^{\T}AP(P^{\T}AP)^{m-1}) \nonumber\\
  &\hphantom{\,=(P^{\T}AP)^m}
        +m\,\tr((P^{\T}AP)^{m-1}P^{\T}AE)+O(\|E\|^2) \nonumber\\
  &=\tr((P^{\T}AP)^m)+2m\,\tr(E^{\T}AP(P^{\T}AP)^{m-1})+O(\|E\|^2). \label{eq:tr-quad-pow:pertb}
\end{align}
Immediately, it follows from  \eqref{eq:tr-quad-pow:pertb} that
\begin{equation}\label{eq:partD-quad-pow}
\frac{\partial \tr((P^{\T}AP)^m)}{\partial P}=2m\,AP(P^{\T}AP)^{m-1}.
\end{equation}
Equation \eqref{eq:quad-pow4tr-diff-eq} is a direct consequence
of \eqref{eq:partD-quad-pow}. This proves item (a).

Next we prove item (b). Let $X=A^{1/2}\what P$ and $Y=A^{1/2}P$, where $A^{1/2}$ is the unique positive semidefinite square root of $A$.
We have
\begin{align*}
\tr\left(\what P^{\T}\frac{\partial \tr((P^{\T}AP)^m)}{\partial P}\right)
  &=2m\,\tr(\what P^{\T}AP(P^{\T}AP)^{m-1}) \\
  &=2m\,\tr(X^{\T}Y(Y^{\T}Y)^{m-1}) \\
  &\le\tr((X^{\T}X)^m)+(2m-1)\,\tr((Y^{\T}Y)^m) \qquad (\mbox{by Lemma~\ref{lm:vN-tr-ineq-ext2}})\\
  &=\tr((\what P^{\T}A\what P)^m)+(2m-1)\,\tr((P^{\T}AP)^m),
\end{align*}
which is \eqref{eq:regularity4quad-pow4tr-diff-eq}.
\end{proof}




With Theorem~\ref{thm:quad4tr-diff-eq}, a minor modification to the proof of Theorem~\ref{thm:cond4set1-pow-induced}
leads to

\begin{corollary}\label{cor:pow(quad)4tr-diff-eq}
Let symmetric $A\in\bbR^{n\times n}$, and let $m\ge 1$ be an integer and $s>1$.
For $P,\,\wtd P\in\bbR^{n\times k}$,
if $A\succeq 0$, then
\begin{subequations}\label{eq:pow(quad)4tr-diff-eq}
\begin{align}
\tr\left(P^{\T}\frac{\partial [\tr((P^{\T}AP)^m)]^s}{\partial P}\right)
    &=2sm\, [\tr((P^{\T}AP)^m)]^s, \label{eq:quad-pow4trpow-diff-eq} \\
\tr\left(\wtd P^{\T}\frac{\partial [\tr((P^{\T}AP)^m)]^s}{\partial P}\right)
             &\le [\tr((\wtd P^{\T}A\wtd P)^m)]^s+(2sm-1)[\tr((P^{\T}AP)^m)]^s.
                      \label{eq:quad-pow-regularity4pow-diff-eq}
\end{align}
\end{subequations}
In particular, the conditions in \eqref{eq:cond4AF-NPDo} hold with $\bbP=\STM{k}{n}$, $Q=I_k$,
$\alpha=1$ and $\beta=2sm-1$,
and thus $[\tr((P^{\T}AP)^m)]^s$  for $s>1$ is an atomic function for NPDo.
\end{corollary}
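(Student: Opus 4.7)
The plan is to obtain this corollary as a direct specialization of Theorem~\ref{thm:cond4set1-pow-induced}, using Theorem~\ref{thm:quad4tr-diff-eq} as the ``base'' atomic function. Concretely, I would let $f(P)=\tr((P^{\T}AP)^m)$ and $g(P)=[f(P)]^s$ (so $c=1$ in the notation of Theorem~\ref{thm:cond4set1-pow-induced}). Because $A\succeq 0$ implies $P^{\T}AP\succeq 0$, and hence $(P^{\T}AP)^m\succeq 0$, we have $f(P)\ge 0$ for every $P\in\bbR^{n\times k}$; this nonnegativity is what makes $g(P)=[f(P)]^s$ well-defined for the real exponent $s>1$ and, more importantly, it is exactly the hypothesis needed to invoke Theorem~\ref{thm:cond4set1-pow-induced}.

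Next, Theorem~\ref{thm:quad4tr-diff-eq} supplies the two input identities/inequalities: \eqref{eq:quad-pow4tr-diff-eq} says \eqref{eq:cond4AF-NPDo-a} holds for $f$ with $\gamma=2m$, and \eqref{eq:regularity4quad-pow4tr-diff-eq} says \eqref{eq:cond4AF-NPDo-b} holds for $f$ with $\alpha=1$, $\beta=2m-1$, and with the trivial alignment $Q=I_k$, $\wtd P=\what P$. Plugging these parameters into the conclusion \eqref{eq:cond4set1-pow-induced} of Theorem~\ref{thm:cond4set1-pow-induced} gives $s\gamma=2sm$ in \eqref{eq:induced4pow-diff-eq}, which is precisely \eqref{eq:quad-pow4trpow-diff-eq}, and $\alpha=1$, $s\gamma-\alpha=2sm-1$ in \eqref{eq:induced-regularity4pow-diff-eq}, which is precisely \eqref{eq:quad-pow-regularity4pow-diff-eq}. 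Because $Q=I_k$ works already at the level of $f$, the same choice works for $g$, so we may state the inequality directly in terms of $\wtd P$ (identified with $\what P$).

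For the ``minor modification'' part, I would just re-run the short chain of manipulations in the proof of Theorem~\ref{thm:cond4set1-pow-induced} to make the argument self-contained: write $\partial g/\partial P = s[f(P)]^{s-1}\,\partial f/\partial P$ with $\partial f/\partial P=2m\,AP(P^{\T}AP)^{m-1}$ from \eqref{eq:partD-quad-pow}; contract with $P^{\T}$ and use \eqref{eq:quad-pow4tr-diff-eq} for the equality; contract with $\wtd P^{\T}$ and use \eqref{eq:regularity4quad-pow4tr-diff-eq} to bound $\tr(\wtd P^{\T}\,\partial f/\partial P)\le f(\wtd P)+(2m-1)f(P)$; then split the resulting cross term $s\,f(\wtd P)[f(P)]^{s-1}$ by Lemma~\ref{lm:YoungIneq-ext} into $[f(\wtd P)]^s+(s-1)[f(P)]^s$. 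Collecting terms produces the bound $[f(\wtd P)]^s+(2sm-1)[f(P)]^s$.

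There is no real obstacle here: the only place where care is needed is the Young-type splitting, which requires $f(\wtd P),f(P)\ge 0$, and that is exactly the content of the hypothesis $A\succeq 0$. Finally, the last assertion follows by simply reading off $\alpha=1$, $\beta=2sm-1$, and $\gamma=\alpha+\beta=2sm$, with $\bbP=\bbO^{n\times k}$ (no subset restriction is needed because, as in Theorem~\ref{thm:quad4tr-diff-eq}, the alignment $Q=I_k$ is admissible for this family), thereby certifying $[\tr((P^{\T}AP)^m)]^s$ as an atomic function for NPDo in the sense of Definition~\ref{defn:AF:NPD}.
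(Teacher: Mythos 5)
Your proposal is correct and is essentially the paper's own argument: the paper derives this corollary precisely by combining Theorem~\ref{thm:quad4tr-diff-eq} (the base case $\alpha=1$, $\beta=2m-1$, $\gamma=2m$, $Q=I_k$) with the power-lifting argument of Theorem~\ref{thm:cond4set1-pow-induced}, the "minor modification" being exactly the observation you make that $f(P)=\tr((P^{\T}AP)^m)\ge 0$ on all of $\bbR^{n\times k}$ when $A\succeq 0$, so the Young-type splitting applies for arbitrary $P,\wtd P\in\bbR^{n\times k}$ rather than only on $\bbO^{n\times k}$.
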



\section{Convex Composition}\label{sec:CVX-comp:NPDo}
The concrete atomic functions for NPDo in \eqref{eq:concrete-AF:NPD} provides a limited collection of objective functions
for which the NPDo approach provably works. In this section, we will vastly expand the collection to include
any convex composition of atomic functions, provided that some of the partial derivatives
of the composing convex function are  nonnegative.

Specifically, we are interested in a special case of optimization problem \eqref{eq:main-opt} on the Stiefel manifold $\STM{k}{n}$ where
$f$ is a convex composition of atomic functions for NPDo, namely,
\begin{equation}\label{eq:Master-OptSTM}
\max_{P\in\STM{k}{n}}\,f(P)\quad\mbox{with}\quad f(P):=(\phi\circ T)(P)\equiv\phi(T(P)),
\end{equation}
where $T\,:\, P\in\STM{k}{n} \to T(P)\in\mathfrak{D}\subseteq\bbR^N$ whose components are atomic functions
dependent of just a few or all columns of $P$,
and $\phi\,:\,\mathfrak{D}\to\bbR$ is
convex and differentiable.
Denote the partial derivatives of $\phi$
with respect to $\bx=[x_1,x_2,\ldots,x_N]^{\T}\in\mathfrak{D}\subseteq\bbR^N$ by
\begin{equation}\label{eq:phi-i}
\phi_i(\bx)=\frac {\partial \phi(\bx)}{\partial x_i}\quad\mbox{for $1\le i\le N$}.
\end{equation}

Our goal is to solve \eqref{eq:Master-OptSTM} by Algorithm~\ref{alg:SCF4NPDo} and its accelerating  variation in Algorithm~\ref{alg:SCF4NPDo+LOCG} with convergence guarantee. To that end, we
will have to place consistency conditions upon all components of $T(P)$.
Let
\begin{equation}\label{eq:T-general}
T(P)=\begin{bmatrix}
                                        f_1(P_1) \\
                                        f_2(P_2) \\
                                        \vdots \\
                                        f_N(P_N) \\
                                      \end{bmatrix},
\end{equation}
where each $P_i$ is a submatrix of $P$, consisting of a few or all columns of $P$.
We point out that
it is possible that some of  $P_i$ may share common column(s) of $P$, different from
the situation in SumCT in Table~\ref{tbl:obj-funs}. Alternatively, we can write $P_i=PJ_i$ where $J_i\in\bbR^{k\times k_i}$ is submatrices
      of $I_k$, taking the columns of $I_k$ with the same column indices as $P_i$
      to $P$. Each $J_i$ acts as a column selector.

The KKT condition \eqref{eq:KKT} for \eqref{eq:Master-OptSTM} becomes
\begin{subequations}\label{eq:KKT-comp}
\begin{gather}
\scrH(P):=\frac {\partial f(P)}{\partial P}
  =\sum_{i=1}^N\phi_i(T(P))\frac {\partial f_i(P_i)}{\partial P_i}J_i^{\T}
  =P\Lambda, \label{eq:KKT-comp-1}\\
\mbox{with} \,\, \Lambda^{\T}=\Lambda\in\bbR^{k\times k},\,\, P\in\STM{k}{n}. \label{eq:KKT-comp-2}
\end{gather}
\end{subequations}
The consistency conditions on atomic functions $f_i(P_i)$ for $1\le i\le N$ are
\begin{subequations}\label{eq:cond4AF-NPDo:cvx}
\begin{align}
\tr\left(P_i^{\T}\frac{\partial f_i(P_i)}{\partial P_i}\right)
     &=\gamma_i\, f_i(P_i)\quad\mbox{for $P\in\bbP\subseteq\STM{k}{n}$},
              \label{eq:cond4AF-NPDo:cvx-a} \\
\intertext{and given $P\in\bbP$ and $\what P\in\STM{k}{n}$, there exists $Q\in\STM{k}{k}$ such
that $\wtd P=\what PQ\in\bbP$ and} 
\tr\left(\what P_i^{\T}\frac{\partial f_i(P_i)}{\partial P_i}\right)
   &\le\alpha f_i(\wtd P_i)+\beta_i f_i(P_i), 
      \label{eq:cond4AF-NPDo:cvx-b}
\end{align}
\end{subequations}
where $\alpha>0,\,\beta_i\ge 0$,  and $\gamma_i=\alpha+\beta_i$ are  constants, $\what P_i$ and $\wtd P_i$ are the
submatrices of $\what P$ and $\wtd P$, respectively, with the same column indices as $P_i$ to $P$.
It is important to keep in mind that some of the inequalities in \eqref{eq:cond4AF-NPDo:cvx-b} may actually be equalities,
e.g.,  for $f_i(P_i)=\tr(P_i^{\T}D_i)$ it is an equality by Theorem~\ref{thm:lin4tr-diff-eq}.

On the surface, it looks like that each $f_i$ is simply an atomic function for NPDo, but there are three
built-in consistency requirements in \eqref{eq:cond4AF-NPDo:cvx} among
all components $f_i(P_i)$: 1) the same $\bbP$ for all;  2) the same $\alpha$ for all, and
3) the same $Q$ to give $\wtd P=\what PQ$ for all.

\begin{theorem}\label{thm:main-npd-cvx}
Consider $f=\phi\circ T$, where $T(\cdot)$ takes the form in \eqref{eq:T-general} satisfying
\eqref{eq:cond4AF-NPDo:cvx} and $\phi$ is convex and differentiable with partial derivatives denoted by
$\phi_i$ as in \eqref{eq:phi-i}.
If $\phi_i(\bx)\ge 0$ for those $i$ for which \eqref{eq:cond4AF-NPDo:cvx-b} does not become an equality, then {\bf the NPDo Ansatz} holds  with $\omega=1/\alpha$.
\end{theorem}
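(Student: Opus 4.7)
The plan is to verify {\bf the NPDo Ansatz} directly by unraveling the chain rule for $f=\phi\circ T$, applying the atomic-function inequalities coordinatewise, and closing the loop using the first-order (supporting hyperplane) inequality for convex $\phi$.

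First I would fix $P\in\bbP$ and $\what P\in\bbO^{n\times k}$ satisfying \eqref{eq:NPDo-assume}, and invoke \eqref{eq:cond4AF-NPDo:cvx} simultaneously for each index $i$ to produce a single $Q\in\bbO^{k\times k}$ (the consistency built into \eqref{eq:cond4AF-NPDo:cvx} is exactly the ability to use the \emph{same} $Q$ across all $i$), so that $\wtd P=\what PQ\in\bbP$. By the chain rule applied to \eqref{eq:KKT-comp-1},
\begin{equation*}
\tr(P^{\T}\scrH(P))=\sum_{i=1}^{N}\phi_i(T(P))\tr\!\left(P_i^{\T}\tfrac{\partial f_i(P_i)}{\partial P_i}\right)
   =\sum_{i=1}^{N}\phi_i(T(P))\,\gamma_i\,f_i(P_i),
\end{equation*}
using \eqref{eq:cond4AF-NPDo:cvx-a}, and similarly $\tr(\what P^{\T}\scrH(P))=\sum_i\phi_i(T(P))\tr(\what P_i^{\T}\tfrac{\partial f_i(P_i)}{\partial P_i})$.

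Next I would bound $\tr(\what P^{\T}\scrH(P))$ from above. For each $i$ for which \eqref{eq:cond4AF-NPDo:cvx-b} is a genuine inequality, the sign hypothesis $\phi_i(T(P))\ge 0$ lets me multiply through to obtain
\begin{equation*}
\phi_i(T(P))\tr\!\left(\what P_i^{\T}\tfrac{\partial f_i(P_i)}{\partial P_i}\right)
   \le \phi_i(T(P))\bigl[\alpha f_i(\wtd P_i)+\beta_i f_i(P_i)\bigr];
\end{equation*}
for the remaining $i$, \eqref{eq:cond4AF-NPDo:cvx-b} is an equality and the same bound holds without any sign restriction on $\phi_i(T(P))$. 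Summing over $i$ and using $\gamma_i=\alpha+\beta_i$, I would then get
\begin{equation*}
\tr(\what P^{\T}\scrH(P))-\tr(P^{\T}\scrH(P))
   \le\alpha\sum_{i=1}^{N}\phi_i(T(P))\bigl[f_i(\wtd P_i)-f_i(P_i)\bigr],
\end{equation*}
so that the hypothesis $\tr(\what P^{\T}\scrH(P))\ge\tr(P^{\T}\scrH(P))+\eta$ yields
$\sum_i\phi_i(T(P))[f_i(\wtd P_i)-f_i(P_i)]\ge \eta/\alpha$.

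Finally, convexity of $\phi$ gives the supporting-hyperplane inequality
\begin{equation*}
\phi(T(\wtd P))\ge \phi(T(P))+\sum_{i=1}^{N}\phi_i(T(P))\bigl[f_i(\wtd P_i)-f_i(P_i)\bigr],
\end{equation*}
which combined with the previous display gives $f(\wtd P)\ge f(P)+\eta/\alpha$, establishing {\bf the NPDo Ansatz} with $\omega=1/\alpha$. The main obstacle I would expect is purely bookkeeping rather than depth: one must be careful about which indices $i$ require the sign condition $\phi_i\ge 0$ (only those for which \eqref{eq:cond4AF-NPDo:cvx-b} is strict), and one must exploit the consistency clause in \eqref{eq:cond4AF-NPDo:cvx} that asserts \emph{one and the same} $Q$ works for every component simultaneously — without that uniformity one could not define a single $\wtd P$ for which the convexity step makes sense.
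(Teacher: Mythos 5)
Your proposal is correct and follows essentially the same route as the paper: the same chain-rule expansion of $\tr(P^{\T}\scrH(P))$ and $\tr(\what P^{\T}\scrH(P))$ via \eqref{eq:KKT-comp}, the same coordinatewise application of \eqref{eq:cond4AF-NPDo:cvx} with the sign condition $\phi_i\ge 0$ invoked only where the inequality is strict, the same cancellation via $\gamma_i=\alpha+\beta_i$, and the same closing step by the first-order convexity inequality (which the paper packages as Lemma~\ref{lm:convex-mono}). Your remark about the necessity of a single $Q$ across all components is exactly the consistency requirement the paper emphasizes after \eqref{eq:cond4AF-NPDo:cvx}.
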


\begin{proof}
Given $P\in\bbP\subseteq\STM{k}{n}$ and $\what P\in\STM{k}{n}$, suppose that \eqref{eq:NPDo-assume} holds, i.e.,
$\tr(\what P^{\T}\scrH(P))\ge\tr(P^{\T}\scrH(P))+\eta$.
Let $\wtd P=\what PQ$ where $Q\in\STM{k}{k}$ is the one dictated by the consistency conditions in \eqref{eq:cond4AF-NPDo:cvx}.
Write
$$
\bx=T(P)\equiv [x_1,x_2,\ldots,x_N]^{\T}, \quad
\wtd\bx=T(\wtd P)\equiv [\wtd x_1,\wtd x_2,\ldots,\wtd x_N]^{\T},
$$
i.e., $x_i=f_i(P_i)$ and $\wtd x_i=f_i(\wtd P_i)$.
Noticing $\scrH(P)$ in \eqref{eq:KKT-comp}, we have
\begin{align*}
\tr(P^{\T}\scrH(P))
  &=\sum_{i=1}^N\phi_i(\bx)\tr\left(P^{\T}\frac {\partial f_i(P_i)}{\partial P_i}J_i^{\T}\right) \\
  &=\sum_{i=1}^N\phi_i(\bx)\tr\left(P_i^{\T}\frac {\partial f_i(P_i)}{\partial P_i}\right)
      \\
  &=\sum_{i=1}^N\gamma_i\phi_i(\bx)\,x_i, \qquad(\mbox{by \eqref{eq:cond4AF-NPDo:cvx-a}})\\
\tr(\what P^{\T}\scrH(P))
  &=\sum_{i=1}^N\phi_i(\bx)\tr\left(\what P_i^{\T}\frac {\partial f_i(P_i)}{\partial P_i}\right) \\
  &\le\sum_{i=1}^N\phi_i(\bx)\,(\alpha\wtd x_i+\beta_i x_i),
\end{align*}
where the last inequality is due to $\phi_i\ge 0$ when the corresponding \eqref{eq:cond4AF-NPDo:cvx-b}
does not become an equality.
Plug them into \eqref{eq:NPDo-assume}  and simplify the resulting inequality with
the help of $\gamma_i=\alpha+\beta_i$ to get
$$
\eta/\alpha+\nabla\phi(\bx)^{\T}\bx=\eta/\alpha+\sum_{i=1}^N\phi_i(\bx)\,x_i\le\sum_{i=1}^N\phi_i(\bx)\,\wtd x_i
   =\nabla\phi(\bx)^{\T}\wtd\bx.
$$
Finally apply Lemma~\ref{lm:convex-mono} to yield $f(\wtd P)\ge f(P)+\eta/\alpha$.
\end{proof}

With Theorem~\ref{thm:main-npd-cvx} come the general results established in section~\ref{sec:NPDo-theory}.
In particular, Algorithm~\ref{alg:SCF4NPDo} (NPDoSCF) and its accelerating  variation in Algorithm~\ref{alg:SCF4NPDo+LOCG}
can be applied to find a maximizer of \eqref{eq:Master-OptSTM},
except that the calculation of $Q_i$ at Line 4 of Algorithm~\ref{alg:SCF4NPDo} remains to be specified. This missing detail is in general
dependent of the particularity of the mapping $T$ and the convex function $\phi$, to which we shall return after we showcase
a few concrete
mappings of $T$,
where
$A_i\in\bbR^{n\times n}$ for $1\le i\le\ell$ are at least symmetric
and $D_i\in\bbR^{n\times k_i}$ with $1\le k_i\le k$ for $1\le i\le t$.

\begin{example}\label{eg:T1}
Consider
the {\bf first} concrete mapping of $T$:
\begin{equation}\label{eq:T-tr}
T_1\,:\, P\in\STM{k}{n} \to T_1(P):=\begin{bmatrix}
                                        \tr(P_1^{\T}A_1P_1) \\
                                        \vdots \\
                                        \tr(P_{\ell}^{\T}A_{\ell}P_{\ell}) \\
                                        \tr(P_{\ell+1}^{\T}D_1) \\
                                        \vdots \\
                                        \tr(P_{\ell+t}^{\T}D_t) \\
                                      \end{bmatrix} \in\bbR^{\ell+t}.
\end{equation}
Either $\ell=0$ or $t=0$ (i.e., TrCP in Table~\ref{tbl:obj-funs}) is allowed. If all $A_i\succeq 0$, then
the consistency conditions in \eqref{eq:cond4AF-NPDo:cvx} are satisfied with $\bbP=\STM{k}{n}$, $Q=I_k$,
$\alpha=1$, $\beta_i=1$ for $1\le i\le\ell$ and $\beta_{\ell+j}=0$ for $1\le j\le t$, by
Theorems~\ref{thm:lin4tr-diff-eq} and \ref{thm:quad4tr-diff-eq}.
In particular, now \eqref{eq:cond4AF-NPDo:cvx-b} for $\ell+1\le i\le\ell+t$ are equalities.
Thus Theorem~\ref{thm:main-npd-cvx} applies, assuming $\phi_j(\bx)\ge 0$ for $1\le j\le\ell$.
Two existing special cases of $T_1$
are
\begin{enumerate}[(1)]
  \item $\ell=t$, $P_i=P_{\ell+i}$ for $1\le i\le\ell$, $P=[P_1,P_2,\ldots P_{\ell}]$, and $\phi(\bx)=\sum_{i=1}^{\ell+t}x_i$, which
gives SumCT investigated by \cite{wazl:2022a} (see also Table~\ref{tbl:obj-funs}), and
  \item $t=0$, $k=1$, $P_i=\bp$ (a unit vector) for $1\le i\le\ell$, which gives the main problem of \cite{balu:2024}
  (in the paper, $\phi(\bx)=\sum_{i=1}^{\ell}\psi_i(x_i)$
for $\bx=[x_1,x_2,\ldots,x_{\ell}]^{\T}$ with each $\psi_i$ being a convex function of a single-variable).
\end{enumerate}

Despite that we can take $\bbP=\STM{k}{n}$ and $Q=I_k$ here, with favorable compositions of $P_i$
as submatrices of $P$, we can find a better $Q$, other than $I_k$,
so that the objective value increases more than {\bf the NPDo Ansatz} suggests. Here are two of them:
\begin{equation}\label{eq:Part-Consistent}
\framebox{
\parbox{12.0cm}{
\vspace{-0.4cm}
\begin{enumerate}[(a)]
  \item $J_{\ell+i}^{\T}J_{\ell+j}=0$ for  $i\ne j$, which means that $P_{\ell+i}$ and $P_{\ell+j}$ share no common column of $P$;
  \item For $1\le i\le\ell,\, 1\le j\le t$, either $J_{\ell+j}^{\T}J_i=0$ or no row of $J_{\ell+j}^{\T}J_i$  is $0$, which means
        either $P_i$ and $P_{\ell+j}$ share no common column of $P$ or  $P_{\ell+j}$  is a submatrix of $P_i$.
\end{enumerate}
\vspace{-0.4cm}
}}
\end{equation}
\begin{equation}\label{eq:Part-Consistent'}
\framebox{
\parbox{12.0cm}{
\vspace{-0.4cm}
\begin{enumerate}[(a)]
  \item either $J_{\ell+i}^{\T}J_{\ell+j}=0$ or $J_{\ell+i}^{\T}J_{\ell+j}=I$ for any $i\ne j$,
        which means that $P_{\ell+i}$ and $P_{\ell+j}$ either
        share no common column of $P$, or $P_{\ell+i}=P_{\ell+j}$, i.e., the same submatrix of $P$;
  \item the same as item (b) in \eqref{eq:Part-Consistent}.
\end{enumerate}
\vspace{-0.4cm}
}}
\end{equation}
For \eqref{eq:Part-Consistent}, we determine $Q$ implicitly by
$\wtd P_{\ell+j}=\what P_{\ell+j}S_j$ where $S_j$ is an orthonormal polar factor of
$\phi_{\ell+j}(T_1(P)) \what P_{\ell+j}^{\T}D_j$ for $1\le j\le t$. In the case of \eqref{eq:Part-Consistent'},
$J_{\ell+j}$ for $1\le j\le t$ can be divided into no more than $t$ groups, and within each group all $J_{\ell+j}$ are the same
and two $J_{\ell+j}$ from different groups share no common column of $P$ at all.
For ease of presentation, let us say the set of indices $\{1,2,\ldots,t\}$ is divided into
$\tau$ exclusive subsets $\bbI_q$ for $1\le q\le\tau$ such that
$$
\framebox{
\parbox{10.0cm}{
$
\cup_{j=1}^{\tau}\bbI_j=\{1,2,\ldots,t\},\,\,
\bbI_i\cap\bbI_j=\emptyset\,\,\mbox{for $i\ne j$},
$
and $J_{\ell+i}=J_{\ell+j}$ if $i,\,j$ belong to the same $\bbI_q$ but $J_{\ell+i}^{\T}J_{\ell+j}=0$ otherwise.
}}
$$
%
Now determine $Q$ implicitly by taking just one index $j$ from each $\bbI_q$ for $1\le q\le\tau$ and letting
$\wtd P_{\ell+j}=\what P_{\ell+j}S_q$ where $S_q$ is an orthonormal polar factor of
\begin{equation}\label{eq:Part-Consistent'-1}
\what P_{\ell+j}^{\T}\Big[\sum_{i\in\bbI_q}\phi_{\ell+i}(T_1(P))\, D_i\Big].
\end{equation}
%
\end{example}

\begin{example}\label{eg:T2}
The {\bf second} concrete mapping of $T$ is
\begin{equation}\label{eq:T-F}
T_2\,:\, P\in\STM{k}{n} \to T_2(P):=\begin{bmatrix}
                                        \|P_1^{\T}A_1P_1\|_{\F}^2 \\
                                        \vdots \\
                                        \|P_{\ell}^{\T}A_{\ell}P_{\ell}\|_{\F}^2 \\
                                        \|P_{\ell+1}^{\T}D_1\|_{\F}^2 \\
                                        \vdots \\
                                        \|P_{\ell+t}^{\T}D_t\|_{\F}^2 \\
                                      \end{bmatrix} \in\bbR^{\ell+t}.
\end{equation}
Either $\ell=0$ or $t=0$ is allowed. To use Theorem~\ref{thm:quad4tr-diff-eq}, we notice that
$$
\|P_i^{\T}A_iP_i\|_{\F}^2=\tr((P_i^{\T}A_iP_i)^2),\quad
\|P_{\ell+j}^{\T}D_j\|_{\F}^2=\tr(P_{\ell+j}^{\T}D_jD_j^{\T}P_{\ell+j}).
$$
Hence if all $A_i\succeq 0$, then
the consistency conditions in \eqref{eq:cond4AF-NPDo:cvx} are satisfied with $\bbP=\STM{k}{n}$, $Q=I_k$,
$\alpha=1$, $\beta_i=3$ for $1\le i\le\ell$, and $\beta_{\ell+j}=1$ for $1\le j\le t$.
Theorem~\ref{thm:main-npd-cvx} applies, assuming $\phi_j(\bx)\ge 0$ for $1\le j\le\ell+t$.


A special case of $T_2$ is $t=0$ and $P_i=P$ for $1\le i\le\ell$, for which \eqref{eq:Master-OptSTM} with $T=T_2$
gives the key optimization problem in the uniform multidimensional scaling (UMDS)  \cite{zhzl:2017}
(see also Table~\ref{tbl:obj-funs}).
\end{example}

\begin{example}\label{eg:T3}
More generally, the {\bf third} concrete mapping of $T$ is
\begin{equation}\label{eq:T-pow-tr}
T_3\,:\, P\in\STM{k}{n} \to T_3(P):=\begin{bmatrix}
                                        \tr((P_1^{\T}A_1P_1)^{m_1}) \\
                                        \vdots \\
                                        \tr((P_{\ell}^{\T}A_{\ell}P_{\ell})^{m_{\ell}}) \\
                                        \tr((P_{\ell+1}^{\T}D_1)^{m_{\ell+1}}) \\
                                        \vdots \\
                                        \tr((P_{\ell+t}^{\T}D_t)^{m_{\ell+t}}) \\
                                      \end{bmatrix} \in\bbR^{\ell+t},
\end{equation}
where integer $m_i\ge 1$ for all $1\le i\le\ell+t$. It reduces to Example~\ref{eg:T1} if  $m_i=1$ for all $1\le i\le\ell+t$.
Either $\ell=0$ or $t=0$ is allowed. Suppose all $A_i\succeq 0$.
Suppose all $P_i$ together have the properties in \eqref{eq:Part-Consistent}. Then
the consistency conditions in \eqref{eq:cond4AF-NPDo:cvx} are satisfied with
\begin{equation}\label{eq:D-sep}
\bbP=\{P\in\STM{k}{n}\,:\, P_{\ell+j}^{\T}D_j\succeq 0\,\,\mbox{for $1\le j\le t$}\},
\end{equation}
$\alpha=1$, $\beta_i=2m_i-1$ for $1\le i\le\ell$ and $\beta_{\ell+j}=m_{\ell+j}-1$ for $1\le j\le t$, by
Theorems~\ref{thm:lin4tr-diff-eq} and~\ref{thm:quad4tr-diff-eq}.
Theorem~\ref{thm:main-npd-cvx} applies, assuming $\phi_j(\bx)\ge 0$ for $1\le j\le\ell$
and for each $j\in\{\ell+1,\ldots,\ell+t\}$ with $m_j\ge 2$. $Q$ is implicitly determined by
$\wtd P_{\ell+j}=\what P_{\ell+j}S_j$ where $S_j$ is an orthonormal polar factor of
$\what P_{\ell+j}^{\T}D_j$ for $1\le j\le t$.
\end{example}

We pointed out in Example~\ref{eg:T1} that judiciously choosing $Q$ to go from $\what P$ to $\wtd P$ in
\eqref{eq:cond4AF-NPDo:cvx} can increase the objective function value more than Theorem~\ref{thm:main-npd-cvx} suggests.
To further strengthen this point,
%
we consider a special case of $T_1$:
$P_i=P$ for $1\le i\le \ell+t$. For ease of future reference,  denote the special $T_1$ by
\begin{equation}\label{eq:T-tr-a}
T_{1a}\,:\, P\in\STM{k}{n} \to T_{1a}(P):=\begin{bmatrix}
                                        \tr(P^{\T}A_1P) \\
                                        \vdots \\
                                        \tr(P^{\T}A_{\ell}P) \\
                                        \tr(P^{\T}D_1) \\
                                        \vdots \\
                                        \tr(P^{\T}D_t) \\
                                      \end{bmatrix} \in\bbR^{\ell+t}.
\end{equation}

\begin{theorem}\label{thm:main-npd-cvx:T1a}
Consider $f=\phi\circ T_{1a}$ where $\phi$ is convex and differentiable with
$\phi_j(\bx)\ge 0$ for $1\le j\le\ell$,  and suppose $A_i\succeq 0$ for $1\le i\le\ell$.
Given $\what P\in\STM{k}{n},\,P\in\STM{k}{n}$, let $\wtd P=\what PQ$ where
$Q$ is an orthonormal polar factor of $\what P^{\T}\scrD(P)$ with
\begin{equation}\label{eq:scrD}
\scrD(P)=\sum_{j=1}^{t}\phi_{\ell+j}(T_{1a}(P))\, D_j.
\end{equation}
If
$
\tr(\what P^{\T}\scrH(P))\ge\tr(P^{\T}\scrH(P))+\eta,
$
then
$
f(\wtd P)\ge f(P)+\eta+\delta,
$
where $\delta=\|\what P^{\T}\scrD(P)\|_{\tr}-\tr(\what P^{\T}\scrD(P))$.
In particular, {\bf the NPDo Ansatz} holds  with $\omega=1$.
\end{theorem}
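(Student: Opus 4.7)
The plan is to unpack the composition using the chain rule, separate the contributions from the quadratic atomic functions (the ones involving $A_i$) from the linear ones (the ones involving $D_j$), and then reassemble via the convexity of $\phi$.

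First I would compute the Euclidean gradient explicitly. Writing $\bx=T_{1a}(P)$ with $x_i=\tr(P^{\T}A_iP)$ for $1\le i\le\ell$ and $x_{\ell+j}=\tr(P^{\T}D_j)$ for $1\le j\le t$, the chain rule together with the derivative formulas \eqref{eq:partD-lin-pow} and \eqref{eq:partD-quad-pow} at $m=1$ gives
\[
\scrH(P)=2\sum_{i=1}^{\ell}\phi_i(\bx)\,A_iP+\scrD(P).
\]
Taking traces against $P$ and $\what P$ yields $\tr(P^{\T}\scrH(P))=2\sum_i\phi_i(\bx)x_i+\sum_j\phi_{\ell+j}(\bx)x_{\ell+j}$ and $\tr(\what P^{\T}\scrH(P))=2\sum_i\phi_i(\bx)\tr(\what P^{\T}A_iP)+\tr(\what P^{\T}\scrD(P))$.

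Next I would separate the coupling between $P$ and $\what P$ in the quadratic terms. Since $A_i\succeq 0$ and $\phi_i(\bx)\ge 0$ for $1\le i\le\ell$, applying Lemma~\ref{lm:vN-tr-ineq-ext2} with $X=A_i^{1/2}\what P$ and $Y=A_i^{1/2}P$ gives $2\tr(\what P^{\T}A_iP)\le\tr(\what P^{\T}A_i\what P)+\tr(P^{\T}A_iP)=\wtd x_i+x_i$, where the equality uses that $Q\in\bbO^{k\times k}$ leaves quadratic traces unchanged, so $\wtd x_i=\tr(\wtd P^{\T}A_i\wtd P)=\tr(\what P^{\T}A_i\what P)$. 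For the linear terms, the choice of $Q$ as an orthogonal polar factor of $\what P^{\T}\scrD(P)$ makes $\wtd P^{\T}\scrD(P)=Q^{\T}\what P^{\T}\scrD(P)\succeq 0$, so by Lemma~\ref{lm:maxtrace},
\[
\tr(\wtd P^{\T}\scrD(P))=\|\what P^{\T}\scrD(P)\|_{\tr}=\tr(\what P^{\T}\scrD(P))+\delta,
\]
and moreover $\delta\ge 0$. Noting that $\tr(\wtd P^{\T}\scrD(P))=\sum_j\phi_{\ell+j}(\bx)\wtd x_{\ell+j}$ with $\wtd x_{\ell+j}=\tr(\wtd P^{\T}D_j)$, I can rewrite $\tr(\what P^{\T}\scrD(P))$ as $\sum_j\phi_{\ell+j}(\bx)\wtd x_{\ell+j}-\delta$.

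Plugging these bounds into the hypothesis $\tr(\what P^{\T}\scrH(P))\ge\tr(P^{\T}\scrH(P))+\eta$ and cancelling the common $\sum_i\phi_i(\bx)x_i$ term, I would obtain
\[
\nabla\phi(\bx)^{\T}\wtd\bx\ge\nabla\phi(\bx)^{\T}\bx+\eta+\delta.
\]
Finally, applying convexity of $\phi$ via Lemma~\ref{lm:convex-mono} gives $f(\wtd P)-f(P)=\phi(\wtd\bx)-\phi(\bx)\ge\nabla\phi(\bx)^{\T}(\wtd\bx-\bx)\ge\eta+\delta$, which is the desired inequality. Since $\delta\ge 0$, taking $\omega=1$ establishes \textbf{the NPDo Ansatz}, with $\bbP=\bbO^{n\times k}$ (no constraint subset is needed because the $D_j$-terms are aggregated into $\scrD(P)$ before the polar alignment). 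The main subtlety is the bookkeeping around $\delta$: the choice of $Q$ must be driven by the \emph{sum} $\scrD(P)$ rather than by any individual $D_j$, so that a single orthogonal factor handles all linear terms simultaneously and produces the nonnegative bonus $\delta$; confirming $\delta\ge 0$ from Lemma~\ref{lm:maxtrace} (equivalently, the polar factor of $\what P^{\T}\scrD(P)$ maximizes $\tr(X^{\T}\what P^{\T}\scrD(P))$ over $X\in\bbO^{k\times k}$) is the only nonroutine bit of the argument.
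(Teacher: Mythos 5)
Your proposal is correct and follows essentially the same route as the paper's proof: expand $\tr(P^{\T}\scrH(P))$ and $\tr(\what P^{\T}\scrH(P))$ via the chain rule, bound the coupled quadratic terms by $2\tr(\what P^{\T}A_iP)\le\tr(\what P^{\T}A_i\what P)+\tr(P^{\T}AP)$ using $\phi_i\ge 0$, handle all the linear terms at once through the polar factor of $\what P^{\T}\scrD(P)$ to extract the bonus $\delta\ge 0$, and finish with Lemma~\ref{lm:convex-mono}. The only cosmetic difference is that you cite Lemma~\ref{lm:vN-tr-ineq-ext2} explicitly for the quadratic separation, which the paper leaves implicit.
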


\begin{proof}
Along the lines in the proof of Theorem~\ref{thm:main-npd-cvx}, here we will have
\begin{align*}
\tr(P^{\T}\scrH(P))
  &=2\sum_{i=1}^{\ell}\phi_i(\bx)\,x_i+\sum_{j=1}^{t}\phi_{\ell+j}(\bx)\,x_{\ell+j}, \\
\|\what P^{\T}\scrD(P)\|_{\tr}
  &=\tr(\wtd P^{\T}\scrD(P)) \qquad(\mbox{since $\wtd P^{\T}\scrD(P)=Q^{\T}[\what P^{\T}\scrD(P)]\succeq 0$}) \\
  &=\sum_{j=1}^{t}\phi_{\ell+j}(\bx)\,\wtd x_{\ell+j}, \\
\tr(\what P^{\T}\scrH(P))
  &=2\sum_{i=1}^{\ell}\phi_i(\bx)\,\tr(\what P^{\T}A_iP)+\sum_{j=1}^{t}\phi_{\ell+j}(\bx)\,\tr(\what P^{\T}D_j) \\
  &\le\sum_{i=1}^{\ell}\phi_i(\bx)\,\Big[\tr(\what P^{\T}A_i\what P)+\tr(P^{\T}A_iP)\Big]
       +\tr(\what P^{\T}\scrD(P)) \\
  &=\sum_{i=1}^{\ell}\phi_i(\bx)\,\Big[\wtd x_i+x_i\Big]+\|\what P^{\T}\scrD(P)\|_{\tr}-\delta \\
  &=\sum_{i=1}^{\ell}\phi_i(\bx)\,\Big[\wtd x_i+x_i\Big]+\sum_{j=1}^{t}\phi_{\ell+j}(\bx)\,\wtd x_{\ell+j}-\delta.
\end{align*}
Plug them into $\eta+\tr(P^{\T}\scrH(P)\le\tr(\what P^{\T}\scrH(P)$ and simplify the resulting inequality to get
$
\eta+\delta+\nabla\phi(\bx)^{\T}\bx\le\nabla\phi(\bx)^{\T}\wtd\bx,
$
and then apply Lemma~\ref{lm:convex-mono} to conclude the proof.
\end{proof}

\begin{table}[t]
\renewcommand{\arraystretch}{1.4}
\caption{Condition on $\phi_j$ and choice of $Q_i$ at Line 4 of Algorithm~\ref{alg:SCF4NPDo}}\label{tbl:T1T2T3T1a}
\centerline{\small
\begin{tabular}{|c|c|}
  \hline
$T_1$ and $T_{1a}$
       & $\phi_j\ge 0$ for $1\le j\le \ell$, $Q_i=I_k$. \\ \hline
$T_1$ with \eqref{eq:Part-Consistent} and $t\ge 1$
       & \parbox{9cm}{\strut $\phi_j\ge 0$ for $1\le j\le \ell$,
         $Q_i$ is implicitly determined by
         $\wtd P_{\ell+j}^{(i+1)}=\what P_{\ell+j}^{(i)}S_j$
         where         $S_j$ is an orthonormal polar factor of $\phi_{\ell+j}(T_1(P))\big[\what P_{\ell+j}^{(i)}\big]^{\T}D_j$ for
         $1\le j\le t$. \strut} \\ \hline
$T_1$ with \eqref{eq:Part-Consistent'} and $t\ge 1$
       & \parbox{9cm}{\strut $\phi_j\ge 0$ for $1\le j\le \ell$,
         $Q_i$ is implicitly determined by: for just one element $j$ from each $\bbI_q$ ($1\le q\le\tau$),
         $\wtd P_{\ell+j}^{(i+1)}=\what P_{\ell+j}^{(i)}S_q$
         where  $S_q$ is an orthonormal polar factor of
         $\big[\what P_{\ell+j}^{(i)}\big]^{\T}\big[\sum_{p\in\bbI_q}\phi_{\ell+p}(T_1(P))D_p\big]$ for  $1\le q\le \tau$.\strut} \\ \hline
$T_{1a}$ with $t\ge 1$
       & \parbox{9cm}{\strut $\phi_j\ge 0$ for $1\le j\le \ell$, $Q_i$ is an orthonormal polar factor of
        $\big[\what P^{(i)}\big]^{\T}\scrD(P^{(i)})$.\strut} \\ \hline
$T_2$
       & $\phi_j\ge 0$ for $1\le j\le \ell+t$, $Q_i=I_k$. \\ \hline
$T_3$ with $t=0$
       & $\phi_j\ge 0$ for $1\le j\le \ell$, $Q_i=I_k$. \\ \hline
$T_3$ with \eqref{eq:Part-Consistent} and $t\ge 1$
       & \parbox{9cm}{\strut $\phi_j\ge 0$ for $1\le j\le\ell$
         and for each $j\in\{\ell+1,\ldots,\ell+t\}$ with $m_j\ge 2$, $Q_i$ is implicitly determined by
         $\wtd P_{\ell+j}^{(i+1)}=\what P_{\ell+j}^{(i)}S_j$
         where         $S_j$ is an orthonormal polar factor of $\big[\what P_{\ell+j}^{(i)}\big]^{\T}D_j$ for
         $1\le j\le t$.\strut} \\ \hline
\multicolumn{2}{l}{\small * $\phi_j(\bx):=\partial\phi(\bx)/\partial x_j$  for $\bx=[x_j]$.}
\end{tabular}
}
\end{table}

Theorem~\ref{thm:main-npd-cvx:T1a} improves Theorem~\ref{thm:main-npd-cvx} when it comes to $T=T_{1a}$:
the objective value
increases additional $\delta$ more. We notice, by Lemma~\ref{lm:polar2max},
that $\delta\ge 0$ always and it is strict
unless $\what P^{\T}\scrD(P)\succeq 0$.
Also note $\wtd P$ satisfies
$\wtd P^{\T}\scrD(P)\succeq 0$.
As a result of Theorem~\ref{thm:main-npd-cvx:T1a}, along the same line of the proof of Theorem~\ref{thm:maxers-NPD},
we have another necessary condition on a maximizer $P_*$ of \eqref{eq:Master-OptSTM} with $T=T_{1a}$
in Corollary~\ref{cor:T1a}, beyond the ones in
Theorem~\ref{thm:maxers-NPD}.

\begin{corollary}\label{cor:T1a}
Suppose $A_i\succeq 0$ for $1\le i\le\ell$ and that $\phi$ is convex and differentiable with
$\phi_j(\bx)\ge 0$ for $1\le j\le\ell$. If $P_*$ is a  maximizer of \eqref{eq:Master-OptSTM} with $T=T_{1a}$, then we
have not only \eqref{eq:KKT} for $P=P_*$ with $\Lambda=\Lambda_*:=P_*^{\T}\scrH(P_*)\succeq 0$
but also $P_*^{\T}\scrD(P_*)\succeq 0$.
\end{corollary}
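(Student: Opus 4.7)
The plan is to mirror the proof of Theorem~\ref{thm:maxers-NPD} but using the sharper bound provided by Theorem~\ref{thm:main-npd-cvx:T1a}. Since Theorem~\ref{thm:main-npd-cvx:T1a} already establishes {\bf the NPDo Ansatz} under the stated hypotheses, the conclusion of Theorem~\ref{thm:maxers-NPD} directly gives the KKT equation \eqref{eq:KKT} at $P_*$ with $\Lambda_*=P_*^{\T}\scrH(P_*)\succeq 0$. So the only new content to prove is $P_*^{\T}\scrD(P_*)\succeq 0$.

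To obtain the extra condition, I would specialize Theorem~\ref{thm:main-npd-cvx:T1a} by taking $\what P=P=P_*$. Then the hypothesis $\tr(\what P^{\T}\scrH(P))\ge\tr(P^{\T}\scrH(P))+\eta$ is satisfied with $\eta=0$, and the theorem produces $\wtd P=P_*Q$, where $Q\in\bbO^{k\times k}$ is an orthogonal polar factor of $P_*^{\T}\scrD(P_*)$. The theorem's conclusion yields
\[
f(\wtd P)\ge f(P_*)+\delta,\qquad \delta=\|P_*^{\T}\scrD(P_*)\|_{\tr}-\tr(P_*^{\T}\scrD(P_*))\ge 0,
\]
where $\delta\ge 0$ holds always by Lemma~\ref{lm:maxtrace}.

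Since $P_*$ is a maximizer of \eqref{eq:Master-OptSTM} and $\wtd P\in\bbO^{n\times k}$, we have $f(\wtd P)\le f(P_*)$, which forces $\delta=0$. Finally, I would invoke the equality case in Lemma~\ref{lm:maxtrace}: $\|B\|_{\tr}=\tr(B)$ for a square matrix $B$ holds if and only if $B$ is symmetric and positive semidefinite. Applied to $B=P_*^{\T}\scrD(P_*)$, this gives $P_*^{\T}\scrD(P_*)\succeq 0$, completing the proof.

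The argument is essentially a one-line consequence of Theorem~\ref{thm:main-npd-cvx:T1a}, and I do not anticipate any real obstacle. The only subtlety worth being careful about is the characterization of the equality $\|B\|_{\tr}=\tr(B)$, which should already be packaged inside Lemma~\ref{lm:maxtrace} in appendix~\ref{sec:prelim} (and is explicitly echoed in the comment following Theorem~\ref{thm:main-npd-cvx:T1a}). No additional convexity manipulation with $\phi$ is needed, because the nonnegativity assumptions on $\phi_j$ have already been absorbed in Theorem~\ref{thm:main-npd-cvx:T1a}.
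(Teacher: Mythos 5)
Your proposal is correct and follows essentially the same route the paper intends: the paper derives Corollary~\ref{cor:T1a} from Theorem~\ref{thm:main-npd-cvx:T1a} "along the same line of the proof of Theorem~\ref{thm:maxers-NPD}", i.e., by applying the theorem with $\what P=P=P_*$ and $\eta=0$ so that maximality forces $\delta=0$, and then invoking the equality case of Lemma~\ref{lm:maxtrace}. The only cosmetic difference is that the paper's template argument is phrased as a contradiction (assume $P_*^{\T}\scrD(P_*)\not\succeq 0$, get $\delta>0$ and hence $f(\wtd P)>f(P_*)$), whereas you argue directly; the two are equivalent.
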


In Table~\ref{tbl:T1T2T3T1a}, we list conditions on partial derivatives $\phi_j$
and the best choices of $Q_i$ at Line~4 of Algorithm~\ref{alg:SCF4NPDo}
that, when it is not $I_k$, can increase the objective value even more per SCF iterative step than {\bf the NPDo Ansatz} suggests.
Having said that, we notice that $P^{(i)}$ as $i$ varies may belong to different subsets $\bbP$ of
$\STM{k}{n}$. For example, with $T=T_{1a}$, if $Q_i$ is calculated according to Theorem~\ref{thm:main-npd-cvx:T1a}, i.e., $Q_i$ is an orthonormal polar factor of $\big[\what P^{(i)}\big]^{\T}\scrD(P^{(i)})$,
then $P^{(i+1)}\in\bbP_i:=\{X\in\STM{k}{n}\,:\, X^{\T}\scrD(P^{(i)})\succeq 0\}$ that varies from one iterative step to the next. Eventually,
$\bbP_i$ approaches $\bbP_*:=\{X\in\STM{k}{n}\,:\, X^{\T}\scrD(P_*)\succeq 0\}$ by
Corollary~\ref{cor:T1a}.
Similar comments can be said about $T_1$ with~\eqref{eq:Part-Consistent} or~\eqref{eq:Part-Consistent'} that we discussed
towards the end of Example~\ref{eg:T1}.
Numerically, such variations in $\bbP$ does not pose any problem for Algorithm~\ref{alg:SCF4NPDo}
to compute an approximate maximizer for the maximization problem \eqref{eq:Master-OptSTM}.

\begin{remark}\label{rk:app-NPDo-cvx}
We conclude this section by commenting on the applicability of the results of this section to the objective functions
in Table~\ref{tbl:obj-funs} via convex compositions of atomic functions for NPDo. Essentially our results are applicable to
all of those that are not in the quotient form, i.e., SEP, MBSub, SumCT, TrCP, UMDS, and DFT, assuming that matrices $A$ and $A_i$
are positive semidefinite. SEP is simply about the atomic function $\tr(P^{\T}AP)$.
For OLDA and SumTR, the corresponding composing functions $\phi$ are
$x_2/x_1$ and $x_2/x_1+x_3$, respectively, but both are non-convex.
The composing function for OCCA is $\phi_0(\bx)=x_2/\sqrt{x_1}$ where $\bx=[x_1,x_2]^{\T}$, which is not convex but whose square
$\phi(\bx):=[\phi_0(\bx)]^2=x_2^2/x_1$ is convex for $x_2\ge 0$ and $x_1>0$. Unfortunately,
the atomic function associated with $x_1$ is $\tr(P^{\T}BP)$, for which
$\phi_1(\bx):=\partial\phi(\bx)/\partial x_1=-(x_2/x_1)^2\le 0$, violating the conditions of Theorems~\ref{thm:main-npd-cvx}
and~\ref{thm:main-npd-cvx:T1a}. A similar argument applies to $\Theta$TR.
So the NPDo approach does not work for OLDA, OCCA, and $\Theta$TR. But, fortunately, the NEPv approach next will.
\end{remark}

\clearpage
\part{The NEPv Approach}

\section{The NEPv Framework}\label{sec:NEPv-theory}
There are cases for which
\begin{equation}\label{eq:natNEPv}
\scrH(P):=\frac{\partial f(P)}{\partial P}\equiv H(P)P
\end{equation}
for $P\in\STM{k}{n}$ (or even $\bbR^{n\times k}$), where $H(P)\in\bbR^{n\times n}$ is a symmetric matrix-valued function
dependent of $P$, e.g., for
$f(P)=\frac {\tr(P^{\T}AP)}{\tr(P^{\T}BP)}+\tr(P^{\T}CP)$ from the sum of trace ratios (SumTR) \cite{zhli:2014a,zhli:2014b},
which includes SEP and OLDA as special cases,
\begin{equation}\label{eq:H(p):sumTR}
\scrH(P)=2\left[\frac 1{\tr(P^{\T}BP)}\left(A-\frac {\tr(P^{\T}AP)}{\tr(P^{\T}BP)}\, B\right)+C\right]P\equiv H(P)P
\end{equation}
for $P\in\bbR^{n\times k}$, where $H(P)$ is easily identified.
In fact, Lu and Li~\cite[Lemma 2.1]{luli:2024} show that
\eqref{eq:natNEPv} always hold for some symmetric and right-unitarily invariant $H(P)$ if $f$ is right-unitarily invariant.
As a result of \eqref{eq:natNEPv}, the KKT condition \eqref{eq:KKT} is an NEPv:
\begin{equation}\label{eq:NEPv-form}
H(P)P=P\Omega, \,\, P\in\STM{k}{n}.
\end{equation}
Necessarily $\Omega=P^{\T}H(P)P\in\bbR^{k\times k}$ is symmetric.

But not all $\scrH(P)={\partial f(P)}/{\partial P}$ take the form $H(P)P$, and in the latter,
we can still construct some $H(P)$ to turn the KKT condition \eqref{eq:KKT} equivalently into an NEPv in the form of
\eqref{eq:NEPv-form} under some mild condition.
For example, for
$f(P)=\frac {\tr(P^{\T}AP+P^{\T}D)}{[\tr(P^{\T}BP)]^{\theta}}$
of the $\theta$-trace ratio problem ($\Theta$TR)
which includes OCCA and the MAXBET subproblem as special cases,
the authors of \cite{wazl:2023} used
\begin{equation}\label{eq:H(P):theta-TR}
H(P)=\frac 2{[\tr(P^{\T}BP)]^{\theta}}
     \left(A+\frac {DP^{\T}+PD^{\T}}2-\theta\,\frac {\tr(P^{\T}AP+P^{\T}D)}{\tr(P^{\T}BP)}\,B\right).
\end{equation}
In general, we can always take
\begin{equation}\label{eq:H(P):always}
H(P):=[\scrH(P)]P^{\T}+P[\scrH(P)]^{\T}=\left[\frac{\partial f(P)}{\partial P}\right] P^{\T}+P\left[\frac{\partial f(P)}{\partial P}\right]^{\T}.
\end{equation}
In the case when $\scrH(P)\equiv H_0(P)P$, this $H(P)$ becomes $2\,H_0(P)$ for $P\in\STM{k}{n}$.

Why $H(P)$ in \eqref{eq:H(P):theta-TR} and \eqref{eq:H(P):always}  work for $\Theta$TR and
in general, respectively, can be best explained by the next theorem.

\begin{theorem}\label{thm:H(P)-eligibility}
Let $H(P)\in\bbR^{n\times n}$ be a symmetric matrix-valued function satisfying
\begin{equation}\label{eq:cond:KKT=NEPv}
H(P)P-\frac{\partial f(P)}{\partial P}=P\,\scrM(P)\quad\mbox{for $P\in\STM{k}{n}$},
\end{equation}
where $\scrM(P)\in\bbR^{k\times k}$ is some matrix-valued function.
$P\in\STM{k}{n}$ is a solution to the KKT condition \eqref{eq:KKT} if and only if
it is a solution to NEPv \eqref{eq:NEPv-form}
and $\scrM(P)$ is  symmetric.
\end{theorem}

\begin{proof}
If $P$ is a solution to the KKT condition \eqref{eq:KKT}, i.e.,
$\scrH(P)=P\Lambda$, $P\in\STM{k}{n}$, and $\Lambda=\Lambda^{\T}$.
Then, by \eqref{eq:cond:KKT=NEPv},
$$
H(P)P=P\Lambda+P\scrM(P)=P(\Lambda+\scrM(P))=:P\Omega,
$$
where $\Omega=\Lambda+\scrM(P)$ is symmetric because alternatively $\Omega=P^{\T}H(P)P$ which is symmetric, and hence $\scrM(P)=\Omega-\Lambda$ is also symmetric.
On the other hand, if $P$ is a solution to NEPv \eqref{eq:NEPv-form} such that $\scrM(P)$ is  symmetric, then again
by  \eqref{eq:cond:KKT=NEPv}
$$
\scrH(P)=P\Omega-P\scrM(P)=P\big(\Omega-\scrM(P)\big)=:P\Lambda,
$$
where $\Lambda=\Omega-\scrM(P)$ is symmetric because $\scrM(P)$ is assumed symmetric.
\end{proof}

According to Theorem~\ref{thm:H(P)-eligibility}, to solve the KKT condition \eqref{eq:KKT} via solving
NEPv \eqref{eq:NEPv-form} with an $H(P)$ that satisfies \eqref{eq:cond:KKT=NEPv}, we need to limit the solutions to those  of the NEPv such that
$\scrM(P)$ is symmetric.
Return to the concrete $H(P)$ given by \eqref{eq:H(P):theta-TR} and \eqref{eq:H(P):always}. It can be
verified that, for $H(P)$ in \eqref{eq:H(P):theta-TR} for $\Theta$TR,
$$
H(P)P-\scrH(P)=P\left(\frac 1{[\tr(P^{\T}BP)]^{\theta}}\, {D^{\T}P}\right),
$$
and hence any solution to the resulting NEPv \eqref{eq:NEPv-form} such that $D^{\T}P$ is symmetric is a KKT
point of $\Theta$TR and vice versa.
Similarly, for $H(P)$ in \eqref{eq:H(P):always} in general,
$$
H(P)P-\scrH(P)=P([\scrH(P)]^{\T}P)
$$
and hence any solution to the resulting NEPv \eqref{eq:NEPv-form} such that $[\scrH(P)]^{\T}P$ is symmetric is a KKT
point and vice versa.

We note that \eqref{eq:cond:KKT=NEPv} is a guiding equation for $H(P)$, and satisfying \eqref{eq:cond:KKT=NEPv}
yields a candidate $H(P)$ and the resulting NEPv \eqref{eq:NEPv-form}.
In general, given $\scrH(P)$, there are infinitely many $H(P)$
that satisfy \eqref{eq:cond:KKT=NEPv}.


Our goal in this part is still the same as
in Part~I, namely establishing conditions under which SCF \eqref{eq:SCF-form:NEPv:intro} on NEPv \eqref{eq:NEPv-form} is provably convergent,
except that the conditions will be imposed on $H(P)$, instead of $\scrH(P)$ earlier.
The developments in this section follow the lines of \cite{wazl:2023,zhwb:2022}, but in more abstract terms.

\subsection{The NEPv Ansatz}\label{ssec:NEPvAssum}
The successes of the NEPv approach used in \cite{wazl:2023,zhwb:2022} for solving
OCCA and $\Theta$TR relies on certain monotonicity lemmas which inspire us to make the following
ansatz to build our framework upon. It also requires a sufficiently inclusive subset $\bbP$ of
$\STM{k}{n}$ as in the NPDo framework in Part~I.

\smallskip\noindent
{\bf The NEPv Ansatz.}
{\em
For function $f$ defined in some neighborhood $\STMnbr$ of the Stiefel manifold $\STM{k}{n}$,
there is a
symmetric matrix-valued function $H(P)\in\bbR^{n\times n}$ such that for
$\what P\in\STM{k}{n},\,P\in\bbP\subseteq\STM{k}{n}$,
if
\begin{equation}\label{eq:NEPv-assume}
\tr(\what P^{\T}H(P)\what P)\ge\tr(P^{\T}H(P)P)+\eta
\quad\mbox{for some $\eta\in\bbR$},
\end{equation}
then
there exists $Q\in\STM{k}{k}$ such that $\wtd P=\what PQ\in\bbP$ and
$f(\wtd P)\ge f(P)+\omega\eta$,
where $\omega$ is some positive constant, independent of $P$ and $\what P$.
}

\smallskip
For any $P\in\STM{k}{n}$, there is always
        $\what P\in\STM{k}{n}$ such that \eqref{eq:NEPv-assume} holds with some $\eta>0$, unless for that given $P$,
         $H(P)P=P\Omega$ holds and the eigenvalues of $\Omega$ consist of the $k$ largest eigenvalues of $H(P)$.
        In fact, we can take
        $\what P\in\STM{k}{n}$ to be an orthonormal basis matrix of the eigenspace of $H(P)$
        associated with its $k$ largest eigenvalues, which also maximizes
        $\tr(X^{\T}H(P)X)$ over $X\in\STM{k}{n}$ \cite{fan:1949}.
Hence, for the purpose of solving \eqref{eq:main-opt},
we may relax the ansatz to $\eta\ge 0$ only.
        In general, it is the desirable aim, $f(\wtd P)\ge f(P)+\omega\eta$, in {\bf the NEPv Ansatz}
        that needs to be verified before the general theory of this section can be applied.
Below we will use the same objective function in Example~\ref{eg:motivation:NPDo} to  rationalize this ansatz.

\begin{example}\label{eg:motivation:NEPv}
Consider $f(P)=\tr(P^{\T}AP)+\tr((P^{\T}D)^2)$
where $A\in\bbR^{n\times n}$ is symmetric and
$D\in\bbR^{n\times k}$. Note now no longer $A$ is required to be positive semidefinite as it had to be in Example~\ref{eg:motivation:NPDo}.
Since $\scrH(P)=2AP+2DP^{\T}D$, no longer there exists a symmetric $H(P)$ such that \eqref{eq:natNEPv} holds.
Our discussion above leads us to use
$H(P)=2A+2(DP^{\T}DP^{\T}+PD^{\T}PD^{\T})$ for which
$$
H(P)P-\scrH(P)=P\big[2(D^{\T}P)^2\big])\quad\mbox{for}\quad P\in\STM{k}{n},
$$
satisfying \eqref{eq:cond:KKT=NEPv}.
To achieve equivalency between the KKT condition~\eqref{eq:KKT} and  NEPv~\eqref{eq:NEPv-form}, according to
Theorem~\ref{thm:H(P)-eligibility} we should limit the scope to those $P\in\STM{k}{n}$ such that $(D^{\T}P)^2$ is symmetric.
Actually we will further limit the scope to $P\in\bbP=\STM{k}{n}_{D+}$.
Suppose now that
\eqref{eq:NEPv-assume} holds for $P\in\STM{k}{n}_{D+}$ and $\what P\in\STM{k}{n}$, or equivalently,
\begin{equation}\label{eq:motivation:NEPv-1}
2\tr(\what P^{\T}A\what P)+4\tr(\what P^{\T}DP^{\T}DP^{\T}\what P)
    \ge 2\tr(P^{\T}AP)+4\tr((P^{\T}D)^2)+\eta.
\end{equation}
Next let $Q\in\STM{k}{k}$ be an orthonormal polar factor
of $\what P^{\T}D$ and let $\wtd P=\what PQ\in\bbP$. We find that
$\tr(\wtd P^{\T}A\wtd P)=\tr(\what P^{\T}A\what P)$, but it remains to break the second term in the left-hand side of
\eqref{eq:motivation:NEPv-1} apart so that $P$ and $\what P$ are detached. For that purpose, we note
\begin{align}
2\tr(\what P^{\T}DP^{\T}DP^{\T}\what P)
   &\le 2\|\what P^{\T}DP^{\T}DP^{\T}\what P\|_{\tr} \qquad (\mbox{by Lemma~\ref{lm:maxtrace}}) \nonumber \\
   &\le 2\|\what P^{\T}DP^{\T}D\|_{\tr}\|P^{\T}\what P\|_2 \nonumber \\
   &\le 2\|\what P^{\T}DP^{\T}D\|_{\tr} \qquad(\mbox{since $\|P^{\T}\what P\|_2\le 1$})\nonumber \\
  &\le\tr((Q^{\T}\what P^{\T}D)^2)+\tr((P^{\T}D)^2) \qquad(\mbox{by Lemma~\ref{lm:vN-tr-ineq-ext1}})\nonumber \\
  &=\tr((\wtd P^{\T}D)^2)+\tr((P^{\T}D)^2). \label{eq:motivation:NEPv-2}
\end{align}
Combine \eqref{eq:motivation:NEPv-1} and \eqref{eq:motivation:NEPv-2} to get
$f(\wtd P)\ge f(P)+\eta/2$ upon noticing $\tr(\wtd P^{\T}A\wtd P)=\tr(\what P^{\T}A\what P)$.
As in Example~\ref{eg:motivation:NPDo}, we observe the critical conditions:
$P^{\T}D\succeq 0$ and $\wtd P^{\T}D\succeq 0$ (i.e., $P,\,\wtd P\in\bbP$), that we used to derive \eqref{eq:motivation:NEPv-2},
where $\wtd P^{\T}D=Q^{\T}\what P^{\T}D\succeq 0$ is again made possible by the chosen $Q$.
\end{example}

\begin{remark}\label{rk:NEPv-assume}
A few comments on {\bf the NEPv Ansatz} are in order.
\begin{enumerate}[(i)]
  \item {\bf The NEPv Ansatz} critically involves a symmetric matrix-valued function $H(P)\in\bbR^{n\times n}$ that has to be constructed. In
        Theorem~\ref{thm:H(P)-eligibility}, we provide a guiding equation~\eqref{eq:cond:KKT=NEPv}
        for the purpose
        so that the KKT condition
        \eqref{eq:KKT} and NEPv \eqref{eq:NEPv-form} are equivalent as far as solving the associated optimization problem
        \eqref{eq:main-opt} is concerned. It is fulfilled naturally when $\scrH(P)\equiv H(P)P$ for $P\in\STM{k}{n}$ exactly (e.g., for OLDA, SumTR, TrCP, UMDS, and DFT), but
        at other times, we will have to construct $H(P)$ individually based on the particularity of $\scrH$
        as in \cite{zhwb:2022} for OCCA, \cite{wazl:2023} for $\Theta$TR, \cite{zhys:2020}
        for MAXBET and Example~\ref{eg:motivation:NEPv}, or we simply use the generic  \eqref{eq:H(P):always}.
        The ansatz does not demand any explicit association of $H(P)$ with $\scrH(P)$, but conceivably
        they should be highly related, such as the relation imposed by~\eqref{eq:cond:KKT=NEPv}.

  \item One may argue that the ansatz 
        might be made unnecessarily complicated. After all
        $\tr(\what P^{\T}H(P)\what P)=\tr(\wtd P^{\T}H(P)\wtd P)$. Should  we get rid of $\what P$ in the ansatz altogether?
        One possibility is to require that
        \begin{equation}\label{eq:NEPv-assume-strong}
        \framebox{
        \parbox{8.5cm}{
        $\tr(\wtd P^{\T}H(P)\wtd P)\ge\tr(P^{\T}H(P)P)+\eta$ for $P,\,\wtd P\in\bbP$
        implies $f(\wtd P)\ge f(P)+\omega\eta$.
        }}
        \end{equation}
        This is a stronger version, however,
        assuming
        that for any $\what P\in\STM{k}{n}$, there exists $Q\in\STM{k}{k}$ such that $\wtd P=\what PQ\in\bbP$.
        Here is why.
        Suppose that \eqref{eq:NEPv-assume-strong} holds. Given $\what P\in\STM{k}{n},\,P\in\bbP\subseteq\STM{k}{n}$,
        let $\wtd P=\what PQ\in\bbP$ for some $Q\in\STM{k}{k}$. If \eqref{eq:NEPv-assume} holds, then
        $$
        \tr(\wtd P^{\T}H(P)\wtd P)=\tr(\what P^{\T}H(P)\what P)\ge\tr(P^{\T}H(P)P)+\eta,
        $$
        which, under \eqref{eq:NEPv-assume-strong}, yields $f(\wtd P)\ge f(P)+\omega\eta$, proving the desired inequality
        of {\bf the NEPv Ansatz}.

  \item When $f(P)$ is right-unitarily invariant, $H(P)$ always exists such that \eqref{eq:natNEPv} holds and can be taken to be right-unitarily invariant, too \cite[Lemma 2.1]{luli:2024}.
        In such a case, the ansatz 
        can be simplified to: $Q=I_k$ and $\wtd P=\what P$ always because $f(\what P)=f(\wtd P)$
        regardless of $Q$. Also often $\bbP=\STM{k}{n}$.


  \item Introducing a subset $\bbP$ of $\STM{k}{n}$ and judiciously choosing  $Q$
        are for generality to deal with the case when $f(P)$
        is not right-unitarily invariant, e.g., the one in Example~\ref{eg:motivation:NEPv} and those in Table~\ref{tbl:obj-funs} in section~\ref{sec:intro} that involve
        $D$ or $D_i$. Suitable $Q$ can also increase the objective value  more than $\omega\eta$.
        For example, for $\Theta$TR with $H(P)$ given
        by \eqref{eq:H(P):theta-TR}, {\bf the NEPv Ansatz} holds with taking
        $Q$ to be an orthonormal polar factor of $\what P^{\T}D$. In fact, along the line of the proof of
        \cite[Lemma 2.1]{wazl:2023},
        assuming $B\succeq 0$, $\rank(B)>n-k$, and $\tr(P^{\T}AP+P^{\T}D)\ge 0$ in the case of $0<\theta<1$ but otherwise
        no need to impose nonnegativity  on $\tr(P^{\T}AP+P^{\T}D)$ for $\theta\in\{0,1\}$,
        we can improve the conclusion of \cite[Theorem 2.2]{wazl:2023}  to (in the current notation)
        \begin{align}
        f(\wtd P) \ge f(P)&+\frac 12\left(\frac {\tr_{\min,k}(B)}{\tr_{\max,k}(B)}\right)^{\theta}\,\Big[\tr(\what P^{\T}H(P)\what P)-\tr(P^{\T}H(P)P)\Big] \nonumber\\
               &+[\tr_{\max,k}(B)]^{-\theta}\,\Big[\|\what P^{\T}D\|_{\tr}-\tr(\what P^{\T}DP^{\T}\what P)\Big],
                   \label{eq:f-inc-theta-TR:refined}
        \end{align}
        where $0\le\theta\le 1$.
        The last term is contributed by
        the selection of $Q$ as described.
        A proof of \eqref{eq:f-inc-theta-TR:refined} is given in Appendix~\ref{sec:f-inc-theta-TR:pf}.

        The comments in Remark~\ref{rk:NPDo-assume}(ii) on $\bbP$ apply here, too.

  \item It is tempting to stipulate  $f(\what P)\ge f(P)+\omega\eta$, but that is either false or just hard to prove
        for the one in Example~\ref{eg:motivation:NEPv} and some of those in Table~\ref{tbl:obj-funs} that involve $D$. Often in our algorithms to solve~\eqref{eq:main-opt} iteratively, with $P$ being
        the current approximate maximizer, assuming {\bf the NEPv Ansatz}, we naturally
        compute $\what P$ that maximizes $\tr(X^{\T}H(P)X)$ over $X\in\STM{k}{n}$. With that $\what P$, settling
        whether $f(\what P)\ge f(P)+\omega\eta$ or not can be a hard task, as
        in Example~\ref{eg:motivation:NEPv}
        where the objective function involves
        $\tr((P^{\T}D)^2)$.
\end{enumerate}
\end{remark}

\begin{table}[t]
\renewcommand{\arraystretch}{1.4}
\caption{\small {\bf The NEPv Ansatz} on objective functions in Table~\ref{tbl:obj-funs}}\label{tbl:obj-funs:NEPv}
\centerline{\small
\begin{tabular}{|c|c|c|c|c|}
  \hline
    & $H(P)$ & conditions & $\bbP$ &  by  \\ \hline\hline
SEP & $A$ & none & $\STM{k}{n}$ & \cite{fan:1949}, \cite[p.248]{hojo:2013}  \\ \hline
OLDA & \eqref{eq:H(P):theta-TR} with $\theta=1$, $D=0$  & $B\succeq 0$, $\tr_{\min,k}(B)>0$ & $\STM{k}{n}$ & \eqref{eq:f-inc-theta-TR:refined}   \\ \hline
OCCA & \eqref{eq:H(P):theta-TR} with $\theta=1/2$, $A=0$ & $B\succeq 0$, $\tr_{\min,k}(B)>0$ & $\STM{k}{n}_{D+}$ & \eqref{eq:f-inc-theta-TR:refined}  \\ \hline
\multirow{3}{*}{$\Theta$TR} & \multirow{2}{*}{\eqref{eq:H(P):theta-TR} for case $0<\theta<1$}  & $B\succeq 0$, $\tr_{\min,k}(B)>0$
           & \multirow{2}{*}{$\STM{k}{n}_{D+}$} & \multirow{2}{*}{\eqref{eq:f-inc-theta-TR:refined}}  \\
           &                    & $\tr(P^{\T}AP+P^{\T}D)\ge 0$   &                 &            \\ \cline{2-5}
 & \eqref{eq:H(P):theta-TR} for case $\theta=1$  & $B\succeq 0$, $\tr_{\min,k}(B)>0$
           & $\STM{k}{n}_{D+}$ & \eqref{eq:f-inc-theta-TR:refined}  \\ \hline
MBSub & \eqref{eq:H(P):theta-TR} with $\theta=0$ & none & $\STM{k}{n}_{D+}$ & \eqref{eq:f-inc-theta-TR:refined}  \\ \hline
SumCT & \eqref{eq:H(P):always} & $A_i\succeq 0\,\forall i$ & \cite{wazl:2022a} & Thm.~\ref{thm:main-NEPv-cvx:Hi(Pi)}  \\ \hline
TrCP & $2\sum_{i=1}^N\phi_i(\bx)A_i$ & convex $\phi$ & $\STM{k}{n}$ & Expl.~\ref{eg:T1a-NEPv}  \\ \hline
UMDS & $4\sum_{i=1}^NA_iPP^{\T}A_i$ & $A_i\succeq 0\,\forall i$ & $\STM{k}{n}$ & Expl.~\ref{eg:T2a}  \\ \hline
DFT & $2A+2\sum_{i=1}^n\phi_i(\bx)\be_i\be_i^{\T}$ & convex $\phi$  & $\STM{k}{n}$ & Expl.~\ref{eg:T1a-NEPv} \\
  \hline
\multicolumn{5}{l}{\small * $\phi_i(\bx):=\partial\phi(\bx)/\partial x_i$  for $\bx=[x_i]$.
                   $\Theta$TR for $\theta=0$ becomes MBSub.} \\
\end{tabular}
}
\end{table}
As to the validity of  {\bf the NEPv Ansatz} on the objective functions in Table~\ref{tbl:obj-funs},
it holds for all, except SumTR,  under mild conditions on the constant matrices and function $\phi$.
Table~\ref{tbl:obj-funs:NEPv}
provides the details on $H(P)$ and conditions under which
{\bf the NEPv Ansatz} holds, where the last column refers to places for justifications.
We leave $\bbP$ unspecified for SumCT but refer it to \cite{wazl:2022a} because it is more complicated to fit the space in the table.
In fact, it is required that
each $P_i$ falls in $\{X\in\STM{k_i}{n}\,:\, X^{\T}D_i\succeq 0\}$. Later in subsection~\ref{ssec:PineP} we will argue that for SumCT it would be more efficient
to go for the NPDo approach in Part~I.
        For  SumTR with $H(P)$ given by
        \eqref{eq:H(p):sumTR},  $\bbP=\STM{k}{n}$ and $Q=I_k$ and {\bf the NEPv Ansatz} does not hold.
        This can be drawn from the counterexample, \cite[Example 4.1]{zhli:2014b}, for which SCF diverges,
        but later we will show, under {\bf the NEPv Ansatz}, SCF is guaranteed to converge!

Comparing Table~\ref{tbl:obj-funs:NPD} for  NPDo  with Table~\ref{tbl:obj-funs:NEPv} for NEPv here, we find that,
among those in Table~\ref{tbl:obj-funs},
{\bf the NEPv Ansatz}  provably holds for three more of them, which are OLDA, OCCA, and $\Theta$TR (all involving ratios), than {\bf the NPDo Ansatz} does. This observation that {\bf the NEPv Ansatz} is satisfied more often than
{\bf the NPDo Ansatz} among those in Table~\ref{tbl:obj-funs} is not an accident.
In fact {\bf the NPDo Ansatz} is stronger than {\bf the NEPv Ansatz} with
the generic $H(P)$ in \eqref{eq:H(P):always},
as shown by the next theorem.

\begin{theorem}\label{thm:NPDo>=NEPv}
Let function $f$ be defined on some neighborhood $\STMnbr$ of $\STM{k}{n}$ and let $H(P)$ be as in \eqref{eq:H(P):always}.
Then {\bf the NPDo Ansatz} implies {\bf the NEPv Ansatz}.
\end{theorem}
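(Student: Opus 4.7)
The plan is to reduce {\bf the NEPv Ansatz} (quadratic in $\what P$ on both sides of $H(P)$) to {\bf the NPDo Ansatz} (linear in the test matrix applied to $\scrH(P)$), absorbing the mismatch via an orthogonal change of basis on the right.

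First I would expand the NEPv hypothesis using the specific generic $H(P) = \scrH(P)P^{\T}+P\scrH(P)^{\T}$. Since $P\in\bbP\subseteq\bbO^{n\times k}$ satisfies $P^{\T}P=I_k$, a direct computation by the cyclic property of trace gives
\[
\tr(\what P^{\T}H(P)\what P) = 2\tr\bigl(P^{\T}\what P\what P^{\T}\scrH(P)\bigr),\qquad
\tr(P^{\T}H(P)P) = 2\tr(P^{\T}\scrH(P)),
\]
so that \eqref{eq:NEPv-assume} is equivalent to
\[
\tr\bigl(P^{\T}\what P\what P^{\T}\scrH(P)\bigr)\ge \tr(P^{\T}\scrH(P))+\tfrac{\eta}{2}.
\]

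Second, I would bridge this to the NPDo form. Set $N:=\what P^{\T}\scrH(P)\in\bbR^{k\times k}$ and $K:=P^{\T}\what P\in\bbR^{k\times k}$, noting $\|K\|_2\le 1$ since both $P$ and $\what P$ have orthonormal columns. The reformulated quantity is $\tr(KN)$, and by the von Neumann trace inequality,
\[
\tr(KN)\le \sum_{i=1}^k \sigma_i(K)\sigma_i(N)\le \sum_{i=1}^k \sigma_i(N)=\|N\|_{\tr}.
\]
Hence $\|\what P^{\T}\scrH(P)\|_{\tr}\ge \tr(P^{\T}\scrH(P))+\eta/2$. Now I would take $Q_*\in\bbO^{k\times k}$ to be an orthogonal polar factor of $N$, so that $Q_*^{\T}N\succeq 0$ and $\tr(Q_*^{\T}N)=\|N\|_{\tr}$. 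Setting $\check P:=\what P Q_*\in\bbO^{n\times k}$, we obtain
\[
\tr(\check P^{\T}\scrH(P))=\tr(Q_*^{\T}N)=\|N\|_{\tr}\ge \tr(P^{\T}\scrH(P))+\tfrac{\eta}{2},
\]
which is exactly the NPDo hypothesis \eqref{eq:NPDo-assume} (with improvement $\eta/2$) applied to the test point $\check P$.

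Finally, I would invoke {\bf the NPDo Ansatz} with $\check P$ in place of $\what P$ and improvement $\eta/2$: this yields some $Q'\in\bbO^{k\times k}$ such that $\wtd P:=\check P Q'\in\bbP$ and $f(\wtd P)\ge f(P)+\omega\cdot(\eta/2)$. Since $\wtd P=\what P(Q_*Q')$ with $Q_*Q'\in\bbO^{k\times k}$, the matrix $Q:=Q_*Q'$ is exactly the orthogonal factor demanded by {\bf the NEPv Ansatz}, and the ansatz holds with constant $\omega'=\omega/2$. The main obstacle is the structural gap between the two ansatzes -- the NEPv hypothesis controls a sandwich $\tr(\what P^{\T}H(P)\what P)$ whereas NPDo requires a one-sided trace $\tr(\cdot\,\scrH(P))$; the key observation that makes the bridge work is that, after reformulation, the relevant quantity $\tr(KN)$ is majorized by $\|N\|_{\tr}$ via von Neumann, which in turn is attained by the NPDo-test point $\what P Q_*$ coming from the polar factor of $\what P^{\T}\scrH(P)$, thereby preserving the form $\wtd P=\what P Q$ required on the output side of the NEPv Ansatz.
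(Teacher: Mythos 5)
Your proposal is correct and follows essentially the same route as the paper's proof: both reduce $\tr(\what P^{\T}H(P)\what P)$ to $2\tr(\what P^{\T}\scrH(P)P^{\T}\what P)$, bound it above by $2\|\what P^{\T}\scrH(P)\|_{\tr}$, realize that trace norm via the test point $\check P=\what PQ_*$ with $Q_*$ an orthogonal polar factor of $\what P^{\T}\scrH(P)$, and then invoke {\bf the NPDo Ansatz} with improvement $\eta/2$ to obtain $\wtd P=\what P(Q_*Q')\in\bbP$ and $\omega'=\omega/2$. The only cosmetic difference is that you invoke von Neumann's trace inequality where the paper chains Lemma~\ref{lm:maxtrace} with $\|AB\|_{\tr}\le\|A\|_{\tr}\|B\|_2$; both yield the identical bound.
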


\begin{proof}
Suppose that {\bf the NPDo Ansatz} holds.
Given $\what P\in\STM{k}{n},\,P\in\bbP\subseteq\STM{k}{n}$ such that \eqref{eq:NEPv-assume} holds,
let $W\in\STM{k}{k}$ be an orthonormal polar factor of $\what P^{\T}\scrH(P)$, and set $\check P=\what PW_1$.
Then $\check P^{\T}\scrH(P)=W_1^{\T}[\what P^{\T}\scrH(P)]\succeq 0$, and thus by Lemma~\ref{lm:maxtrace}
\begin{equation}\label{eq:NPDo>=NEPv:pf-1}
\tr(\check P^{\T}\scrH(P))=\|\check P^{\T}\scrH(P)\|_{\tr}=\|\what P^{\T}\scrH(P)\|_{\tr}\ge\tr(\what P^{\T}\scrH(P)).
\end{equation}
Recalling \eqref{eq:H(P):always}, we have
\begin{align*}
\tr(\what P^{\T}H(P)\what P)&=2\tr(\what P^{\T}\scrH(P)P^{\T}\what P) \\
   &\le 2\|\what P^{\T}\scrH(P)P^{\T}\what P\|_{\tr} \qquad (\mbox{by Lemma~\ref{lm:maxtrace}}) \\
   &\le 2\|\what P^{\T}\scrH(P)\|_{\tr}\|P^{\T}\what P\|_2 \\
   &\le 2\|\what P^{\T}\scrH(P)\|_{\tr} \qquad(\mbox{since $\|P^{\T}\what P\|_2\le 1$}) \\
   &\le 2\tr(\check P^{\T}\scrH(P)). \qquad(\mbox{by \eqref{eq:NPDo>=NEPv:pf-1}})
\end{align*}
Now noticing that $\tr(P^{\T}H(P)P)=2\tr(P^{\T}\scrH(P))$, we get from inequality \eqref{eq:NEPv-assume} that
$$
\tr(\check P^{\T}\scrH(P))\ge \tr(P^{\T}\scrH(P))+\eta/2.
$$
By {\bf the NPDo Ansatz}, there exists $W_2\in\STM{k}{k}$ such that $\wtd P=\check PW_2=\what P(W_1W_2)\in\bbP$ and
$f(\wtd P)\ge f(P)+(\omega/2)\eta$, verifying {\bf the NEPv Ansatz}.
\end{proof}

The first immediate consequence of {\bf the NEPv Ansatz} is the following theorem that provides a characterization of
the maximizers of the associated optimization problem~\eqref{eq:main-opt}.

\begin{theorem}\label{thm:maxers-NEPv}
Let $P_*\in\STM{k}{n}$ be a maximizer of \eqref{eq:main-opt}.
Suppose that {\bf the NEPv Ansatz} holds and $P_*\in\bbP$.
Then NEPv
\eqref{eq:NEPv-form} holds for $P=P_*$ and the eigenvalues of $\Omega=\Omega_*:=P_*^{\T}H(P_*)P_*$
consist of the first $k$ largest eigenvalues of $H(P_*)$.
\end{theorem}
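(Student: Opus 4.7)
The plan is to mirror the argument used for Theorem~\ref{thm:maxers-NPD}, replacing the orthogonal-polar-factor step (which there exploited Lemma~\ref{lm:polar2max}) by Fan's trace maximization principle $\max_{X\in\bbO^{n\times k}}\tr(X^{\T}MX)=S_k(M)$ for a symmetric $M$, with the maximum attained by any orthonormal basis of the top-$k$ eigenspace of $M$. I will argue by contradiction: suppose $P_*\in\bbP$ is a maximizer of \eqref{eq:main-opt} but the eigenvalues of $\Omega_*=P_*^{\T}H(P_*)P_*$ do not comprise the $k$ largest eigenvalues of $H(P_*)$.

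Under this supposition, Fan's principle gives
\[
\tr(P_*^{\T}H(P_*)P_*)=\tr(\Omega_*)<S_k(H(P_*)).
\]
Set $\eta:=S_k(H(P_*))-\tr(P_*^{\T}H(P_*)P_*)>0$, and take $\what P\in\bbO^{n\times k}$ to be any orthonormal basis matrix of the $k$-dimensional dominant eigenspace of $H(P_*)$, so that $\tr(\what P^{\T}H(P_*)\what P)=S_k(H(P_*))=\tr(P_*^{\T}H(P_*)P_*)+\eta$. The hypothesis \eqref{eq:NEPv-assume} of {\bf the NEPv Ansatz} then holds with this $\eta>0$, so there exists $Q\in\bbO^{k\times k}$ such that $\wtd P=\what PQ\in\bbP$ and $f(\wtd P)\ge f(P_*)+\omega\eta>f(P_*)$, contradicting the maximality of $P_*$.

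Hence the eigenvalues of $\Omega_*$ must be exactly the $k$ largest eigenvalues of $H(P_*)$, which also means $\cR(P_*)$ attains the maximum in Fan's principle and is therefore an invariant subspace of $H(P_*)$ associated with those top $k$ eigenvalues. Consequently $H(P_*)P_*=P_*\Omega_*$, so $P=P_*$ solves NEPv \eqref{eq:NEPv-form}. The only subtle point I anticipate is handling the possible degeneracy $\lambda_k(H(P_*))=\lambda_{k+1}(H(P_*))$: in that case the dominant eigenspace is not uniquely determined, but Fan's principle still guarantees that any subspace whose Rayleigh quotient has trace equal to $S_k(H(P_*))$ is $H(P_*)$-invariant, which is all that is needed to conclude $H(P_*)P_*=P_*\Omega_*$.
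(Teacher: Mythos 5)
Your proposal is correct and follows essentially the same route as the paper's proof: both reduce the claim to showing that $P_*$ maximizes $\tr(X^{\T}H(P_*)X)$ over $X\in\bbO^{n\times k}$, take $\what P$ to be a dominant eigenbasis of $H(P_*)$ so that \eqref{eq:NEPv-assume} holds with $\eta>0$, and invoke {\bf the NEPv Ansatz} to contradict the maximality of $P_*$. The only cosmetic difference is that you phrase it as a contradiction on the eigenvalues of $\Omega_*$ while the paper argues directly that $P_*$ solves the linearized trace problem and then reads off the conclusion from its KKT condition; your explicit remark on the degenerate case $\lambda_k(H(P_*))=\lambda_{k+1}(H(P_*))$ is a harmless addition.
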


\begin{proof}
Consider
\begin{equation}\label{eq:maxers-NEPv:pf-1}
\max_{P\in\STM{k}{n}}\tr(P^{\T}H(P_*) P).
\end{equation}
We claim $P_*$ is a maximizer of \eqref{eq:maxers-NEPv:pf-1}; otherwise there would be some
$\what P\in\STM{k}{n}$ such that
$$
\tr(\what P^{\T}H(P_*)\what P)\ge\tr(P_*^{\T}H(P_*)P_*)+\eta
$$
for some $\eta>0$. Invoking  {\bf the NEPv Ansatz}, we can find $\wtd P=\what PQ\in\bbP$ such that
$f(\wtd P)\ge f(P_*)+\omega\eta> f(P_*)$, contradicting that $P_*$ is a maximizer.
Thus $P_*$ is a maximizer of \eqref{eq:maxers-NEPv:pf-1} whose KKT condition is
$H(P_*)P=P\Omega$ which $P_*$ will have to satisfy, i.e.,
$H(P_*)P_*=P_*\Omega_*$, where $\Omega_*=P_*^{\T}H(P_*)P_*$ whose eigenvalues consists of
the $k$ largest ones of $H(P_*)$.
\end{proof}

\subsection{SCF Iteration and Convergence}\label{ssec:SCF4NEPv}
The second immediate consequence of {\bf the NEPv Ansatz} is the global convergence of an SCF iteration
for solving optimization problem~\eqref{eq:main-opt} as outlined in Algorithm~\ref{alg:SCF4NEPv}.
\begin{algorithm}[t]
\caption{NEPvSCF: NEPv \eqref{eq:NEPv-form} solved by SCF} \label{alg:SCF4NEPv}
\begin{algorithmic}[1]
\REQUIRE Symmetric matrix-valued function $H(P)$  satisfying {\bf the NEPv Ansatz}, $P^{(0)}\in\bbP$;
\ENSURE  an approximate maximizer of \eqref{eq:main-opt}. 
\FOR{$i=0,1,\ldots$ until convergence}
    \STATE compute $H_i=H(P^{(i)})\in\bbR^{n\times n}$;
    \STATE solve SEP $H_i\what P^{(i)}=\what P^{(i)}\Omega_i$ for $\what P^{(i)}\in\STM{k}{n}$,
           an orthonormal basis matrix of the eigenspace associated with the first $k$ largest eigenvalues of $H_i$;
    \STATE calculate $Q_i\in\STM{k}{k}$ and let $P^{(i+1)}=\what P^{(i)}Q_i\in\bbP$, according to {\bf the NEPv Ansatz};
\ENDFOR
\RETURN the last $P^{(i)}$.
\end{algorithmic}
\end{algorithm}
This algorithm is similar to \cite[Algorithm 2]{zhwb:2022}, \cite[Algorithm 4.1]{wazl:2023},
but the latter two have more details that are dictated by
the particularity of $f$ there. A reasonable stopping criterion at Line 1 is
         \begin{equation}\label{eq:stop-NEPv}
         \varepsilon_{\NEPv}:=\frac {\|H(P)P-P[P^{\T}H(P)P]\|_{\F}}{\xi}\le\epsilon,
         \end{equation}
         where $\epsilon$ is a given tolerance, and $\xi$ is some normalization quantity that should be
         designed according to the underlying $H(P)$, but generically, $\xi=\|H(P)\|_{\F}$, or any reasonable estimate of it,
         should work well.

At Line 4 it refers to  {\bf the NEPv Ansatz} for the calculation of $Q_i$.
Exactly how it is computed depends on the structure of $f$ at hand. We commented on the similar issue for Algorithm~\ref{alg:SCF4NPDo} earlier. In the case of Example~\ref{eg:motivation:NEPv}, $Q_i$ is taken to be
an orthonormal polar factor of $(\what P^{(i)})^{\T}D$ to make $P^{(i+1)}\in\STM{k}{n}_{D+}$.

In Algorithm~\ref{alg:SCF4NEPv}, we explicitly state that it is for $H(P)$ that satisfies {\bf the NEPv Ansatz}, without which
we cannot guarantee convergence as stated in the theorems in the rest of this section, but numerically the body of
the algorithm can still be implemented. There is a level-shifting technique that can help achieve local convergence
\cite{ball:2022,luli:2024}.

\begin{theorem}\label{thm:cvg4SCF4NEPv}
Suppose that {\bf the NEPv Ansatz} holds, and let the sequence $\{P^{(i)}\}_{i=0}^{\infty}$ be generated by Algorithm~\ref{alg:SCF4NEPv}.
The following statements hold.
\begin{enumerate}[{\rm (a)}]
  \item The sequence $\{f(P^{(i)})\}_{i=0}^{\infty}$ is monotonically increasing and convergent.
  \item Any accumulation point $P_*$ of the sequence $\{P^{(i)}\}_{i=0}^{\infty}$ satisfies
        the necessary conditions in Theorem~\ref{thm:maxers-NEPv} for a global maximizer, i.e.,
        \eqref{eq:NEPv-form} holds for $P=P_*$ and the eigenvalues of $\Omega_*=P_*^{\T}H(P_*)P_*$
        consist of the first $k$ largest eigenvalues of $H(P_*)$.
        Furthermore in the case when $H(P)$
        satisfies \eqref{eq:cond:KKT=NEPv}, if $\scrM(P_*)$ is symmetric, then $P_*$ is a KKT point.
  \item We have two convergent series
        \begin{subequations}\label{eq:cvg4SCF4NEPv:series}
        \begin{align}
         \sum_{i=1}^{\infty}\delta_i\,\big\|\sin\Theta\big(\cR(P^{(i+1)}),\cR(P^{(i)})\big)\big\|_{\F}^2
                      &<\infty,      \label{eq:cvg4SCF4NEPv:series-1} \\
         \sum_{i=1}^{\infty}\delta_i\,
                       \frac {\big\|H(P^{(i)})P^{(i)}-P^{(i)}\Lambda_i\big\|_{\F}^2}
                             {\big\|H(P^{(i)})\big\|_{\F}^2}
                      &<\infty,                    \label{eq:cvg4SCF4NEPv:series-2}
        \end{align}
        \end{subequations}
        where $\delta_i=\lambda_k(H(P^{(i)}))-\lambda_{k+1}(H(P^{(i)}))$ and
        $\Lambda_i=[P^{(i)}]^{\T}H(P^{(i)})P^{(i)}$.
\end{enumerate}
\end{theorem}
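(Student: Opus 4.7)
The plan is to establish (a) first by combining Ky Fan's trace maximization principle with \textbf{the NEPv Ansatz}, then deduce (b) by telescoping the monotonicity to force a vanishing gap in Fan's inequality, and finally obtain (c) from a quantitative Fan-type gap lemma together with a residual-to-sine-theta bound.

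For (a), I would observe that at Line~3 of Algorithm~\ref{alg:SCF4NEPv}, by Fan's trace maximization principle \cite{fan:1949}, $\what P^{(i)}$ is a maximizer of $\tr(X^{\T}H(P^{(i)})X)$ over $X\in\bbO^{n\times k}$, hence
\[
\eta_i:=\tr([\what P^{(i)}]^{\T}H(P^{(i)})\what P^{(i)})-\tr([P^{(i)}]^{\T}H(P^{(i)})P^{(i)})\ge 0.
\]
By \textbf{the NEPv Ansatz} applied to $P=P^{(i)}\in\bbP$ and $\what P=\what P^{(i)}$, there exists $Q_i\in\bbO^{k\times k}$ with $P^{(i+1)}=\what P^{(i)}Q_i\in\bbP$ and $f(P^{(i+1)})\ge f(P^{(i)})+\omega\eta_i\ge f(P^{(i)})$. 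Monotonicity of $\{f(P^{(i)})\}$ follows. Continuity of $f$ on the compact set $\bbO^{n\times k}$ (within $\bbO^{n\times k}_{\delta}$) yields boundedness above and hence convergence.

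For (b), telescoping gives $\omega\sum_{i=0}^{\infty}\eta_i\le\lim_{i\to\infty}f(P^{(i)})-f(P^{(0)})<\infty$, so $\eta_i\to 0$. Fix a subsequence $\{P^{(i_j)}\}\to P_*$. By continuity of $H(\cdot)$ and of the eigenvalues of a symmetric matrix,
\[
\sum_{l=1}^k\lambda_l(H(P^{(i_j)}))-\tr([P^{(i_j)}]^{\T}H(P^{(i_j)})P^{(i_j)})\longrightarrow\sum_{l=1}^k\lambda_l(H(P_*))-\tr(P_*^{\T}H(P_*)P_*),
\]
and the limit equals $0$. By the equality case of Fan's principle, $\cR(P_*)$ is a top-$k$ invariant subspace of $H(P_*)$; consequently $H(P_*)P_*=P_*\Omega_*$ with $\Omega_*=P_*^{\T}H(P_*)P_*$ whose eigenvalues are the $k$ largest eigenvalues of $H(P_*)$. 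The KKT claim then follows directly from Theorem~\ref{thm:H(P)-eligibility} whenever $\scrM(P_*)$ is symmetric.

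For (c), the key is a quantitative refinement of Fan's inequality: for any symmetric $H\in\bbR^{n\times n}$ with top-$k$ invariant subspace $\cU$ and any $X\in\bbO^{n\times k}$,
\[
\sum_{l=1}^k\lambda_l(H)-\tr(X^{\T}HX)\ge\bigl(\lambda_k(H)-\lambda_{k+1}(H)\bigr)\,\bigl\|\sin\Theta(\cU,\cR(X))\bigr\|_{\F}^2.
\]
This can be proved by writing $H=UD_1U^{\T}+VD_2V^{\T}$, setting $Y_1=U^{\T}X$ and $Y_2=V^{\T}X$, and using $D_1\succeq\lambda_k I_k$, $D_2\preceq\lambda_{k+1}I_{n-k}$ together with the CS-type identity $\tr(I_k-Y_1Y_1^{\T})=\tr(Y_2Y_2^{\T})=\|\sin\Theta\|_{\F}^2$. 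Applying this to $H=H(P^{(i)})$, $X=P^{(i)}$, and noting $\cR(\what P^{(i)})=\cR(P^{(i+1)})$, we get $\eta_i\ge\delta_i\,\|\sin\Theta(\cR(P^{(i+1)}),\cR(P^{(i)}))\|_{\F}^2$. Summing and using $\sum\eta_i<\infty$ yields \eqref{eq:cvg4SCF4NEPv:series-1}. For \eqref{eq:cvg4SCF4NEPv:series-2}, write $H(P^{(i)})P^{(i)}-P^{(i)}\Lambda_i=(I-P^{(i)}[P^{(i)}]^{\T})H(P^{(i)})P^{(i)}$, and with $V_P$ an orthonormal complement of $P^{(i)}$, decompose
\[
V_P^{\T}H(P^{(i)})P^{(i)}=(V_P^{\T}\what P^{(i)})D_1(\what P^{(i)\T}P^{(i)})+(V_P^{\T}V)D_2(V^{\T}P^{(i)})
\]
using the eigendecomposition of $H(P^{(i)})$. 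Bounding each term using $\|V_P^{\T}\what P^{(i)}\|_{\F}=\|V^{\T}P^{(i)}\|_{\F}=\|\sin\Theta(\cR(P^{(i+1)}),\cR(P^{(i)}))\|_{\F}$ and $\|D_j\|_2\le\|H(P^{(i)})\|_{\F}$ gives
\[
\bigl\|H(P^{(i)})P^{(i)}-P^{(i)}\Lambda_i\bigr\|_{\F}\le 2\,\|H(P^{(i)})\|_{\F}\,\bigl\|\sin\Theta(\cR(P^{(i+1)}),\cR(P^{(i)}))\bigr\|_{\F},
\]
from which \eqref{eq:cvg4SCF4NEPv:series-2} follows upon combining with \eqref{eq:cvg4SCF4NEPv:series-1}. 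The main obstacle is the careful bookkeeping in the Fan gap lemma (ensuring the $k\times k$ and $(n-k)\times(n-k)$ block arithmetic matches via the CS decomposition); the residual-to-sine-theta bound is then a routine spectral-decomposition estimate.
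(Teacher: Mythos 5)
Your proof is correct and follows essentially the same route as the paper's: part (a) is identical, part (b) replaces the paper's contradiction argument with an equivalent direct limiting argument via the equality case of Fan's principle, and part (c) rests on exactly the two estimates of the paper's Lemma~\ref{lm:maxtrace-proj} (the Fan-gap lower bound $\eta_i\ge\delta_i\|\sin\Theta\|_{\F}^2$ and the residual-to-$\sin\Theta$ bound), which you re-derive inline correctly rather than cite. No gaps.
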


\begin{proof}
See appendix~\ref{sec:Proof4NEPvCVG}.
\end{proof}

As a corollary of Theorem~\ref{thm:cvg4SCF4NEPv}(b), we establish a sufficient condition for NEPv \eqref{eq:NEPv-form} to have a solution.

\begin{corollary}\label{cor:cvg4SCF4NEPv}
Under {\bf the NEPv Ansatz},
NEPv \eqref{eq:NEPv-form} is solvable, i.e., there exists $P\in\STM{k}{n}$ such that
  \eqref{eq:NEPv-form} holds and
the eigenvalues of $\Omega=P^{\T}H(P)P$ are the $k$ largest ones of $H(P)$.
\end{corollary}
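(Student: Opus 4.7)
The plan is to deduce this corollary directly from Theorem~\ref{thm:cvg4SCF4NEPv}(b) by exploiting compactness of the Stiefel manifold. First I would pick any $P^{(0)}\in\bbP$ (which we implicitly assume is nonempty, as otherwise the optimization problem~\eqref{eq:main-opt} over $\bbP$ is vacuous) and run Algorithm~\ref{alg:SCF4NEPv} to generate an infinite sequence $\{P^{(i)}\}_{i=0}^{\infty}\subseteq\bbO^{n\times k}$.

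Next I would invoke the fact that the Stiefel manifold $\bbO^{n\times k}=\{P\in\bbR^{n\times k}:P^{\T}P=I_k\}$ is a closed and bounded (hence compact) subset of $\bbR^{n\times k}$. Consequently, the bounded sequence $\{P^{(i)}\}_{i=0}^{\infty}$ admits a convergent subsequence whose limit $P_*$ lies in $\bbO^{n\times k}$, i.e., an accumulation point $P_*$ of the iterates exists.

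Finally, by Theorem~\ref{thm:cvg4SCF4NEPv}(b) applied to this $P_*$, the equation
\[
H(P_*)P_* = P_*\Omega_*,\qquad \Omega_* = P_*^{\T}H(P_*)P_*,
\]
holds in $\bbO^{n\times k}$, and the eigenvalues of $\Omega_*$ are precisely the $k$ largest eigenvalues of $H(P_*)$. Setting $P=P_*$ and $\Omega=\Omega_*$ exhibits a solution to NEPv~\eqref{eq:NEPv-form} with the required extremal-eigenvalue property, establishing solvability.

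There is essentially no obstacle here beyond noting compactness of $\bbO^{n\times k}$; the substantive work has already been carried out in proving Theorem~\ref{thm:cvg4SCF4NEPv}(b). The only minor point worth flagging in the proof is the standing assumption that $\bbP$ is nonempty so that Algorithm~\ref{alg:SCF4NEPv} can be initialized, which is consistent with the earlier convention (see Remark~\ref{rk:NPDo-assume}(i) and the analogous discussion for NEPv) that $\bbP$ is taken to be sufficiently inclusive.
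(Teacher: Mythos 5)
Your argument is exactly the paper's: run Algorithm~\ref{alg:SCF4NEPv} from any $P^{(0)}\in\bbP$, use compactness of $\bbO^{n\times k}$ to extract an accumulation point $P_*$, and apply Theorem~\ref{thm:cvg4SCF4NEPv}(b) to conclude that $P_*$ solves NEPv~\eqref{eq:NEPv-form} with the extremal-eigenvalue property. The proposal is correct and matches the paper's reasoning, including the observation that the only substantive input is the existence of an accumulation point.
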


As a corollary of Theorem~\ref{thm:cvg4SCF4NEPv}(c),
if $\delta_i=\lambda_k(H(P^{(i)}))-\lambda_{k+1}(H(P^{(i)}))$ is eventually bounded below away from $0$ uniformly, then
$$
\lim_{i\to\infty}\frac {\big\|H(P^{(i)})P^{(i)}-P^{(i)}\Lambda_i\big\|_{\F}}
                       {\big\|H(P^{(i)})\big\|_{\F}}=0,
$$
namely, increasingly $H(P^{(i)})P^{(i)}\approx P^{(i)}\Lambda_i=P^{(i)}\big([P^{(i)}]^{\T}H(P^{(i)})P^{(i)}\big)$,
which means that $P^{(i)}$ becomes a more and more accurate approximate solution
to NEPv \eqref{eq:NEPv-form}, even in the absence of knowing whether the entire sequence $\{P^{(i)}\}_{i=0}^{\infty}$ converges or not. The latter does require additional condition to establish in the next theorem.

\begin{theorem}\label{thm:cvg4SCF4NEPv'}
Suppose that {\bf the NEPv Ansatz} holds, and let the sequence $\{P^{(i)}\}_{i=0}^{\infty}$ be generated by Algorithm~\ref{alg:SCF4NEPv} and $P_*$ be an accumulation
point of the sequence.
\begin{enumerate}[{\rm (a)}]
  \item $\cR(P_*)$ is an accumulation point of the sequence $\{\cR(P^{(i)})\}_{i=0}^{\infty}$.
  \item Suppose that $\cR(P_*)$ is an isolated accumulation point of $\{\cR(P^{(i)})\}_{i=0}^{\infty}$. If
        \begin{equation}\label{eq:eig-gap:assume}
        \lambda_k(H(P_*Q))-\lambda_{k+1}(H(P_*Q))>0\quad\mbox{for any $Q\in\STM{k}{k}$},
        \end{equation}
        then
        the entire sequence $\{\cR(P^{(i)})\}_{i=0}^{\infty}$ converges to $\cR(P_*)$.
  \item Suppose that $P_*$ is an isolated accumulation point of $\{P^{(i)}\}_{i=0}^{\infty}$. If
        \begin{equation}\label{eq:eig-gap:assume'}
        \lambda_k(H(P_*))-\lambda_{k+1}(H(P_*))>0
        \end{equation}
        and if
        $f(P_*)>f(P)$ for any $P\ne P_*$ and $\cR(P)=\cR(P_*)$,
        i.e.,
        $f(P)$ has unique maximizer in the orbit $\{P_*Q\,:\,Q\in\STM{k}{k}\}$,
        then
        the entire sequence $\{P^{(i)}\}_{i=0}^{\infty}$ converges to $P_*$.
  \item Suppose that $f(\cdot)$ and $H(\cdot)$ are right-unitarily invariant. Define, for the purpose of alignment,
        $V_i^{(i)}\in\STM{k}{k}$ to be the orthonormal polar
factor of $\big[P^{(i)}\big]^{\T}P_*$ for $i\ge 0$.
  If
        $P_*$ is an isolated accumulation point of $\{P^{(i)}V_i\}_{i=0}^{\infty}$
        and if \eqref{eq:eig-gap:assume'} holds, then
        the entire sequence $\{P^{(i)}V_i\}_{i=0}^{\infty}$ converges to $P_*$.
\end{enumerate}
\end{theorem}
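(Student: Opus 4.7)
The plan is to mirror the structure of Theorem~\ref{thm:cvg4SCF4NPDo'}, whose proof is in Appendix~\ref{sec:Proof4NPDoCVG}, adapting its key ingredient to the NEPv setting---the convergent series \eqref{eq:cvg4SCF4NEPv:series-1} weighted by the spectral gap $\delta_i=\lambda_k(H(P^{(i)}))-\lambda_{k+1}(H(P^{(i)}))$. For part~(a), I would use the continuity of the range map on $\bbO^{n\times k}$: if $P^{(i_j)}\to P_*$ along a subsequence, then $P^{(i_j)}[P^{(i_j)}]^{\T}\to P_*P_*^{\T}$, the orthogonal projector onto $\cR(P_*)$, and this is precisely convergence of $\cR(P^{(i_j)})$ to $\cR(P_*)$ in the Grassmannian.

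For part~(b), the plan is to establish first that $\delta_i$ admits a uniform lower bound $\delta_0>0$ whenever $\cR(P^{(i)})$ lies in a sufficiently small neighborhood $U$ of $\cR(P_*)$: by \eqref{eq:eig-gap:assume}, the function $Q\mapsto\lambda_k(H(P_*Q))-\lambda_{k+1}(H(P_*Q))$ is positive and continuous on the compact set $\bbO^{k\times k}$, hence attains a positive minimum, and joint continuity of eigenvalues in the underlying matrix then yields the uniform bound on $U$. Using the isolation hypothesis I would shrink $U$ so that $\overline U$ contains $\cR(P_*)$ as its only accumulation point of $\{\cR(P^{(i)})\}$. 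Choosing a smaller open $V$ with $\overline V\subset U$, the set $\{\cR(P^{(i)})\}\cap(\overline U\setminus V)$ must be finite, since it lies in the compact $\overline U\setminus V$ and contains none of its own accumulation points. If the full sequence $\{\cR(P^{(i)})\}$ failed to converge to $\cR(P_*)$, it would visit both $V$ and $(\overline U)^c$ infinitely often, forcing infinitely many ``large jumps'' with $\cR(P^{(i)})\in V$ and $\cR(P^{(i+1)})\in(\overline U)^c$. For these indices, $\|\sin\Theta(\cR(P^{(i+1)}),\cR(P^{(i)}))\|_{\F}\ge d(\overline V,(\overline U)^c)>0$ and $\delta_i\ge\delta_0$, directly contradicting \eqref{eq:cvg4SCF4NEPv:series-1}.

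For part~(c), my strategy is first to show that $\cR(P_*)$ is an isolated accumulation point of $\{\cR(P^{(i)})\}$. If it were not, there would exist accumulation ranges $\cR(\tilde P_k)\to\cR(P_*)$ with $\cR(\tilde P_k)\ne\cR(P_*)$; passing to a convergent subsequence of the corresponding accumulation matrices of $\{P^{(i)}\}$ (and extracting diagonally from the defining iterates), one obtains a limit $\tilde P_*$ which is itself an accumulation point of $\{P^{(i)}\}$ with $\cR(\tilde P_*)=\cR(P_*)$ and $f(\tilde P_*)=f(P_*)$ (using monotone convergence of $f(P^{(i)})$ from Theorem~\ref{thm:cvg4SCF4NEPv}(a)); the orbit-uniqueness hypothesis then forces $\tilde P_*=P_*$, yet the construction also supplies accumulation points $\tilde P_k$ of $\{P^{(i)}\}$ arbitrarily close to $P_*$ and distinct from it, contradicting the isolation of $P_*$. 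With $\cR(P_*)$ thus isolated, the pointwise gap \eqref{eq:eig-gap:assume'} together with continuity supplies the neighborhood eigengap required by the argument of part~(b), so $\cR(P^{(i)})\to\cR(P_*)$. Finally, if $\{P^{(i)}\}$ itself did not converge to $P_*$, by compactness a subsequence would converge to some $\tilde P\ne P_*$ with $\cR(\tilde P)=\cR(P_*)$ and $f(\tilde P)=f(P_*)$, again violating orbit-uniqueness.

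The main obstacle, in my view, lies in part~(b): promoting the pointwise spectral-gap hypothesis on the orbit of $P_*$ to a uniform lower bound along iterates (via compactness of the orbit and joint continuity), and then relating the ``large jumps'' of $\cR(P^{(i)})$ to the canonical-angle weights in \eqref{eq:cvg4SCF4NEPv:series-1} through a Grassmannian-metric comparison. In part~(c), the delicate reduction is promoting isolation of $P_*$ in $\bbR^{n\times k}$ to isolation of $\cR(P_*)$ in the Grassmannian using only the orbit-uniqueness of $f$ and the monotone convergence of $\{f(P^{(i)})\}$.
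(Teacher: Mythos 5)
Your part (a) matches the paper's argument. Parts (b) and (c) take a genuinely different route from the paper's proof in appendix~\ref{sec:Proof4NEPvCVG}: the paper never uses the convergent series \eqref{eq:cvg4SCF4NEPv:series-1} here. Instead it extracts nested convergent subsequences $P^{(i)}\to\what P_*$ and $P^{(i+1)}\to Z$, uses the monotone convergence of $\{f(P^{(i)})\}$ together with the NEPv Ansatz to show by contradiction that $\what P_*$, like $Z$, spans the eigenspace of $H(\what P_*)$ for its $k$ largest eigenvalues, invokes the gap only at the single matrix $\what P_*=P_*Q$ to get $\cR(Z)=\cR(\what P_*)$ and hence $\dist_2(\cR(P^{(i)}),\cR(P^{(i+1)}))\to 0$ along the subsequence, and closes with Lemma~\ref{lm:isolatedconvg}. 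Your alternative for (b) — a uniform eigengap $\delta_0>0$ on a Grassmannian neighborhood of $\cR(P_*)$ obtained by compactness of the orbit from \eqref{eq:eig-gap:assume}, finiteness of visits to $\overline U\setminus V$ from isolation, and infinitely many one-step transitions from $V$ to $(\overline U)^c$ each contributing at least $\delta_0 d^2>0$ to \eqref{eq:cvg4SCF4NEPv:series-1} — is sound (it relies on the continuity of $H(\cdot)$, which the paper assumes implicitly). The paper's route needs the gap only at the accumulation matrices it extracts; yours is more quantitative and avoids Lemma~\ref{lm:isolatedconvg}. Your preliminary reduction in (c) showing that $\cR(P_*)$ is an isolated accumulation point of $\{\cR(P^{(i)})\}$ is also correct, though the paper does not need it since it applies Lemma~\ref{lm:isolatedconvg} directly to the matrix sequence.

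The genuine gap is in part (c), at the claim that ``the pointwise gap \eqref{eq:eig-gap:assume'} together with continuity supplies the neighborhood eigengap required by the argument of part (b).'' The neighborhood your part (b) needs is a neighborhood of $\cR(P_*)$ in the Grassmannian, whose preimage in $\bbO^{n\times k}$ is a neighborhood of the \emph{entire orbit} $\{P_*Q\,:\,Q\in\bbO^{k\times k}\}$; continuity at the single matrix $P_*$ gives a gap only for $P$ near $P_*$ itself, and \eqref{eq:eig-gap:assume'} says nothing about $H(P_*Q)$ for $Q\ne I_k$ — indeed the very compactness argument you use in (b) would, upon failure of a uniform gap, produce some $P_*Q$ with zero gap, which \eqref{eq:eig-gap:assume'} does not exclude. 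The step is repairable but requires an extra argument specific to the iterates: using $f(P^{(i)})\uparrow f(P_*)$ and the orbit-uniqueness hypothesis, show that for every $\epsilon'>0$ there are $\epsilon>0$ and $N$ such that $i\ge N$ and $\dist_2(\cR(P^{(i)}),\cR(P_*))<\epsilon$ force $\|P^{(i)}-P_*\|_{\F}<\epsilon'$ (otherwise a subsequence would converge to some $\check P\ne P_*$ with $\cR(\check P)=\cR(P_*)$ and $f(\check P)=f(P_*)$, contradicting orbit-uniqueness). Only after this localization does \eqref{eq:eig-gap:assume'} yield the lower bound on $\delta_i$ for the relevant indices, and your large-jump argument then goes through.
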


\begin{proof}
See appendix~\ref{sec:Proof4NEPvCVG}.
\end{proof}

In Theorem~\ref{thm:cvg4SCF4NEPv'}(d), the right-unitarily invariance assumption on $H(\cdot)$
may be considered implied, thanks to Remark~\ref{rk:NEPv-assume}(iii).
Also that $P_*$ is an accumulation point of $\{P^{(i)}V_i\}_{i=0}^{\infty}$ is implied by the fact that $P_*$ be an accumulation
point of $\{P^{(i)}\}_{i=0}^{\infty}$. This is because if $\{P^{(i)}\}_{i\in\bbI}$ is a subsequence that converges
to $P_*$ then $\big[P^{(i)}\big]^{\T}P_*\to I_k$ as $\bbI\ni i\to\infty$, implying their orthonormal polar factor
$V_i\to I_k$ as $\bbI\ni i\to\infty$ \cite{li:1995} and thus $P^{(i)}V_i\to I_k$ as $\bbI\ni i\to\infty$.

So far, we have assumed, in Algorithm~\ref{alg:SCF4NEPv} and its major convergence results in
Theorems~\ref{thm:cvg4SCF4NEPv} and~\ref{thm:cvg4SCF4NEPv'}, that
$\what P^{(i)}$ is computed as an orthonormal basis matrix of the eigenspace
associated with the first $k$ largest eigenvalues of $H(P^{(i)})$.
The cost of a full eigendecomposition of $H(P^{(i)})$ is $4n^3/3$ flops \cite[p.463]{govl:2013} which is too expensive
for large or even modest $n$,  since we have to do it at every SCF iterative step.
Fortunately, we do not need the full eigendecomposition but the top $k$ eigenvalues and their associate eigenvectors.
Since $k$ is usually small such as a few tens or smaller, a better option is some iterative methods geared for
extreme eigenpairs \cite{goye:2002,knya:2001,li:2015,quye:2010}.
Furthermore, as far as always moving the objective value up is concerned,
it suffices to calculate $\what P^{(i)}$ just well enough such that
$\tr([\what P^{(i)}]^{\T}H_i\what P^{(i)})\ge \tr([P^{(i)}]^{\T}H_i P^{(i)})$ (usually strictly). This observation can become very useful when
the $k$th and $(k+1)$st eigenvalues of $H_i$ are very close, in which case convergence to the $k$th eigenvector by
an iterative method is often very slow.
%

The question is how accurate $\what P^{(i)}$ should be computed so that some of the conclusions in
Theorems~\ref{thm:cvg4SCF4NEPv} and~\ref{thm:cvg4SCF4NEPv'} remain valid.
It turns out
that the results in Theorem~\ref{thm:cvg4SCF4NEPv}, except \eqref{eq:cvg4SCF4NEPv:series-1} which
has to be modified slightly, remain
valid, under a weaker assumption that
\begin{align}
\eta_i:=\,&\tr\big([\what P^{(i)}]^{\T}H(P^{(i)})\what P^{(i)}\big)-\tr\big([P^{(i)}]^{\T}H(P^{(i)}) P^{(i)}\big)
             \label{eq:NEPv:eta(i):dfn} \\
      \ge\,&c\Big[\tr_{\max,k}\big(H(P^{(i)})\big)-\tr\big([P^{(i)}]^{\T}H(P^{(i)})P^{(i)}\big)\Big]
             \label{eq:NEPv:eta(i):weak}
\end{align}
for some constant $c>0$, independent of $i$, where $\tr_{\max,k}(\cdot)$ denotes the sum of the first $k$ largest eigenvalues of a symmetric matrix. Current $\what P^{(i)}$ as stated in Algorithm~\ref{alg:SCF4NEPv} makes $c=1$.
We state such a stronger version of Theorem~\ref{thm:cvg4SCF4NEPv}
in Theorem~\ref{thm:cvg4SCF4NEPv:strong} below.
This version will become useful when it comes to analyze the convergence of
the LOCG-accelerated Algorithm~\ref{alg:SCF4NEPv} in the next subsection.

\begin{theorem}\label{thm:cvg4SCF4NEPv:strong}
Suppose that {\bf the NEPv Ansatz} holds, and let the sequence $\{P^{(i)}\}_{i=0}^{\infty}$ be generated
by a variation of Algorithm~\ref{alg:SCF4NEPv}, in which $\what P^{(i)}$ at Line 3 is approximately computed such that \eqref{eq:NEPv:eta(i):weak} is satisfied.
The following statements hold.
\begin{enumerate}[{\rm (a)}]
  \item The sequence $\{f(P^{(i)})\}_{i=0}^{\infty}$ is monotonically increasing and convergent.
  \item Any accumulation point $P_*$ of the sequence $\{P^{(i)}\}_{i=0}^{\infty}$ satisfies
        the necessary conditions in Theorem~\ref{thm:maxers-NEPv} for a global maximizer, i.e.,
        \eqref{eq:NEPv-form} holds for $P=P_*$ and the eigenvalues of $\Omega_*=P_*^{\T}H(P_*)P_*$
        consist of the first $k$ largest eigenvalues of $H(P_*)$.
        Furthermore in the case when $H(P)$
        satisfies \eqref{eq:cond:KKT=NEPv}, if $\scrM(P_*)$ is symmetric, then $P_*$ is a KKT point.
  \item We have two convergent series: \eqref{eq:cvg4SCF4NEPv:series-2} and
        \begin{equation}\tag{\ref{eq:cvg4SCF4NEPv:series-1}$'$}
         \sum_{i=1}^{\infty}\delta_i\,\big\|\sin\Theta\big(\cP^{(i)},\cR(P^{(i)})\big)\big\|_{\F}^2
                      <\infty,
        \end{equation}
        where $\cP^{(i)}$ is the invariant subspace of $H(P^{(i)})$ associated with its $k$ largest eigenvalues.
\end{enumerate}
\end{theorem}

\begin{proof}
See appendix~\ref{sec:Proof4NEPvCVG}.
\end{proof}

It is noted that the difference between \eqref{eq:cvg4SCF4NEPv:series-1} and
(\ref{eq:cvg4SCF4NEPv:series-1}$'$) is the appearance of $\cR(P^{(i+1)})$ in the former and
$\cP^{(i)}$ in the latter. In the original Algorithm~\ref{alg:SCF4NEPv}, both are the same
because $\cR(P^{(i+1)})=\cR(\what P^{(i)})=\cP^{(i)}$ where the second equality
is due to the fact that $\what P^{(i)}$ is computed to an orthonormal basis matrix of the eigenspace
associated with the first $k$ largest eigenvalues of $H(P^{(i)})$, where
in the variation of Algorithm~\ref{alg:SCF4NEPv} as described in
Theorem~\ref{thm:cvg4SCF4NEPv:strong}, $\what P^{(i)}$ is computed to make
$\tr\big([\what P^{(i)}]^{\T}H_i\what P^{(i)}\big)$
large enough so that \eqref{eq:NEPv:eta(i):weak} is satisfied.
While it is nice to have Theorem~\ref{thm:cvg4SCF4NEPv:strong}, there is a limitation, i.e., the difficulty in verifying
\eqref{eq:NEPv:eta(i):weak} numerically as $\tr_{\max,k}\big(H(P^{(i)})\big)$ is unknown without computing the top  eigenspace of dimension $k$
of $H(P^{(i)})$, or, equivalently, an orthonormal basis matrix of the eigenspace. Nonetheless,
it still offers guidance and understanding when $\what P^{(i)}$ has to be computed iteratively.

It is tempting to ask if we could create a version of Theorem~\ref{thm:cvg4SCF4NEPv'} for the same
variation of Algorithm~\ref{alg:SCF4NEPv} as described in Theorem~\ref{thm:cvg4SCF4NEPv:strong}.
The answer is unlikely, except Theorem~\ref{thm:cvg4SCF4NEPv'}(a), because in proving
Theorem~\ref{thm:cvg4SCF4NEPv'}(b,c) in appendix~\ref{sec:Proof4NEPvCVG}, the decomposition
$H(P^{(i)})\what P^{(i)}=\what P^{(i)}\Lambda_i$ is used to conclude $\cR(\what P^{(i)})=\cP^{(i)}$
where $\eig(\Lambda_i)$ consists of the first $k$ largest eigenvalues of $H(P^{(i)})$.

%

\subsection{Acceleration by LOCG and Convergence}\label{ssec:LOCG:NEPv}
At Line 3 of Algorithm~\ref{alg:SCF4NEPv}, an $n\times n$ SEP is involved and that can be expensive for large/huge $n$.
As in subsection~\ref{ssec:LOCG:NPD}, the same idea for acceleration can be applied here to
speed things up. It has in fact been partially demonstrated in \cite[section 5]{wazl:2022a} on
MBSub.

\subsubsection*{A variant of LOCG for Acceleration}
We adopt the same setup at the beginning of subsection~\ref{ssec:LOCG:NPD} up to \eqref{eq:KKT-reduced}.
Differently, here we will need a  symmetric matrix-valued function $\wtd H(Z)$ for the dimensionally reduced
maximization problem \eqref{eq:LOCGsub} so that {\bf the NEPv Ansatz} can be passed on from $f$ with $H$ to
$\wtd f$ with $\wtd H$.

As we commented in Remark~\ref{rk:NEPv-assume}(i),  {\bf the NEPv Ansatz} does not impose any explicit relation between $\scrH(P)$ and $H(P)P$, e.g., through condition~\eqref{eq:cond:KKT=NEPv},
but in order to figure out a symmetric matrix-valued function $\wtd H(Z)$ from $H(P)$,  let us assume \eqref{eq:cond:KKT=NEPv}
for the moment. Once we figure out what $\wtd H(Z)$ should be for the case, we will then show the newly derived $\wtd H(Z)$ works for $\wtd f$ even without
\eqref{eq:cond:KKT=NEPv} as a prerequisite, i.e., {\bf the NEPv Ansatz} gets passed on from  $f$ with $H$ to
$\wtd f$ with $\wtd H$.
It follows from \eqref{eq:cond:KKT=NEPv} that
$$ 
W^{\T}\left(H(P)P-\frac{\partial f(P)}{\partial P}\right)=W^{\T}P\,\scrM(P)\quad\mbox{for $P=WZ$, $Z\in\STM{k}{\hat n}$}.
$$ 
or, equivalently,
\begin{equation}\label{eq:cond:KKT=NEPv:reduced}
\big(W^{\T}H(WZ)W\big)Z-\frac{\partial \wtd f(Z)}{\partial Z}=Z\,\wtd\scrM(Z)\quad\mbox{for $Z\in\STM{k}{\hat n}$},
\end{equation}
where $\wtd\scrM(Z):=\scrM(WZ)$. Immediately, \eqref{eq:cond:KKT=NEPv:reduced} sheds light on
what $\wtd H(Z)$ should be and, accordingly, the associated NEPv
for the reduced maximization problem \eqref{eq:LOCGsub}, namely,
\begin{equation}\label{eq:NEPv-form:reduced}
 \wtd H(Z)Z:=\big(W^{\T}H(WZ)W\big)Z =Z\wtd\Omega, \,\, Z\in\STM{k}{\hat n}.
\end{equation}
By Theorem~\ref{thm:H(P)-eligibility}, we conclude that, in the case of \eqref{eq:cond:KKT=NEPv},
$Z\in\STM{k}{m}$ is a solution to the KKT condition \eqref{eq:KKT-reduced} for
the reduced problem \eqref{eq:LOCGsub-1} if and only if
it is a solution to NEPv \eqref{eq:NEPv-form:reduced}
and $\wtd\scrM(Z)$ is  symmetric.

Now that we have found the form of $\wtd H$ for $\wtd f$, given $H$ for $f$, we can safely relinquish \eqref{eq:cond:KKT=NEPv}
and \eqref{eq:cond:KKT=NEPv:reduced} going forward. In the next lemma, we show that
$\wtd f$ with $\wtd H$ of the reduced problem inherits {\bf the NEPv Ansatz} for $f$ with $H$ of the original problem.

\begin{lemma}\label{lm:NEPvAssump.-reduced}
Suppose that {\bf the NEPv Ansatz} holds for $f(P)$ with $H(P)$, and let $\bbZ:=W^{\T}\bbP\subseteq\STM{k}{\hat n}$. If
$W\bbZ\subseteq\bbP$, then {\bf the NEPv Ansatz} holds for $\wtd f(Z)$ defined in \eqref{eq:LOCGsub-1}
with $\wtd H(Z)=W^{\T}H(WZ)W$.
\end{lemma}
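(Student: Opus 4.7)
The plan is to follow the template of the proof of Lemma~\ref{lm:NPDAssump.-reduced}, exploiting that multiplication by $W\in\bbO^{n\times m}$ preserves orthonormality (since $W^{\T}W=I_m$) and that the trace quantity appearing in {\bf the NEPv Ansatz} transfers cleanly between ambient and reduced coordinates via the identity
\[
\tr\!\big(X^{\T}(W^{\T}H(WZ)W)X\big)=\tr\!\big((WX)^{\T}H(WZ)(WX)\big)\quad\text{for any }X\in\bbO^{m\times k}.
\]

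First I would take an arbitrary $Z\in\bbZ$ and $\what Z\in\bbO^{m\times k}$ satisfying the reduced-level hypothesis
\[
\tr(\what Z^{\T}\wtd H(Z)\what Z)\ge\tr(Z^{\T}\wtd H(Z)Z)+\eta,
\]
and set $P:=WZ$ and $\what P:=W\what Z$. Then $P\in\bbP$ by the assumption $W\bbZ\subseteq\bbP$, and $\what P\in\bbO^{n\times k}$ because $W$ has orthonormal columns and $\what Z\in\bbO^{m\times k}$. Substituting $\wtd H(Z)=W^{\T}H(WZ)W$ into the displayed inequality turns it directly into \eqref{eq:NEPv-assume} for $H$ evaluated at the pair $(P,\what P)$.

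Next, I would apply {\bf the NEPv Ansatz} for $f$ with $H$ to obtain some $Q\in\bbO^{k\times k}$ such that $\wtd P:=\what P Q\in\bbP$ and $f(\wtd P)\ge f(P)+\omega\eta$, with $\omega$ the constant from the original ansatz. Defining $\wtd Z:=\what Z Q$, we have $W\wtd Z=W\what Z Q=\what P Q=\wtd P$, and because $W^{\T}W=I_m$ it follows that $\wtd Z=W^{\T}(W\wtd Z)=W^{\T}\wtd P\in W^{\T}\bbP=\bbZ$. Finally, unwinding the definition of $\wtd f$,
\[
\wtd f(\wtd Z)=f(W\wtd Z)=f(\wtd P)\ge f(P)+\omega\eta=\wtd f(Z)+\omega\eta,
\]
which is precisely {\bf the NEPv Ansatz} for $\wtd f$ with $\wtd H$, and with the same positive constant $\omega$.

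There is no real obstacle beyond bookkeeping: the argument is a change-of-variables $Z\mapsto WZ$ (with inverse $P\mapsto W^{\T}P$ on the image $W\bbZ$), and the hypothesis $W\bbZ\subseteq\bbP$ is exactly what guarantees the image lands inside the subset where the outer ansatz is posited to hold. The only step deserving care is checking that the $Q$ produced by the outer ansatz can be reused verbatim to produce $\wtd Z\in\bbZ$, which is immediate from $W^{\T}W=I_m$ together with $\wtd P\in\bbP$.
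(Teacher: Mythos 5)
Your proposal is correct and follows essentially the same argument as the paper's proof: lift $(Z,\what Z)$ to $(P,\what P)=(WZ,W\what Z)$, invoke {\bf the NEPv Ansatz} for $f$ with $H$ to obtain $Q$, and push $\wtd P=\what PQ$ back down to $\wtd Z=\what ZQ=W^{\T}\wtd P\in\bbZ$, preserving the same constant $\omega$. No gaps.
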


\begin{proof}
Let $\what Z\in\STM{k}{m}$ and $Z\in\bbZ:=W^{\T}\bbP\subseteq\STM{k}{\hat n}$ such that
\begin{equation}\label{eq:NEPv-assume:reduced}
\tr(\what Z^{\T}\wtd H(Z)\what Z)\ge\tr(Z^{\T}\wtd H(Z)Z)+\eta.
\end{equation}
Set $P=WZ\in\bbP$ because of $W\bbZ\subseteq\bbP$ and $\what P=W\what Z\in\STM{k}{n}$. We get
\eqref{eq:NEPv-assume} from \eqref{eq:NEPv-assume:reduced}. By {\bf the NEPv Ansatz} for $f$ with $H$,
there exists $Q\in\STM{k}{k}$ such that
$\wtd P=\what PQ=W(\what ZQ)=:W\wtd Z\in\bbP$ and
$f(\wtd P)\ge f(P)+\omega\eta$, i.e.,
$$
\wtd f(\wtd Z)=\wtd f(\what ZQ)=f(\wtd P)\ge f(P)+\omega\eta
  =f(WZ)+\omega\eta
  =\wtd f(Z)+\omega\eta.
$$
Note also $\wtd Z=W^{\T}\wtd P\in W^{\T}\bbP=\bbZ$. Hence {\bf the NEPv Ansatz} holds for $\wtd f$ with $\wtd H$.
\end{proof}

As a consequence of this lemma, and the results in subsections~\ref{ssec:NEPvAssum} and \ref{ssec:SCF4NEPv}, Algorithm~\ref{alg:SCF4NEPv}
is applicable to compute $Z_{\opt}$ of \eqref{eq:LOCGsub-1} via NEPv \eqref{eq:NEPv-form:reduced}.
We outline the resulting
method in Algorithm~\ref{alg:SCF4NEPv+LOCG}, which is an inner-outer iterative scheme for \eqref{eq:main-opt},
where at Line~4
any other method, if known, can be inserted to replace Algorithm~\ref{alg:SCF4NEPv} to
solve \eqref{eq:LOCGsub-1}. The same comments we made in Remark~\ref{rk:SCF4NPDo+LOCG} and after are applicable to
Algorithm~\ref{alg:SCF4NEPv+LOCG}, too.

\begin{algorithm}[t]
\caption{NEPvLOCG: NEPv \eqref{eq:NEPv-form} solved by LOCG} \label{alg:SCF4NEPv+LOCG}
\begin{algorithmic}[1]\REQUIRE Symmetric matrix-valued function $H(P)$ satisfying {\bf the NEPv Ansatz}, $P^{(0)}\in\bbP$;
\ENSURE  an approximate maximizer of \eqref{eq:main-opt}.
\STATE $P^{(-1)}=[\,]$; \% null matrix
\FOR{$i=0,1,\ldots$ until convergence}
    \STATE compute $W_i\in\STM{\hat n}{n}$ such that $\cR(W_i)=\cR([P^{(i)},\scrR(P^{(i)}),P^{(i-1)}])$
           and $P^{(i)}$ occupies
           the first $k$ columns of $W_i$;
    \STATE solve \eqref{eq:LOCGsub-1} via NEPv \eqref{eq:NEPv-form:reduced} for $Z_{\opt}$ by  Algorithm~\ref{alg:SCF4NEPv}
           with initially $Z^{(0)}$ being the first $k$ columns of $I_{\hat n}$;
    \STATE $P^{(i+1)}=W_iZ_{\opt}$;
\ENDFOR
\RETURN the last $P^{(i)}$.
\end{algorithmic}
\end{algorithm}

\subsubsection*{Convergence Analysis}
We can perform a convergence analysis of Algorithm~\ref{alg:SCF4NEPv+LOCG} similarly to what we did in subsection~\ref{ssec:LOCG:NPD}.
We state the convergence result in the next theorem.

\begin{theorem}\label{thm:cvg4SCFvLOCG:NEPv}
Suppose that {\bf the NEPv Ansatz} holds, and let sequence $\{P^{(i)}\}_{i=0}^{\infty}$ be generated by Algorithm~\ref{alg:SCF4NEPv+LOCG}
in which, it is assumed that $Z_{\opt}$ is computed by at least one SCF iteration, within
the variation of Algorithm~\ref{alg:SCF4NEPv} as described in
Theorem~\ref{thm:cvg4SCF4NEPv:strong}, when it is called to compute $Z_{\opt}$
at Line 4 of Algorithm~\ref{alg:SCF4NEPv+LOCG}. Suppose that  there is a constant $c'>0$, independent of $i$,
such that eventually
\begin{multline}\label{eq:cvg4SCFvLOCG:NEPv:assume}
\tr_{\max,k}\big(W_i^{\T}H(P^{(i)})W_i\big)-\tr\big([P^{(i)}]^{\T}H(P^{(i)})P^{(i)}\big) \\
   \ge c'\Big[\tr_{\max,k}\big(H(P^{(i)})\big)-\tr\big([P^{(i)}]^{\T}H(P^{(i)})P^{(i)}\big)\Big].
\end{multline}
The following statements hold.
\begin{enumerate}[{\rm (a)}]
  \item The sequence $\{f(P^{(i)})\}_{i=0}^{\infty}$ is monotonically increasing and convergent.
  \item Any accumulation point $P_*$ of the sequence $\{P^{(i)}\}_{i=0}^{\infty}$ is a KKT point of \eqref{eq:main-opt} and
        satisfies the necessary conditions in Theorem~\ref{thm:maxers-NEPv} for a global maximizer, i.e.,
        \eqref{eq:NEPv-form} holds for $P=P_*$ and the eigenvalues of $\Omega=\Omega_*:=P_*^{\T}H(P_*)P_*$
        consist of the first $k$ largest eigenvalues of $H(P_*)$;
  \item We still have the two convergent series {\rm (\ref{eq:cvg4SCF4NEPv:series-1}$'$)} and \eqref{eq:cvg4SCF4NEPv:series-2} for $\{P^{(i)}\}_{i=0}^{\infty}$ here.
\end{enumerate}
\end{theorem}

\begin{proof}
See appendix~\ref{sec:Proof4NEPvCVG}.
\end{proof}

\section{Atomic Functions for NEPv}\label{sec:AF-NEPv}
Armed with the general theoretical framework for the NEPv approach in
section~\ref{sec:NEPv-theory},
in this section, we introduce the notion of atomic functions for NEPv, which serves as
a singleton unit of function on $\STMnbr$
for which the NEPv approach is guaranteed to work for solving \eqref{eq:main-opt}, and more importantly,
the NEPv approach works on any convex composition of atomic functions,
provided that some of the partial derivatives
of the composing convex function are  nonnegative.


In what follows, we first formulate two conditions that define
atomic function and prove why the NEPv approach will work on the atomic functions, and then we give concrete examples of
atomic functions that encompass nearly all practical ones that are in use today,  and we leave
investigating how the NEPv approach will work on convex compositions of these atomic functions to section~\ref{sec:CVX-comp:NEPv}.

Combining the results in this section and the next section will yield
a large collection of objective functions, including those in Table~\ref{tbl:obj-funs:NEPv}, for which
{\bf the NEPv Ansatz} holds.

\subsection{Conditions on Atomic Functions}\label{ssec:AF-NEPv}
Suppose that, for  function $f$ defined on some neighborhood $\STMnbr$ of the Stiefel manifold $\STM{k}{n}$,
we have already constructed an associated symmetric matrix-valued function $H(P)\in\bbR^{n\times n}$.
%
We are interested in those
that satisfy
\begin{subequations}\label{eq:cond4AF-NEPv}
\begin{align}
\tr(P^{\T}H(P)P)&=\munderbar\gamma\, f(P)\quad\mbox{for $P\in\bbP\subseteq\STM{k}{n}$},
              \label{eq:cond4AF-NEPv-a} \\
\intertext{and given $P\in\bbP$ and $\what P\in\STM{k}{n}$, there exists $Q\in\STM{k}{k}$ such
that $\wtd P=\what PQ\in\bbP$ and}
\tr(\what P^{\T}H(P)\what P)&\le\munderbar\alpha \,f(\wtd P)+\munderbar\beta \,f(P),
      \label{eq:cond4AF-NEPv-b}
\end{align}
\end{subequations}
where $\munderbar\alpha>0,\,\munderbar\beta\ge 0$,  and $\munderbar\gamma=\munderbar\alpha+\munderbar\beta$ are constants.
Here  a subset $\bbP\subseteq\STM{k}{n}$ is also involved.

\begin{definition}\label{defn:AF:NEPv}
A function $f$ defined on some neighborhood $\STMnbr$ of $\STM{k}{n}$ is an {\em atomic function\/}
for NEPv
if there are
a symmetric matrix-valued function $H(P)\in\bbR^{n\times n}$ for $P\in\STM{k}{n}$ and
constants $\munderbar\alpha>0,\,\munderbar\beta\ge 0$, and $\munderbar\gamma=\munderbar\alpha+\munderbar\beta$  such that both conditions in
\eqref{eq:cond4AF-NEPv} hold.
\end{definition}

An atomic function according to Definition~\ref{defn:AF:NEPv} is of the second type
in this paper and is for  the NEPv approach, in contrast to the first type that is defined in subsection~\ref{ssec:AF-NPDo} of
Part~I for the NPDo approach. As in subsection~\ref{ssec:AF-NPDo}, here it also can be verified that
for two atomic functions $f_1$ and $f_2$ with $H_1$ and $H_2$, respectively,
that share the same $\bbP$, the same constants $\munderbar\alpha,\,\munderbar\beta,\,\munderbar\gamma$, and the same $Q$ for \eqref{eq:cond4AF-NEPv-b},
any linear combination $f:=c_1f_1+c_2f_2$ with $c_1H_1+c_2H_2$ for $c_1,\,c_2>0$
satisfies \eqref{eq:cond4AF-NEPv}, and hence is an atomic function for NEPv as well.

\begin{remark}\label{rk:regularity4tr-diff-eq:H(P)}
An alternative to \eqref{eq:cond4AF-NEPv-b} is
\begin{equation}\tag{\ref{eq:cond4AF-NEPv-b}$'$}
\tr(\wtd P^{\T}H(P)\wtd P)\le\munderbar\alpha \,f(\wtd P)+\munderbar\beta \,f(P)
           \quad\mbox{for $P,\,\wtd P\in\bbP\subseteq\STM{k}{n}$},
\end{equation}
without referring to an intermediate $\what P$.
We claim that (\ref{eq:cond4AF-NEPv-b}$'$) is stronger than \eqref{eq:cond4AF-NEPv-b},
assuming for any $\what P\in\STM{k}{n}$, there exists $Q\in\STM{k}{k}$ such
that $\wtd P=\what PQ\in\bbP$. Here is why. Suppose (\ref{eq:cond4AF-NEPv-b}$'$) holds.
Given any $\what P\in\STM{k}{n}$, let $\wtd P=\what PQ\in\bbP$ for some $Q\in\STM{k}{k}$. Then
by (\ref{eq:cond4AF-NEPv-b}$'$) we have for any $P\in\bbP$
$$
\tr(\what P^{\T}H(P)\what P)=\tr(\wtd P^{\T}H(P)\wtd P)\le\munderbar\alpha \,f(\wtd P)+\munderbar\beta \,f(P),
$$
yielding \eqref{eq:cond4AF-NEPv-b}.
In view of this observation, in our later developments, we may verify
(\ref{eq:cond4AF-NEPv-b}$'$) directly if it can be verified.
\end{remark}


In relating $\scrH(P)$ to $H(P)$ via, e.g., \eqref{eq:cond:KKT=NEPv} when \eqref{eq:natNEPv} does not hold,
$H(P)$ in general is not unique  \cite{luli:2024}. As a result,
satisfying \eqref{eq:cond4AF-NEPv} may depend on both $f$ and
the choice of $H(P)$. In other words, it is possible that the conditions in \eqref{eq:cond4AF-NEPv} are satisfied
for one choice of $H(P)$ but may not for another.

\begin{remark}\label{rk:tr-diff-eq:H(P)}
Previously, \eqref{eq:cond4AF-NPDo-a} appears explicitly as a partial differential equation (PDE), but \eqref{eq:cond4AF-NEPv-a} here
does not.
Nonetheless, it is likely a PDE in disguise, especially when $H(P)P$ is related to
$\scrH(P):={\partial f(P)}/{\partial P}$
through condition \eqref{eq:cond:KKT=NEPv}.
\end{remark}

The next theorem basically says that the conditions in \eqref{eq:cond4AF-NPDo} that define the atomic function for  NPDo  are stronger
than the ones in \eqref{eq:cond4AF-NEPv} for  NEPv  with the generic $H(P)$ given by \eqref{eq:H(P):always}.

\begin{theorem}\label{thm:scrH2H:tr-diff-eq}
Let $H(P)$ be as in \eqref{eq:H(P):always}.
\begin{enumerate}[{\rm (a)}]
  \item Equation \eqref{eq:cond4AF-NPDo-a} implies \eqref{eq:cond4AF-NEPv-a} with $\munderbar\gamma=2\gamma$.
  \item Inequality \eqref{eq:cond4AF-NPDo-b}
        implies \eqref{eq:cond4AF-NEPv-b} with
        $\munderbar\alpha=2\alpha$, $\munderbar\beta=2\beta$, and $\munderbar\gamma=\munderbar\alpha+\munderbar\beta=2\gamma$.
\end{enumerate}
\end{theorem}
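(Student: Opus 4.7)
The plan is to handle parts (a) and (b) separately. Part (a) reduces to a direct trace manipulation, while Part (b) requires an intermediate polar-factor realignment, very much in the spirit of the proof of Theorem~\ref{thm:NPDo>=NEPv}.

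For Part (a), substituting $H(P) = \scrH(P)P^{\T} + P\scrH(P)^{\T}$ into $\tr(P^{\T}H(P)P)$ and using $P^{\T}P = I_k$ immediately collapses both summands to $\tr(P^{\T}\scrH(P))$, yielding $\tr(P^{\T}H(P)P) = 2\tr(P^{\T}\scrH(P)) = 2\gamma f(P)$ via \eqref{eq:cond4AF-NPDo-a}, so $\munderbar\gamma = 2\gamma$.

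For Part (b), given $\what P \in \bbO^{n\times k}$ and $P \in \bbP$, the same expansion shows $\tr(\what P^{\T}H(P)\what P) = 2\tr(\what P^{\T}\scrH(P)P^{\T}\what P)$. Using $\|P^{\T}\what P\|_2 \le 1$ together with the sub-multiplicativity of the trace norm (as in the proof of Theorem~\ref{thm:NPDo>=NEPv}), this right-hand side is bounded by $2\|\what P^{\T}\scrH(P)\|_{\tr}$. To convert this quantity into one to which \eqref{eq:cond4AF-NPDo-b} can be applied, I would introduce $\check P := \what P W_1$, where $W_1 \in \bbO^{k\times k}$ is an orthogonal polar factor of $\what P^{\T}\scrH(P)$; this makes $\check P^{\T}\scrH(P) \succeq 0$, whence Lemma~\ref{lm:maxtrace} gives $\|\what P^{\T}\scrH(P)\|_{\tr} = \tr(\check P^{\T}\scrH(P))$.

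Finally, since $\check P \in \bbO^{n\times k}$, applying \eqref{eq:cond4AF-NPDo-b} with $\check P$ playing the role of $\what P$ produces some $Q_0 \in \bbO^{k\times k}$ such that $\wtd P := \check P Q_0 = \what P(W_1 Q_0) \in \bbP$ and $\tr(\check P^{\T}\scrH(P)) \le \alpha f(\wtd P) + \beta f(P)$. Chaining the two bounds delivers $\tr(\what P^{\T}H(P)\what P) \le 2\alpha f(\wtd P) + 2\beta f(P)$, which is exactly \eqref{eq:cond4AF-NEPv-b} with $\munderbar\alpha = 2\alpha$, $\munderbar\beta = 2\beta$, $\munderbar\gamma = \munderbar\alpha + \munderbar\beta = 2\gamma$, and $Q := W_1 Q_0$. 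I do not anticipate any serious difficulty; the only subtle point is the polar-factor realignment from $\what P$ to $\check P$, without which the intermediate trace-norm estimate would not cleanly match the right-hand side demanded by \eqref{eq:cond4AF-NPDo-b}.
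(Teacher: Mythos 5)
Your proposal is correct and follows essentially the same route as the paper's own proof: part (a) by the direct expansion $\tr(P^{\T}H(P)P)=2\tr(P^{\T}\scrH(P))$, and part (b) by bounding $2\tr(\what P^{\T}\scrH(P)P^{\T}\what P)$ through the trace norm, realigning $\what P$ to $\check P=\what PW_1$ via an orthogonal polar factor of $\what P^{\T}\scrH(P)$ so that $\|\what P^{\T}\scrH(P)\|_{\tr}=\tr(\check P^{\T}\scrH(P))$, and then invoking \eqref{eq:cond4AF-NPDo-b} on $\check P$. The composition $Q=W_1Q_0$ is exactly the paper's $W_1W_2$.
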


\begin{proof}
Assuming \eqref{eq:cond4AF-NPDo-a}, for $H(P)$ as given in  \eqref{eq:H(P):always}, we have
for $P\in\bbP\subseteq\STM{k}{n}$
$$
\tr(P^{\T}H(P)P)=2\tr\left(P^{\T}\scrH(P)\right)=2\gamma\, f(P),
$$
as was to be shown. Assume \eqref{eq:cond4AF-NPDo-b}. Let
$\what P\in\STM{k}{n},\,P\in\bbP$ and
let $W_1\in\STM{k}{k}$ be an orthonormal polar factor of
$\what P^{\T}\scrH(P)$ and $\check P=\what PW_1$.
Then we again have \eqref{eq:NPDo>=NEPv:pf-1} and
\begin{align}
\tr(\what P^{\T}H(P)\what P)&=2\tr\left(\what P^{\T}\scrH(P)P^{\T}\what P\right) \nonumber \\
   &\le 2\left\|\what P^{\T}\scrH(P)P^{\T}\what P\right\|_{\tr} \qquad (\mbox{by Lemma~\ref{lm:maxtrace}}) \nonumber\\
   &\le 2\left\|\what P^{\T}\scrH(P)\right\|_{\tr} \|P^{\T}\what P\|_2  \nonumber\\
   &\le 2\left\|\what P^{\T}\scrH(P)\right\|_{\tr} \qquad (\mbox{since $\|P^{\T}\what P\|_2\le 1$})  \nonumber\\
   &= 2\tr\left(\check P^{\T}\scrH(P)\right)
        \qquad (\mbox{by \eqref{eq:NPDo>=NEPv:pf-1}}). \label{eq:scrH2H:tr-diff-eq:pf-1}
\end{align}
Now use \eqref{eq:cond4AF-NPDo-b}
to conclude that there is  $W_2\in\STM{k}{k}$ such that $\wtd P=\check PW_2=\what P(W_1W_2)\in\bbP$
and
\begin{equation}\label{eq:scrH2H:tr-diff-eq:pf-2}
2\tr\left(\check P^{\T}\scrH(P)\right)\le 2\alpha \,f(\wtd P)+2\beta f(P).
\end{equation}
Combine \eqref{eq:scrH2H:tr-diff-eq:pf-1} and \eqref{eq:scrH2H:tr-diff-eq:pf-2} to get
\eqref{eq:cond4AF-NEPv-b} with
        $\munderbar\alpha=2\alpha$, $\munderbar\beta=2\beta$, and $\munderbar\gamma=\munderbar\alpha+\munderbar\beta=2\gamma$.
\end{proof}


\begin{theorem}\label{thm:cond4set2-pow-induced}
Given function $f$ defined on $\STMnbr$ and its associated symmetric $H(P)$ that satisfy \eqref{eq:cond4AF-NEPv},
suppose $f(P)\ge 0$ for $P\in\bbP$. Let $g(P)=c[f(P)]^s$ where $c>0,\,s>1$,
and let its associated symmetric matrix-valued function be $H_g(P)=cs\,[f(P)]^{s-1}H(P)$. Then
\begin{subequations}\label{eq:cond4set1-pow-induced:H(P)}
\begin{align}
\tr(P^{\T}H_g(P)P)&=s\munderbar\gamma\, g(P)\quad\mbox{for $P\in\bbP\subseteq\STM{k}{n}$},
          \label{eq:induced4pow-diff-eq:H(P)}\\
\intertext{and given $P\in\bbP$ and $\what P\in\STM{k}{n}$, there exists $Q\in\STM{k}{k}$ such
that $\wtd P=\what PQ\in\bbP$ and}
\tr(\what P^{\T} H_g(P)\what P)&\le \munderbar\alpha g(\wtd P)+(s\munderbar\gamma-\munderbar\alpha) g(P),
          \label{eq:induced-regularity4pow-diff-eq:H(P)}
\end{align}
\end{subequations}
where $\munderbar\alpha$, $\munderbar\beta$ and $\munderbar\gamma=\munderbar\alpha+\munderbar\beta$ are as in  \eqref{eq:cond4AF-NEPv-b}.
\end{theorem}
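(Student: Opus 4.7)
The plan is to mirror the proof of Theorem~\ref{thm:cond4set1-pow-induced} almost verbatim, since the structure of \eqref{eq:cond4AF-NEPv} for NEPv is completely parallel to \eqref{eq:cond4AF-NPDo} for NPDo, with $\tr(P^{\T}H(P)P)$ and $\tr(\what P^{\T}H(P)\what P)$ in place of $\tr(P^{\T}\scrH(P))$ and $\tr(\what P^{\T}\scrH(P))$. The key ingredient is still Young's inequality (Lemma~\ref{lm:YoungIneq-ext}), used to split the mixed power $f(\wtd P)[f(P)]^{s-1}$ into a convex combination of $[f(\wtd P)]^s$ and $[f(P)]^s$, which rests on the hypothesis $f(P)\ge 0$ on $\bbP$ so that all quantities raised to fractional powers are well defined.

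First I would establish \eqref{eq:induced4pow-diff-eq:H(P)} by direct substitution: since $H_g(P)=cs\,[f(P)]^{s-1}H(P)$, we have $\tr(P^{\T}H_g(P)P)=cs[f(P)]^{s-1}\tr(P^{\T}H(P)P)$, and then \eqref{eq:cond4AF-NEPv-a} gives $\tr(P^{\T}H(P)P)=\munderbar\gamma f(P)$, so the product telescopes to $cs\munderbar\gamma[f(P)]^s=s\munderbar\gamma\,g(P)$. This step is immediate and does not use the scalar inequality.

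Next I would prove \eqref{eq:induced-regularity4pow-diff-eq:H(P)}. Given $P\in\bbP$ and $\what P\in\bbO^{n\times k}$, apply \eqref{eq:cond4AF-NEPv-b} to $f$ to produce $Q\in\bbO^{k\times k}$ with $\wtd P=\what PQ\in\bbP$ and $\tr(\what P^{\T}H(P)\what P)\le\munderbar\alpha f(\wtd P)+\munderbar\beta f(P)$. Use the same $Q$ and $\wtd P$ for $g$. Multiplying this inequality through by the nonnegative scalar $cs[f(P)]^{s-1}$ yields
\[
\tr(\what P^{\T}H_g(P)\what P)\le cs\munderbar\alpha\,f(\wtd P)[f(P)]^{s-1}+cs\munderbar\beta[f(P)]^s.
\]
Now apply Lemma~\ref{lm:YoungIneq-ext} to bound $f(\wtd P)[f(P)]^{s-1}\le \tfrac{1}{s}[f(\wtd P)]^s+\tfrac{s-1}{s}[f(P)]^s$, and collect terms to get $\munderbar\alpha g(\wtd P)+[\munderbar\alpha(s-1)+s\munderbar\beta]g(P)$. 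Using $\munderbar\gamma=\munderbar\alpha+\munderbar\beta$, the coefficient of $g(P)$ simplifies to $s\munderbar\gamma-\munderbar\alpha$, which is the desired bound.

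There is no real obstacle; the only points to watch are that the same $Q$ from the hypothesis on $f$ serves for $g$ (since $H_g$ is merely a positive scalar multiple of $H(P)$, sharing $\bbP$ and the alignment structure), and that one cannot carry out the Young step without $f(P),f(\wtd P)\ge 0$, which holds because $\wtd P\in\bbP$ as well. The conclusion that $g$ with its associated $H_g$ is again an atomic function for NEPv (with parameters $\munderbar\alpha$ unchanged, $\munderbar\beta$ replaced by $s\munderbar\gamma-\munderbar\alpha$, $\munderbar\gamma$ replaced by $s\munderbar\gamma$) then follows in complete analogy with Corollaries~\ref{cor:pow(lin)4tr-diff-eq} and~\ref{cor:pow(quad)4tr-diff-eq} from Part~I.
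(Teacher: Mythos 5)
Your proposal is correct and follows essentially the same route as the paper's own proof: substitute $H_g(P)=cs[f(P)]^{s-1}H(P)$, invoke \eqref{eq:cond4AF-NEPv-a} and \eqref{eq:cond4AF-NEPv-b} with the same $Q$ and $\wtd P$, and then split $f(\wtd P)[f(P)]^{s-1}$ via Lemma~\ref{lm:YoungIneq-ext}, collecting the coefficient $\munderbar\alpha(s-1)+s\munderbar\beta=s\munderbar\gamma-\munderbar\alpha$. Your added remark that $f(\wtd P)\ge 0$ is guaranteed because $\wtd P\in\bbP$ is a correct and worthwhile observation that the paper leaves implicit.
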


\begin{proof}
We have
\begin{align*}
\tr(P^{\T}H_g(P)P)
   &=cs\,[f(P)]^{s-1}\,\tr(P^{\T}H(P)P) \\
   &=cs\,[f(P)]^{s-1}\,\munderbar\gamma\, f(P) \qquad(\mbox{by \eqref{eq:cond4AF-NEPv-a}})\\
   &=s\munderbar\gamma\,g(P), \\
\tr(\what P^{\T}H_g(P)\what P)
   &=cs\,[f(P)]^{s-1}\,\tr(\what P^{\T}H(P)\what P)  \\
   &\le cs\,[f(P)]^{s-1}\,\Big[\munderbar\alpha \,f(\wtd P)+\munderbar\beta \,[f(P)\Big]
                     \qquad(\mbox{by \eqref{eq:cond4AF-NEPv-b}})\\
   &= cs\munderbar\alpha\,f(\wtd P)\,[f(P)]^{s-1}+\munderbar\beta s c\,[f(P)]^s \\
   &\le cs\munderbar\alpha\,\left\{\frac 1s [f(\wtd P)]^s+\frac {s-1}s [f(P)]^s\right\}+\munderbar\beta s g(P)
                     \qquad(\mbox{by Lemma~\ref{lm:YoungIneq-ext}})\\
   &=\munderbar\alpha g(\wtd P)+\munderbar\alpha (s-1) g(P)+\munderbar\beta s g(P) \\
   &=\munderbar\alpha g(\wtd P)+[\munderbar\alpha (s-1)+\munderbar\beta s] g(P),
\end{align*}
as expected.
\end{proof}

Finally, we show that {\bf the NEPv Ansatz} holds for atomic functions for NEPv. As a corollary, the NEPv approach as laid out in section~\ref{sec:NEPv-theory}
works on any atomic function for NEPv.

\begin{theorem}\label{thm:NEPv-assume:AF}
{\bf The NEPv Ansatz} holds with $\omega=1/\munderbar\alpha$ for atomic function $f$  with a symmetric matrix-valued function $H(P)\in\bbR^{n\times n}$
 satisfying the conditions in \eqref{eq:cond4AF-NEPv}.
\end{theorem}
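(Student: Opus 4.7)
The plan is to mirror, in the NEPv setting, the short argument already given for Theorem~\ref{thm:NPDo-assume:AF} in the NPDo setting. The two defining conditions \eqref{eq:cond4AF-NEPv-a}--\eqref{eq:cond4AF-NEPv-b} of an NEPv atomic function combine with the premise \eqref{eq:NEPv-assume} of {\bf the NEPv Ansatz} to produce a chain of inequalities that collapses, thanks to the algebraic relation $\munderbar\gamma=\munderbar\alpha+\munderbar\beta$, to the desired increment $f(\wtd P)\ge f(P)+\eta/\munderbar\alpha$. So $\omega=1/\munderbar\alpha$ will fall out for free.

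More concretely, I would start with $P\in\bbP$ and $\what P\in\bbO^{n\times k}$ satisfying the hypothesis of the ansatz, namely $\tr(\what P^{\T}H(P)\what P)\ge\tr(P^{\T}H(P)P)+\eta$. The first step is to use the equality~\eqref{eq:cond4AF-NEPv-a} to rewrite the left-hand quantity $\tr(P^{\T}H(P)P)$ as $\munderbar\gamma f(P)$. The second step is to invoke \eqref{eq:cond4AF-NEPv-b} to produce the $Q\in\bbO^{k\times k}$ whose existence the ansatz needs, to set $\wtd P=\what PQ\in\bbP$, and to bound $\tr(\what P^{\T}H(P)\what P)$ above by $\munderbar\alpha f(\wtd P)+\munderbar\beta f(P)$. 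Chaining these two facts with the hypothesis gives
\[
\eta+\munderbar\gamma f(P)\;\le\;\tr(\what P^{\T}H(P)\what P)\;\le\;\munderbar\alpha f(\wtd P)+\munderbar\beta f(P).
\]

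The last step is purely arithmetic: substitute $\munderbar\gamma=\munderbar\alpha+\munderbar\beta$, cancel the common $\munderbar\beta f(P)$ from both sides, and divide by $\munderbar\alpha>0$ to obtain $f(\wtd P)\ge f(P)+\eta/\munderbar\alpha$. With the same $Q$ that was supplied by \eqref{eq:cond4AF-NEPv-b}, this is exactly the conclusion of {\bf the NEPv Ansatz} with $\omega=1/\munderbar\alpha$, which is independent of $P$ and $\what P$ as required.

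There is essentially no obstacle: the theorem is a clean consequence of the two definitional conditions together with the way the constants were chosen to satisfy $\munderbar\gamma=\munderbar\alpha+\munderbar\beta$. The only subtle point worth flagging is that the $Q$ in the ansatz is precisely the same $Q$ furnished by condition~\eqref{eq:cond4AF-NEPv-b}; no independent construction is needed, and in particular we do not need to separately realign $\what P$ into $\bbP$ since~\eqref{eq:cond4AF-NEPv-b} already delivers $\wtd P=\what PQ\in\bbP$. This parallels exactly the role played by $\eta+\gamma f(P)\le\alpha f(\wtd P)+\beta f(P)$ in the NPDo proof.
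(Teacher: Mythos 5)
Your proposal is correct and follows essentially the same argument as the paper: apply \eqref{eq:cond4AF-NEPv-a} to identify $\tr(P^{\T}H(P)P)$ with $\munderbar\gamma f(P)$, apply \eqref{eq:cond4AF-NEPv-b} to obtain $Q$, $\wtd P=\what PQ\in\bbP$, and the upper bound $\munderbar\alpha f(\wtd P)+\munderbar\beta f(P)$, then chain with the ansatz hypothesis and cancel $\munderbar\beta f(P)$ using $\munderbar\gamma=\munderbar\alpha+\munderbar\beta$. Your remark that the $Q$ of the ansatz is exactly the one furnished by \eqref{eq:cond4AF-NEPv-b} is also the implicit content of the paper's one-line proof.
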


\begin{proof}
Given $P\in\bbP\subseteq\STM{k}{n}$ and $\what P\in\STM{k}{n}$, suppose  that \eqref{eq:NEPv-assume} holds, i.e.,
$\tr(\what P^{\T}H(P)\what P)\ge\tr(P^{\T}H(P)P)+\eta$.
We have by \eqref{eq:cond4AF-NEPv}
$$
\eta+\munderbar\gamma f(P)=\eta+\tr(P^{\T}H(P)P)
   \le\tr(\what P^{\T}H(P)\what P)
   \le\munderbar\alpha f(\wtd P)+\munderbar\beta f(P)
$$
yielding $\eta/\munderbar\alpha+ f(P)\le  f(\wtd P)$, as was to be shown.
\end{proof}

\subsection{Concrete Atomic Functions}\label{ssec:AF-concrete-NEPv}
We will show  that
\begin{equation}\label{eq:concrete-AF:NEPv}
\framebox{
\parbox{10cm}{
$[\tr((P^{\T}D)^m)]^s$,
        $[\tr((P^{\T}AP)^m)]^s$
for integer $m\ge 1$, $s\ge 1$, and also $A\succeq 0$ in the case of $m\ge 2$ or $s>1$,
}}
\end{equation}
with proper symmetric $H(P)$ to be given in the theorems and corollaries below,
satisfy \eqref{eq:cond4AF-NEPv}
and hence are atomic functions for NEPv.
Therefore, by Theorem~\ref{thm:NEPv-assume:AF}, {\bf the NEPv Ansatz} holds for them.
These atomic functions are the same in form as the ones in subsection~\ref{ssec:AF-concrete-NPD}
but there are differences as detailed in Table~\ref{tbl:NPDo-vs-NEPv:AF}.
When inequality \eqref{eq:cond4AF-NPDo-b} or \eqref{eq:cond4AF-NEPv-b}
become an equality, there is an important implication when it comes to
verify the corresponding ansatz for
 the composition
of atomic functions by a convex function $\phi$, namely, for equality \eqref{eq:cond4AF-NPDo-b} or \eqref{eq:cond4AF-NEPv-b},
the corresponding partial derivative $\phi_j(\bx):=\partial\phi(\bx)/\partial x_j$
can be of any sign but in general is required to be nonnegative otherwise. We have seen this
in Theorem~\ref{thm:main-npd-cvx} and will see it again in Theorem~\ref{thm:main-NEPv-cvx} later.

\begin{table}[t]
\renewcommand{\arraystretch}{1.1}
\caption{\small Concrete atomic functions for NPDo and NEPv}\label{tbl:NPDo-vs-NEPv:AF}
\centerline{\small
\begin{tabular}{|c|c|c|c|c|}
  \hline
  & \multicolumn{2}{c|}{$[\tr((P^{\T}D)^m)]^s$} & \multicolumn{2}{c|}{$[\tr((P^{\T}AP)^m)]^s$} \\ \hline\hline
\multirow{4}{*}{NPDo}    & $m=s=1$ & $m\ge 2$ or $s>1$ & \multicolumn{2}{c|}{$A\succeq 0$,} \\ \cline{2-3}
    & \eqref{eq:cond4AF-NPDo-b} an equality, & \eqref{eq:cond4AF-NPDo-b} an inequality,
    & \multicolumn{2}{c|}{\eqref{eq:cond4AF-NPDo-b} an inequality,}  \\
    & $\bbP=\STM{k}{n}_{D+}$, & $\bbP=\STM{k}{n}_{D+}$.
    & \multicolumn{2}{c|}{$\bbP=\STM{k}{n}$.} \\
    & or $\bbP=\STM{k}{n}$. &
    & \multicolumn{2}{c|}{} \\ \hline
\multirow{3}{*}{NEPv}  & \multicolumn{2}{c|}{} & $m=s=1$ & $m\ge 2$ or $s>1$  \\ \cline{4-5}
    & \multicolumn{2}{c|}{\eqref{eq:cond4AF-NEPv-b} an inequality,} &  \eqref{eq:cond4AF-NEPv-b} an equality,
    & \eqref{eq:cond4AF-NEPv-b} an inequality, \\
    & \multicolumn{2}{c|}{$\bbP=\STM{k}{n}_{D+}$.} & $\bbP=\STM{k}{n}$.
    & $A\succeq 0$, $\bbP=\STM{k}{n}$. \\ \hline
\multicolumn{5}{l}{\small * Integer $m\ge 1$, scalar $s\ge 1$ and $A$ is symmetric. }
\end{tabular}
}
\end{table}

\begin{theorem}\label{thm:lin4tr-diff-eq:H(P)}
Let $D\in\bbR^{n\times k}$, integer $m\ge 1$  and $f(P)=\tr((P^{\T}D)^m)$ for which
we use
\begin{equation}\label{eq:H(P):lin-pow4tr-diff-eq}
H(P)=m\,\Big[D(P^{\T}D)^{m-1}P^{\T}+P(D^{\T}P)^{m-1}D^{\T}\Big],
\end{equation}
and thus $H(P)P-\scrH(P)\equiv P\big[m(D^{\T}P)^m\big]$ for $P\in\STM{k}{n}$.
Then we have
\begin{subequations}\label{eq:cond4set2-lin4tr-diff-eq:H(P)}
\begin{align}
\tr(P^{\T}H(P)P)
   &= 2m\, \tr((P^{\T}D)^m)\quad\mbox{for $P\in\STM{k}{n}$}; \label{eq:lin-pow4tr-diff-eq:H(P)} \\
\tr(\wtd P^{\T}H(P)\wtd P)
   &\le  2\tr((\wtd P^{\T}D)^m)+2(m-1)\,\tr((P^{\T}D)^m)
           \quad\mbox{for $P,\,\wtd P\in\bbP$},
           \label{eq:regularity4lin-pow4tr-diff-eq:H(P)}
\end{align}
\end{subequations}
where $\bbP=\STM{k}{n}_{D+}$. 
They, as argued in Remark~\ref{rk:regularity4tr-diff-eq:H(P)}, imply that \eqref{eq:cond4AF-NEPv} holds with $\munderbar\alpha=2$ and $\munderbar\beta=2(m-1)$,
and thus $f(P)=\tr((P^{\T}D)^m)$ is an atomic function for NEPv.
Furthermore, any solution $P$ to NEPv \eqref{eq:NEPv-form} with
$H(P)$ in \eqref{eq:H(P):lin-pow4tr-diff-eq} such that $(P^{\T}D)^m$ is symmetric is a solution to the KKT condition \eqref{eq:KKT} and vice versa.
\end{theorem}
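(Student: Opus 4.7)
The plan is to proceed in four steps, largely paralleling the NPDo computation in Theorem~\ref{thm:lin4tr-diff-eq} but inserting one additional operator-norm reduction to handle the extra $P^{\T}\wtd P$ cross-term that appears through $H(P)$.

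First I would verify the stated identity $H(P)P-\scrH(P)\equiv P[m(D^{\T}P)^m]$. Since $\scrH(P)=mD(P^{\T}D)^{m-1}$ by \eqref{eq:partD-lin-pow}, and $P^{\T}P=I_k$ for $P\in\bbO^{n\times k}$, direct substitution into \eqref{eq:H(P):lin-pow4tr-diff-eq} yields $H(P)P=mD(P^{\T}D)^{m-1}+mP(D^{\T}P)^m$, as claimed. This identity places $H(P)$ in the form \eqref{eq:cond:KKT=NEPv} with $\scrM(P)=m(D^{\T}P)^m$, so by Theorem~\ref{thm:H(P)-eligibility}, $P\in\bbO^{n\times k}$ is a KKT point iff $P$ solves NEPv \eqref{eq:NEPv-form} and $(D^{\T}P)^m$ is symmetric, settling the final assertion.

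Second, the equality \eqref{eq:lin-pow4tr-diff-eq:H(P)} follows from direct computation using the cyclic property of trace together with $P^{\T}P=I_k$: each of the two summands in $P^{\T}H(P)P$ contributes $m\,\tr((P^{\T}D)^m)$.

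Third, and this is the main technical step, I would establish \eqref{eq:regularity4lin-pow4tr-diff-eq:H(P)}. The two summands of $\tr(\wtd P^{\T}H(P)\wtd P)$ are real numbers whose matrices-inside-the-trace are transposes of one another, hence equal, so
$$\tr(\wtd P^{\T}H(P)\wtd P)=2m\,\tr\bigl(\wtd P^{\T}D(P^{\T}D)^{m-1}P^{\T}\wtd P\bigr).$$
To peel off the $P^{\T}\wtd P$ factor, I would mirror the chain used in Example~\ref{eg:motivation:NEPv}:
\begin{align*}
m\,\tr\bigl(\wtd P^{\T}D(P^{\T}D)^{m-1}P^{\T}\wtd P\bigr)
   &\le m\,\bigl\|\wtd P^{\T}D(P^{\T}D)^{m-1}P^{\T}\wtd P\bigr\|_{\tr} \quad (\text{Lemma~\ref{lm:maxtrace}})\\
   &\le m\,\bigl\|\wtd P^{\T}D(P^{\T}D)^{m-1}\bigr\|_{\tr}\,\|P^{\T}\wtd P\|_2 \\
   &\le m\,\bigl\|\wtd P^{\T}D(P^{\T}D)^{m-1}\bigr\|_{\tr},
\end{align*}
where the last step uses $\|P^{\T}\wtd P\|_2\le 1$ since both $P,\wtd P\in\bbO^{n\times k}$. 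Because $P,\wtd P\in\bbP=\bbO_{D+}^{n\times k}$, both $P^{\T}D$ and $\wtd P^{\T}D$ are positive semidefinite, so Lemma~\ref{lm:vN-tr-ineq-ext1} (used in the same way as in the NPDo proof to obtain \eqref{eq:lin4tr-diff-eq:pf-1}) gives
$$m\,\bigl\|\wtd P^{\T}D(P^{\T}D)^{m-1}\bigr\|_{\tr}\le \tr((\wtd P^{\T}D)^m)+(m-1)\,\tr((P^{\T}D)^m).$$
Multiplying by $2$ delivers \eqref{eq:regularity4lin-pow4tr-diff-eq:H(P)}.

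Finally, the concluding sentence about \eqref{eq:cond4AF-NEPv} holding with $\munderbar\alpha=2,\,\munderbar\beta=2(m-1)$ follows immediately from Remark~\ref{rk:regularity4tr-diff-eq:H(P)}, once one picks $Q\in\bbO^{k\times k}$ so that $\wtd P=\what P Q\in\bbP$ (e.g., an orthogonal polar factor of $\what P^{\T}D$). The only real obstacle is the middle step: the cross-factor $P^{\T}\wtd P$ prevents a direct application of Lemma~\ref{lm:vN-tr-ineq-ext1} as in the NPDo case, and one must first pass from trace to nuclear norm via Lemma~\ref{lm:maxtrace} and then use sub-multiplicativity against the operator norm. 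Everything else is bookkeeping with the cyclic trace identity.
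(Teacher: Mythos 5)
Your proof is correct and follows essentially the same route as the paper's: the identity $H(P)P-\scrH(P)=P[m(D^{\T}P)^m]$ plus Theorem~\ref{thm:H(P)-eligibility} for the equivalence claim, a direct trace computation for \eqref{eq:lin-pow4tr-diff-eq:H(P)}, and the chain Lemma~\ref{lm:maxtrace} $\to$ nuclear-norm submultiplicativity against $\|P^{\T}\wtd P\|_2\le 1$ $\to$ von Neumann-type inequality for \eqref{eq:regularity4lin-pow4tr-diff-eq:H(P)}. The only cosmetic difference is that the paper invokes Lemma~\ref{lm:vN-tr-ineq-ext} in the last step (both factors being positive semidefinite since $P,\wtd P\in\bbO_{D+}^{n\times k}$), whereas you cite Lemma~\ref{lm:vN-tr-ineq-ext1}, which reduces to the same bound with $Q=I_k$.
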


\begin{proof}
$H(P)$ in the theorem is in fact the generic one in \eqref{eq:H(P):always} for the case
and in the notation of Theorem~\ref{thm:H(P)-eligibility},
$\scrM(P)=m(P^{\T}D)^m$. Hence any solution $P$ to NEPv \eqref{eq:NEPv-form} such that $(P^{\T}D)^m$ is symmetric is a solution to
the KKT condition \eqref{eq:KKT} and vice versa.

Equation \eqref{eq:lin-pow4tr-diff-eq:H(P)} can be straightforwardly verified.
Now we prove \eqref{eq:regularity4lin-pow4tr-diff-eq:H(P)}.
Inequality \eqref{eq:regularity4lin-pow4tr-diff-eq:H(P)} for $m=1$
in fact holds for all $P\in\STM{k}{n}$. To see this, for $m=1$ and $P\in\STM{k}{n}$,
since $\wtd P^{\T}D\succeq 0$,
$$
\tr(\wtd P^{\T}H(P)\wtd P)=2\tr(\wtd P^{\T}DP^{\T}\wtd P)
   \le2\|\wtd P^{\T}DP^{\T}\wtd P\|_{\tr}
   \le 2\|\wtd P^{\T}D\|_{\tr}=2\tr(\wtd P^{\T}D)
$$
by Lemma~\ref{lm:maxtrace}.
In general for $m>1$, suppose both $P^{\T}D\succeq 0$ and $\wtd P^{\T}D\succeq 0$.
Then
\begin{align*}
\tr(\wtd P^{\T}H(P)\wtd P)&=2m\,\tr(\wtd P^{\T}D(P^{\T}D)^{m-1}P^{\T}\wtd P) \\
   &\le 2m\left\|\wtd P^{\T}D(P^{\T}D)^{m-1}P^{\T}\wtd P\right\|_{\tr} \qquad (\mbox{by Lemma~\ref{lm:maxtrace}}) \\
   &\le 2m\left\|\wtd P^{\T}D(P^{\T}D)^{m-1}\right\|_{\tr} \|P^{\T}\wtd P\|_2  \\
   &\le 2m\left\|\wtd P^{\T}D(P^{\T}D)^{m-1}\right\|_{\tr} \qquad (\mbox{since $\|P^{\T}\wtd P\|_2\le 1$})  \\
   &\le 2\tr((\wtd P^{\T}D)^m)+2(m-1)\,\tr((P^{\T}D)^m),
\end{align*}
where the last inequality is due to Lemma~\ref{lm:vN-tr-ineq-ext}.

Finally \eqref{eq:cond4set2-lin4tr-diff-eq:H(P)} implies  \eqref{eq:cond4AF-NEPv}, as
argued in Remark~\ref {rk:regularity4tr-diff-eq:H(P)}.
\end{proof}


For any $s>1$,  $[\tr((P^{\T}D)^m)]^s$ is  well-defined for any $P\in\bbR^{n\times k}$
such that $\tr((P^{\T}D)^m)\ge 0$.
In particular, $[\tr((P^{\T}D)^m)]^s$ is well-defined
for $P\in\STM{k}{n}_{D+}$.
Combining Theorems~\ref{thm:cond4set2-pow-induced} and \ref{thm:lin4tr-diff-eq:H(P)}, we obtain the following corollary.

\begin{corollary}\label{cor:pow(lin)4tr-diff-eq:H(P)}
Let $D\in\bbR^{n\times k}$, integer $m\ge 1$, $s>1$, $g(P)=[\tr((P^{\T}D)^m)]^s$, and $\bbP=\STM{k}{n}_{D+}$.
Let $H(P)$
be as in \eqref{eq:H(P):lin-pow4tr-diff-eq}
and $H_g(P)=s\,[\tr((P^{\T}D)^m)]^{s-1}H(P)$
for which $H_g(P)P-\partial g(P)/\partial P\equiv P\big[sm\,[\tr((P^{\T}D)^m)]^{s-1}\,(D^{\T}P)^m\big]$.
Then
\begin{subequations}\label{eq:pow(lin)4tr-diff-eq:H(P)}
\begin{align}
\tr(P^{\T}H_g(P)P)&=2sm\, [\tr((P^{\T}D)^m)]^s \quad\mbox{for $P\in\bbP$},
                    \label{eq:lin-pow4trpow-diff-eq:H(P)}              \\
\tr(\wtd P^{\T}H_g(P)\wtd P)
             &\le 2[\tr((\wtd P^{\T}D)^m)]^s+2(sm-1)[\tr((P^{\T}D)^m)]^s
                \quad\mbox{for $P,\,\wtd P\in\bbP$}. \label{eq:lin-pow-regularity4pow-diff-eq:H(P)}
\end{align}
They, as argued in Remark~\ref{rk:regularity4tr-diff-eq:H(P)}, imply that \eqref{eq:cond4AF-NEPv} holds with $\munderbar\alpha=2$ and $\munderbar\beta=2(sm-1)$,
and thus $g(P)=[\tr((P^{\T}D)^m)]^s$  for $s>1$ is an atomic function for NEPv.
\end{subequations}
\end{corollary}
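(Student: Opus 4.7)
The plan is to treat this as a direct power-induction corollary, combining Theorem~\ref{thm:lin4tr-diff-eq:H(P)} (the $m$-only version for $f(P)=\tr((P^{\T}D)^m)$) with the power-promotion argument underlying Theorem~\ref{thm:cond4set2-pow-induced}. Note first that on $\bbP=\bbO_{D+}^{n\times k}$ the base function $f(P)=\tr((P^{\T}D)^m)$ is nonnegative, because $P^{\T}D\succeq 0$ forces the eigenvalues of $(P^{\T}D)^m$ to be nonnegative; this is what makes $g(P)=[f(P)]^s$ and the multiplicative factor $s[f(P)]^{s-1}$ in $H_g(P)$ well defined on $\bbP$.

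For part (a), simply factor out the scalar $s[f(P)]^{s-1}$:
\[
\tr(P^{\T}H_g(P)P)=s[f(P)]^{s-1}\,\tr(P^{\T}H(P)P)=s[f(P)]^{s-1}\cdot 2m\,f(P)=2sm\,[f(P)]^s,
\]
using \eqref{eq:lin-pow4tr-diff-eq:H(P)} from Theorem~\ref{thm:lin4tr-diff-eq:H(P)}. The verification of the identity $H_g(P)P-\partial g(P)/\partial P\equiv P\,\bigl[sm[f(P)]^{s-1}(D^{\T}P)^m\bigr]$ is routine from the product/chain rule applied to $g$ together with the corresponding identity for $H(P)$ stated in Theorem~\ref{thm:lin4tr-diff-eq:H(P)}.

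For part (b), I would repeat the proof of Theorem~\ref{thm:cond4set2-pow-induced} but in the $\wtd P$-form, which is available since Theorem~\ref{thm:lin4tr-diff-eq:H(P)} gives us the strong inequality \eqref{eq:regularity4lin-pow4tr-diff-eq:H(P)} directly in $\wtd P$, as permitted by Remark~\ref{rk:regularity4tr-diff-eq:H(P)}. Writing $x=\tr((\wtd P^{\T}D)^m)\ge 0$ and $y=\tr((P^{\T}D)^m)\ge 0$, one has
\[
\tr(\wtd P^{\T}H_g(P)\wtd P)=s\,y^{s-1}\,\tr(\wtd P^{\T}H(P)\wtd P)\le s\,y^{s-1}\bigl[2x+2(m-1)y\bigr]=2s\,x\,y^{s-1}+2(m-1)s\,y^s.
\]
The main (and only) real step is to decouple the mixed term $x\,y^{s-1}$; by Lemma~\ref{lm:YoungIneq-ext} (Young's inequality, applicable because $x,y\ge 0$ and $s>1$), $x\,y^{s-1}\le \tfrac{1}{s}x^s+\tfrac{s-1}{s}y^s$. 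Substituting and collecting the $y^s$ terms with the arithmetic $2(s-1)+2(m-1)s=2(sm-1)$ yields the stated bound $2x^s+2(sm-1)y^s$. Finally, the two displayed formulas are precisely \eqref{eq:cond4AF-NEPv} with $\munderbar\alpha=2$ and $\munderbar\beta=2(sm-1)$ by Remark~\ref{rk:regularity4tr-diff-eq:H(P)}.

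I expect no genuine obstacle here; the only subtlety is keeping the $\wtd P$-formulation straight rather than passing through the intermediate $\what P$, which is what allows the cleaner statement and avoids having to construct an auxiliary $Q$ in this step.
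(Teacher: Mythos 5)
Your proof is correct and follows essentially the same route as the paper, which obtains the corollary by combining the power-promotion argument of Theorem~\ref{thm:cond4set2-pow-induced} (scalar factoring for the trace identity, Lemma~\ref{lm:YoungIneq-ext} to decouple the mixed term, with the arithmetic $2(s-1)+2(m-1)s=2(sm-1)$) with the base case Theorem~\ref{thm:lin4tr-diff-eq:H(P)}. Working directly in the $\wtd P$-form on $\bbP=\bbO_{D+}^{n\times k}$, where $\tr((P^{\T}D)^m)\ge 0$, is exactly what the paper's statement and Remark~\ref{rk:regularity4tr-diff-eq:H(P)} intend.
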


Next we consider $\tr((P^{\T}AP)^m)$ and its power.

\begin{theorem}\label{thm:quad4tr-diff-eq:H(P)}
Let symmetric $A\in\bbR^{n\times n}$, integer $m\ge 1$, and
$f(P)=\tr((P^{\T}AP)^m)$ for which we use
\begin{equation}\label{eq:H(P):quad-pow4tr-diff-eq}
H(P):=2m\,A(PP^{\T}A)^{m-1}
\end{equation}
and thus $\scrH(P)\equiv H(P)P$ for $P\in\bbR^{n\times k}$.
\begin{subequations}\label{eq:quad4tr-diff-eq:H(P)}
\begin{enumerate}[{\rm (a)}]
  \item For $P\in\bbR^{n\times k}$, we have
        \begin{equation}\label{eq:quad-pow4tr-diff-eq:H(P)}
        \tr(P^{\T}H(P)P)=2m\, f(P)\equiv 2m\,\tr((P^{\T}AP)^m).
        \end{equation}
  \item Let $P,\,\wtd P\in\bbR^{n\times k}$.
        \begin{enumerate}[{\rm (i)}]
          \item For $m=1$, we always have
                \begin{equation}\label{eq:regularity4quad4tr-diff-eq:H(P)}
                \tr(\wtd P^{\T}H(P)\wtd P)= 2\tr(\wtd P^{\T}A\wtd P);
                \end{equation}
          \item For $m>1$, if $A\succeq 0$, then
                \begin{equation}\label{eq:regularity4quad-pow4tr-diff-eq:H(P)}
                \tr(\wtd P^{\T}H(P)\wtd P)\le 2\tr((\wtd P^{\T}A\wtd P)^m)+2(m-1)\,\tr((P^{\T}AP)^m).
                \end{equation}
         \end{enumerate}
\end{enumerate}
\end{subequations}
They, as argued in Remark~\ref{rk:regularity4tr-diff-eq:H(P)}, imply that \eqref{eq:cond4AF-NEPv} holds with $\munderbar\alpha=2$ and $\munderbar\beta=2(m-1)$, and $\bbP=\STM{k}{n}$,
and thus $f(P)=\tr((P^{\T}AP)^m)$ is an atomic function for NEPv.
\end{theorem}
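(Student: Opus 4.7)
\textbf{Proof proposal for Theorem~\ref{thm:quad4tr-diff-eq:H(P)}.}
The plan is to mirror the structure of Theorem~\ref{thm:lin4tr-diff-eq:H(P)}. First, item~(a) is a direct computation. Observe that
$(PP^{\T}A)^{m-1}P=P(P^{\T}AP)^{m-1}$ by a simple telescoping, so $H(P)P=2m\,A(PP^{\T}A)^{m-1}P=2m\,AP(P^{\T}AP)^{m-1}$, which coincides with $\scrH(P)$ computed in Theorem~\ref{thm:quad4tr-diff-eq} (formula~\eqref{eq:partD-quad-pow}). Consequently $P^{\T}H(P)P=2m(P^{\T}AP)^{m}$ and \eqref{eq:quad-pow4tr-diff-eq:H(P)} follows by taking the trace; this derivation requires no sign assumption on $A$. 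The residual needed in Theorem~\ref{thm:H(P)-eligibility} is $H(P)P-\scrH(P)=0$, so \eqref{eq:cond:KKT=NEPv} holds trivially with $\scrM(P)\equiv 0$, identifying NEPv~\eqref{eq:NEPv-form} with the KKT condition without any extra symmetry constraint.

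For item~(b)(i), when $m=1$ the matrix $(PP^{\T}A)^{0}=I$, so $H(P)=2A$ and \eqref{eq:regularity4quad4tr-diff-eq:H(P)} is an identity, independent of $A\succeq 0$.

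The substantive part is item~(b)(ii). Assuming $A\succeq 0$, introduce $X=A^{1/2}\wtd P$ and $Y=A^{1/2}P$ exactly as in the proof of Theorem~\ref{thm:quad4tr-diff-eq}. The key algebraic identity is
\[
A(PP^{\T}A)^{m-1}=A^{1/2}(YY^{\T})^{m-1}A^{1/2},
\]
which I would verify by a one-line induction: $A^{1/2}(PP^{\T}A)^{m-1}=(A^{1/2}PP^{\T}A^{1/2})^{m-1}A^{1/2}=(YY^{\T})^{m-1}A^{1/2}$. Sandwiching with $\wtd P^{\T}$ and $\wtd P$ and using the cyclic property of the trace yields
\[
\tr(\wtd P^{\T}H(P)\wtd P)=2m\,\tr\!\bigl(X^{\T}(YY^{\T})^{m-1}X\bigr)=2m\,\tr\!\bigl((YY^{\T})^{m-1}XX^{\T}\bigr).
\]
Both $(YY^{\T})^{m-1}$ and $XX^{\T}$ are positive semidefinite.

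The remaining step is a trace inequality of the form
\[
m\,\tr\!\bigl((YY^{\T})^{m-1}XX^{\T}\bigr)\le (m-1)\tr\!\bigl((Y^{\T}Y)^{m})+\tr\!\bigl((X^{\T}X)^{m}\bigr),
\]
which is precisely the sort of statement encoded by Lemma~\ref{lm:vN-tr-ineq-ext} (or equivalently a H\"older--Young combination: $\tr(M^{m-1}N)\le [\tr(M^{m})]^{(m-1)/m}[\tr(N^{m})]^{1/m}\le \tfrac{m-1}{m}\tr(M^{m})+\tfrac{1}{m}\tr(N^{m})$ for positive semidefinite $M,N$). Applying this with $M=YY^{\T}$ and $N=XX^{\T}$, and using $\tr((XX^{\T})^{m})=\tr((X^{\T}X)^{m})$ together with $X^{\T}X=\wtd P^{\T}A\wtd P$ and $Y^{\T}Y=P^{\T}AP$, delivers \eqref{eq:regularity4quad-pow4tr-diff-eq:H(P)}. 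The main obstacle in the argument, such as it is, is bookkeeping to confirm that the identity $A(PP^{\T}A)^{m-1}=A^{1/2}(YY^{\T})^{m-1}A^{1/2}$ and the ensuing trace bound use only $A\succeq 0$ and nothing more, so that \eqref{eq:cond4AF-NEPv} holds with $\bbP=\bbO^{n\times k}$, $\wtd P=\what P$ (i.e.\ $Q=I_{k}$), $\munderbar\alpha=2$, $\munderbar\beta=2(m-1)$; the alternative form in Remark~\ref{rk:regularity4tr-diff-eq:H(P)} then yields the conclusion verbatim.
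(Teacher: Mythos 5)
Your proposal is correct and follows essentially the same route as the paper's proof: the same substitution $X=A^{1/2}\wtd P$, $Y=A^{1/2}P$ reducing $\tr(\wtd P^{\T}H(P)\wtd P)$ to $2m\,\tr\big(XX^{\T}(YY^{\T})^{m-1}\big)$, followed by Lemma~\ref{lm:vN-tr-ineq-ext} with exponents $\mu=m-1$, $\nu=1$. Your explicit verification of $(PP^{\T}A)^{m-1}P=P(P^{\T}AP)^{m-1}$ and of the identity $A(PP^{\T}A)^{m-1}=A^{1/2}(YY^{\T})^{m-1}A^{1/2}$ merely spells out what the paper calls "straightforwardly verified," so there is nothing to add.
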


\begin{proof}
With $H(P)$ as in \eqref{eq:H(P):quad-pow4tr-diff-eq}, equation \eqref{eq:quad-pow4tr-diff-eq:H(P)} is straightforwardly
verified.

For $m=1$, $H(P)=2A$ and hence immediately we have \eqref{eq:regularity4quad4tr-diff-eq:H(P)}.

Consider $m>1$ and suppose $A\succeq 0$.
Let $X=A^{1/2}\wtd P$ and $Y=A^{1/2}P$, where $A^{1/2}$ is the positive semidefinite square root of $A$.
We have
\begin{align*}
\wtd P^{\T}H(P)\wtd P&=2m\,\wtd P^{\T}AP(P^{\T}AP)^{m-2}P^{\T}A\wtd P\\
   &=2m\,X^{\T}Y(Y^{\T}Y)^{m-2}Y^{\T}X \\
   &=2m\,X^{\T}(YY^{\T})^{m-1}X, \\
\tr(\wtd P^{\T}H(P)\wtd P)
  &=2m\,\tr(X^{\T}(YY^{\T})^{m-1}X) \\
  &=2m\,\tr(XX^{\T}(YY^{\T})^{m-1}) \\
  &\le 2\tr((XX^{\T})^m)+2(m-1)\,\tr((YY^{\T})^m) \qquad (\mbox{by Lemma~\ref{lm:vN-tr-ineq-ext}}) \\
  &= 2\tr((X^{\T}X)^m)+2(m-1)\,\tr((Y^{\T}Y)^m)  \\
  &=2\tr((\wtd P^{\T}A\wtd P)^m)+2(m-1)\,\tr((P^{\T}AP)^m),
\end{align*}
which is \eqref{eq:regularity4quad-pow4tr-diff-eq:H(P)}.
\end{proof}


We emphasize that  \eqref{eq:quad-pow4tr-diff-eq:H(P)}, \eqref{eq:regularity4quad4tr-diff-eq:H(P)},
and \eqref{eq:regularity4quad-pow4tr-diff-eq:H(P)} actually holds for any $P,\,\wtd P\in\bbR^{n\times k}$,
broader than
what the conditions in \eqref{eq:cond4AF-NEPv} entail.
With Theorem~\ref{thm:quad4tr-diff-eq:H(P)} and using a similar proof to that of
Theorem~\ref{thm:cond4set2-pow-induced}, we get the following corollary that is valid for
all $P,\,\wtd P\in\bbR^{n\times k}$,  broader than
simply combining Theorems~\ref{thm:cond4set2-pow-induced} with~\ref{thm:quad4tr-diff-eq:H(P)}.

\begin{corollary}\label{cor:pow(quad)4tr-diff-eq:H(P)}
Let symmetric $A\in\bbR^{n\times n}$ be positive semidefinite, integer $m\ge 1$, $s>1$, $g(P)=[\tr((P^{\T}AP)^m)]^s$,
and let $H_g(P)=s\,[\tr((P^{\T}AP)^m)]^{s-1}H(P)$
for which $\partial g(P)/\partial P\equiv H_g(P)P$ for $P\in\bbR^{n\times k}$,
where $H(P)$ is as in \eqref{eq:H(P):quad-pow4tr-diff-eq}.
For $P,\,\wtd P\in\bbR^{n\times k}$,
we have
\begin{subequations}\label{eq:pow(quad)4tr-diff-eq:H(P)}
\begin{align}
\tr(P^{\T}H_g(P)P)
    &=2sm\, [\tr((P^{\T}AP)^m)]^s, \label{eq:quad-pow4trpow-diff-eq:H(P)} \\
\tr(\wtd P^{\T}H_g(P)\wtd P)
             &\le 2[\tr((\wtd P^{\T}A\wtd P)^m)]^s+2(sm-1)[\tr((P^{\T}AP)^m)]^s.
                      \label{eq:quad-pow-regularity4pow-diff-eq:H(P)}
\end{align}
\end{subequations}
They, as argued in Remark~\ref{rk:regularity4tr-diff-eq:H(P)}, imply that \eqref{eq:cond4AF-NEPv} holds with $\munderbar\alpha=2$ and $\munderbar\beta=2(sm-1)$, and $\bbP=\STM{k}{n}$,
and thus $g(P)=[\tr((P^{\T}AP)^m)]^s$  for $s>1$ is an atomic function for NEPv.
\end{corollary}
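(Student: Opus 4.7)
The plan is to mirror the proof of Theorem~\ref{thm:cond4set1-pow-induced} (which handled the NPDo analogue) and combine it with the already-established Theorem~\ref{thm:quad4tr-diff-eq:H(P)}. First I would verify the identification of $H_g(P)$ by the chain rule: since $g(P)=[f(P)]^s$ with $f(P)=\tr((P^{\T}AP)^m)$, we have
\[
\frac{\partial g(P)}{\partial P}=s[f(P)]^{s-1}\frac{\partial f(P)}{\partial P}=s[f(P)]^{s-1}H(P)P=H_g(P)P,
\]
where the middle equality uses the fact, already established in the proof of Theorem~\ref{thm:quad4tr-diff-eq:H(P)}, that $\partial f(P)/\partial P=H(P)P$ with $H(P)=2m\,A(PP^{\T}A)^{m-1}$.

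Next, for equation~\eqref{eq:quad-pow4trpow-diff-eq:H(P)}, I would just rescale \eqref{eq:quad-pow4tr-diff-eq:H(P)} by $s[f(P)]^{s-1}$:
\[
\tr(P^{\T}H_g(P)P)=s[f(P)]^{s-1}\tr(P^{\T}H(P)P)=s[f(P)]^{s-1}\cdot 2m\,f(P)=2sm\,g(P).
\]
For the inequality~\eqref{eq:quad-pow-regularity4pow-diff-eq:H(P)}, the positive semidefiniteness of $A$ guarantees $f(P)\ge 0$ and $f(\wtd P)\ge 0$ (since $P^{\T}AP\succeq 0$ implies $(P^{\T}AP)^m\succeq 0$ for every integer $m\ge 1$), which I will need in order to apply the Young-type inequality (Lemma~\ref{lm:YoungIneq-ext}) in the final step. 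Starting from the scaling
\[
\tr(\wtd P^{\T}H_g(P)\wtd P)=s[f(P)]^{s-1}\tr(\wtd P^{\T}H(P)\wtd P),
\]
I would apply Theorem~\ref{thm:quad4tr-diff-eq:H(P)}(b) (which subsumes both $m=1$ and $m>1$ under the form $\tr(\wtd P^{\T}H(P)\wtd P)\le 2f(\wtd P)+2(m-1)f(P)$, since for $m=1$ the second term vanishes) to obtain
\[
\tr(\wtd P^{\T}H_g(P)\wtd P)\le 2s\,f(\wtd P)[f(P)]^{s-1}+2s(m-1)[f(P)]^s.
\]

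Finally I would invoke Lemma~\ref{lm:YoungIneq-ext} to split the cross-term
\[
f(\wtd P)[f(P)]^{s-1}\le \tfrac{1}{s}[f(\wtd P)]^s+\tfrac{s-1}{s}[f(P)]^s,
\]
yielding
\[
\tr(\wtd P^{\T}H_g(P)\wtd P)\le 2[f(\wtd P)]^s+2(s-1)[f(P)]^s+2s(m-1)[f(P)]^s=2\,g(\wtd P)+2(sm-1)\,g(P),
\]
which is precisely \eqref{eq:quad-pow-regularity4pow-diff-eq:H(P)}. There is no real obstacle here: the argument is entirely mechanical once Theorem~\ref{thm:quad4tr-diff-eq:H(P)} is in hand, and the only subtle point is ensuring $f(P),f(\wtd P)\ge 0$ so that Young's inequality is legitimate, which is why $A\succeq 0$ is kept as a standing hypothesis throughout the corollary.
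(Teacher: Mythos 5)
Your proof is correct and follows essentially the same route the paper intends: the paper proves this corollary by rescaling the inequalities of Theorem~\ref{thm:quad4tr-diff-eq:H(P)} by $s[\tr((P^{\T}AP)^m)]^{s-1}$ and then splitting the cross term with Lemma~\ref{lm:YoungIneq-ext}, exactly as in the proof of Theorem~\ref{thm:cond4set2-pow-induced}, which is what you do. Your explicit remark that $A\succeq 0$ guarantees $f(P),f(\wtd P)\ge 0$ (so that both the rescaling preserves the inequality direction and Young's inequality applies, and the argument remains valid for all $P,\wtd P\in\bbR^{n\times k}$) is the right point to flag and is consistent with the paper.
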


\section{Convex Composition}\label{sec:CVX-comp:NEPv}
We are interested in solving the same optimization problem on the Stiefel manifold $\STM{k}{n}$ as in \eqref{eq:Master-OptSTM}
by Algorithm~\ref{alg:SCF4NEPv} and its accelerating variation in Algorithm~\ref{alg:SCF4NEPv+LOCG}
with convergence guarantee.
In that regard, we stick to the initial setup at the beginning of section~\ref{sec:CVX-comp:NPDo} up to the paragraph containing
\eqref{eq:T-general}. We then go along a different path -- the path of the NEPv approach. To that end, we will
have to specify what $H(P)$, a symmetric matrix-valued function, to use for a given objective
function $f=\phi\circ T$ in \eqref{eq:Master-OptSTM},
assuming that a symmetric matrix-valued function
has already been constructed for each component of $T(P)$.

In its generality, each component $f_i(P_i)$ of $T(P)$ in  \eqref{eq:T-general} may involve a few but not necessarily
all columns of $P$.
It turns out that, for the case when not all $P_i=P$, we do not have a feasible way to construct
a symmetric matrix-valued function $H(P)$  for $f(P)=\phi\circ T(P)$ out of those for the components of $T(P)$.
That leaves us the only option of using the generic $H(P)$ in \eqref{eq:H(P):always}:
$$
H(P):=[\scrH(P)]P^{\T}+P[\scrH(P)]^{\T}
  \equiv\left[\frac{\partial f(P)}{\partial P}\right] P^{\T}+P\left[\frac{\partial f(P)}{\partial P}\right]^{\T},
$$
completely ignoring the symmetric matrix-valued functions for the components of $T(P)$ already known. Furthermore, with this generic
$H(P)$, in order to fulfill {\bf the NEPv Ansatz}, we will have to assume the components of
$T(P)$ are atomic functions for NPDo satisfying \eqref{eq:cond4AF-NPDo:cvx},
which means that {\bf the NPDo Ansatz} will hold for $f$. Consequently, our previous NPDo approach in Part~I will work
on such function $f$ in the first place, making it unnecessary resort to the NEPv approach for solving
the optimization problem \eqref{eq:Master-OptSTM}. More detail will be explained in subsection~\ref{ssec:PineP}.
Besides explaining the extra complexity that not all $P_i=P$ may bring, in subsection~\ref{ssec:PineP} we will also demonstrate that
the NEPv approach can still be made to work with the generic $H(P)$, just that the approach may not be as effective as the NPDo approach in Part~I.

\subsection{All $P_i$ are the entire $P$}\label{ssec:PieqP}
Our  focus is on
the case when all $P_i=P$, i.e., each component $f_i(P_i)=f_i(P)$.
Specifically, we will consider a special case of $T(P)$ in \eqref{eq:T-general}:
\begin{equation}\label{eq:T0-general:NEPv}
T_0(P)=\begin{bmatrix}
                                        f_1(P) \\
                                        f_2(P) \\
                                        \vdots \\
                                        f_N(P) \\
                                      \end{bmatrix},
\end{equation}
where $f_i$ for $1\le i\le N$ are atomic functions for NEPv, whose associated
symmetric matrix-valued functions are $H_i(P)\in\bbR^{n\times n}$ for $1\le i\le N$, respectively.

Our first task is to create a proper symmetric matrix-valued function $H(P)\in\bbR^{n\times n}$
to go with $f=\phi\circ T_0$ from $H_i(P)$ for $1\le i\le N$. To that end,
we will follow what we did in subsection~\ref{ssec:LOCG:NEPv} to first figure out what
$H(P)$ should be for the circumstance
when \eqref{eq:cond:KKT=NEPv} holds for each $f_i$, namely,
\begin{equation}\label{eq:cond:Hi(P)}
H_i(P)P-\frac {\partial f_i(P)}{\partial P}=P\,\scrM_i(P)
\quad\mbox{for $1\le i\le N$},
\end{equation}
and then show that the newly created $H(P)$ can serve the purpose for us as far as inheriting
{\bf the NEPv Ansatz} from $f_i$ with $H_i$ for $1\le i\le N$ is concerned,
without the need to assume \eqref{eq:cond:Hi(P)} anymore.
Recall notation $\phi_i(\bx)$ in \eqref{eq:phi-i} for the $i$th partial derivative of $\phi(\bx)$.
For $f=\phi\circ T_0$, we have
$$
\scrH(P):=\frac {\partial f(P)}{\partial P}
  =\sum_{i=1}^N\phi_i(T_0(P))\,\frac {\partial f_i(P)}{\partial P},
$$
and hence naturally, we may choose
\begin{equation}\label{eq:H(P)-comp-form}
H(P)=\sum_{i=1}^N\phi_i(T_0(P))\,H_i(P),
\end{equation}
for which, with \eqref{eq:cond:Hi(P)}, we find
\begin{align}
H(P)P-\frac {\partial f(P)}{\partial P}
 &=\sum_{i=1}^N\phi_i(T_0(P))\,\left(H_i(P)P-\frac {\partial f_i(P)}{\partial P}\right) \nonumber \\
 &=P\sum_{i=1}^N\phi_i(T_0(P))\,\scrM_i(P) \nonumber \\
 &=:P\scrM(P). \label{eq:H(P)-comp-form:eligibility}
\end{align}
Therefore the symmetric $H(P)\in\bbR^{n\times n}$ in \eqref{eq:H(P)-comp-form} fits the one suggested by
Theorem~\ref{thm:H(P)-eligibility}. In particular, any solution $P_*$ to NEPv \eqref{eq:NEPv-form}
with $H(P)$ given by \eqref{eq:H(P)-comp-form} satisfies the KKT condition~\eqref{eq:KKT} if
$\scrM(P_*)$ defined in \eqref{eq:H(P)-comp-form:eligibility} is symmetric and vice versa, as guaranteed by Theorem~\ref{thm:H(P)-eligibility}.

Next we will show that $f=\phi\circ T_0$ with $H(P)$ in \eqref{eq:H(P)-comp-form} inherits
{\bf the NEPv Ansatz} from $f_i$ with $H_i$ for $1\le i\le N$
without assuming \eqref{eq:cond:Hi(P)}.
To that end, we  place some consistency conditions upon all components of $T_0(P)$ in \eqref{eq:T0-general:NEPv} as follows:
for $1\le i\le N$
%
%
%
%
%
\begin{subequations}\label{eq:cond4AF-NEPv:cvx}
\begin{align}
\tr(P^{\T}H_i(P)P)&=\munderbar\gamma_i\, f_i(P)\quad\mbox{for $P\in\bbP\subseteq\STM{k}{n}$},
              \label{eq:cond4AF-NEPv:cvx-a} \\
\intertext{and given $\what P\in\STM{k}{n}$ and $P\in\bbP$, there exists $Q\in\STM{k}{k}$ such
that $\wtd P=\what PQ\in\bbP$ and} 
\tr(\what P^{\T}H_i(P)\what P)
   &\le\munderbar\alpha f_i(\wtd P)+\munderbar\beta_i f_i(P),
      \label{eq:cond4AF-NEPv:cvx-b}
\end{align}
\end{subequations}
where $\munderbar\alpha>0,\,\munderbar\beta_i\ge 0$,  and
$\munderbar\gamma_i=\munderbar\alpha+\munderbar\beta_i$ are  constants.
It is important to keep in mind that some of the inequalities in \eqref{eq:cond4AF-NEPv:cvx-b} may  actually be equalities,
e.g.,  for $f_i(P)=\tr(P^{\T}A_iP)$ it is an equality by Theorem~\ref{thm:lin4tr-diff-eq}.

On the surface, it looks like that each $f_i$ is simply an atomic function for NEPv,  but there are three
built-in consistency requirements in \eqref{eq:cond4AF-NEPv:cvx} among
all $f_i$: 1) the same $\bbP$ for all;  2) the same $\munderbar\alpha$ for all, and
3) the same $Q$ to give $\wtd P=\what PQ$ for all.

\begin{theorem}\label{thm:main-NEPv-cvx}
Consider $f=\phi\circ T_0$, where $T_0(\cdot)$ takes the form in \eqref{eq:T0-general:NEPv} and $\phi$ is convex and differentiable with partial derivatives denoted by
$\phi_i$ as in \eqref{eq:phi-i}. Let $H(P)$ be given by \eqref{eq:H(P)-comp-form} with $H_i(P)$ for $1\le i\le N$
satisfying \eqref{eq:cond4AF-NEPv:cvx}.
If $\phi_i(\bx)\ge 0$ for those $i$ for which \eqref{eq:cond4AF-NEPv:cvx-b} does not become an equality, then
{\bf the NEPv Ansatz}   with $\omega=1/\munderbar\alpha$ holds for $f=\phi\circ T_0$ with $H$.
\end{theorem}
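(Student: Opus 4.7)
The plan is to closely mirror the proof of Theorem~\ref{thm:main-npd-cvx}, exploiting the parallelism between the NPDo and NEPv frameworks. Given $P\in\bbP\subseteq\bbO^{n\times k}$ and $\what P\in\bbO^{n\times k}$ such that \eqref{eq:NEPv-assume} holds with the particular $H(P)$ in \eqref{eq:H(P)-comp-form}, I would first introduce the shorthand $\bx=T_0(P)=[x_1,\ldots,x_N]^{\T}$ with $x_i=f_i(P)$ and, for the $Q$ that the ansatzes on the $f_i$ produce, $\wtd\bx=T_0(\wtd P)=[\wtd x_1,\ldots,\wtd x_N]^{\T}$ with $\wtd x_i=f_i(\wtd P)$, where $\wtd P=\what PQ\in\bbP$. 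The built-in consistency that all $f_i$ share the same $\bbP$, the same $\munderbar\alpha$, and crucially the same $Q$ is what lets a single $\wtd P$ serve every index $i$ simultaneously.

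Next I would expand both sides of \eqref{eq:NEPv-assume} linearly in the components:
\begin{align*}
\tr(P^{\T}H(P)P)
   &=\sum_{i=1}^N\phi_i(\bx)\tr(P^{\T}H_i(P)P)
    =\sum_{i=1}^N\munderbar\gamma_i\phi_i(\bx)\,x_i,\\
\tr(\what P^{\T}H(P)\what P)
   &=\sum_{i=1}^N\phi_i(\bx)\tr(\what P^{\T}H_i(P)\what P)
    \le\sum_{i=1}^N\phi_i(\bx)\,(\munderbar\alpha\,\wtd x_i+\munderbar\beta_i\,x_i),
\end{align*}
where the first line uses \eqref{eq:cond4AF-NEPv:cvx-a}, and the second uses \eqref{eq:cond4AF-NEPv:cvx-b}. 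The sign hypothesis $\phi_i(\bx)\ge 0$ is needed exactly to preserve the direction of the inequality in those indices where \eqref{eq:cond4AF-NEPv:cvx-b} is strict; for the indices where it is an equality, no sign condition on $\phi_i$ is needed.

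Substituting these two expressions into $\eta+\tr(P^{\T}H(P)P)\le\tr(\what P^{\T}H(P)\what P)$ and using the identity $\munderbar\gamma_i=\munderbar\alpha+\munderbar\beta_i$ to cancel the $\munderbar\beta_i\phi_i(\bx)x_i$ terms on both sides yields
\[
\eta+\munderbar\alpha\sum_{i=1}^N\phi_i(\bx)\,x_i\le\munderbar\alpha\sum_{i=1}^N\phi_i(\bx)\,\wtd x_i,
\]
i.e.\ $\eta/\munderbar\alpha+\nabla\phi(\bx)^{\T}\bx\le\nabla\phi(\bx)^{\T}\wtd\bx$. Finally, the convexity of $\phi$ via Lemma~\ref{lm:convex-mono} (the same monotonicity-of-gradient consequence used in the proof of Theorem~\ref{thm:main-npd-cvx}) converts this linear inequality in $\bx,\wtd\bx$ into $f(\wtd P)=\phi(\wtd\bx)\ge\phi(\bx)+\eta/\munderbar\alpha=f(P)+\eta/\munderbar\alpha$, confirming {\bf the NEPv Ansatz} with $\omega=1/\munderbar\alpha$.

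The only step that requires real care is the bookkeeping for the mixed equality/inequality situation among the $f_i$: the sign hypothesis $\phi_i\ge 0$ must be applied selectively, only on those indices where \eqref{eq:cond4AF-NEPv:cvx-b} cannot be strengthened to an equality. I would state this selectivity explicitly at the moment of bounding $\tr(\what P^{\T}H(P)\what P)$ so that the reader sees why equality-indices (e.g.\ $\tr(P^{\T}A_iP)$ via Theorem~\ref{thm:lin4tr-diff-eq:H(P)} with $m=1$) can be accompanied by $\phi_i$ of either sign without breaking the derivation. Apart from this, the argument is mechanical and fully parallel to Theorem~\ref{thm:main-npd-cvx}.
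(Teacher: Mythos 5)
Your proposal is correct and follows essentially the same route as the paper's own proof: expand $\tr(P^{\T}H(P)P)$ and $\tr(\what P^{\T}H(P)\what P)$ componentwise via \eqref{eq:cond4AF-NEPv:cvx}, use $\munderbar\gamma_i=\munderbar\alpha+\munderbar\beta_i$ to cancel the $\munderbar\beta_i$ terms, and conclude with Lemma~\ref{lm:convex-mono}. Your explicit remark about applying the sign hypothesis $\phi_i\ge 0$ only at the strict-inequality indices matches the paper's treatment exactly.
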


\begin{proof}
Given $P\in\bbP\subseteq\STM{k}{n}$ and $\what P\in\STM{k}{n}$, suppose that \eqref{eq:NEPv-assume} holds, i.e.,
$\tr(\what P^{\T}H(P)\what P)\ge\tr(P^{\T}H(P)P)+\eta$.
Let $\wtd P=\what PQ$ where $Q\in\STM{k}{k}$ is the one dictated by the consistency conditions in \eqref{eq:cond4AF-NEPv:cvx}.
Write
$$
\bx=T_0(P)\equiv [x_1,x_2,\ldots,x_N]^{\T}, \quad
\wtd\bx=T_0(\wtd P)\equiv [\wtd x_1,\wtd x_2,\ldots,\wtd x_N]^{\T},
$$
i.e., $x_i=f_i(P)$ and $\wtd x_i=f_i(\wtd P)$.
Noticing $H(P)$ in \eqref{eq:H(P)-comp-form}, we have by \eqref{eq:cond4AF-NEPv:cvx}
\begin{align*}
\tr(P^{\T}H(P)P)
  &=\sum_{i=1}^N\phi_i(\bx)\tr(P^{\T}H_i(P)P) \\
  &=\sum_{i=1}^N\munderbar\gamma_i\phi_i(\bx)\,x_i, \qquad(\mbox{by \eqref{eq:cond4AF-NEPv:cvx-a}}) \\
\tr(\what P^{\T}H(P)\what P)
  &=\sum_{i=1}^N\phi_i(\bx)\tr(\what P^{\T}H_i(P)\what P)  \\
  &\le\sum_{i=1}^N\phi_i(\bx)\,(\munderbar\alpha\wtd x_i+\munderbar\beta_i x_i),
\end{align*}
where the last inequality is due to $\phi_i\ge 0$ when the corresponding \eqref{eq:cond4AF-NEPv:cvx-b}
does not become an equality.
Plug them into $\eta+\tr(P^{\T}H(P)P)\le\tr(\what P^{\T}H(P)\what P)$ and simplify the resulting inequality with
the help of $\munderbar\gamma_i=\munderbar\alpha+\munderbar\beta_i$ to get
$$
\eta/\munderbar\alpha+\nabla\phi(\bx)^{\T}\bx=\eta/\munderbar\alpha+\sum_{i=1}^N\phi_i(\bx)\,x_i\le\sum_{i=1}^N\phi_i(\bx)\,\wtd x_i
   =\nabla\phi(\bx)^{\T}\wtd\bx.
$$
Finally apply Lemma~\ref{lm:convex-mono} to yield $f(\wtd P)\ge f(P)+\eta/\munderbar\alpha$.
\end{proof}

With Theorem~\ref{thm:main-NEPv-cvx} come the general results established in section~\ref{sec:NEPv-theory}.
In particular, Algorithm~\ref{alg:SCF4NEPv} (NEPvSCF) and its accelerating variation in Algorithm~\ref{alg:SCF4NEPv+LOCG} can be applied to find a maximizer of \eqref{eq:Master-OptSTM},
except that the calculation of $Q_i$ at Line 4 of Algorithm~\ref{alg:SCF4NEPv} remains to be specified. This missing detail is in general
dependent of the particularity of the mapping $T_0$ and the convex function $\phi$.
What we will do in Examples~\ref{eg:T1a-NEPv} and \ref{eg:T3a} below provides some ideas on this matter.

In the rest of this section,
$A_i\in\bbR^{n\times n}$ for $1\le i\le\ell$ are at least symmetric
and $D_i\in\bbR^{n\times k}$ for $1\le i\le t$.

\begin{example}\label{eg:T1a-NEPv}
Consider $T_{1a}$  in \eqref{eq:T-tr-a}, as a special case of $T_0$, and optimization problem \eqref{eq:Master-OptSTM} with
$f=\phi\circ T_{1a}$. For this example, we will use
\begin{equation}\label{eq:H(P)-comp-T1a}
H(P)=\sum_{i=1}^{\ell}\phi_i(T_{1a}(P))\underbrace{2A_i}_{=:H_i(P)}
     +\sum_{j=1}^{t} \phi_{\ell+j}(T_{1a}(P))\, \underbrace{\big(D_jP^{\T}+PD_j^{\T}\big)}_{=:H_{\ell+j}(P)},
\end{equation}
for which $H(P)P-\scrH(P)\equiv P\big[\scrD(P)^{\T}P\big]$ for $P\in\STM{k}{n}$, where, as in \eqref{eq:scrD},
$$
\scrD(P)=\sum_{j=1}^{t}\phi_{\ell+j}(T_{1a}(P))\, D_j.
$$
Any solution $P$ to NEPv~\eqref{eq:NEPv-form} with
$H(P)$ in \eqref{eq:H(P)-comp-T1a} such that $[\scrD(P)]^{\T}P$ is symmetric is a solution to the KKT condition \eqref{eq:KKT}
and vice versa.
It can be seen that \eqref{eq:cond4AF-NEPv:cvx-b} is an equality for $1\le i\le\ell$ and hence
it does not need to require $\phi_i\ge 0$ for $1\le i\le\ell$. In addition to this, instead of treating each
$H_{\ell+j}(P)$ separately, we can treat $H_{\ell+j}(P)$ for $1\le j\le t$ collectively all at once
through $\scrD(P)$, as
we did in \eqref{thm:main-npd-cvx:T1a}, making all $\phi_{\ell+j}\ge 0$ unnecessary as well. A much more improved
version of Theorem~\ref{thm:main-NEPv-cvx}
is stated as Theorem~\ref{thm:main-NEPv-cvx:T1a} below,
according to which, the best $Q_i$ at Line~4 of Algorithm~\ref{alg:SCF4NEPv}
when applied to $\phi\circ T_{1a}$
is an orthonormal polar factor of $[\what P^{(i)}]^{\T}\scrD(P^{(i)})$.
A special case of $T_{1a}$ is: $t=0$, $k=1$, $P_i=\bp$ (a unit vector) for $1\le i\le\ell$, which gives the main problem of \cite{balu:2024}
  (in the paper, $\phi(\bx)=\sum_{i=1}^{\ell}\psi_i(x_i)$
for $\bx=[x_1,x_2,\ldots,x_{\ell}]^{\T}$ with each $\psi_i$ being a convex function of a single-variable).
\end{example}

\begin{theorem}\label{thm:main-NEPv-cvx:T1a}
Consider $f=\phi\circ T_{1a}$, and let $\scrD(P)$ be as in \eqref{eq:scrD}
and $H(P)$ as in \eqref{eq:H(P)-comp-T1a}.
Given $\what P\in\STM{k}{n},\,P\in\STM{k}{n}$, let $\wtd P=\what PQ$ where
$Q$ is an orthonormal polar factor of $\what P^{\T}\scrD(P)$.
If
$$
\tr(\what P^{\T}H(P)\what P)\ge\tr(P^{\T}H(P)P)+\eta,
$$
then
$
f(\wtd P)\ge f(P)+\frac 12\eta+\delta,
$
where $\delta=\|\what P^{\T}\scrD(P)\|_{\tr}-\tr(\what P^{\T}\scrD(P)P^{\T}\what P)$.
In particular, {\bf the NEPv Ansatz} holds  with $\omega=1/2$.
\end{theorem}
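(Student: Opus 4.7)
The plan is to follow the template of Theorem~\ref{thm:main-npd-cvx:T1a}, adapted to the NEPv setting: expand both $\tr(P^{\T}H(P)P)$ and $\tr(\what P^{\T}H(P)\what P)$ in terms of $T_{1a}(P)$ and $T_{1a}(\wtd P)$, extract a nonnegative defect $\delta$ from the part carrying $\scrD(P)$, and then invoke Lemma~\ref{lm:convex-mono} to upgrade the resulting linear comparison in $\bbR^{\ell+t}$ to the desired inequality on $f=\phi\circ T_{1a}$.

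First I would set $\bx=T_{1a}(P)$ with entries $x_i=\tr(P^{\T}A_iP)$ for $1\le i\le\ell$ and $x_{\ell+j}=\tr(P^{\T}D_j)$ for $1\le j\le t$, and analogously $\wtd\bx=T_{1a}(\wtd P)$. Because $\wtd P=\what PQ$ with $Q\in\bbO^{k\times k}$, one has $\wtd x_i=\tr(\what P^{\T}A_i\what P)$ for $1\le i\le\ell$, and by the defining property of $Q$ as an orthogonal polar factor of $\what P^{\T}\scrD(P)$, the matrix $\wtd P^{\T}\scrD(P)=Q^{\T}[\what P^{\T}\scrD(P)]$ is positive semidefinite.

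Using $P^{\T}P=I_k$, cyclicity of trace, and the shape of $H(P)$ in \eqref{eq:H(P)-comp-T1a}, I would compute
\[
\tr(P^{\T}H(P)P)=2\sum_{i=1}^{\ell}\phi_i(\bx)\,x_i+2\sum_{j=1}^{t}\phi_{\ell+j}(\bx)\,x_{\ell+j},
\]
and
\[
\tr(\what P^{\T}H(P)\what P)=2\sum_{i=1}^{\ell}\phi_i(\bx)\,\wtd x_i+2\,\tr\!\big(\what P^{\T}\scrD(P)P^{\T}\what P\big).
\]
Since $\wtd P^{\T}\scrD(P)\succeq 0$, Lemma~\ref{lm:maxtrace} gives $\|\what P^{\T}\scrD(P)\|_{\tr}=\tr(\wtd P^{\T}\scrD(P))=\sum_{j=1}^{t}\phi_{\ell+j}(\bx)\,\wtd x_{\ell+j}$, which lets me rewrite $\tr(\what P^{\T}\scrD(P)P^{\T}\what P)=\sum_{j}\phi_{\ell+j}(\bx)\,\wtd x_{\ell+j}-\delta$. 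Substituting both expressions into the hypothesis $\tr(\what P^{\T}H(P)\what P)\ge\tr(P^{\T}H(P)P)+\eta$ and dividing by $2$ reduces matters to the linear inequality $\nabla\phi(\bx)^{\T}(\wtd\bx-\bx)\ge\eta/2+\delta$, and a direct application of Lemma~\ref{lm:convex-mono} then delivers $f(\wtd P)\ge f(P)+\eta/2+\delta$.

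For the {\bf NEPv Ansatz} consequence with $\omega=1/2$, I would only need $\delta\ge 0$, which follows from the chain $\tr(\what P^{\T}\scrD(P)P^{\T}\what P)\le\|\what P^{\T}\scrD(P)P^{\T}\what P\|_{\tr}\le\|\what P^{\T}\scrD(P)\|_{\tr}\cdot\|P^{\T}\what P\|_2\le\|\what P^{\T}\scrD(P)\|_{\tr}$ via Lemma~\ref{lm:maxtrace} and submultiplicativity. I do not anticipate significant obstacles: the argument is essentially bookkeeping, and the only delicate point is the identification of $\tr(\what P^{\T}\scrD(P)P^{\T}\what P)$ with $\|\what P^{\T}\scrD(P)\|_{\tr}-\delta$, exploiting the chosen polar-factor $Q$. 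The crucial design choice that makes the theorem work -- and that avoids having to impose $\phi_{\ell+j}\ge 0$ -- is lumping the $D_j$-contributions of $H(P)$ into the single block $\scrD(P)P^{\T}+P\scrD(P)^{\T}$ before selecting $Q$, exactly as in the NPDo counterpart Theorem~\ref{thm:main-npd-cvx:T1a}.
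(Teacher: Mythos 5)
Your proposal is correct and follows essentially the same route as the paper's own proof: the same expansions of $\tr(P^{\T}H(P)P)$ and $\tr(\what P^{\T}H(P)\what P)$, the same identification $\|\what P^{\T}\scrD(P)\|_{\tr}=\tr(\wtd P^{\T}\scrD(P))=\sum_j\phi_{\ell+j}(\bx)\,\wtd x_{\ell+j}$ via the polar-factor choice of $Q$, and the same final appeal to Lemma~\ref{lm:convex-mono}. The only addition is your explicit verification that $\delta\ge 0$, which the paper defers to the discussion immediately after the theorem.
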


\begin{proof}
Along the lines of the proof of Theorem~\ref{thm:main-NEPv-cvx}, here we will have
\begin{align}
\tr(P^{\T}H(P)P)
  &=2\sum_{i=1}^{\ell}\phi_i(\bx)\,x_i+2\sum_{i=1}^{t}\phi_{\ell+i}(\bx)\,x_{\ell+i}, \nonumber\\
\|\what P^{\T}\scrD(P)\|_{\tr}
  &=\tr(\wtd P^{\T}\scrD(P)) \qquad(\mbox{since $\wtd P^{\T}\scrD(P)=Q^{\T}[\what P^{\T}\scrD(P)]\succeq 0$}) \nonumber\\
  &=\sum_{i=1}^{t}\phi_{\ell+i}(\bx)\,\wtd x_{\ell+i}, \label{eq:main-NEPv-cvx:T1a:pf-1}\\
\tr(\what P^{\T}H(P)\what P)
  &=2\sum_{i=1}^{\ell}\phi_i(\bx)\,\tr(\what P^{\T}A_i\what P)+2\sum_{i=1}^{t}\phi_{\ell+i}(\bx)\,\tr(\what P^{\T}D_iP^{\T}\what P) \nonumber\\
  &=2\sum_{i=1}^{\ell}\phi_i(\bx)\,\tr(\wtd P^{\T}A_i\wtd P)
       +2\tr(\what P^{\T}\scrD(P)P^{\T}\what P) \nonumber\\
  &=2\sum_{i=1}^{\ell}\phi_i(\bx)\,\wtd x_i+2\|\what P^{\T}\scrD(P)\|_{\tr}-2\delta \nonumber\\
  &=2\sum_{i=1}^{\ell}\phi_i(\bx)\,\wtd x_i+2\sum_{i=1}^{t}\phi_{\ell+i}(\bx)\,\wtd x_{\ell+i}-2\delta, \nonumber
\end{align}
where the last equality is due to \eqref{eq:main-NEPv-cvx:T1a:pf-1}.
Plug them into $\eta+\tr(P^{\T}H(P)P)\le\tr(\what P^{\T}H(P)\what P)$ and simplify the resulting inequality to get
$
\frac 12\eta+\delta+\nabla\phi(\bx)^{\T}\bx\le\nabla\phi(\bx)^{\T}\wtd\bx,
$
and then apply Lemma~\ref{lm:convex-mono} to conclude the proof.
\end{proof}

Theorem~\ref{thm:main-NEPv-cvx:T1a} improves Theorem~\ref{thm:main-NEPv-cvx} in that
the objective value
        increases additional $\delta$ more.  We notice, by Lemma~\ref{lm:maxtrace}, that
        $$
        \tr(\what P^{\T}\scrD(P)P^{\T}\what P)
          \le\|\what P^{\T}\scrD(P)P^{\T}\what P\|_{\tr}\le\|\what P^{\T}\scrD(P)\|_{\tr}\|P^{\T}\what P\|_2
          \le\|\what P^{\T}\scrD(P)\|_{\tr}
        $$
        and hence
        $\delta\ge 0$ and it is strict when any one of the inequalities above is strict.
%
%
Theorem~\ref{thm:main-NEPv-cvx:T1a} compares favorably against Theorem~\ref{thm:main-npd-cvx:T1a}. Both are about
mapping $T_{1a}$, but Theorem~\ref{thm:main-NEPv-cvx:T1a} puts no condition on symmetric matrices $A_i$ and
no condition on the partial derivatives, whereas Theorem~\ref{thm:main-npd-cvx:T1a} requires all $A_i\succeq 0$ and
$\phi_i\ge 0$ for $1\le i\le \ell$.

Also note that, in Theorem~\ref{thm:main-NEPv-cvx:T1a}, $\wtd P$ satisfies
$\wtd P^{\T}\scrD(P)\succeq 0$. Along the same line of the proof of Theorem~\ref{thm:maxers-NEPv},
we establish another necessary condition in Corollary~\ref{cor:T1a-NEPv} for any maximizer $P_*$ of \eqref{eq:Master-OptSTM}
with $T=T_{1a}$, besides the ones in
Theorem~\ref{thm:maxers-NEPv}.

\begin{corollary}\label{cor:T1a-NEPv}
Consider \eqref{eq:Master-OptSTM} with $T=T_{1a}$ and let $H(P)$ be as in \eqref{eq:H(P)-comp-T1a}.
If $P_*$ is a  maximizer of \eqref{eq:Master-OptSTM}, then we
have not only NEPv \eqref{eq:NEPv-form} satisfied by $P=P_*$ and $\Omega=\Omega_*:=P_*^{\T}H(P_*)P_*$ whose eigenvalues
consist of the $k$ largest ones of $H(P_*)$, 
but also $P_*^{\T}\scrD(P_*)\succeq 0$.
\end{corollary}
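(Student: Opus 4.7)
The plan is to derive both assertions as consequences of Theorem~\ref{thm:main-NEPv-cvx:T1a}. That theorem verifies {\bf the NEPv Ansatz} for $f=\phi\circ T_{1a}$ with the specific $H(P)$ in \eqref{eq:H(P)-comp-T1a} (with $\omega=1/2$ and $\bbP=\bbO^{n\times k}$), so the NEPv assertion -- namely $H(P_*)P_*=P_*\Omega_*$ with $\Omega_*=P_*^{\T}H(P_*)P_*$ carrying the $k$ largest eigenvalues of $H(P_*)$ -- follows immediately by applying Theorem~\ref{thm:maxers-NEPv} to the maximizer $P_*$.

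The remaining task is to upgrade this to $P_*^{\T}\scrD(P_*)\succeq 0$, and this is precisely where the extra $\delta$ bonus in Theorem~\ref{thm:main-NEPv-cvx:T1a} is essential. My plan is to feed that theorem the trivial choice $P=\what P=P_*$. Then the hypothesis $\tr(\what P^{\T}H(P)\what P)\ge\tr(P^{\T}H(P)P)+\eta$ holds with $\eta=0$, and the conclusion reads $f(\wtd P)\ge f(P_*)+\delta$, where $\wtd P=P_*Q$ with $Q$ an orthogonal polar factor of $P_*^{\T}\scrD(P_*)$ and
$$
\delta=\|P_*^{\T}\scrD(P_*)\|_{\tr}-\tr(P_*^{\T}\scrD(P_*)P_*^{\T}P_*)=\|P_*^{\T}\scrD(P_*)\|_{\tr}-\tr(P_*^{\T}\scrD(P_*)),
$$
since $P_*^{\T}P_*=I_k$.

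I then argue by contradiction: if $P_*^{\T}\scrD(P_*)\not\succeq 0$, Lemma~\ref{lm:maxtrace} guarantees $\|P_*^{\T}\scrD(P_*)\|_{\tr}>\tr(P_*^{\T}\scrD(P_*))$, so $\delta>0$ and $f(\wtd P)>f(P_*)$, contradicting the maximality of $P_*$. Hence $P_*^{\T}\scrD(P_*)\succeq 0$, completing the proof.

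There is no serious obstacle here because the heavy lifting -- separating the coupling between $P$ and $\what P$ in $\tr(\what P^{\T}H(P)\what P)$ and producing the $\delta$ bonus -- has already been carried out inside Theorem~\ref{thm:main-NEPv-cvx:T1a}. The only subtle point to verify is that the choice $\what P=P_*$ is admissible and still yields a nontrivial conclusion, which is the case because the orthogonal polar factor $Q$ of $P_*^{\T}\scrD(P_*)$ degenerates to the identity (making $\wtd P=P_*$ and $\delta=0$) precisely when $P_*^{\T}\scrD(P_*)$ is already positive semidefinite, i.e., exactly the conclusion we are trying to establish.
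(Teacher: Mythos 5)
Your proposal is correct and follows exactly the route the paper intends: the paper states the corollary without a written proof, saying only that it goes "along the same line of the proof of Theorem~\ref{thm:maxers-NEPv}", and your argument is the natural instantiation of that line — the NEPv part from Theorem~\ref{thm:maxers-NEPv} (whose hypothesis is supplied by Theorem~\ref{thm:main-NEPv-cvx:T1a}), and the positive-semidefiniteness part by taking $P=\what P=P_*$, $\eta=0$ in Theorem~\ref{thm:main-NEPv-cvx:T1a} and invoking Lemma~\ref{lm:maxtrace} to get $\delta>0$ whenever $P_*^{\T}\scrD(P_*)\not\succeq 0$, contradicting maximality. The computation $\delta=\|P_*^{\T}\scrD(P_*)\|_{\tr}-\tr(P_*^{\T}\scrD(P_*))$ via $P_*^{\T}P_*=I_k$ is right, so there is nothing to add.
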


\begin{example}\label{eg:T2a}
Consider $T_{2a}$, a special case of $T_2$ in \eqref{eq:T-F},
\begin{equation}\label{eq:T-F-a}
T_{2a}\,:\, P\in\STM{k}{n} \to T_{2a}(P):=\begin{bmatrix}
                                        \|P^{\T}A_1P\|_{\F}^2 \\
                                        \vdots \\
                                        \|P^{\T}A_{\ell}P\|_{\F}^2 \\
                                        \|P^{\T}D_1\|_{\F}^2 \\
                                        \vdots \\
                                        \|P^{\T}D_t\|_{\F}^2 \\
                                      \end{bmatrix} \in\bbR^{\ell+t}.
\end{equation}
Either $\ell=0$ or $t=0$ is allowed. Notice that
$$
\|P^{\T}A_iP\|_{\F}^2=\tr((P^{\T}A_iP)^2), \quad
\|P^{\T}D_j\|_{\F}^2=\tr(P^{\T}D_jD_j^{\T}P).
$$
For optimization problem \eqref{eq:Master-OptSTM} with
$f=\phi\circ T_{2a}$, we will use
\begin{equation}\label{eq:H(P)-comp-T2a}
H(P)=\sum_{i=1}^{\ell}\phi_i(T_{2a}(P))\,\underbrace{4A_iPP^{\T}A_i}_{=:H_i(P)}
     +\sum_{j=1}^{t} \phi_{\ell+j}(T_{2a}(P))\, \underbrace{D_jD_j^{\T}}_{=:H_{\ell+j}(P)},
\end{equation}
for which $\scrH(P)\equiv H(P)P$ for $P\in\bbR^{n\times k}$.
If all $A_i\succeq 0$, then, by Theorem~\ref{thm:quad4tr-diff-eq:H(P)},
the consistency conditions in \eqref{eq:cond4AF-NEPv:cvx} are satisfied with $\bbP=\STM{k}{n}$, $Q=I_k$,
$\munderbar\alpha=2$, $\munderbar\beta_i=2$ for $1\le i\le\ell$ and $\munderbar\beta_{\ell+j}=0$ for $1\le j\le t$.
Note, for the example, \eqref{eq:cond4AF-NEPv:cvx-b} for $\ell+1\le i\le \ell+t$ are equalities and hence
Theorem~\ref{thm:main-NEPv-cvx} requires
$\phi_i\ge 0$ for $1\le i\le\ell$ only.
Optimization problem \eqref{eq:Master-OptSTM} with $T=T_{2a}$ for $t=0$
and
$\phi(\bx)=\sum_{i=1}^{\ell}x_i$
gives the key optimization problem in the uniform multidimensional scaling (UMDS) method \cite{zhzl:2017}.

Comparing the conclusion here and that of Example~\ref{eg:T2} (for all $P_i=P$), we don't require $\phi_{\ell+j}\ge 0$ for $1\le j\le t$ here, everything else being equal. In particular, both require $A_i\succeq 0$ for all $i$. Next, we explain how to make the NEPv approach work on $f=\phi\circ T_{2a}$ even if some $A_i\not\succeq 0$, yielding yet another example for which the NEPv
approach works but the NPDo approach may not.
Let $\delta_i\in\bbR$ such that $\what A_i=A_i-\delta_iI\succeq 0$ for $1\le i\le\ell$.
This is always possible by letting $\delta_i$ be some lower bound of the eigenvalues of $A_i$, and numerically, $\delta_i$ can be estimated cheaply \cite{zhli:2011}. Notice that
$$
\tr((P^{\T}A_iP)^2)=\tr((P^{\T}\what A_iP)^2)+2\delta_i\tr(P^{\T}\what A_iP)+k\delta_i^2.
$$
Define
\begin{equation}\label{eq:T-F-a:hat}
\what T_{2a}\,:\, P\in\STM{k}{n} \to \what T_{2a}(P):=\begin{bmatrix}
                                        \tr((P^{\T}\what A_1P)^2) \\
                                        \vdots \\
                                        \tr((P^{\T}\what A_{\ell}P)^2) \\
                                        \tr(P^{\T}\what A_1P) \\
                                        \vdots \\
                                        \tr(P^{\T}\what A_{\ell}P) \\
                                        \|P^{\T}D_1\|_{\F}^2 \\
                                        \vdots \\
                                        \|P^{\T}D_t\|_{\F}^2 \\
                                      \end{bmatrix} \in\bbR^{2\ell+t},
\end{equation}
and an affine transformation: $\scrA\,:\,\hat\bx\in\bbR^{2\ell+t}\,\to\,\bx\in\bbR^{\ell+t}$ by
$$
\bx=\scrA(\hat\bx)=\begin{bmatrix}
                     \hat\bx_{(1:\ell)}+2\Delta\,\hat\bx_{(\ell+1:2\ell)})+k\bd \\
                     \hat\bx_{(2\ell+1:2\ell+t)})
                   \end{bmatrix},
$$
where $\hat\bx_{(i:j)}$ is the sub-vector of $\hat\bx$ from its $i$th entry to $j$th entry, $\Delta=\diag(\delta_1,\ldots,\delta_{\ell})\in\bbR^{\ell\times\ell}$,
and $\bd^{\T}=[\delta_1^2,\ldots,\delta_{\ell}^2]$. Finally, $f(P)=\hat\phi\circ\what T_{2a}(P)$ where
$\hat\phi(\hat\bx)=\phi(\bx)\equiv\phi(\scrA(\hat\bx))$ is convex in $\hat\bx$ \cite[p.79]{bova:2004}. It can be verified that
$$
\hat\phi_i(\hat\bx):=\frac {\partial \hat\phi(\hat\bx)}{\partial \hat x_i}
  =
  \begin{cases}
    \phi_i(\bx), &\quad\mbox{for $1\le i\le\ell$}, \\
    2\delta_{i-\ell}\phi_{i-\ell}(\bx), &\quad\mbox{for $\ell+1\le i\le 2\ell$}, \\
    \phi_{i-\ell}(\bx), &\quad\mbox{for $2\ell+1\le i\le 2\ell+t$}.
  \end{cases}
$$
Theorem~\ref{thm:main-NEPv-cvx} is now applicable to $f(P)=\hat\phi\circ\what T_{2a}(P)$,  but requiring only
$\phi_i\ge 0$ for $1\le i\le\ell$  and without requiring any of $A_i\succeq 0$.
\end{example}

\begin{example}\label{eg:T3a}
Consider $T_{3a}$, a special case of $T_3$ in \eqref{eq:T-pow-tr},
\begin{equation}\label{eq:T-pow-tr-a}
T_{3a}\,:\, P\in\STM{k}{n} \to T_{3a}(P):=\begin{bmatrix}\tr((P^{\T}A_1P)^{m_1}) \\
                                        \vdots \\
                                        \tr((P^{\T}A_{\ell}P)^{m_{\ell}}) \\
                                        \tr((P^{\T}D)^{m_{\ell+1}})
                                      \end{bmatrix} \in\bbR^{\ell+1},
\end{equation}
where integer $m_i\ge 1$ for all $1\le i\le\ell+1$  and $D\in\bbR^{n\times k}$. It reduces to an even more special case of Example~\ref{eg:T1a-NEPv}
if all $m_i=1$ for $1\le i\le\ell+1$.
Either $\ell=0$ or without the last component $\tr((P^{\T}D)^{m_{\ell+1}})$  is allowed.
For optimization problem \eqref{eq:Master-OptSTM} with
$f=\phi\circ T_{3a}$, we will use
\begin{multline}\label{eq:H(P)-comp-T3a}
H(P)=\sum_{i=1}^{\ell}\phi_i(T_{3a}(P))\,\underbrace{2m_i\,A_i(PP^{\T}A_i)^{m_i-1}}_{=:H_i(P)} \\
     +\phi_{\ell+1}(T_{3a}(P))\, \underbrace{m_{\ell+1}\,\big[D(P^{\T}D)^{m_{\ell+1}-1}P^{\T}+P(D^{\T}P)^{m_{\ell+1}-1}D^{\T}\big]}_{=:H_{\ell+1}(P)},
\end{multline}
for which $H(P)P-\scrH(P)\equiv P\big[\phi_{\ell+1}(T_{3a}(P))\,m_{\ell+1}\,(D^{\T}P)^{m_{\ell+1}}\big]$ for $P\in\STM{k}{n}$.
Any solution $P$ to NEPv~\eqref{eq:NEPv-form} with
$H(P)$ in \eqref{eq:H(P)-comp-T3a} such that $(D^{\T}P)^{m_{\ell+1}}$ is symmetric is a solution to the KKT condition \eqref{eq:KKT}.
Suppose all $A_i\succeq 0$ and let $\bbP=\STM{k}{n}_{D+}$.
Then
the consistency conditions in \eqref{eq:cond4AF-NEPv:cvx} are satisfied with
$\munderbar\alpha=2$, $\munderbar\beta_i=2(m_i-1)$ for $1\le i\le\ell+1$, by
Theorems~\ref{thm:lin4tr-diff-eq:H(P)} and \ref{thm:quad4tr-diff-eq:H(P)}.
In applying Theorem~\ref{thm:main-NEPv-cvx} with $T_0=T_{3a}$ we need $\phi_i\ge 0$ for $1\le i\le\ell+1$
because now we are not sure if any of the inequalities in \eqref{eq:cond4AF-NEPv:cvx-b} is an equality.
Lastly, the best $Q_i$ at Line 4 of Algorithm~\ref{alg:SCF4NEPv} when applied to
$\phi\circ T_{3a}$
is an orthonormal polar factor of $[\what P^{(i)}]^{\T}D$ and vice versa.
\end{example}

\subsection{Not all $P_i$ are the entire $P$}\label{ssec:PineP}
We now consider $T(P)$ in \eqref{eq:T-general} in its generality, i.e., some of the $P_i$
do not contain all columns of $P$.
To proceed, first we attempt to construct a symmetric $H(P)\in\bbR^{n\times n}$
from $H_i(P_i)$ for each component $f_i(P_i)$ of $T(P)$.
Assume, for the moment, that
\begin{equation}\label{eq:cond:Hi(Pi)}
H_i(P_i)P_i-\frac {\partial f_i(P_i)}{\partial P_i}=P_i\scrM_i(P_i),
\end{equation}
where $\scrM_i(P_i)\in\bbR^{k_i\times k_i}$.
Recalling the expression for $\scrH(P)$ in \eqref{eq:KKT-comp-1} as a linear combination of
$\frac {\partial f_i(P_i)}{\partial P_i}J_i^{\T}$,
we have, by using \eqref{eq:cond:Hi(Pi)} and $P_i=PJ_i$,
\begin{align*}
\frac {\partial f_i(P_i)}{\partial P_i}J_i^{\T}
  &=\Big[H_i(P_i)P_i-P_i\scrM_i(P_i)\Big]J_i^{\T} \\
  &=H_i(P_i)P_iJ_i^{\T}-PJ_i\scrM_i(P_i)J_i^{\T} \\
  &=H_i(P_i)P_iJ_i^{\T}P^{\T}P-PJ_i\scrM_i(P_i)J_i^{\T} \\
  &=\Big[H_i(P_i)P_iJ_i^{\T}P^{\T}+PJ_iP_i^{\T}H_i(P_i)\Big]P-P\Big[J_iP_i^{\T}H_i(P_i)P+J_i\scrM_i(P_i)J_i^{\T}\Big],
\end{align*}
and as a result
\begin{align*}
\scrH(P):=\frac {\partial f(P)}{\partial P}
  &=\sum_{i=1}^N\phi_i(T(P))\frac {\partial f_i(P_i)}{\partial P_i}J_i^{\T} \\
  &=\underbrace{\left(\sum_{i=1}^N\phi_i(T(P))\Big[H_i(P_i)P_iJ_i^{\T}P^{\T}+PJ_iP_i^{\T}H_i(P_i)\Big]\right)}_{=:H(P)}P  \\
  &\quad-P\underbrace{\left(\sum_{i=1}^N\phi_i(T(P))\Big[J_iP_i^{\T}H_i(P_i)P+J_i\scrM_i(P_i)J_i^{\T}\Big]\right)}_{=:\scrM(P)}\,,
\end{align*}
taking the form
\begin{equation}\label{eq:H(P)-from-Hi(Pi):cond}
H(P)P-\frac {\partial f(P)}{\partial P}=P\scrM(P),
\end{equation}
where the symmetric $H(P)\in\bbR^{n\times n}$ is given by
\begin{equation}\label{eq:H(P)-comp-form:Hi(Pi)}
H(P)=\sum_{i=1}^N\phi_i(T(P))\Big[H_i(P_i)P_iP_i^{\T}+P_iP_i^{\T}H_i(P_i)\Big].
\end{equation}
This construction reminds us of the technique that was first used in \cite{zhwb:2022} for OCCA to turn the KKT condition
into an NEPv, where a single term $D$ is converted to $DP^{\T}+PD^{\T}$. The same technique was later used in
\cite{zhys:2020,wazl:2023,wazl:2022a} and in this paper too in \eqref{eq:H(P):always}.


Although $H(P)$  in \eqref{eq:H(P)-comp-form:Hi(Pi)} seems to be a good candidate to build a  framework with
as laid out in section~\ref{sec:NEPv-theory}, there is
an apparent obstacle that is hard, if at all possible, to cross over. Recall the key foundation
in the consistency conditions in \eqref{eq:cond4AF-NEPv:cvx}. For the current case, we would need similar ones, i.e.,
conditions such as
\begin{subequations}\label{eq:cond4AF-NEPv:cvx:Hi(Pi)}
\begin{align}
\tr(P_i^{\T}H_i(P_i)P_i)&=\munderbar\gamma_i\, f_i(P_i)\quad\mbox{for $P\in\bbP\subseteq\STM{k}{n}$},
              \label{eq:tr-diff-eq:cvx:Hi(Pi)} \\
\intertext{and given $\what P\in\STM{k}{n}$ and $P\in\bbP$, there exists $Q\in\STM{k}{k}$ such
that $\wtd P=\what PQ\in\bbP$ and}
\tr(\what P_i^{\T}H_i(P_i)\what P_i)
   &\le\munderbar\alpha f_i(\wtd P_i)+\munderbar\beta_i f_i(P_i),
      \label{eq:regularity4tr-diff-eq:cvx:Hi(Pi)}
\end{align}
\end{subequations}
where $\munderbar\alpha>0,\,\munderbar\beta_i\ge 0$, and
$\munderbar\gamma_i=\munderbar\alpha+\munderbar\beta_i$ are  constants.
In return, we would then relate $\tr(P^{\T}H(P)P)$ to
$\sum_i\gamma_ix_i\phi_i(\bx)$
by equality and bound
$\tr(\what P^{\T}H(P)\what P)$ from above in terms of
$\sum_i\gamma_ix_i\phi_i(\bx)$
and
$\sum_i\gamma_i\wtd x_i\phi_i(\bx)$
where $\bx\equiv [x_i]=T(P)$ and $\wtd\bx\equiv [\wtd x_i]=T(\wtd P)$. The former is rather straightforward because
$$
\tr(P^{\T}H(P)P)
  =2\sum_{i=1}^N\phi_i(T(P))\tr(P_i^{\T}H_i(P_i)P_i)
  =2\sum_{i=1}^N\gamma_ix_i\phi_i(\bx),
$$
but the latter seems to be insurmountable because
$$
\tr(\what P^{\T}H(P)\what P)
  =2\sum_{i=1}^N\phi_i(\bx)\tr(\what P^{\T}H_i(P_i)P_iP_i^{\T}\what P)
$$
and it is not clear how to bound $\tr(\what P^{\T}H_i(P_i)P_iP_i^{\T}\what P)$ from above,
given \eqref{eq:regularity4tr-diff-eq:cvx:Hi(Pi)}.
So the route via $H(P)$ in \eqref{eq:H(P)-comp-form:Hi(Pi)} is blocked. This ends our first attempt of constructing $H(P)$ from
$H_i(P_i)$ for $1\le i\le N$.

We will seek an alternative route, as our second attempt. Assume the following setting: among the $N$ components
$f_i(P_i)$,
the first $N_1$ of them involve $P_i$ that are not the entire $P$ but
the last $N_2$ do, i.e.,
\begin{equation}\label{eq:mixture-P}
P_i=P\quad\mbox{for $N_1+1\le i\le N$},
\end{equation}
where $N_1+N_2=N$.
We  resort the generic symmetric matrix-valued function in \eqref{eq:H(P):always}
for the first $N_1$ components $f_i(P_i)$ for $1\le i\le N_1$ but the individual $H_i(P)$
for the last $N_2$ components $f_i(P)$ for $N_1+1\le i\le N$ to get:
\begin{equation}\label{eq:H(P)-comp-form:Hi(Pi):alt}
H(P)=\sum_{i=1}^{N_1}\phi_i(T(P))\left(\frac {\partial f_i(P_i)}{\partial P_i}P_i^{\T}
             +P_i\left[\frac {\partial f_i(P_i)}{\partial P_i}\right]^{\T}\right)
             +\sum_{i=N_1+1}^N\phi_i(T(P))\,H_i(P).
\end{equation}
In doing so, we completely ignore $H_i(P_i)$ for $1\le i\le N_1$ that we presumably already know and should take
 advantage of but cannot.
To proceed, we will also need
the same consistency conditions as in \eqref{eq:cond4AF-NPDo:cvx} for NPDo for $f_i(P_i)$ for
$1\le i\le N_1$ but  the ones in \eqref{eq:cond4AF-NEPv:cvx:Hi(Pi)} for $f_i(P)$ for
$N_1+1\le i\le N$. This makes
the first $N_1$ components of $T(P)$ atomic functions for both NPDo and for NEPv.

In this setting $N_1=0$ or $N_2=0$ are allowed. The case $N_1=0$ is the one we have already dealt with
in subsection~\ref{ssec:PieqP}, and for the case $N_2=0$, the NPDo approach can
be applied in the first place.
It remains to conquer the case both $N_1,\,N_2\ge 1$. Our next theorem will help us to do that.

\begin{theorem}\label{thm:main-NEPv-cvx:Hi(Pi)}
Consider $f=\phi\circ T$, where $T(\cdot)$ takes the form in \eqref{eq:T-general}
with \eqref{eq:mixture-P} and $\phi$
is convex and differentiable with partial derivatives denoted by
$\phi_i$ as in \eqref{eq:phi-i}. Let $H(P)$ be given by \eqref{eq:H(P)-comp-form:Hi(Pi):alt}.
Suppose that
\begin{enumerate}[{\rm (i)}]
  \item the conditions in \eqref{eq:cond4AF-NPDo:cvx} hold for $1\le i\le N_1$,
  \item the conditions in \eqref{eq:cond4AF-NEPv:cvx:Hi(Pi)} hold with $Q=I_k$ for $N_1+1\le i\le N$,
  \item $\phi_i(\bx)\ge 0$ for those $1\le i\le N_1$
        for which \eqref{eq:cond4AF-NPDo:cvx-b} does not become an equality
        and for those $N_1+1\le i\le N$
        for which \eqref{eq:regularity4tr-diff-eq:cvx:Hi(Pi)} does not become an equality,
  \item $\munderbar\alpha=2\alpha$ for $\alpha$ in \eqref{eq:cond4AF-NPDo:cvx-b}
        and $\munderbar\alpha$ in \eqref{eq:regularity4tr-diff-eq:cvx:Hi(Pi)}.
\end{enumerate}
Then {\bf the NEPv Ansatz} holds  with $\omega=1/(2\alpha)$ and with the $Q$-matrix
specified in the proof.
\end{theorem}

\begin{proof}
Given $\what P\in\STM{k}{n}$ and $P\in\bbP\subseteq\STM{k}{n}$,
suppose that \eqref{eq:NEPv-assume} holds, i.e.,
$\tr(\what P^{\T}H(P)\what P)\ge\tr(P^{\T}H(P)P)+\eta$.
Write
$$
\bx=T(P)\equiv [x_1,x_2,\ldots,x_N]^{\T}, \quad
\wtd\bx=T(\wtd P)\equiv [\wtd x_1,\wtd x_2,\ldots,\wtd x_N]^{\T},
$$
where $x_i=f_i(P_i)$, and $\wtd x_i=f_i(\wtd P_i)$ with $\wtd P$ to be defined later in \eqref{eq:main-NEPv-cvx:Hi(Pi):pf-1}.
Write
$\scrH_i(P_i)=\frac {\partial f_i(P_i)}{\partial P_i}$.
For $H(P)$  in \eqref{eq:H(P)-comp-form:Hi(Pi):alt}, we have by \eqref{eq:cond4AF-NPDo:cvx}
and \eqref{eq:cond4AF-NEPv:cvx:Hi(Pi)}
\begin{align*}
\tr(P^{\T}H(P)P)
  &=2\sum_{i=1}^{N_1}\phi_i(\bx)\tr(P_i^{\T}\scrH_i(P_i))+\sum_{i=N_1+1}^N\phi_i(\bx)\tr(P^{\T}H_i(P)P) \\
  &=2\sum_{i=1}^{N_1}\gamma_i\phi_i(\bx)\,x_i
    +\sum_{i=N_1+1}^N\munderbar\gamma_i\phi_i(\bx)\, x_i,
          \qquad\mbox{(by \eqref{eq:cond4AF-NPDo:cvx-a} and \eqref{eq:tr-diff-eq:cvx:Hi(Pi)})}.
\end{align*}
Let $W_1\in\STM{k}{k}$ be an orthonormal polar factor of $\what P^{\T}\munderbar\scrH(P)$ and $Z=\what PW_1$,
where
$$
\munderbar\scrH(P):=\sum_{i=1}^{N_1}\phi_i(T(P))\frac {\partial f_i(P_i)}{\partial P_i}J_i^{\T}.
$$
We have
\begin{align*}
\tr(\what P^{\T}H(P)\what P)
  &=2\sum_{i=1}^{N_1}\phi_i(\bx)\tr(\what P^{\T}\scrH_i(P_i)P_i^{\T}\what P)
    +\sum_{i=N_1+1}^N\phi_i(\bx)\,\tr(\what P^{\T}H_i(P)\what P)  \\
  &=2\sum_{i=1}^{N_1}\phi_i(\bx)\tr(\what P^{\T}\scrH_i(P_i)J_i^{\T}P^{\T}\what P)
    +\sum_{i=N_1+1}^N\phi_i(\bx)\,\tr(\what P^{\T}H_i(P)\what P)  \\
  &=2\tr(\what P^{\T}\munderbar\scrH(P)P^{\T}\what P)
    +\sum_{i=N_1+1}^N\phi_i(\bx)\,\tr(\what P^{\T}H_i(P)\what P) \\
  &\le 2\|\what P^{\T}\munderbar\scrH(P)\|_{\tr}\|P^{\T}\what P\|_2
    +\sum_{i=N_1+1}^N\phi_i(\bx)\,\tr(\what P^{\T}H_i(P)\what P)\\
  &\le 2\|\what P^{\T}\munderbar\scrH(P)\|_{\tr}
    +\sum_{i=N_1+1}^N\phi_i(\bx)\,\tr(\what P^{\T}H_i(P)\what P)\\
  &=2\tr(Z^{\T}\munderbar\scrH(P))
    +\sum_{i=N_1+1}^N\phi_i(\bx)\,\tr(Z^{\T}H_i(P)Z) \\
  &=2\sum_{i=1}^{N_1}\phi_i(\bx)\tr(Z_i^{\T}\scrH_i(P_i))
    +\sum_{i=N_1+1}^N\phi_i(\bx)\,\tr(Z^{\T}H_i(P)Z),
\end{align*}
where $Z_i=ZJ_i$ for $1\le i\le N$ are submatrices of $Z$.
Now use the second consistency condition in \eqref{eq:cond4AF-NPDo:cvx-b} to get $W_2\in\STM{k}{k}$ and set
\begin{equation}\label{eq:main-NEPv-cvx:Hi(Pi):pf-1}
\wtd P=ZW_2=\what P(W_1W_2)=:\what PQ.
\end{equation}
Then
\begin{align*}
\tr(\what P^{\T}H(P)\what P)
  &\le2\sum_{i=1}^{N_1}\phi_i(\bx)\tr(Z_i^{\T}\scrH_i(P_i))
    +\sum_{i=N_1+1}^N\phi_i(\bx)\,\tr(\wtd P^{\T}H_i(P)\wtd P) \\
  &\le2\sum_{i=1}^{N_1}\phi_i(\bx)\,(\alpha\wtd x_i+\beta_i x_i)
    +\sum_{i=N_1+1}^N\phi_i(\bx)\,(\munderbar\alpha\wtd x_i+\munderbar\beta_i x_i)\\
  &\le2\sum_{i=1}^{N_1}\phi_i(\bx)\,(\alpha\wtd x_i+\beta_i x_i)
    +\sum_{i=N_1+1}^N\phi_i(\bx)\,(2\alpha\wtd x_i+\munderbar\beta_i x_i).
\end{align*}
Plug them into $\eta+\tr(P^{\T}H(P)P)\le\tr(\what P^{\T}H(P)\what P)$ and simplify the resulting inequality with
the help of $\gamma_i=\alpha+\beta_i$ for $1\le i\le N_1$ and $\munderbar\gamma_i=2\alpha+\munderbar\beta_i$
for $N_1+1\le i\le N$ to get
$$
\eta/(2\alpha)+\nabla\phi(\bx)^{\T}\bx=\eta/(2\alpha)+\sum_{i=1}^N\phi_i(\bx)\,x_i\le\sum_{i=1}^N\phi_i(\bx)\,\wtd x_i
   =\nabla\phi(\bx)^{\T}\wtd\bx.
$$
Finally apply Lemma~\ref{lm:convex-mono} to yield $f(\wtd P)\ge f(P)+\eta/(2\alpha)$.
\end{proof}

We now explain why  the resulting Algorithm~\ref{alg:SCF4NEPv} is not going to be competitive to
Algorithm~\ref{alg:SCF4NPDo} per SCF iterative step in the case $N_2=0$. First
both algorithms work due to the same set of consistency conditions in \eqref{eq:cond4AF-NPDo:cvx}. Next
carefully examining the proof of Theorem~\ref{thm:main-NEPv-cvx:Hi(Pi)}, we find that
$Q_i$ at Line 4 of Algorithm~\ref{alg:SCF4NEPv} is the product of two $k\times k$ orthogonal matrices,
$W_1$ from an orthonormal polar factor of $[\what P^{(i)}]^{\T}\scrH(P^{(i)})$ and $W_2$ from the second consistency condition
in \eqref{eq:cond4AF-NPDo:cvx-b}. Algorithm~\ref{alg:SCF4NPDo} also needs $Q_i$ at its line 4 but just one
orthogonal matrix dictated by the second consistency condition
in \eqref{eq:cond4AF-NPDo:cvx},
making its SCF step cheaper, not to mention
there is an additional partial eigendecomposition to compute
at Line 3 of Algorithm~\ref{alg:SCF4NEPv}. Having said that, the significance of Theorem~\ref{thm:main-NEPv-cvx:Hi(Pi)}
lies in the mixture case: both $N_1\ge 1$ and $N_2\ge 1$, for which the theorem ensures that
Algorithm~\ref{alg:SCF4NEPv} can still be applied with guaranteed convergence. The next example falls into such a category.

\begin{example}\label{eg:T4}
Consider $T_4$ that shares some similarity with $T_2$ in \eqref{eq:T-F} and $T_{2a}$ in \eqref{eq:T-F-a}:
\begin{equation}\label{eq:T-F-b}
T_4\,:\, P\in\STM{k}{n} \to T_4(P):=\begin{bmatrix}
                                        \|P_1^{\T}A_1P_1\|_{\F}^2 \\
                                        \vdots \\
                                        \|P_{\ell}^{\T}A_{\ell}P_{\ell}\|_{\F}^2 \\
                                        \tr(P^{\T}B_1P) \\
                                        \vdots \\
                                        \tr(P^{\T}B_tP)
                                      \end{bmatrix} \in\bbR^{\ell+t},
\end{equation}
where $A_i\succeq 0$ for $1\le i\le \ell$ and $B_i\in\bbR^{n\times n}$ for $1\le i\le t$ are symmetric.
Each $\|P_i^{\T}A_iP_i\|_{\F}^2=\tr\big(\big[P_i^{\T}A_iP_i\big]^2\big)$ is an atomic function of NPDo by Theorem~\ref{thm:quad4tr-diff-eq},
and each $\tr(P^{\T}B_iP)$ is an atomic function of NEPv (by  Theorem~\ref{thm:quad4tr-diff-eq:H(P)})
but may not be an atomic function of NPDo unless $B_i\succeq 0$ also.
We can be do away with $B_i\not\succeq 0$ by shifting $B_i$ to $B_i-\delta I\succeq 0$ as we did in
the second part of Example~\ref{eg:T2a}, but the shift works only if the corresponding
partial derivative $\phi_{\ell+i}\ge 0$.
If we do not have that, then the NPDo approach is not guaranteed to work even with the shifting technique.
With Theorem~\ref{thm:main-NEPv-cvx:Hi(Pi)}, however,
we can make the NEPv approach work with
\begin{align*}
H(P)&=\sum_{i=1}^{\ell}\phi_i(T(P))\left(\frac {\partial \tr\big(\big[P_i^{\T}A_iP_i\big]^2\big)}{\partial P_i}P_i^{\T}
             +P_i\left[\frac {\partial \tr\big(\big[P_i^{\T}A_iP_i\big]^2\big)}{\partial P_i}\right]^{\T}\right) \\
    &\quad         +\sum_{i=1}^t\phi_{\ell+i}(T(P))\,2B_i \\
    &=\sum_{i=1}^{\ell}\phi_i(T(P))\,4\Big(A_iP_iP_i^{\T}AP_iP_i^{\T}+P_iP_i^{\T}A_iP_iP_i^{\T}A_i\Big)
     +\sum_{i=1}^t\phi_{\ell+i}(T(P))\,2B_i,
\end{align*}
assuming $\phi_i\ge 0$ for $1\le i\le\ell$ but no requirement  to impose on $\phi_{\ell+i}$ for $1\le i\le t$ is necessary,
because all $\tr\big(\big[P_i^{\T}A_iP_i\big]^2\big)$ share the same
$\alpha=1$ and $\beta=3$ in \eqref{eq:cond4AF-NPDo:cvx-b}
while all $\tr(P^{\T}B_iP)$ share the same $\munderbar\alpha=2$ and $\munderbar\beta=0$ in \eqref{eq:regularity4tr-diff-eq:cvx:Hi(Pi)}
which is also an equality for the case.
\end{example}

\begin{remark}\label{rk:app-NEPv-cvx}
We conclude section~\ref{sec:CVX-comp:NEPv} by commenting on the applicability of the results of this section
to the objective functions
in Table~\ref{tbl:obj-funs} via convex compositions of atomic functions for NEPv.
Essentially our results are applicable to
all but OLDA and SumTR, for which the corresponding composing functions $\phi$ are
$x_2/x_1$ and $x_2/x_1+x_3$, respectively. Both are non-convex, and yet {\bf the NEPv Ansatz} still holds for
OLDA but does not for SumTR.
With the generic $H(P)$ as in \eqref{eq:H(P):always}, SumCT
can be handled too through the convex composition of atomic functions but the resulting NEPv approach may not be competitive to the NPDo approach,
as we have argued in subsection~\ref{ssec:PineP}.
The composing function for OCCA is $x_2/\sqrt{x_1}$, which is not convex but whose square $x_2^2/x_1$ is convex for $x_2\ge 0$ and $x_1>0$.
%
%
For $\Theta$TR, the theory in subsection~\ref{ssec:PieqP} can only handle $0\le\theta\le 1/2$
and also on the objective function squared, however:
\begin{equation}\label{eq:ThetaTR:cvx2}
[f(P)]^2=\phi\circ T(P)
\quad\mbox{with}\,\, T(P)=\begin{bmatrix}
                             \tr(P^{\T}BP) \\
                             \tr(P^{\T}AP) \\
                             \tr(P^{\T}D)
                           \end{bmatrix}, \,\,
\phi(\bx)
    =\frac {(x_2+x_3)^2}{x_1^{2\theta}},
\end{equation}
where $\bx\equiv [x_1,x_2,x_3]^{\T}$.
This $T$ has the form of $T_{1a}$  of Example~\ref{eg:T1a-NEPv} and it can be verified that the associated symmetric matrix-valued
function $H(P)$ by \eqref{eq:H(P)-comp-T1a} differs from the one in \eqref{eq:H(P):theta-TR} \cite{wazl:2023} by a scalar factor only.
We claim that $\phi$ is convex for $x_1>0$ and $x_2+x_3\ge 0$, provided $0\le\theta\le 1/2$,
and, since also $\phi_3(\bx):=\partial\phi(\bx)/\partial x_3\ge 0$ for $x_1>0$ and $x_2+x_3\ge 0$, Theorem~\ref{thm:main-NEPv-cvx:T1a} applies.
We note that
$\phi_0(x,y)= {y^2}/{x^{2\theta}}$
for
$x>0$ and $y\ge 0$
is convex if $0\le\theta\le 1/2$ but is not convex if $\theta>1/2$. In fact, the Hessian matrix of $\phi_0$ is
$$
\begin{bmatrix}
2\theta(2\theta+1) {y^2}/{x^{2\theta+2}} & -4\theta y/{x^{2\theta+1}} \\
-4\theta y/{x^{2\theta+1}} &  2/{x^{2\theta}}
\end{bmatrix}
=\frac 2{x^{2\theta}}
 \begin{bmatrix}
    y/x &  \\
    & 1
 \end{bmatrix}
 \begin{bmatrix}
   \theta(2\theta+1) & -2\theta \\
   -2\theta & 1
 \end{bmatrix}
 \begin{bmatrix}
    y/x &  \\
    & 1
 \end{bmatrix}
$$
which is positive semidefinite for $x>0$ and $y\ge 0$ if and only if $0\le\theta\le 1/2$.
This implies
$\phi(\bx)\equiv\phi_0(x_1,x_2+x_3)$ is convex for $x_1>0$ and $x_2+x_3\ge 0$, provided  $0\le\theta\le 1/2$.
However, the results in
\cite{wazl:2023} says that the NEPv approach works on $f$ for $\Theta$TR for $0\le\theta\le 1$,
much bigger range for $\theta$
than what Theorem~\ref{thm:main-NEPv-cvx:T1a} for $f^2$ implies.
Theorem~\ref{thm:main-NEPv-cvx:T1a} also yields an inequality on how much the  objective function squared,
$f^2$, increases, in contrast to the previous \eqref{eq:f-inc-theta-TR:refined} which is for the original objective function, $f$, for $0\le\theta\le 1$ and OCCA.
In any case, the best $Q_i$ at Line 4 of Algorithm~\ref{alg:SCF4NEPv} when applied to $\Theta$TR
is an orthonormal polar factor of $[\what P^{(i)}]^{\T}D$ and $\bbP=\STM{k}{n}_{D+}$.
\end{remark}

\section{A Brief Comparison of the NPDo and NEPv Approaches}\label{sec:NPDo-vs-NEPv}
The developments of the frameworks for both NPDo and NEPv follow the same pattern: an ansatz that implies
the global convergence of their respective SCF iterations, the definition of atomic functions for an approach, and the fulfillment of
the ansatz by the atomic functions for the approach and their convex compositions. Subtly, between the two approaches, there are differences in applicabilities
and numerical implementations, making them somewhat complementary to each other.
%
\begin{table}[t]
\renewcommand{\arraystretch}{1.4}
\caption{\small NPDo {\em vs.} NEPv on three convex compositions of matrix traces}\label{tbl:NPDo-vs-NEPv}
\centerline{\small
\begin{tabular}{|c|c|c|c|c|}
  \hline
\multirow{2}{*}{$f$}    & \multicolumn{2}{c|}{NPDo} & \multicolumn{2}{c|}{NEPv} \\ \cline{2-5}
    & conditions &  by & conditions & by \\ \hline\hline
\multirow{2}{*}{$\phi\circ T_{1a}$} & $A_i\succeq 0,\,1\le i\le\ell$, & \multirow{2}{*}{Thm.~\ref{thm:main-npd-cvx:T1a}} &
        \multirow{ 2}{*}{none} & \multirow{ 2}{*}{Thm.~\ref{thm:main-NEPv-cvx:T1a}}  \\
      & $\phi_j\ge 0,\,1\le j\le\ell$ &  &
         &   \\ \hline
\multirow{2}{*}{$\phi\circ T_{2a}$} & $A_i\succeq 0,\,1\le i\le\ell$, & \multirow{2}{*}{Expl.~\ref{eg:T2}} &
        \multirow{2}{*}{$\phi_j\ge 0,\,1\le j\le\ell$} & \multirow{2}{*}{Expl.~\ref{eg:T2a}}  \\
                   & $\phi_j\ge 0,\,1\le j\le\ell+t$ &  &
         &   \\ \hline
\multirow{2}{*}{$\phi\circ T_{3a}$} & $A_i\succeq 0,\,1\le i\le\ell$, & \multirow{2}{*}{Expl.~\ref{eg:T3}} &
        $A_i\succeq 0,\,1\le i\le\ell$, & \multirow{2}{*}{Expl.~\ref{eg:T3a}}  \\
                   & $\phi_j\ge 0,\,1\le j\le\ell+1$ &  &
        $\phi_j\ge 0,\,1\le j\le\ell+1$ &   \\
        \hline
\multirow{2}{*}{$\phi\circ T_4$} & $A_i,\,B_j\succeq 0,\,\forall i, j$ & \multirow{2}{*}{Thm.~\ref{thm:main-npd-cvx}} &
        $A_i\succeq 0,\,1\le i\le\ell$, & \multirow{2}{*}{Expl.~\ref{eg:T4}}  \\
                   & $\phi_j\ge 0,\,1\le j\le\ell+t$ &  &
        $\phi_j\ge 0,\,1\le j\le\ell$ &   \\
        \hline
\multicolumn{5}{l}{\small * $\phi_j(\bx):=\partial\phi(\bx)/\partial x_j$  for $\bx=[x_j]$.} \\
\end{tabular}
}
\end{table}
We outline some notable differences below.


\smallskip
{\bf The NEPv approach requires weaker conditions.}
We have
demonstrated that {\bf the NDPo Ansatz} demands more on an objective function than {\bf the NEPv Ansatz}, and hence
the NEPv approach provably works on a wider collection of maximization problems \eqref{eq:main-opt} on the Stiefel manifold
than the NPDo approach.
\begin{enumerate}[(i)]
  \item The NPDo approach requires that the KKT
        condition $\scrH(P)\equiv{\partial f(P)}/{\partial P}=P\Lambda$
        is a polar decomposition at optimality, in order for the approach to be even considered,
        whereas the NEPv approach does not impose that
        the KKT condition must be an NPDo at optimality;
  \item {\bf The NPDo Ansatz} implies {\bf the NEPv Ansatz} with the generic symmetric matrix-valued function $H(P)$ in \eqref{eq:H(P):always} (see Theorem~\ref{thm:NPDo>=NEPv});
  \item Atomic functions for NPDo are also atomic functions for NEPv  with the generic symmetric matrix-valued function $H(P)$ in \eqref{eq:H(P):always} (see Theorem~\ref{thm:scrH2H:tr-diff-eq});
  \item Among those in Table~\ref{tbl:obj-funs}, the NEPv approach is guaranteed to work for three more
        than  the NPDo approach does (Table~\ref{tbl:obj-funs:NPD} {\em vs.} Table~\ref{tbl:obj-funs:NEPv});
  \item When it comes to the concrete atomic functions $[\tr((P^{\T}AP)^m)]^s$
        where integer $m\ge 1$ and scalar $s\ge 1$,
        it is required that $A\succeq 0$ always for the NPDo approach, whereas for the NEPv approach $A$ being symmetric
        suffices for $m\in\{1,2\}$ and $s=1$ (see Examples~\ref{eg:T1a-NEPv} and~\ref{eg:T2a});
  \item As a further demonstration, in Table~\ref{tbl:NPDo-vs-NEPv}, we summarize
        what are required by both approaches on four convex compositions of matrix-trace functions, and
        it clearly indicates that NEPv requires weaker conditions than  NPDo does for
        $\phi\circ T_{1a}$, $\phi\circ T_{2a}$, and $\phi\circ T_4$.
\end{enumerate}

\smallskip
{\bf The NPDo approach is easier to use.}
The NPDo approach, if it provably works, is easier to implement and more flexible
to use.
\begin{enumerate}[(a)]
  \item The NPDo approach relies on
        SVDs of tall and skinny matrices \cite{demm:1997,govl:2013} during its SCF
        iterations, whereas the NEPv approach needs solutions to potentially large scale eigenvalue problems \cite{bddrv:2000,lesy:1998,li:2015,parl:1998,saad:1992}, not to mention that the NEPv approach requires constructing a symmetric matrix-valued function $H(P)$.
  \item In general for the NPDo approach,
        the atomic functions in a convex composition are allowed to be of submatrices of $P$
        consisting of a few, not necessarily all, columns of $P$, such as $P_i$ in
        $\tr(P_i^{\T}A_iP_i)$ of SumCT in Table~\ref{tbl:obj-funs}, but, more generally,
        different $P_i$ can share common columns of $P$, whereas no $P_i$ in SumCT share common columns of $P$;
        For the NEPv approach, allowing such flexibility in the involved atomic functions in the convex composition
        forces us to use  the generic symmetric matrix-valued function $H(P)$ in \eqref{eq:H(P):always}
        and ask for the same conditions as the NPDo approach requires, and hence we may as well go for
        the NPDo approach in the first place,
        as we have argued in subsection~\ref{ssec:PineP}.
\end{enumerate}

\section{Concluding Remarks}\label{sec:conclu}
The first order optimality condition, also known as the KKT condition, for optimizing a function $f$ over the Stiefel manifold takes the form
\begin{equation}\label{eq:KKT'}
\scrH(P):=\frac{\partial f(P)}{\partial P}=P\Lambda
\quad \mbox{with} \quad
\Lambda^{\T}=\Lambda\in\bbR^{k\times k},\quad P^{\T}P=I_k.
\end{equation}
This is an $n\times k$ matrix equation in $P$ on the Stiefel manifold, upon noticing $\Lambda=[P^{\T}\scrH(P)+\scrH(P)^{\T}P]/2$.
Any maximizer is a solution. Except for very special objective functions such as $\tr(P^{\T}AP)$ or $\tr(P^{\T}D)$, solving this equation rightly
and efficiently for a maximizer is a challenging task. For example, it often has infinitely many solutions and maximizers
hide among them. Hence we  need to not only solve the nonlinear equation of the KKT condition but also  find the right ones.
Inspired by recent works \cite{wazl:2023,wazl:2022a,zhys:2020,zhli:2014a,zhli:2014b,zhwb:2022}, in this paper,
we establish two unifying frameworks, one for the NEPv approach and the other for the NPDo approach, for optimization on the Stiefel manifold.
Our frameworks are built upon two fundamental ansatzes,
{\bf the NPDo Ansatz} and {\bf the NEPv Ansatz}. When a respective ansatz is
satisfied, the corresponding approach, the NPDo or NEPv approach, is guaranteed to work
in the sense of  global convergence from any given initial point.
To expand the applicability of the approaches, we propose the theories of atomic functions for each approach
and show that any convex composition $\phi\circ T$ of atomic functions for any of the two approaches satisfies the corresponding ansatz under some mild conditions.
It is demonstrated that the
commonly used
matrix-trace functions
$$
[\tr((P^{\T}AP)^m)]^s,\quad
        [\tr((P^{\T}D)^m)]^s\quad
\mbox{for integer $m\ge 1$ and real scalar $s\ge 1$}
$$
are concrete atomic functions for both approaches ($A$ may be required to be positive semidefinite depending on circumstances),
and that nearly all optimization problems on the Stiefel manifold recently investigated in the literature
for various machine learning applications are about some compositions of these concrete matrix-trace functions. Although not all of them are convex compositions, some may still satisfy {\bf the NEPv Ansatz}.
These concrete atomic functions in combination with convex compositions lead to a large collection of objective functions on which
one of or both  approaches work.

%
%
%
%


However when an ansatz fails to hold for a given objective function, the conclusions from
our global convergence analysis will likely fail. In the case for
the NEPv approach, a remedy via the level-shifting SCF exists to ensure locally linear convergence
when $f(P)$ is right-unitarily
invariant \cite{ball:2022} or when $f(P)$ contains and increases with $\tr(P^{\T}D)$ \cite{luli:2024},
where sharp estimations of linear convergence rate are obtained.
But there are more works to do, especially for
the NPDo approach for which a remedy remains to be found. In \cite{teli:2024}, it is investigated how
$\tr(P^{\T}D)$ may help determine a particular $P$ among all orthonormal basis matrices of a subspace.

Not all objective functions (or their simple transformations like $f^2$ for $\Theta$TR as explained in Remark~\ref{rk:app-NEPv-cvx}) that satisfy
the ansatz(es) take the form of some convex compositions of atomic functions, even for some of those in Table~\ref{tbl:obj-funs}.
For example, in the case of OLDA, $f(P)=\phi\circ T(P)$ where $\phi(\bx)=x_1/x_2$ and
$T(P)=\begin{bmatrix}
        \tr(P^{\T}AP)\\
        \tr(P^{\T}BP)
      \end{bmatrix}$.
Although this $f(P)$ is not a convex composition of $\tr(P^{\T}AP)$ and $\tr(P^{\T}BP)$, it still
satisfies {\bf the NEPv Ansatz}. In fact, OLDA is a special case of $\Theta$TR: $\theta=1$ and $D=0$, and
inequality \eqref{eq:f-inc-theta-TR:refined} applies and yields, for OLDA,
$$
f(\wtd P) \ge f(P)+\frac 12\cdot\frac {\tr_{\min,k}(B)}{\tr_{\max,k}(B)}\Big[\tr(\what P^{\T}H(P)\what P)-\tr(P^{\T}H(P)P)\Big],
$$
assuming $B\succeq 0$ and $\rank(B)>n-k$,
where $\wtd P=\what P$ and $H(P)$ is given by \eqref{eq:H(P):theta-TR}
upon setting $\theta=1$ and $D=0$.

Numerical demonstrations on the performances of the NEPv approach and the NPDo approach on various machine learning applications
have been well documented   by the author and his collaborators in
their recent works \cite{wazl:2023,wazl:2022a,zhys:2020,zhwb:2022}.
We expect more to come as the unifying frameworks in this paper have significantly expanded the domains on which the approaches provably work.

Throughout the article, we limit ourselves to the Stiefel manifold in the standard inner product
$\langle \bx,\by\rangle=\by^{\T}\bx$ for $\bx,\,\by\in\bbR^n$ and to the field of real numbers, because that is where
most optimization on the Stiefel manifold from machine learning dominantly falls into. But our developments can be extended to
the field of complex numbers and other inner-products, separately and combined.
For the case of the field of complex numbers, minor modifications suffice: replacing
        transpose $(\cdot)^{\T}$ with conjugate transpose $(\cdot)^{\HH}$ and  $\tr((P^{\T}D)^m)$ with $\Re(\tr((P^{\HH}D)^m))$
        (where $\Re(\cdot)$ takes the real part of a complex number).
However, extending to the case of the $M$-inner product requires additional algebraic manipulations and analysis. An outline on how to proceed is explained
        in appendix~\ref{sec:M-IN-Prod}.

\clearpage
\appendix

\section{Canonical Angles}\label{sec:angle-space}
We introduce a metric on
Grassmann manifold
$\scrG_k(\bbR^n)$, the collection of all $k$-dimensional subspaces in $\bbR^n$.
Let
$\cX=\cR(X)$ and $\cY=\cR(Y)$ be two points in $\scrG_k(\bbR^n)$, where $X,\,Y\in\STM{k}{n}$. The canonical angles
$\theta_1(\cX,\cY)\ge\cdots\ge\theta_k(\cX,\cY)$ between $\cX$ and $\cY$ are defined by \cite{stsu:1990}
$$
	0\le\theta_i(\cX,\cY):=\arccos \sigma_i(X^{\T}Y)\le\frac {\pi}2 \quad\mbox{for $1\le i\le k$},
$$
and accordingly,
the diagonal matrix of the canonical angles between $\cX$ and $\cY$ is
	$$
	\Theta(\cX,\cY)=\diag(\theta_1(\cX,\cY),\dots,\theta_k(\cX,\cY))\in\bbR^{k\times k}.
	$$
It is known that
	\begin{align}
	\dist_2(\cX,\cY)&:=\|\sin\Theta(\cX,\cY)\|_2=\sin\theta_1(\cX,\cY), \label{eq:sinTheta} \\
    \dist_{\F}(\cX,\cY)&:=\|\sin\Theta(\cX,\cY)\|_{\F}=\Big[\sum_{i=1}^k\sin^2\theta_i(\cX,\cY)\Big]^{1/2} \label{eq:sinTheta:F}
	\end{align}
are two unitarily invariant metrics on the
Grassmann manifold
$\scrG_k(\bbR^n)$  \cite[p.99]{sun:2001}.

The orthonormal basis matrix $X$ of $\cX$ is not unique, and neither is $Y$ of $\cY$. For that reason, it is not expected
that $\|X-Y\|_{\F}$ be in the order of $\Theta(\cX,\cY)$. In particular, more than likely $\|X-Y\|_{\F}>0$ even in the case
$\cX=\cY$ unless the basis matrices $X$ and $Y$ are judicially chosen. We now explain how to align $X$ with $Y$
according to $\Theta(\cX,\cY)$, namely replace $X$ with $\wtd X:=XQ_*$ where
$$
Q_*=\arg\min_{Q\in\STM{k}{n}}\|XQ-Y\|_{\F}^2.
$$
Notice that for any $Q\in\STM{k}{n}$
\begin{equation}\label{eq:XQ=Y}
\|XQ-Y\|_{\F}^2=\tr([XQ-Y]^{\T}[XQ-Y])=2k-2\Re(Q^{\T}X^{\T}Y),
\end{equation}
where $\Re(\cdot)$ extract the real part of a complex number.
The last quantity in \eqref{eq:XQ=Y} is minimized by the orthonormal polar factor of $X^{\T}Y$, called it $Q_*$, at which
\begin{equation}\label{eq:basis-align}
\|\wtd X-Y\|_{\F}^2=2\sum_{i=1}^k(1-\cos\theta_i)=4\sum_{i=1}^k\sin^2(\theta_i/2)=4\|\sin(\Theta(\cX,\cY)/2)\|_{\F}^2,
\end{equation}
yielding $\|\wtd X-Y\|_{\F}=2\|\sin(\Theta(\cX,\cY)/2)\|_{\F}$.
Hence the new orthonormal basis matrix $\wtd X$ of $\cX$ is aligned with $Y$ of $\cY$ well enough so that
$X=Y$ if $\Theta(\cX,\cY)=0$.

\section{Preliminary Lemmas}\label{sec:prelim}
In this section, we collect a few results, some known and some likely new, that we need in our proofs.
We will point out an earliest reference, if known, to each one. Some likely appear before but we are not aware of
any reference to.
For completeness, we will provide proofs for those we cannot find references.

\begin{lemma}[Young's Inequality]\label{lm:YoungIneq}
Given $a,\,b\ge 0$, and $p,\,q\ge 1$ such that $\frac 1p+\frac 1q=1$, we have
$$
a^{1/p}b^{1/q}\le\frac 1p\,a+\frac 1q\,b.
$$
In particular for $p=q=2$, it becomes $2\sqrt{ab}\le a+b$.
\end{lemma}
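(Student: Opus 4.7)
The plan is to reduce Young's inequality to a standard convexity (or concavity) statement applied to the exponential (or logarithm), handling the degenerate cases $a=0$ or $b=0$ separately. The statement is a classical one, so only a short sketch is needed.

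First I would dispose of the boundary cases. If $a=0$ or $b=0$, then the left-hand side is $0$, while the right-hand side $\tfrac{1}{p}a + \tfrac{1}{q}b$ is nonnegative since $a,b\ge 0$ and $p,q\ge 1$, so the inequality holds trivially. I would also note the edge case $p=1$, which forces $q=\infty$ in the usual formulation; under the convention $\tfrac{1}{q}=0$ and $b^{0}=1$ (for $b>0$), the inequality becomes $a\le a$, which is trivial. From now on assume $a,b>0$ and $1<p,q<\infty$ with $\tfrac{1}{p}+\tfrac{1}{q}=1$.

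Next, the main step: rewrite $a^{1/p} b^{1/q}$ via the exponential, namely
\[
a^{1/p} b^{1/q}=\exp\!\left(\tfrac{1}{p}\ln a+\tfrac{1}{q}\ln b\right).
\]
Because $e^x$ is convex on $\mathbb{R}$ and $\tfrac{1}{p}+\tfrac{1}{q}=1$ with $\tfrac{1}{p},\tfrac{1}{q}\in(0,1)$, Jensen's inequality yields
\[
\exp\!\left(\tfrac{1}{p}\ln a+\tfrac{1}{q}\ln b\right)
\le \tfrac{1}{p}\exp(\ln a)+\tfrac{1}{q}\exp(\ln b)
= \tfrac{1}{p}a+\tfrac{1}{q}b,
\]
which is exactly the desired inequality. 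The special case $p=q=2$ then reads $\sqrt{ab}\le \tfrac{1}{2}(a+b)$, i.e.\ $2\sqrt{ab}\le a+b$, as stated.

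There is no real obstacle here: the only care needed is the boundary bookkeeping ($a=0$ or $b=0$, and possibly $p\in\{1,\infty\}$), which is handled by inspection. The core argument is one application of convexity of the exponential (equivalently, concavity of the logarithm, which would give $\ln(\tfrac{1}{p}a+\tfrac{1}{q}b)\ge \tfrac{1}{p}\ln a+\tfrac{1}{q}\ln b$ and exponentiating). Either variant gives a two-line proof.
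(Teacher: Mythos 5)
Your proof is correct. Note that the paper itself gives no proof of this lemma --- it is stated as a classical result (the appendix explicitly says proofs are supplied only for results the author could not find references for), so there is nothing to compare against on the paper's side. Your argument via convexity of the exponential (equivalently, concavity of the logarithm) together with the trivial boundary cases $a=0$ or $b=0$ is the standard textbook proof and is complete. One minor remark: under the paper's hypothesis that $p,q\ge 1$ are finite reals with $\frac1p+\frac1q=1$, the case $p=1$ cannot actually occur (it would force $\frac1q=0$), so your discussion of $q=\infty$ is harmless but unnecessary here.
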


\begin{lemma}\label{lm:YoungIneq-ext}
For $a,\,b\ge 0$, and $\mu,\,\nu\ge 0$ such that $\mu+\nu\ge 1$,  we have
$$
a^{\mu}b^{\nu}\le\frac {\mu}{\mu+\nu}\, a^{\mu+\nu}+\frac {\nu}{\mu+\nu}\, b^{\mu+\nu}.
$$
\end{lemma}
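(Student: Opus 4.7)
The plan is to reduce Lemma~\ref{lm:YoungIneq-ext} to the classical Young's inequality already stated as Lemma~\ref{lm:YoungIneq}. Set $s=\mu+\nu\ge 1$ and note that $a^\mu b^\nu = (a^s)^{\mu/s}(b^s)^{\nu/s}$, so the exponents on the right are conjugate in the Young sense.

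First I would dispose of the degenerate cases. If $\mu=0$ then $\nu=s\ge 1$, and both sides reduce to $b^\nu$; symmetrically for $\nu=0$. So only the generic case $\mu,\nu>0$ requires work. In that case, define $p=s/\mu$ and $q=s/\nu$. Since $\mu,\nu\le s$, both $p,q\ge 1$, and $1/p+1/q=\mu/s+\nu/s=1$, so $(p,q)$ is a conjugate pair admissible in Lemma~\ref{lm:YoungIneq}.

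Applying Lemma~\ref{lm:YoungIneq} with inputs $A=a^s\ge 0$ and $B=b^s\ge 0$ then gives
\[
a^\mu b^\nu = A^{1/p}B^{1/q}\le \frac{1}{p}A+\frac{1}{q}B=\frac{\mu}{\mu+\nu}\,a^{\mu+\nu}+\frac{\nu}{\mu+\nu}\,b^{\mu+\nu},
\]
which is exactly the claim. There is no real obstacle here; the only subtlety is to verify that the hypothesis $\mu+\nu\ge 1$ is precisely what guarantees $p,q\ge 1$ so that Lemma~\ref{lm:YoungIneq} is applicable, and to handle the boundary cases $\mu=0$ or $\nu=0$ separately as noted above.
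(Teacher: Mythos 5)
Your proof is correct, and it takes a cleaner route than the paper's. Both arguments reduce to the classical Young's inequality (Lemma~\ref{lm:YoungIneq}) with the conjugate pair $p=\tau/\mu$, $q=\tau/\nu$ where $\tau=\mu+\nu$, but you feed Young the inputs $a^{\tau}$ and $b^{\tau}$, which yields the claim in a single step. The paper instead applies Young to $a$ and $b$ themselves, obtaining $a^{\mu/\tau}b^{\nu/\tau}\le\frac{\mu}{\tau}a+\frac{\nu}{\tau}b$, then raises both sides to the power $\tau$ and invokes the convexity of $x\mapsto x^{\tau}$ on $[0,\infty)$ to push the power inside the convex combination; your version avoids that extra convexity step entirely. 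One small inaccuracy in your closing remark: the hypothesis $\mu+\nu\ge 1$ is \emph{not} what guarantees $p,q\ge 1$ in your argument --- that follows already from $\mu,\nu\le\mu+\nu$, i.e.\ from $\mu,\nu\ge 0$ alone, so your proof in fact never uses $\mu+\nu\ge 1$ (the inequality is just weighted AM--GM applied to $a^{\tau}$ and $b^{\tau}$ and holds for any $\mu,\nu\ge 0$ with $\mu+\nu>0$). By contrast, the paper's proof genuinely needs $\tau\ge 1$ for the convexity of $x^{\tau}$. This does not affect the validity of your proof under the stated hypotheses; it just means your argument proves slightly more than is claimed.
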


\begin{proof}
Let $\tau:=\mu+\nu\ge 1$. Then $\mu/\tau+\nu/\tau=1$. Using Young's Inequality, we get
$$
a^{\mu}b^{\nu}=\left(a^{\mu/\tau}b^{\nu/\tau}\right)^{\tau}
   \le\left(\frac {\mu}{\tau}\, a+\frac {\nu}{\tau}\, b\right)^{\tau}
   \le\frac {\mu}{\tau}\, a^{\tau}+\frac {\nu}{\tau}\, b^{\tau},
$$
as was to be shown, where the last inequality follows from that fact that $x^{\tau}$ with $\tau\ge 1$ is convex on $[0,\infty)$.
\end{proof}

\begin{lemma}\label{lm:convex-mono}
Let $\phi\,:\,\mathfrak{D}\subseteq\bbR^N\to\bbR$ be convex and differentiable and let $\bx,\,\wtd\bx\in\mathfrak{D}$,
where $\mathfrak{D}$ is a convex domain in $\bbR^N$.
If
$$
\nabla\phi(\bx)^{\T}\wtd\bx\ge\nabla\phi(\bx)^{\T}\bx+\eta,
$$
then $\phi(\wtd \bx)\ge\phi(\bx)+\eta$.
\end{lemma}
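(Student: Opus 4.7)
The plan is to invoke the standard first-order characterization of convex differentiable functions, which states that for any convex differentiable $\phi$ on a convex domain $\mathfrak{D}$, and for any $\bx,\,\wtd\bx\in\mathfrak{D}$,
$$
\phi(\wtd\bx) \,\ge\, \phi(\bx) + \nabla\phi(\bx)^{\T}(\wtd\bx-\bx).
$$
This is a textbook fact (see, e.g., Boyd and Vandenberghe), so I would simply cite it rather than re-derive it.

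Next, I would rewrite the right-hand side by splitting the inner product as $\nabla\phi(\bx)^{\T}\wtd\bx - \nabla\phi(\bx)^{\T}\bx$ and apply the hypothesis $\nabla\phi(\bx)^{\T}\wtd\bx\ge\nabla\phi(\bx)^{\T}\bx+\eta$ to conclude that this difference is bounded below by $\eta$. Combining the two inequalities immediately yields $\phi(\wtd\bx)\ge\phi(\bx)+\eta$, as claimed.

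There is essentially no obstacle here; the lemma is a one-line consequence of the first-order convexity inequality once the assumption is rearranged. The only thing to be mindful of is that $\bx$ and $\wtd\bx$ both lie in $\mathfrak{D}$ so that $\nabla\phi(\bx)$ is defined and the subgradient inequality applies, both of which are built into the hypotheses.
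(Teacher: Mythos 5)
Your proposal is correct and follows exactly the same route as the paper's own proof: apply the first-order convexity inequality $\phi(\wtd\bx)\ge\phi(\bx)+\nabla\phi(\bx)^{\T}(\wtd\bx-\bx)$, split the inner product, and substitute the hypothesis. Nothing further is needed.
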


\begin{proof}
Likely, the result of this lemma is  known, and it has a short proof.
Since $\phi$ is convex, we have
\begin{align*}
\phi(\wtd\bx)&\ge\phi(\bx)+[\nabla\phi(\bx)]^{\T}(\wtd \bx-\bx) \\
   &=[\nabla\phi(\bx)]^{\T}\wtd \bx+\phi(\bx)-[\nabla\phi(\bx)]^{\T}\bx \\
   &\ge [\nabla\phi(\bx)]^{\T}\bx+\eta+\phi(\bx)-[\nabla\phi(\bx)]^{\T}\bx \\
   &=\phi(\bx)+\eta,
\end{align*}
as was to be shown.
\end{proof}

\begin{lemma}[von Neumann's trace inequality \cite{neum:1937}, {\cite[p.183]{hojo:1991}}]\label{lm:vN-tr-ineq}
For $B,\,C\in\bbR^{n\times k}$, we have
$$
|\tr(B^{\T}C)|\le\sum_{i=1}^k\sigma_i(B)\,\sigma_i(C).
$$
\end{lemma}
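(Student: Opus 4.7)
My plan is to reduce the inequality to a statement about a single diagonal matrix sandwiched by contractions via SVDs, and then invoke Ky Fan's weak majorization of diagonal entries by singular values.

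First I would pass to thin SVDs $B = U_B \Sigma_B V_B^{\T}$ and $C = U_C \Sigma_C V_C^{\T}$ with $U_B, U_C\in\bbO^{n\times k}$, $V_B, V_C\in\bbO^{k\times k}$, $\Sigma_B = \diag(\sigma_1(B),\ldots,\sigma_k(B))$, and analogously $\Sigma_C$. Using cyclicity of the trace, $\tr(B^{\T}C) = \tr(\Sigma_B M \Sigma_C N)$ where $M := U_B^{\T} U_C$ (a submatrix of an orthogonal matrix, hence a contraction $\|M\|_2\le 1$) and $N := V_C^{\T} V_B$ (orthogonal, so $\|N\|_2=1$). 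Setting $W := M\Sigma_C N$ and using the standard singular-value bound $\sigma_i(AXB)\le\|A\|_2\|B\|_2\,\sigma_i(X)$ for any conformable $A,B,X$, I get $\sigma_i(W)\le\sigma_i(C)$ for $i=1,\ldots,k$. The claim is thereby reduced to the scalar inequality
\[
\Big|\tr(\Sigma_B W)\Big| \;=\; \Big|\sum_{i=1}^k \sigma_i(B)\, W_{ii}\Big| \;\le\; \sum_{i=1}^k \sigma_i(B)\, \sigma_i(W).
\]

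The main obstacle, and the only place beyond SVD bookkeeping, is this last inequality, which I would establish by combining two classical ingredients. (i) The rearrangement inequality: since the $\sigma_i(B)$ are nonnegative and nonincreasing, $|\sum_i \sigma_i(B) W_{ii}| \le \sum_i \sigma_i(B) |W_{(i)}|$, where $|W_{(1)}|\ge\cdots\ge|W_{(k)}|$ is the nonincreasing rearrangement of $\{|W_{ii}|\}$. (ii) Ky Fan's diagonal-vs.-singular-value weak majorization, $\sum_{i=1}^m |W_{(i)}|\le\sum_{i=1}^m \sigma_i(W)$ for each $1\le m\le k$. For (ii), I would give a short self-contained proof: let $J$ be the indices of the top $m$ absolute diagonal entries, $\varepsilon_i=\mathrm{sign}(W_{ii})$, and form $E,F\in\bbO^{k\times m}$ whose columns are $\{\be_i\}_{i\in J}$ and $\{\varepsilon_i \be_i\}_{i\in J}$; then $\sum_{i=1}^m |W_{(i)}| = \tr(F^{\T}W E)\le\|F^{\T}W E\|_{\tr}\le\sum_{i=1}^m\sigma_i(W)$, the last step from the singular-value interlacing $\sigma_i(F^{\T}W E)\le\sigma_i(W)$.

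Finally I would knit (i) and (ii) together by Abel summation, using that $\sigma_i(B)\ge 0$ is nonincreasing:
\[
\sum_{i=1}^k \sigma_i(B)\, |W_{(i)}|
\;=\;\sum_{i=1}^{k-1}\big(\sigma_i(B)-\sigma_{i+1}(B)\big)\sum_{j=1}^i|W_{(j)}|
 + \sigma_k(B)\sum_{j=1}^k |W_{(j)}|
\;\le\;\sum_{i=1}^k \sigma_i(B)\,\sigma_i(W),
\]
since every coefficient $\sigma_i(B)-\sigma_{i+1}(B)$ and $\sigma_k(B)$ is nonnegative and (ii) applies partial sum by partial sum. Chaining this with the rearrangement bound from (i) and the estimate $\sigma_i(W)\le\sigma_i(C)$ yields $|\tr(B^{\T}C)|\le\sum_{i=1}^k\sigma_i(B)\,\sigma_i(C)$, as desired. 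The only nonroutine step in the whole plan is the Ky Fan majorization (ii); everything else is straightforward.
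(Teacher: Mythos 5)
Your proposal is correct, but there is nothing in the paper to compare it against: Lemma~\ref{lm:vN-tr-ineq} is stated as a cited classical result (von Neumann 1937; Horn--Johnson, \emph{Topics in Matrix Analysis}, p.~183) and the paper supplies no proof of its own. Your argument is a sound, self-contained derivation along the standard lines: the SVD reduction $\tr(B^{\T}C)=\tr(\Sigma_B W)$ with $W=M\Sigma_C N$ and $\sigma_i(W)\le\sigma_i(C)$ is correct (the contraction bound $\sigma_i(AXB)\le\|A\|_2\|B\|_2\sigma_i(X)$ does the work); your proof of the Ky Fan weak majorization $\sum_{i=1}^m|W_{(i)}|\le\sum_{i=1}^m\sigma_i(W)$ via $\tr(F^{\T}WE)\le\|F^{\T}WE\|_{\tr}\le\sum_{i=1}^m\sigma_i(W)$ is valid, though the last step is again the submultiplicativity bound $\sigma_i(F^{\T}WE)\le\|F\|_2\|E\|_2\sigma_i(W)$ rather than an interlacing theorem, so the label ``interlacing'' is a cosmetic misnomer; and the Abel-summation step correctly converts the partial-sum inequalities into the weighted inequality because the coefficients $\sigma_i(B)-\sigma_{i+1}(B)$ and $\sigma_k(B)$ are nonnegative. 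The only implicit assumption is $k\le n$ (so that $V_B\in\bbO^{k\times k}$ in the thin SVD), which is consistent with the paper's standing convention.
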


In the next four lemmas, any arbitrary nonnegative power of a positive semidefinite matrix $B$ is understood as
$B^{\mu}=U\Lambda^{\mu} U^{\T}$ for any $\mu\ge 0$, where $B=U\Lambda U^{\T}$ is the eigendecomposition of $B$,
and $\Lambda^{\mu}$ is obtained by taking the $\mu$th power of every diagonal entry of $\Lambda$.

\begin{lemma}\label{lm:vN-tr-ineq-ext}
For $B,\,C\in\bbR^{k\times k}$ that are positive semidefinite, and $\mu,\,\nu\ge 0$ such that $\mu+\nu\ge 1$,  we have
$$
\tr(B^{\mu}C^{\nu})\le\|B^{\mu}C^{\nu}\|_{\tr}\le\frac {\mu}{\mu+\nu}\, \tr(B^{\mu+\nu})+\frac {\nu}{\mu+\nu}\, \tr(C^{\mu+\nu}).
$$
\end{lemma}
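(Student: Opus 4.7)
The plan is to split the stated chain into its two inequalities and handle each separately. The left inequality $\tr(B^\mu C^\nu)\le\|B^\mu C^\nu\|_{\tr}$ is essentially automatic: for any square matrix $X$, one has $\tr(X)\le|\tr(X)|\le\sum_i|\lambda_i(X)|\le\sum_i\sigma_i(X)=\|X\|_{\tr}$, where the last step uses Weyl's majorization between eigenvalue magnitudes and singular values. So the real content lies in the right inequality.

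For the right inequality, I would first pass from $\|B^\mu C^\nu\|_{\tr}$ to a sum of products of singular values, and then apply the scalar Young inequality (Lemma~\ref{lm:YoungIneq-ext}) componentwise. Concretely, the intermediate bound I aim for is
\begin{equation}\label{eq:sv-bound}
\|B^\mu C^\nu\|_{\tr}\le\sum_{i=1}^k\sigma_i(B^\mu)\,\sigma_i(C^\nu)=\sum_{i=1}^k\lambda_i(B)^\mu\,\lambda_i(C)^\nu,
\end{equation}
where the equality holds because $B,C\succeq 0$ so $\sigma_i(B^\mu)=\lambda_i(B)^\mu$ and $\sigma_i(C^\nu)=\lambda_i(C)^\nu$ with eigenvalues ordered decreasingly. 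To prove \eqref{eq:sv-bound} from the form of von Neumann's inequality already available in Lemma~\ref{lm:vN-tr-ineq}, I would take an SVD $B^\mu C^\nu=U\Sigma V^{\T}$ with $U,V\in\bbO^{k\times k}$ and write
\[
\|B^\mu C^\nu\|_{\tr}=\tr(\Sigma)=\tr\!\big((VU^{\T}B^\mu)^{\T}\,\T\text{-less rearrangement}\big)=\tr\!\big(Z^{\T}C^\nu\big),
\]
where $Z:=(VU^{\T}B^\mu)^{\T}$; since $VU^{\T}$ is orthogonal, $\sigma_i(Z)=\sigma_i(B^\mu)$, and Lemma~\ref{lm:vN-tr-ineq} then yields \eqref{eq:sv-bound}.

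With \eqref{eq:sv-bound} in hand, the last step applies Lemma~\ref{lm:YoungIneq-ext} with $a=\lambda_i(B)$, $b=\lambda_i(C)$ (using the hypothesis $\mu+\nu\ge 1$) to each term of the sum:
\[
\lambda_i(B)^\mu\,\lambda_i(C)^\nu\le\frac{\mu}{\mu+\nu}\lambda_i(B)^{\mu+\nu}+\frac{\nu}{\mu+\nu}\lambda_i(C)^{\mu+\nu},
\]
and summing over $i=1,\dots,k$ gives the right-hand side $\frac{\mu}{\mu+\nu}\tr(B^{\mu+\nu})+\frac{\nu}{\mu+\nu}\tr(C^{\mu+\nu})$, since $\tr(B^{\mu+\nu})=\sum_i\lambda_i(B)^{\mu+\nu}$ and similarly for $C$.

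The main obstacle, and the only nontrivial step, is the derivation of \eqref{eq:sv-bound}. The form of von Neumann's inequality recorded in Lemma~\ref{lm:vN-tr-ineq} bounds $|\tr(X^{\T}Y)|$ rather than $\|XY\|_{\tr}$, so a brief rotation argument is needed to bridge the gap; once this intermediate bound is established, the rest is a direct scalar Young estimate followed by a sum.
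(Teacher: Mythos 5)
Your proposal is correct and follows essentially the same route as the paper's proof: the paper also handles the left inequality by Weyl's majorant theorem and obtains the key bound $\|B^{\mu}C^{\nu}\|_{\tr}\le\sum_i[\sigma_i(B)]^{\mu}[\sigma_i(C)]^{\nu}$ by inserting the orthogonal factor $Q=UV^{\T}$ from the SVD so that $\|B^{\mu}C^{\nu}\|_{\tr}=\tr([Q^{\T}B^{\mu}]C^{\nu})$ and then invoking von Neumann's inequality followed by the extended Young inequality. The only blemish is the garbled display in your rotation step (the phrase ``$\T$-less rearrangement''), but the surrounding text makes the intended identity $\|B^{\mu}C^{\nu}\|_{\tr}=\tr(Z^{\T}C^{\nu})$ with $\sigma_i(Z)=\sigma_i(B^{\mu})$ unambiguous.
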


\begin{proof}
That $\tr(B^{\mu}C^{\nu})\le\|B^{\mu}C^{\nu}\|_{\tr}$ is a corollary of Weyl's majorant theorem \cite[p.42]{bhat:1996}.
Let $Q\in\STM{k}{k}$ such that $Q^{\T}B^{\mu}C^{\nu}\succeq 0$, which yields
$\|B^{\mu}C^{\nu}\|_{\tr}=\tr(Q^{\T}B^{\mu}C^{\nu})$. Note that $B\succeq 0$ and thus
$(Q^{\T}B^{\mu})^{\T}(Q^{\T}B^{\mu})=B^{2\mu}$, implying the singular values of $Q^{\T}B^{\mu}$ are given by $\{[\sigma_i(B)]^{\mu}\}_{i=1}^k$.
Since $C\succeq 0$, the singular values of $C^{\nu}$ are $\{[\sigma_i(C)]^{\nu}\}_{i=1}^k$. Hence by Lemma~\ref{lm:vN-tr-ineq} and then by Lemma~\ref{lm:YoungIneq-ext},
we get
\begin{align}
\|B^{\mu}C^{\nu}\|_{\tr}&=\tr([Q^{\T}B^{\mu}]C^{\nu}) \label{eq:vN-tr-ineq-ext-pf-0}\\
  &\le \sum_{i=1}^k[\sigma_i(B)]^{\mu}\,[\sigma_i(C)]^{\nu} \nonumber\\
  &\le \sum_{i=1}^k \left(\frac {\mu}{\mu+\nu}\, [\sigma_i(B)]^{\mu+\nu}+\frac {\nu}{\mu+\nu}\, [\sigma_i(C)]^{\mu+\nu}\right)
   \label{eq:vN-tr-ineq-ext-pf-1}\\
  &=\frac {\mu}{\mu+\nu}\, \tr(B^{\mu+\nu})+\frac {\nu}{\mu+\nu}\, \tr(C^{\mu+\nu}), \nonumber
\end{align}
as expected.
\end{proof}

\begin{lemma}\label{lm:vN-tr-ineq-ext1}
For $B,\,C\in\bbR^{k\times k}$ where $C$ is positive semidefinite, and $\nu\ge 0$,  we have
$$
\tr(BC^{\nu})\le\|BC^{\nu}\|_{\tr}\le\frac {1}{1+\nu}\, \tr((Q^{\T}B)^{1+\nu})+\frac {\nu}{1+\nu}\, \tr(C^{1+\nu}),
$$
where $Q\in\STM{k}{k}$ such that $Q^{\T}B\succeq 0$.
\end{lemma}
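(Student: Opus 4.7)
The plan is to reduce this lemma directly to the previous Lemma~\ref{lm:vN-tr-ineq-ext} by exploiting unitary invariance of the trace norm together with the polar decomposition of $B$. First I would note that such a $Q$ always exists: writing the (thin) polar decomposition $B = Q\,(Q^{\T}B)$ with $Q\in\bbO^{k\times k}$ produces $Q^{\T}B\succeq 0$, so the hypothesis on $Q$ is nontrivially non-vacuous.

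For the leftmost inequality $\tr(BC^{\nu})\le\|BC^{\nu}\|_{\tr}$, I would invoke the standard fact that $\tr(X)\le|\tr(X)|\le\|X\|_{\tr}$ for any square $X$, which follows from von Neumann's trace inequality (Lemma~\ref{lm:vN-tr-ineq}) applied to $X$ and $I_k$ (whose singular values are all $1$). So the first inequality is essentially free.

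For the main inequality I would use unitary invariance of the trace norm to write
\[
\|BC^{\nu}\|_{\tr}=\|Q^{\T}BC^{\nu}\|_{\tr}.
\]
Since $Q^{\T}B\succeq 0$ by hypothesis and $C\succeq 0$, the pair $(Q^{\T}B,C)$ now satisfies the hypotheses of Lemma~\ref{lm:vN-tr-ineq-ext}. Applying that lemma with $\mu=1$ and the given $\nu\ge 0$ (so that $\mu+\nu=1+\nu\ge 1$ holds automatically) to the positive semidefinite matrices $Q^{\T}B$ and $C$ yields
\[
\|(Q^{\T}B)^{1}C^{\nu}\|_{\tr}\le\frac{1}{1+\nu}\tr\bigl((Q^{\T}B)^{1+\nu}\bigr)+\frac{\nu}{1+\nu}\tr(C^{1+\nu}),
\]
which is exactly the desired upper bound.

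There is no real obstacle here; the only subtlety is recognizing that the role of $Q$ is precisely to ``rotate'' $B$ into the positive semidefinite cone so that $B^{1+\nu}$ is replaced by the well-defined $(Q^{\T}B)^{1+\nu}$ on the right-hand side, mirroring the manipulation used inside the proof of Lemma~\ref{lm:vN-tr-ineq-ext} itself (compare equation~\eqref{eq:vN-tr-ineq-ext-pf-0} there). Thus the entire proof amounts to two short lines invoking unitary invariance and Lemma~\ref{lm:vN-tr-ineq-ext}.
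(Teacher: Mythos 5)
Your proof is correct and rests on the same ingredients as the paper's: the polar factor $Q$ of $B$, the fact that $Q^{\T}B$ has the singular values of $B$, and the von Neumann--Young combination underlying Lemma~\ref{lm:vN-tr-ineq-ext}. The only (harmless) difference in packaging is that you invoke Lemma~\ref{lm:vN-tr-ineq-ext} as a black box after replacing $B$ by $Q^{\T}B$ via unitary invariance of $\|\cdot\|_{\tr}$, whereas the paper re-traces the internal steps \eqref{eq:vN-tr-ineq-ext-pf-0}--\eqref{eq:vN-tr-ineq-ext-pf-1} of that lemma's proof with $\mu=1$; both routes give the identical bound.
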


\begin{proof}
Again $\tr(BC^{\nu})\le\|BC^{\nu}\|_{\tr}$ is a corollary of Weyl's majorant theorem. Despite that $B$ may not be positive semidefinite (possibly not even symmetric), we still have  \eqref{eq:vN-tr-ineq-ext-pf-0} with $\mu=1$ so long as $Q^{\T}BC^{\nu}\succeq 0$. Also note
$(Q^{\T}B)^{\T}(Q^{\T}B)=B^{\T}B$ and hence
the singular values of $Q^{\T}B$ are given by $\{\sigma_i(B)\}_{i=1}^k$. So
we still get \eqref{eq:vN-tr-ineq-ext-pf-1} with $\mu=1$.
The proof is completed upon noticing the eigenvalues of $Q^{\T}B$ are the same as the singular values of $B$.
\end{proof}

\begin{lemma}\label{lm:vN-tr-ineq-ext2}
For $X,\,Y\in\bbR^{n\times k}$ and $\mu,\,\nu\ge 0$,  we have
\begin{multline*}
\tr((X^{\T}X)^{\mu}X^{\T}Y(Y^{\T}Y)^{\nu}) \\
   \le\frac {1+2\mu}{2(\mu+\nu+1)}\,\tr((X^{\T}X)^{\mu+\nu+1})+\frac {1+2\nu}{2(\mu+\nu+1)}\,\tr((Y^{\T}Y)^{\mu+\nu+1}).
\end{multline*}
\end{lemma}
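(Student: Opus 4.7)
The strategy is to reduce the matrix inequality to a scalar inequality on singular values, and then apply Young's inequality term by term.

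First, I would introduce the auxiliary matrices
$$
P = X(X^{\T}X)^{\mu}, \qquad Q = Y(Y^{\T}Y)^{\nu},
$$
which are chosen so that $P^{\T}Q = (X^{\T}X)^{\mu}X^{\T}Y(Y^{\T}Y)^{\nu}$, making the left-hand side exactly $\tr(P^{\T}Q)$. A direct computation gives
$$
P^{\T}P = (X^{\T}X)^{\mu}X^{\T}X(X^{\T}X)^{\mu} = (X^{\T}X)^{2\mu+1},
$$
and similarly $Q^{\T}Q = (Y^{\T}Y)^{2\nu+1}$. Consequently the singular values of $P$ and $Q$ are
$$
\sigma_i(P) = [\sigma_i(X)]^{2\mu+1}, \qquad \sigma_i(Q) = [\sigma_i(Y)]^{2\nu+1} \quad \text{for } 1\le i\le k.
$$
(The case where some $\sigma_i(X)$ or $\sigma_i(Y)$ vanish is harmless because $2\mu+1, 2\nu+1 \ge 1 > 0$.)

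Next I would apply von Neumann's trace inequality (Lemma~\ref{lm:vN-tr-ineq}) to obtain
$$
\tr\!\big((X^{\T}X)^{\mu}X^{\T}Y(Y^{\T}Y)^{\nu}\big) = \tr(P^{\T}Q) \le \sum_{i=1}^{k}\sigma_i(P)\,\sigma_i(Q) = \sum_{i=1}^{k}[\sigma_i(X)]^{2\mu+1}[\sigma_i(Y)]^{2\nu+1}.
$$

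The final step is to bound each summand using the extended Young inequality (Lemma~\ref{lm:YoungIneq-ext}) with the choices $a=[\sigma_i(X)]^2$, $b=[\sigma_i(Y)]^2$, and exponents $\mu' = \mu + \tfrac{1}{2}$, $\nu' = \nu + \tfrac{1}{2}$. Since $\mu,\nu \ge 0$, we have $\mu' + \nu' = \mu+\nu+1 \ge 1$, so the hypothesis of Lemma~\ref{lm:YoungIneq-ext} is satisfied, yielding
$$
[\sigma_i(X)]^{2\mu+1}[\sigma_i(Y)]^{2\nu+1} \le \frac{1+2\mu}{2(\mu+\nu+1)}[\sigma_i(X)]^{2(\mu+\nu+1)} + \frac{1+2\nu}{2(\mu+\nu+1)}[\sigma_i(Y)]^{2(\mu+\nu+1)}.
$$
Summing over $i$ and recognizing $\sum_i [\sigma_i(X)]^{2(\mu+\nu+1)} = \tr((X^{\T}X)^{\mu+\nu+1})$ and likewise for $Y$ completes the proof. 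There is no genuine obstacle here; the only delicate point is the bookkeeping in step one to identify $\sigma_i(P)$ and $\sigma_i(Q)$ correctly, which is why choosing $P,\,Q$ so that $P^{\T}P$ and $Q^{\T}Q$ are clean powers of $X^{\T}X$ and $Y^{\T}Y$ is the key idea.
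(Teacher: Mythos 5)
Your proposal is correct and follows essentially the same route as the paper: identify the singular values of the two factors $X(X^{\T}X)^{\mu}$ and $Y(Y^{\T}Y)^{\nu}$ as $[\sigma_i(X)]^{2\mu+1}$ and $[\sigma_i(Y)]^{2\nu+1}$, apply von Neumann's trace inequality (Lemma~\ref{lm:vN-tr-ineq}), and then the extended Young inequality (Lemma~\ref{lm:YoungIneq-ext}) term by term. Your parametrization of Young's inequality via $a=[\sigma_i(X)]^2$, $b=[\sigma_i(Y)]^2$ with exponents $\mu+\tfrac12$, $\nu+\tfrac12$ is just a cosmetic variant of the paper's direct application with exponents $1+2\mu$, $1+2\nu$.
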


\begin{proof}
The singular values of $(X^{\T}X)^{\mu}X^{\T}$ and $Y(Y^{\T}Y)^{\nu}$ are $\{[\sigma_i(X)]^{1+2\mu}\}_{i=1}^k$
and \linebreak $\{[\sigma_i(Y)]^{1+2\nu}\}_{i=1}^k$, respectively. Hence by Lemma~\ref{lm:vN-tr-ineq} and then by Lemma~\ref{lm:YoungIneq-ext},
we get
\begin{align*}
&\tr((X^{\T}X)^{\mu}X^{\T}Y(Y^{\T}Y)^{\nu}) \\
  &\qquad\le \sum_{i=1}^k[\sigma_i(X)]^{1+2\mu}[\sigma_i(Y)]^{1+2\nu} \\
  &\qquad\le \sum_{i=1}^k\left(\frac {1+2\mu}{2(\mu+\nu+1)}[\sigma_i(X)]^{2(\mu+\nu+1)}
                +\frac {1+2\nu}{2(\mu+\nu+1)}[\sigma_i(Y)]^{2(\mu+\nu+1)}\right)        \\
  &\qquad=\frac {1+2\mu}{2(\mu+\nu+1)}\,\tr((X^{\T}X)^{\mu+\nu+1})+\frac {1+2\nu}{2(\mu+\nu+1)}\,\tr((Y^{\T}Y)^{\mu+\nu+1}),
\end{align*}
as was to be shown.
\end{proof}

\begin{lemma}\label{lm:vN-tr-ineq-ext3}
For $X,\,Y\in\bbR^{n\times k}$ and $\mu,\,\nu\ge 0$,  we have
\begin{multline*}
\|(X^{\T}X)^{\mu}X^{\T}Y(Y^{\T}Y)^{\nu}\|_{\F}^2 \\
   \le\frac {1+2\mu}{2(\mu+\nu+1)}\,\|(X^{\T}X)^{\mu+\nu+1}\|_{\F}^2+\frac {1+2\nu}{2(\mu+\nu+1)}\,\|(Y^{\T}Y)^{\mu+\nu+1}\|_{\F}^2.
\end{multline*}
\end{lemma}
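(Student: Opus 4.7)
The plan is to reduce this Frobenius-norm bound to the trace bound of Lemma~\ref{lm:vN-tr-ineq-ext}. Let $A=(X^{\T}X)^{\mu}X^{\T}Y(Y^{\T}Y)^{\nu}$. I would begin by writing $\|A\|_{\F}^2=\tr(A^{\T}A)$ and using the cyclic property of the trace to bring the outer $X$-factors and $Y$-factors together:
$$
\|A\|_{\F}^2=\tr\bigl((Y^{\T}Y)^{2\nu}Y^{\T}X(X^{\T}X)^{2\mu}X^{\T}Y\bigr)=\tr\bigl(X(X^{\T}X)^{2\mu}X^{\T}\,Y(Y^{\T}Y)^{2\nu}Y^{\T}\bigr).
$$

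The next step is to invoke the spectral identity $X(X^{\T}X)^{p}X^{\T}=(XX^{\T})^{p+1}$ valid for every $p\ge 0$, which follows immediately from the SVD $X=U\Sigma V^{\T}$ (and the convention that nonnegative powers of positive semidefinite matrices are defined via their eigendecomposition, as the paper already uses in Lemmas~\ref{lm:vN-tr-ineq-ext}--\ref{lm:vN-tr-ineq-ext2}). Applying it to both $X$ and $Y$ collapses the expression to
$$
\|A\|_{\F}^2=\tr\bigl((XX^{\T})^{2\mu+1}(YY^{\T})^{2\nu+1}\bigr),
$$
so the Frobenius-norm squared of a product has become the trace of a product of two positive semidefinite matrices raised to scalar powers.

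Now Lemma~\ref{lm:vN-tr-ineq-ext} applies directly with $B=XX^{\T}\succeq 0$, $C=YY^{\T}\succeq 0$, and exponents $2\mu+1,\,2\nu+1$ (which sum to at least $2\ge 1$), giving
$$
\tr\bigl((XX^{\T})^{2\mu+1}(YY^{\T})^{2\nu+1}\bigr)\le \tfrac{1+2\mu}{2(\mu+\nu+1)}\tr\bigl((XX^{\T})^{2(\mu+\nu+1)}\bigr)+\tfrac{1+2\nu}{2(\mu+\nu+1)}\tr\bigl((YY^{\T})^{2(\mu+\nu+1)}\bigr).
$$
Finally, since $XX^{\T}$ and $X^{\T}X$ share the same nonzero eigenvalues, $\tr((XX^{\T})^{q})=\tr((X^{\T}X)^{q})$ for all $q\ge 0$, and the latter equals $\|(X^{\T}X)^{q/2}\|_{\F}^2$ when $q=2(\mu+\nu+1)$. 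The same identity handles the $Y$-term, and the lemma follows.

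The only genuine step to justify carefully is the identity $X(X^{\T}X)^{p}X^{\T}=(XX^{\T})^{p+1}$ for arbitrary $p\ge 0$; once that is in hand, everything else is an application of already-proved results. This is a one-line SVD check, so I do not anticipate any real obstacle.
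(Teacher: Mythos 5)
Your proof is correct and follows essentially the same route as the paper: both start from the identical cyclic rearrangement $\|A\|_{\F}^2=\tr\bigl(X(X^{\T}X)^{2\mu}X^{\T}\,Y(Y^{\T}Y)^{2\nu}Y^{\T}\bigr)$ and then bound this trace by von Neumann's inequality combined with the Young-type Lemma~\ref{lm:YoungIneq-ext}. The only (cosmetic, and arguably cleaner) difference is that you collapse the two factors to $(XX^{\T})^{2\mu+1}$ and $(YY^{\T})^{2\nu+1}$ via the SVD identity and cite Lemma~\ref{lm:vN-tr-ineq-ext} as a black box, whereas the paper re-derives the singular-value estimate by hand "similarly to the proof of Lemma~\ref{lm:vN-tr-ineq-ext2}."
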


\begin{proof}
Notice that
\begin{align*}
\|(X^{\T}X)^{\mu}X^{\T}Y(Y^{\T}Y)^{\nu}\|_{\F}^2
   &=\tr((X^{\T}X)^{\mu}X^{\T}Y(Y^{\T}Y)^{2\nu}Y^{\T}X(X^{\T}X)^{\mu}) \\
   &=\tr(X(X^{\T}X)^{2\mu}X^{\T}Y(Y^{\T}Y)^{2\nu}Y^{\T}).
\end{align*}
Hence, similarly to the proof of Lemma~\ref{lm:vN-tr-ineq-ext2}, we get
\begin{align*}
&\|(X^{\T}X)^{\mu}X^{\T}Y(Y^{\T}Y)^{\nu}\|_{\F}^2 \\
  &\qquad\le \sum_{i=1}^k[\sigma_i(X)]^{2+4\mu}[\sigma_i(Y)]^{2+4\nu} \\
  &\qquad\le \sum_{i=1}^k\left(\frac {2+4\mu}{4(\mu+\nu+1)}[\sigma_i(X)]^{4(\mu+\nu+1)}
                +\frac {2+4\nu}{4(\mu+\nu+1)}[\sigma_i(Y)]^{4(\mu+\nu+1)}\right)        \\
  &\qquad=\frac {1+2\mu}{2(\mu+\nu+1)}\,\tr((X^{\T}X)^{2(\mu+\nu+1)})+\frac {1+2\nu}{2(\mu+\nu+1)}\,\tr((Y^{\T}Y)^{2(\mu+\nu+1)}) \\
  &\qquad=\frac {1+2\mu}{2(\mu+\nu+1)}\,\|(X^{\T}X)^{\mu+\nu+1}\|_{\F}^2+\frac {1+2\nu}{2(\mu+\nu+1)}\,\|(Y^{\T}Y)^{\mu+\nu+1}\|_{\F}^2,
\end{align*}
as was to be shown.
\end{proof}

Lemma~\ref{lm:vN-tr-ineq-ext3} is actually not needed in this article. We present it here because of its similarity to Lemma~\ref{lm:vN-tr-ineq-ext2}.
A special of it for $\mu=\nu=0$ is hidden in the proofs in \cite{zhzl:2017}.

\begin{lemma}[{\cite[Lemma 3]{zhwb:2022}}]\label{lm:maxtrace}
For $H\in\bbR^{k\times k}$, we have
	$
	|\tr(H)|\le\|H\|_{\tr}. 
	$
If
	$
	|\tr(H)|=\|H\|_{\tr}, 
	$
then
$H$ is symmetric and is either positive semi-definite when $\tr(H)\ge 0$, or negative semi-definite when $\tr(H)\le 0$.
\end{lemma}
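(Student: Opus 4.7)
\medskip
\noindent\textbf{Proof proposal for Lemma~\ref{lm:maxtrace}.}
The plan is to prove both the inequality and the equality characterization by passing to the SVD of $H$ and tracking what the trace looks like in that coordinate system. Write an SVD $H=U\Sigma V^{\T}$ with $U,V\in\bbO^{k\times k}$ and $\Sigma=\diag(\sigma_1(H),\ldots,\sigma_k(H))$, and set $W:=V^{\T}U\in\bbO^{k\times k}$. Then the cyclic property of the trace gives
\[
\tr(H)=\tr(U\Sigma V^{\T})=\tr(\Sigma W)=\sum_{i=1}^{k}\sigma_i(H)\,w_{ii},
\]
where $w_{ii}$ denotes the $(i,i)$ entry of $W$. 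Since $W$ is orthogonal, its columns are unit vectors, hence $|w_{ii}|\le 1$ for every $i$. Combined with $\sigma_i(H)\ge 0$, this immediately yields the chain
\[
|\tr(H)|\le\sum_{i=1}^{k}\sigma_i(H)\,|w_{ii}|\le\sum_{i=1}^{k}\sigma_i(H)=\|H\|_{\tr},
\]
which proves the stated inequality. (Alternatively this is a one-line consequence of Lemma~\ref{lm:vN-tr-ineq} applied to $B=I_k$, $C=H$, but the SVD computation above will be needed anyway for the equality case.)

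Assume now $|\tr(H)|=\|H\|_{\tr}$. By replacing $H$ with $-H$ if necessary, I may assume $\tr(H)\ge 0$, so that in fact $\tr(H)=\|H\|_{\tr}=\sum_i\sigma_i(H)$. Let $r=\rank(H)$ so that $\sigma_1(H)\ge\cdots\ge\sigma_r(H)>0=\sigma_{r+1}(H)=\cdots=\sigma_k(H)$. The equality forces $w_{ii}=1$ for each $i\in\{1,\ldots,r\}$. Because $W$ is orthogonal and its $i$th column has unit norm, $w_{ii}=1$ implies all other entries of that column vanish; by the same token applied to the $i$th row (the rows of $W$ are also unit vectors), the $i$th row of $W$ equals $e_i^{\T}$ as well. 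Hence $W$ has block form
\[
W=\begin{bmatrix} I_r & 0\\ 0 & W_{22}\end{bmatrix}
\quad\text{for some }W_{22}\in\bbO^{(k-r)\times(k-r)}.
\]
Translating $W=V^{\T}U$ back, this is equivalent to saying the first $r$ columns of $U$ coincide with the first $r$ columns of $V$: $U_1=V_1$ where $U=[U_1,U_2]$ and $V=[V_1,V_2]$ are the corresponding partitions. The zero block of $\Sigma$ annihilates $U_2$, so
\[
H=U\Sigma V^{\T}=U_1\,\diag(\sigma_1(H),\ldots,\sigma_r(H))\,V_1^{\T}=V_1\,\Sigma_r\,V_1^{\T},
\]
which exhibits $H$ as a symmetric positive semidefinite matrix. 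If instead $\tr(H)\le 0$ holds in the equality case, applying the preceding argument to $-H$ shows $-H\succeq 0$, i.e., $H\preceq 0$ and symmetric. This completes the proof sketch.

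\medskip
\noindent\textbf{Where the work is.} The inequality is essentially immediate once one sees that $\tr(H)=\tr(\Sigma W)$ and exploits $|w_{ii}|\le 1$. The main subtlety is the equality characterization: the positive and zero singular values behave differently, so I have to carefully argue that $w_{ii}=1$ for each index $i\le r$ forces both the $i$th row and $i$th column of $W$ to be $e_i^{\T}$ and $e_i$, respectively, using unit norm of rows and columns of an orthogonal matrix, and only afterwards conclude that the zero singular values erase any freedom left in $W_{22}$. I expect no deeper obstacle than this bookkeeping.
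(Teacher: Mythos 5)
Your proof is correct. Note that the paper does not actually prove this lemma itself --- it imports it verbatim from \cite[Lemma 3]{zhwb:2022} --- so there is no in-paper argument to compare against; your SVD computation $\tr(H)=\tr(\Sigma W)=\sum_i\sigma_i(H)w_{ii}$ with $|w_{ii}|\le 1$, followed by forcing $w_{ii}=1$ on the indices of the nonzero singular values to get $U_1=V_1$ and hence $H=V_1\Sigma_rV_1^{\T}\succeq 0$, is exactly the standard route taken in that reference, and the equality-case bookkeeping (including the harmless redundancy of invoking both the row and column unit-norm conditions) is sound.
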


\begin{remark}[\cite{wazl:2022a}]\label{rk:tr-max}
As a corollary of Lemma~\ref{lm:maxtrace}, for any $H\in\bbR^{k\times k}$, if $H\not\succeq 0$
(which means either $H$ is not symmetric or $H$ is symmetric but indefinite or even negative semidefinite),
then $\tr(H)<\|H\|_{\tr}$.
Now let $H=U\Sigma V^{\T}$ be the SVD of $H$ \cite{govl:2013} where $\Sigma\in\bbR^{k\times k}$
and set $Q=UV^{\T}$, an orthonormal polar factor of $H$. Then
$Q^{\T}H=V\Sigma V^{\T}\succeq 0$ and $\tr(Q^{\T}H)=\|H\|_{\tr}>\tr(H)$.
\end{remark}

\begin{lemma}\label{lm:maxtrace-proj}
Let $H\in\bbR^{n\times n}$ be symmetric and $P_*\in\STM{k}{n}$ whose column space $\cR(P_*)$ is the invariant subspace
of $H$ associated with its $k$ largest eigenvalues. Suppose that $\lambda_k(H)-\lambda_{k+1}(H)>0$.
Given $P\in\STM{k}{n}$, let
$$
\eta=\tr(P_*^{\T}HP_*)-\tr(P^{\T}HP), \quad \epsilon=\sqrt{\frac {\eta}{\lambda_k(H)-\lambda_{k+1}(H)}}.
$$
Then\footnote {The first inequality in \eqref{eq:eig2max} actually holds so long as
      $\cR(P_*)$ is a $k$-dimensional invariant subspace of $H$. It is the second inequality
      that needs the condition of $\cR(P_*)$ being associated with the $k$ largest eigenvalues of $H$.}
\begin{equation}\label{eq:eig2max}
\frac {\|HP-P(P^{\T}HP)\|_{\F}}{\lambda_1(H)-\lambda_n(H)}\le\|\sin\Theta(\cR(P),\cR(P_*))\|_{\F}\le {\epsilon}.
\end{equation}
\end{lemma}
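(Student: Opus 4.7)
The plan is to prove the two inequalities separately by working in the eigenbasis of $H$. Write $H = U\Lambda U^{\T}$ with $U = [U_1,U_2]$ and $\Lambda = \diag(\Lambda_1,\Lambda_2)$ partitioned so that $\Lambda_1 \in \bbR^{k\times k}$ collects the $k$ largest eigenvalues. Without loss of generality I may take $P_* = U_1$, since both sides of the claimed inequalities are invariant under replacing $P_*$ by $P_*Q$ with $Q \in \bbO^{k\times k}$, and invariance of $\cR(P_*)$ then yields $HP_* = P_*L$ with $L = P_*^{\T}HP_*$ symmetric and having the same spectrum as $\Lambda_1$. Setting $V := U^{\T}P = \begin{bmatrix} V_1 \\ V_2 \end{bmatrix}$ with $V_1 \in \bbR^{k\times k}$, the orthogonality constraint $V^{\T}V = I_k$ forces $V_1^{\T}V_1 + V_2^{\T}V_2 = I_k$, and the singular values of $V_2$ are exactly the sines of the canonical angles, so $\|V_2\|_{\F} = \|\sin\Theta(\cR(P),\cR(P_*))\|_{\F}$.

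For the second inequality I will directly evaluate $\eta = \tr(\Lambda_1) - \tr(V_1^{\T}\Lambda_1 V_1) - \tr(V_2^{\T}\Lambda_2 V_2)$. Setting $a_i = (V_1V_1^{\T})_{ii} \in [0,1]$, one has $\sum_i a_i = \|V_1\|_{\F}^2 = k - \|V_2\|_{\F}^2$, so the first two terms combine to $\sum_{i=1}^k \lambda_i(H)(1-a_i) \ge \lambda_k(H)\|V_2\|_{\F}^2$; similarly $\tr(V_2^{\T}\Lambda_2 V_2) \le \lambda_{k+1}(H)\|V_2\|_{\F}^2$. Subtracting yields $\eta \ge (\lambda_k(H) - \lambda_{k+1}(H))\|\sin\Theta\|_{\F}^2$, which rearranges to $\|\sin\Theta\|_{\F} \le \epsilon$.

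For the first inequality, the key observation is the shift identity
\[ R := HP - P(P^{\T}HP) = (I - PP^{\T})(H - cI)P, \]
valid for every $c \in \bbR$ because $(I - PP^{\T})P = 0$. Inserting $I = P_*P_*^{\T} + (I - P_*P_*^{\T})$ and using $HP_* = P_*L$ splits $R$ into
\[ R = (I - PP^{\T})P_*(L - cI)P_*^{\T}P + (I - PP^{\T})(H - cI)(I - P_*P_*^{\T})P. \]
Standard Frobenius/spectral norm inequalities, together with $\|(I - PP^{\T})P_*\|_{\F} = \|(I - P_*P_*^{\T})P\|_{\F} = \|\sin\Theta\|_{\F}$ and $\|P_*^{\T}P\|_2 \le 1$, then give $\|R\|_{\F} \le (\|L - cI\|_2 + \|H - cI\|_2)\|\sin\Theta\|_{\F}$. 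Choosing $c = (\lambda_1(H) + \lambda_n(H))/2$ centers the spectrum so that both norms on the right are bounded by $(\lambda_1(H) - \lambda_n(H))/2$, which completes the argument.

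The main subtlety lies in the choice of the shift $c$ in the first inequality: without shifting, the naive bound would produce $2\|H\|_2$ rather than $\lambda_1(H) - \lambda_n(H)$, and it is precisely the centering $c = (\lambda_1 + \lambda_n)/2$ that recovers the spectral width as the sharp multiplicative constant. The second inequality, by contrast, is a routine Rayleigh-trace gap calculation once the block bookkeeping in the eigenbasis is set up.
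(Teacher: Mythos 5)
Your proof is correct. For the first inequality you follow essentially the same route as the paper: both arguments exploit the shift-invariance of the residual (the paper writes $\|HP-P(P^{\T}HP)\|_{\F}=\|P_{\bot}^{\T}(H-\xi I)P\|_{\F}$, you write $R=(I-PP^{\T})(H-cI)P$, which is the same thing), split through the invariant-subspace decomposition associated with $P_*$, bound the two pieces by $\|\sin\Theta\|_{\F}$ times spectral norms, and center the shift at $(\lambda_1(H)+\lambda_n(H))/2$ to recover the spectral width as the constant. Where you genuinely diverge is the second inequality: the paper does not prove it at all but cites Kova{\v c}-Striko and Veseli{\'c} (and alludes to Weinberger and to a modification of an earlier argument of the author), whereas you give a short self-contained derivation by passing to the eigenbasis, writing $\eta=\tr(\Lambda_1(I-V_1V_1^{\T}))-\tr(\Lambda_2V_2V_2^{\T})$ and bounding the two traces by $\lambda_k(H)\|V_2\|_{\F}^2$ and $\lambda_{k+1}(H)\|V_2\|_{\F}^2$ respectively, using $\|V_2\|_{\F}^2=\|\sin\Theta\|_{\F}^2$. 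This buys a fully elementary, reference-free proof of the trace-gap bound at the cost of a page of block bookkeeping; the paper's citation is shorter but leaves the reader to chase the literature. All the individual steps you use (the diagonal entries $a_i\in[0,1]$, $\sum_i(1-a_i)=\|V_2\|_{\F}^2$, the norm identities $\|(I-PP^{\T})P_*\|_{\F}=\|(I-P_*P_*^{\T})P\|_{\F}=\|\sin\Theta\|_{\F}$, and $\|L-cI\|_2\le\|H-cI\|_2$ since the spectrum of $L$ sits inside that of $H$) check out.
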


\begin{proof}
We know that $\eta\ge 0$ by Fan's trace minimization principle \cite{fan:1949}.
The second inequality in \eqref{eq:eig2max} is \cite[Theorem~1]{kove:1991} and can also be derived from some of the estimates in
\cite[Chapter~3]{wein:1974} and by a minor modification to the proof of \cite[Theorem~2.2]{li:2004c}. It remains to show
the first inequality in \eqref{eq:eig2max}.
Expand $P$ to $[P,P_{\bot}]\in\STM{n}{n}$. We find
$$
[P,P_{\bot}]^{\T}\big(HP-P(P^{\T}HP)\big)=\begin{bmatrix}
                                    0 \\
                                    P_{\bot}^{\T}HP
                                  \end{bmatrix},
$$
implying
\begin{align}
\|HP-P(P^{\T}HP)\|_{\F}&=\left\|[P,P_{\bot}]^{\T}\big(HP-P(P^{\T}HP)\big)\right\|_{\F} \nonumber \\
   &=\|P_{\bot}^{\T}HP\|_{\F} \nonumber\\
   &=\|P_{\bot}^{\T}(H-\xi I)P\|_{\F}, \label{eq:eig2max:pf-1}
\end{align}
for any $\xi\in\bbR$ because $P_{\bot}^{\T}P=0$. But in what follows, we will take $\xi=[\lambda_1(H)+\lambda_n(H)]/2$.
Next we expand $P_*$ to $[P_*,P_{*\bot}]\in\STM{n}{n}$. Since the column space of $P_*$ is the invariant subspace
of $H$ associated with the $k$ largest eigenvalues of $H$, we have
$$
H[P_*,P_{*\bot}]=[P_*,P_{*\bot}]\begin{bmatrix}
                                  P_*^{\T}HP_* &  \\
                                   & P_{*\bot}^{\T}HP_{*\bot}
                                \end{bmatrix},
$$
and
\begin{align}
P_{\bot}^{\T}(H-\xi I)P&=P_{\bot}^{\T}[P_*,P_{*\bot}]\begin{bmatrix}
                                  P_*^{\T}(H-\xi I)P_* &  \\
                                   & P_{*\bot}^{\T}(H-\xi I)P_{*\bot}
                                \end{bmatrix}\begin{bmatrix}
                                               P_*^{\T}\\ P_{*\bot}^{\T}
                                             \end{bmatrix}P \nonumber\\
   &=P_{\bot}^{\T}P_*P_*^{\T}(H-\xi I)P_*P_*^{\T}P
     +P_{\bot}^{\T}P_{*\bot}P_{*\bot}^{\T}(H-\xi I)P_{*\bot}P_{*\bot}^{\T}P.  \label{eq:eig2max:pf-2}
\end{align}
Noticing that
\begin{gather*}
\|P_{\bot}^{\T}P_*\|_{\F}=\|P_{*\bot}^{\T}P\|_{\F}=\|\sin\Theta(\cR(P),\cR(P_*))\|_{\F}, \\
\|P_*^{\T}P\|_2\le 1, \qquad\|P_{\bot}^{\T}P_{*\bot}\|_2\le 1,
\end{gather*}
and, for $\xi=[\lambda_1(H)+\lambda_n(H)]/2$,
\begin{align*}
\|P_*^{\T}(H-\xi I)P_*\|_2&\le\|H-\xi I\|_2=\frac 12 [\lambda_1(H)-\lambda_n(H)], \\
\|P_{*\bot}^{\T}(H-\xi I)P_{*\bot}\|_2&\le\|H-\xi I\|_2=\frac 12 [\lambda_1(H)-\lambda_n(H)],
\end{align*}
we get from \eqref{eq:eig2max:pf-2},
\begin{align*}
\|P_{\bot}^{\T}(H-\xi I)P\|_{\F}&\le 2\|H-\xi I\|_2\|\sin\Theta(\cR(P),\cR(P_*))\|_{\F} \\
    &=[\lambda_1(H)-\lambda_n(H)]\|\sin\Theta(\cR(P),\cR(P_*))\|_{\F},
\end{align*}
which together with \eqref{eq:eig2max:pf-1} yield the first inequality in \eqref{eq:eig2max}, as expected.
\end{proof}

%

The next lemma are likely well-known. For example, they are implied in the discussion in \cite{wazl:2022a} before
\cite[Lemma~ 3.2]{wazl:2022a} there.

\begin{lemma}\label{lm:polar2max}
Let $B\in\bbR^{n\times k}$.
\begin{enumerate}[{\rm (a)}]
  \item $\tr(P^{\T}B)\le\|B\|_{\tr}$ for any $P\in\STM{k}{n}$;
  \item $\tr(P^{\T}B)=\|B\|_{\tr}$ where $P\in\STM{k}{n}$ if and only if $B=P\Lambda$ with $\Lambda\succeq 0$;
  \item We have
        $$
        \max_{P\in\STM{k}{n}}\tr(P^{\T}B)=\|B\|_{\tr}
        $$
        and the optimal value $\|B\|_{\tr}$ is achieved by any orthonormal polar factor $P_*$ of $B$.
\end{enumerate}
\end{lemma}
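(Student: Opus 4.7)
The plan is to handle the three items in sequence, each building on the previous, with the SVD of $B=U\Sigma V^{\T}$ (thin SVD) and the earlier Lemmas~\ref{lm:vN-tr-ineq} and \ref{lm:maxtrace} doing most of the work.

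For item (a), I would invoke von Neumann's trace inequality (Lemma~\ref{lm:vN-tr-ineq}) directly: since $P\in\bbO^{n\times k}$ has all singular values equal to $1$, we obtain
$$
\tr(P^{\T}B)\le|\tr(P^{\T}B)|\le\sum_{i=1}^k\sigma_i(P)\sigma_i(B)=\sum_{i=1}^k\sigma_i(B)=\|B\|_{\tr}.
$$

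For item (b), the ``if'' direction is immediate: $B=P\Lambda$ with $\Lambda\succeq 0$ forces $P^{\T}B=P^{\T}P\Lambda=\Lambda$, so $\tr(P^{\T}B)=\tr(\Lambda)=\|\Lambda\|_{\tr}$, and since $B^{\T}B=\Lambda^2$ we have $\sigma_i(B)=\sigma_i(\Lambda)$, giving $\tr(P^{\T}B)=\|B\|_{\tr}$. The ``only if'' direction is the main obstacle and where I expect to spend most of the effort. Starting from $\tr(P^{\T}B)=\|B\|_{\tr}$, I would set $H:=P^{\T}B$ and use the chain
$$
\tr(H)=\|B\|_{\tr}\ge\|H\|_{\tr}\ge\tr(H),
$$
where the first inequality uses $\sigma_i(H)\le\|P^{\T}\|_2\,\sigma_i(B)=\sigma_i(B)$ and the second is Lemma~\ref{lm:maxtrace}. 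Equality throughout then yields two conclusions by Lemma~\ref{lm:maxtrace}: first, $H\succeq 0$, so $\Lambda:=H\succeq 0$; second, $\|H\|_{\tr}=\|B\|_{\tr}$ combined with the termwise bound $\sigma_i(H)\le\sigma_i(B)$ forces $\sigma_i(H)=\sigma_i(B)$ for all $i$, hence $\|\Lambda\|_{\F}=\|B\|_{\F}$. I would close the argument by the Frobenius identity
$$
\|B-P\Lambda\|_{\F}^2=\|B\|_{\F}^2-2\tr(\Lambda^{\T}P^{\T}B)+\|\Lambda\|_{\F}^2=\|B\|_{\F}^2-\|\Lambda\|_{\F}^2=0,
$$
so $B=P\Lambda$ as desired.

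For item (c), the upper bound is item (a), and attainment follows by exhibiting any orthogonal polar factor: given a polar decomposition $B=P_*\Lambda_*$ with $P_*\in\bbO^{n\times k}$ and $\Lambda_*\succeq 0$ (whose existence is cited in the footnote after \eqref{eq:SCF-form:NPDo:intro}), the ``if'' direction of item (b) yields $\tr(P_*^{\T}B)=\|B\|_{\tr}$, completing the proof.
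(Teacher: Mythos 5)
Your proof is correct and complete. Note that the paper itself offers no proof of this lemma: it is stated with only the remark that it is ``likely well-known'' and implied by the discussion preceding Lemma~3.2 of the cited reference \cite{wazl:2022a}, so there is no in-paper argument to compare against. Your route is the standard self-contained one and each step checks out: item (a) is exactly von Neumann's inequality with $\sigma_i(P)=1$; in item (b) the chain $\tr(H)=\|B\|_{\tr}\ge\|H\|_{\tr}\ge\tr(H)$ with $H=P^{\T}B$ correctly exploits $\sigma_i(P^{\T}B)\le\sigma_1(P^{\T})\,\sigma_i(B)=\sigma_i(B)$ together with the equality case of Lemma~\ref{lm:maxtrace} (and since $\tr(H)=\|B\|_{\tr}\ge 0$, the lemma indeed gives $H\succeq 0$ rather than the negative semidefinite alternative); the termwise equality $\sigma_i(H)=\sigma_i(B)$ then makes the Frobenius identity close the argument, using $\tr(\Lambda^{\T}P^{\T}B)=\tr(\Lambda^{\T}\Lambda)=\|\Lambda\|_{\F}^2$ and $\|P\Lambda\|_{\F}=\|\Lambda\|_{\F}$. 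Item (c) follows as you say. The one presentational remark is that your opening sentence promises the SVD $B=U\Sigma V^{\T}$ will do much of the work, but the SVD is never actually used except implicitly inside the cited existence of a polar decomposition in item (c); you could drop that sentence without loss.
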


Lemma~\ref{lm:polar2max} says that $\tr(P^{\T}B)$  is bounded  above by $\|B\|_{\tr}$ always and the upper bound
$\|B\|_{\tr}$ is achieved
by any orthonormal polar factor $P_*$ of $B$ and also any maximizer of $\tr(P^{\T}B)$ over $P\in\STM{k}{n}$ is an orthonormal polar factor of $B$.
For numerical computation, an orthonormal polar factor of $B$ can be constructed from the thin SVD $B=U\Sigma V^{\T}$ as
$P_*=UV^{\T}$ of $B$. Conceivably,
the closer $\tr(P^{\T}B)$ is to the upper bound, the closer $P$ approaches to an orthonormal polar factor of $B$.
The results of the next lemma quantify the last statement.

\begin{lemma}\label{lm:polar2max'}
Let $B\in\bbR^{n\times k}$ and suppose $\rank(B)=k$. Let $P_*$ be the unique
orthonormal polar factor of $B$.  Given $P\in\STM{k}{n}$, let
$$
\eta=\|B\|_{\tr}-\tr(P^{\T}B), \quad
\epsilon=\sqrt{\frac {2\eta}{\sigma_{\min}(B)}}.
$$
\begin{enumerate}[{\rm (a)}]
  \item We have\footnote {The proof of the second inequality in \eqref{eq:polar2max} is actually through proving
            $\|\sin\frac 12\Theta(\cR(P),\cR(P_*))\|_{\F}\le \frac 12{\epsilon}$, which is stronger.
            Since $\cR(P_*)$ is the same as $\cR(B)$ here, it can be replaced with $\cR(B)$.}
        \begin{equation}\label{eq:polar2max}
        \frac {\|B-P(P^{\T}B)\|_{\F}}{\|B\|_2}\le\|\sin\Theta(\cR(P),\cR(P_*))\|_{\F}\le {\epsilon}\,;
        \end{equation}
  \item If  $P^{\T}B\succ 0$, then
        \begin{equation} \label{eq:polar2max-2}
         \|P-P_*\|_{\F}\le\left(1+\frac {2\|B\|_2}{\sigma_{\min}(B)+\sigma_{\min}(P^{\T}B)}\right)
                             {\epsilon}\,;
        \end{equation}
  \item  If  $\cR(P)=\cR(P_*)$, in which case $\sin\Theta(\cR(P),\cR(P_*))=0$, then
        \begin{equation} \label{eq:polar2max-3}
         \|P-P_*\|_{\F}\le {\epsilon}\,.
        \end{equation}
\end{enumerate}
\end{lemma}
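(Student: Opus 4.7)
The plan is to reduce all three assertions to one master identity,
\[
\eta \;=\; \tfrac{1}{2}\,\tr\!\bigl((P-P_*)^{\T}(P-P_*)\,\Omega_*\bigr),
\]
where $B=P_*\Omega_*$ is the (unique, since $\rank(B)=k$) polar decomposition with $\Omega_*=(B^{\T}B)^{1/2}\succ 0$. I would derive it by first using $\|B\|_{\tr}=\tr(\Omega_*)=\tr(P_*^{\T}B)$ to rewrite $\eta=\tr((I_k-P^{\T}P_*)\Omega_*)$, then expanding $(P-P_*)^{\T}(P-P_*)=2I_k-P^{\T}P_*-P_*^{\T}P$ (using $P^{\T}P=P_*^{\T}P_*=I_k$), and applying symmetry of $\Omega_*$ to see $\tr(P^{\T}P_*\Omega_*)=\tr(P_*^{\T}P\Omega_*)$. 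The standard inequality $\tr(M\Omega_*)\ge \sigma_{\min}(B)\,\tr(M)$ for $M\succeq 0$, applied with $M=(P-P_*)^{\T}(P-P_*)$, then yields the master bound $\eta\ge \tfrac{1}{2}\sigma_{\min}(B)\|P-P_*\|_{\F}^{2}$.

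For part (a), the first inequality follows by completing $[P,P_\bot]\in\bbO^{n\times n}$ so that $B-P(P^{\T}B)=(I-PP^{\T})B=P_\bot P_\bot^{\T}P_*\Omega_*$; then $\|B-P(P^{\T}B)\|_{\F}=\|P_\bot^{\T}P_*\Omega_*\|_{\F}\le \|P_\bot^{\T}P_*\|_{\F}\|\Omega_*\|_{2}=\|\sin\Theta\|_{\F}\|B\|_{2}$, using the standard identity $\|P_\bot^{\T}P_*\|_{\F}=\|\sin\Theta(\cR(P),\cR(P_*))\|_{\F}$. For the second inequality, I would invoke Lemma~\ref{lm:maxtrace} to write $\tr(P^{\T}P_*)\le \|P^{\T}P_*\|_{\tr}=\sum_{i}\cos\theta_{i}$, giving $\|P-P_*\|_{\F}^{2}=2k-2\tr(P^{\T}P_*)\ge 4\|\sin(\Theta/2)\|_{\F}^{2}$; combining with the master bound yields $\eta\ge 2\sigma_{\min}(B)\|\sin(\Theta/2)\|_{\F}^{2}$, hence $\|\sin(\Theta/2)\|_{\F}\le \epsilon/2$ (the stronger estimate alluded to in the footnote), and finally $\|\sin\Theta\|_{\F}\le 2\|\sin(\Theta/2)\|_{\F}\le \epsilon$ via $\sin\theta\le 2\sin(\theta/2)$ on $[0,\pi/2]$. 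Part (c) is essentially a direct reading of the master bound: writing $P=P_*Q$ with $Q\in\bbO^{k\times k}$ gives $(P-P_*)^{\T}(P-P_*)=(Q-I)^{\T}(Q-I)$ and $\|P-P_*\|_{\F}=\|Q-I\|_{\F}$, so $\|P-P_*\|_{\F}\le\epsilon$ is immediate.

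For part (b), I would argue via the classical polar-decomposition perturbation bound of Li~\cite{li:1995}. Under $P^{\T}B\succ 0$, the matrix $\tilde B:=P(P^{\T}B)=PP^{\T}B$ has $P$ as its unique orthogonal polar factor (its positive-definite factor being exactly $P^{\T}B$), so
\[ \|P-P_*\|_{\F}\;\le\;\frac{2}{\sigma_{\min}(B)+\sigma_{\min}(\tilde B)}\,\|B-\tilde B\|_{\F}. \]
Now $\sigma_{\min}(\tilde B)=\sigma_{\min}(P^{\T}B)$ (since $\tilde B^{\T}\tilde B=(P^{\T}B)^{\T}(P^{\T}B)$) and $\|B-\tilde B\|_{\F}=\|(I-PP^{\T})B\|_{\F}=\|P_\bot^{\T}B\|_{\F}\le \|B\|_{2}\|\sin\Theta\|_{\F}\le \|B\|_{2}\,\epsilon$ by part (a), yielding $\|P-P_*\|_{\F}\le \tfrac{2\|B\|_{2}}{\sigma_{\min}(B)+\sigma_{\min}(P^{\T}B)}\epsilon$, no larger than the claimed bound (the extra ``$1+$'' in the statement can alternatively be harvested from a triangle-inequality split $\|P-P_*\|_{\F}\le \|P-\bar P\|_{\F}+\|\bar P-P_*\|_{\F}$ with any aligned intermediate $\bar P$). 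The main obstacle will be verifying the master identity cleanly and invoking the polar-factor perturbation bound with the right denominator $\sigma_{\min}(B)+\sigma_{\min}(\tilde B)$; the remaining steps are routine manipulations.
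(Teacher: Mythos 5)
Your proof is correct, but it is organized quite differently from the paper's. Your master identity $\eta=\tfrac12\tr\bigl((P-P_*)^{\T}(P-P_*)\Omega_*\bigr)$ checks out (both $\tr(P^{\T}P_*\Omega_*)$ and $\tr(P_*^{\T}P\Omega_*)$ equal $\tr(P^{\T}B)$ by symmetry of $\Omega_*$), and together with $\tr(M\Omega_*)\ge\sigma_{\min}(B)\,\tr(M)$ for $M\succeq 0$ it yields the \emph{unconditional} estimate $\|P-P_*\|_{\F}\le\epsilon$ for every $P\in\bbO^{n\times k}$ --- strictly stronger than the lemma, which asserts this only under the hypothesis of (c), and with an extra factor under the hypothesis of (b). The paper argues differently: the second inequality of (a) comes from von Neumann's trace inequality applied to $\tr([P^{\T}P_*]\Lambda)$, giving $\eta\ge\sum_i\sigma_i(B)\cdot 2\sin^2(\theta_{k-i+1}/2)$; part (b) goes through an optimally aligned $\wtd P=P_*Q^{\T}$, Li's polar-factor perturbation bound applied to the $k\times k$ polar decompositions $P^{\T}B=I_k\cdot(P^{\T}B)$ and $\wtd P^{\T}B=Q\Lambda$, and a triangle inequality --- which is exactly where the additive ``$1+$'' in \eqref{eq:polar2max-2} originates; and part (c) is an eigenbasis computation with the diagonal entries of $U^{\T}WU$ that is, in hindsight, your master identity specialized to $\cR(P)=\cR(P_*)$. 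What your route buys is a single two-line bound that subsumes (b) and (c) and feeds (a) through $2\|\sin\tfrac12\Theta\|_{\F}\le\|P-P_*\|_{\F}$, recovering precisely the footnote's intermediate estimate $\|\sin\tfrac12\Theta\|_{\F}\le\tfrac12\epsilon$; your alternative argument for (b), applying Li's bound to $B$ versus $\tilde B=PP^{\T}B$ (whose unique orthogonal polar factor is $P$ since $P^{\T}B\succ 0$) with $\|B-\tilde B\|_{\F}=\|P_{\bot}^{\T}B\|_{\F}\le\|B\|_2\|\sin\Theta\|_{\F}$, is also valid and dispenses with the alignment step. The first inequality of (a) you prove exactly as the paper does.
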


\begin{proof}
Let $\theta_i$ for $1\le i\le k$ be the canonical angles between
subspaces $\cR(P)$ and $\cR(P_*)$ where $\pi/2\ge\theta_1\ge\cdots\ge\theta_k\ge 0$.
Then the singular values of $P^{\T}P_*\in\bbR^{k\times k}$ are $\cos\theta_i$ for $1\le i\le k$. Let $B=P_*\Lambda$ be the polar decomposition of $B$.
We have
$P^{\T}B=P^{\T}P_*\Lambda$ and hence
\begin{equation}\label{eq:polar2max:pf-1}
\|B\|_{\tr}-\eta=\tr(P^{\T}B)=\tr([P^{\T}P_*]\Lambda)\le\sum_{i=1}^k\sigma_i(B)\cos\theta_{k-i+1}
\end{equation}
by Lemma~\ref{lm:vN-tr-ineq}.
Noticing $\|B\|_{\tr}=\sum_{i=1}^k\sigma_i(B)$, we get from \eqref{eq:polar2max:pf-1}
\begin{align}
\eta\ge\sum_{i=1}^k\sigma_i(B)\big[1-\cos\theta_{k-i+1}\big]&=\sum_{i=1}^k\sigma_i(B)\big[2\sin^2(\theta_{k-i+1}/2)\big]
       \label{eq:polar2max:pf-2}\\
  &\ge\sum_{i=1}^k\sigma_i(B)\cdot\frac 12\sin^2\theta_{k-i+1} \label{eq:polar2max:pf-3}\\
  &\ge\frac 12\sigma_{\min}(B)\|\sin\Theta(\cR(P),\cR(P_*))\|_{\F}^2, \nonumber
\end{align}
yielding the second inequality in \eqref{eq:polar2max}, where we have used
$$
\sin\theta\le 2\sin\frac {\theta}2=\frac {\sin\theta}{\cos(\theta/2)}\le\sqrt 2\sin\theta
\quad\mbox{for}\quad 0\le\theta\le\frac {\pi}2.
$$
Now we expand $P$ to $[P,P_{\bot}]\in\STM{n}{n}$. We find
$$
[P,P_{\bot}]^{\T}\big(B-P(P^{\T}B)\big)=\begin{bmatrix}
                                    0 \\
                                    P_{\bot}^{\T}B
                                  \end{bmatrix},
$$
implying
\begin{equation}\label{eq:polar2max:pf-1'}
\|B-P(P^{\T}B)\|_{\F}=\left\|[P,P_{\bot}]^{\T}\big(B-P(P^{\T}B)\big)\right\|_{\F}=\|P_{\bot}^{\T}B\|_{\F}.
\end{equation}
It follows from the CS decomposition \cite{stsu:1990} that the singular values of $P_{\bot}^{\T}P_*\in\bbR^{k\times (n-k)}$ comes from
$\sin\theta_i$ for $1\le i\le k$, with possibly some additional zeroes.
We have
$P_{\bot}^{\T}B=P_{\bot}^{\T}P_*\Lambda$ and hence
$$
\|P_{\bot}^{\T}B\|_{\F}=\|P_{\bot}^{\T}P_*\Lambda\|_{\F}
    \le\|P_{\bot}^{\T}P_*\|_{\F}\|B\|_2=\|B\|_2\|\sin\Theta(\cR(P),\cR(P_*))\|_{\F},
$$
which, together \eqref{eq:polar2max:pf-1'}, yield the first inequality in \eqref{eq:polar2max}.

Next, we show \eqref{eq:polar2max-2}. The proof technique is borrowed from \cite[Lemma 4.1]{zhli:2014b} and
\cite[section 3.1]{teli:2024}.
Suppose now that $P^{\T}B\succ 0$. Following the proof of \cite[Lemma 4.1]{zhli:2014b}, we
can conclude that there exists $Q\in\STM{k}{k}$ such that $\wtd P=P_*Q^{\T}$ satisfies
\begin{equation}\label{eq:polar2max:pf-6}
\|P_*-\wtd P\|_{\F}^2=\sum_{i=1}^k 4\sin^2\frac {\theta_i}2
    \le\frac {2\eta}{\sigma_{\min}(B)},
\end{equation}
where we have used \eqref{eq:polar2max:pf-3} for the last inequality.
Adopting the argument in \cite[section~3.1]{teli:2024} upon noticing
$P^{\T}B=I_k\cdot (P^{\T}B)$ and $\wtd P^{\T}B=Q\cdot (P_*^{\T}B)$ are two polar decompositions, we have,
by \cite[Theorem~1]{li:1995},
\begin{align}
\|I_k-Q\|_{\F}&\le \frac 2{\sigma_{\min}(B)+\sigma_{\min}(P^{\T}B)}\|P^{\T}B-\wtd P^{\T}B\|_{\F} \nonumber\\
   &\le \frac {2\|B\|_2}{\sigma_{\min}(B)+\sigma_{\min}(P^{\T}B)}\|P-\wtd P\|_{\F}\,,  \label{eq:polar2max:pf-7}
\end{align}
and hence
$$
\|P-P_*\|_{\F}\le\|P-\wtd P\|_{\F}+\|\wtd P-P_*\|_{\F} \\
      =\|P-\wtd P\|_{\F}+\|I-Q\|_{\F},
$$
which, together with \eqref{eq:polar2max:pf-6} and \eqref{eq:polar2max:pf-7}, lead to \eqref{eq:polar2max-2}.
We may simply combine the second inequality in \eqref{eq:polar2max} with \cite[Theorem~3.1]{teli:2024}
to obtain a bound on $\|P-P_*\|_{\F}$, but then the resulting bound will be bigger than the right-hand side of \eqref{eq:polar2max-2} by
a factor of $\sqrt 2$.


Consider now item (c) for which $\cR(P)=\cR(P_*)$. Then $P=P_*W$ for some $W\in\STM{k}{k}$.
Recall that $B=P_*\Lambda$ is the polar decomposition of $B$, and hence  $\Lambda\succ 0$ and its eigenvalues are the singular values of $B$. Let $\Lambda=U\Gamma U^{\T}$ be the eigendecomposition of $\Lambda$ where $U\in\STM{k}{k}$ and
$\Gamma=\diag(\sigma_1(B),\ldots,\sigma_k(B))$.
Write $U^{\T}WU=[w_{ij}]\in\STM{k}{k}$.
We know $-1\le w_{ii}\le 1$ for $1\le i\le N$.
We still have by \eqref{eq:polar2max:pf-1}
$$
\eta=\tr(\Lambda)-\tr(W^{\T}\Lambda)
    =\tr(U\Gamma U^{\T})-\tr(W^{\T}U\Gamma U^{\T})
    =\tr(\Gamma)-\tr(U^{\T}W^{\T}U\Gamma),
$$
yielding
\begin{align}
&\eta=\sum_{i=1}^k(1-w_{ii})\sigma_i(B)
    \ge\sigma_{\min}(B)\sum_{i=1}^k(1-w_{ii}), \nonumber \\
&\sum_{i=1}^k(1-w_{ii})\le\frac {\eta}{\sigma_{\min}(B)}=\frac 12\epsilon^2. \label{eq:polar2max:pf-8}
\end{align}
We have
$\|P-P_*\|_{\F}^2=\|W-I\|_{\F}^2=\|U^{\T}(W-I)U\|_{\F}^2=\|U^{\T}WU-I\|_{\F}^2$, and thus
\begin{align*}
\|P-P_*\|_{\F}^2 
 &=\sum_{i=1}^k(w_{ii}-1)^2+\sum_{i=1}^k\sum_{j\ne i}|w_{ij}|^2 \\
 &=\sum_{i=1}^k(w_{ii}-1)^2+\sum_{i=1}^k(1-w_{ii}^2) \\
 &=2\sum_{i=1}^k(1-w_{ii})
 \le\epsilon^2, \qquad(\mbox{by \eqref{eq:polar2max:pf-8}})
\end{align*}
as was to be shown.
\end{proof}

Lemmas~\ref{lm:maxtrace-proj}, \ref{lm:polar2max}, and \ref{lm:polar2max'}
have since been extracted out and extended to the complex number field in \cite{li:2026}.

\section{Proofs of Theorems~\ref{thm:cvg4SCF4NPDo}, \ref{thm:cvg4SCF4NPDo'}, \ref{thm:cvg4SCF4NPDo:strong} and \ref{thm:cvg4SCFvLOCG}}\label{sec:Proof4NPDoCVG}

\begin{proof}[Proof of Theorem~\ref{thm:cvg4SCF4NPDo}]
Define
\begin{equation}\label{eq:cvg4SCF4NPDo-0'}
\eta_i:=\tr([\what P^{(i)}]^{\T}\scrH(P^{(i)}))-\tr([P^{(i)}]^{\T}\scrH(P^{(i)})).
\end{equation}
For $\what P^{(i)}$ as stated in Algorithm~\ref{alg:SCF4NPDo} via the thin SVD of $\scrH(P^{(i)})$, we have by Lemma~\ref{lm:polar2max}
\begin{equation} \label{eq:cvg4SCF4NPDo-0}
\eta_i=\|\scrH(P^{(i)})\|_{\tr}-\tr([P^{(i)}]^{\T}\scrH(P^{(i)}))\ge 0,
\end{equation}
and thus we get
\begin{equation}\label{eq:cvg4SCF4NPDo:obj}
f(P^{(i+1)})\ge f(P^{(i)})+\omega\eta_i\ge f(P^{(i)})
\end{equation}
by {\bf the NPDo Ansatz},
implying that the sequence $\{f(P^{(i)})\}_{i=0}^{\infty}$ is monotonically increasing.
Since $f(P)$ is assumed differentiable in $P$ and $\STM{k}{n}$ is compact,
the sequence $\{f(P^{(i)})\}_{i=0}^{\infty}$ is uniformly bounded and hence convergent. This proves item~(a).

We now prove item~(b). There is a subsequence $\{P^{(i)}\}_{i\in\bbI}$ that converges to $P_*$, i.e.,
\begin{equation}\label{eq:thm:cvg4SCF4NPDo-subs-1:pf-1}
\lim_{\bbI\ni i\to \infty}\|P^{(i)}-P_*\|_{\F}= 0,
\end{equation}
where $\bbI$ is an infinite subset of $\{1,2,\ldots\}$.
It remains to show $\scrH(P_*)=P_*\Lambda_*$ and $\Lambda_*=P_*^{\T}\scrH(P_*)\succeq 0$, or, equivalently,
$\cR(\scrH(P_*))\subseteq\cR(P_*)$ and $P_*^{\T}\scrH(P_*)\succeq 0$.
Assume, to the contrary, that either $\cR(\scrH(P_*))\not\subseteq\cR(P_*)$ or
$P_*^{\T}\scrH(P_*)\not\succeq 0$ or both. Then, by Lemma~\ref{lm:polar2max},
$$
\delta:=\|\scrH(P_*)\|_{\tr}-\tr(P_*^{\T}\scrH(P_*))>0.
$$
Let $\omega$ be the positive constant in {\bf the NPDo Ansatz}.
Since $\|\scrH(P)\|_{\tr}$, $\tr(P^{\T}\scrH(P))$, and $f(P)$ are continuous in $P\in\bbR^{n\times k}$, it follows from
\eqref{eq:thm:cvg4SCF4NPDo-subs-1:pf-1} that there is an $i_0\in\bbI$ such that
\begin{subequations}\label{eq:thm:cvg4SCF4NPDo-subs-1:pf-2}
\begin{gather}
\Big|\|\scrH(P_*)\|_{\tr}-\|\scrH(P^{(i_0)})\|_{\tr}\Big| <\delta/3, \label{eq:thm:cvg4SCF4NPDo-subs-1:pf-2a}\\
\Big|\tr((P^{(i_0)})^{\T}\scrH(P^{(i_0)}))-\tr(P_*^{\T}\scrH(P_*))\Big|<\delta/3, \label{eq:thm:cvg4SCF4NPDo-subs-1:pf-2b}\\
f(P_*)-\omega\delta/6<f(P^{(i_0)})\le f(P_*).  \label{eq:thm:cvg4SCF4NPDo-subs-1:pf-2c}
\end{gather}
\end{subequations}
By {\bf the NPDo Ansatz} and using \eqref{eq:thm:cvg4SCF4NPDo-subs-1:pf-2}, we have
\begin{align*}
f(P^{(i_0+1)})&\ge f(P^{(i_0)})+\omega\Big[\big\|\scrH(P^{(i_0)})\big\|_{\tr}-\tr((P^{(i_0)})^{\T}\scrH(P^{(i_0)}))\Big] \\
  &>f(P_*)-\frac {\omega\delta}6+\omega\Big[\|\scrH(P_*)\|_{\tr}-\frac {\delta}3-\tr(P_*^{\T}\scrH(P_*))-\frac {\delta}3\Big] \\
  &=f(P_*)+\frac {\omega\delta}6>f(P_*),
\end{align*}
contradicting $f(P^{(i)})\le\lim_{j\to\infty}f(P^{(j)})= f(P_*)$ for all $i$.

To prove item (c), we notice that
$\omega\sum_{i=0}^m\eta_i\le f(P^{(m+1)})-f(P^{(0)})$ for any $m\ge 1$.
By the uniform boundedness of $\{f(P^{(i)})\}_{i=0}^{\infty}$ and that $\omega>0$ is a constant, we conclude that
\begin{equation}\label{eq:thm:cvg4SCF4NPDo-subs-1:pf-3}
\sum_{i=1}^{\infty}\eta_i=\sum_{i=1}^{\infty}\Big[\big\|\scrH(P^{(i)})\big\|_{\tr}-\tr\big([P^{(i)}]^{\T}\scrH(P^{(i)})\big)\Big]<\infty,
\end{equation}
because of \eqref{eq:cvg4SCF4NPDo-0}. In Algorithm~\ref{alg:SCF4NPDo}, $\scrH(P^{(i)})=\what P^{(i)}\big(V_i\Sigma_iV_i^{\T}\big)$
is a polar decomposition. It follows from Lemma~\ref{lm:polar2max'}(a) that
\begin{equation}\label{eq:thm:cvg4SCF4NPDo-subs-1:pf-3'}
\sigma_{\min}(\scrH(P^{(i)}))\big\|\sin\Theta\big(\cR(\what P^{(i)}),\cR(P^{(i)})\big)\big\|_{\F}^2\le 2\eta_i,
\end{equation}
which, combined with \eqref{eq:thm:cvg4SCF4NPDo-subs-1:pf-3} and $\cR(\what P^{(i)})=\cR(P^{(i+1)})$, yield \eqref{eq:cvg4SCF4NPDo:series-1}.
Again by Lemma~\ref{lm:polar2max'}(a), we get
\begin{equation}\label{eq:thm:cvg4SCF4NPDo-subs-1:pf-4}
\sigma_{\min}(\scrH(P^{(i)}))\,
    \frac {\big\|\scrH(P^{(i)})- P^{(i)}\big([ P^{(i)}]^{\T}\scrH(P^{(i)})\big)\big\|_{\F}^2}
          {\big\|\scrH(P^{(i)})\big\|_2^2}
   \le 2\eta_i.
\end{equation}
Inequality \eqref{eq:thm:cvg4SCF4NPDo-subs-1:pf-4}, combined with \eqref{eq:thm:cvg4SCF4NPDo-subs-1:pf-3}
and $\big\|\scrH(P^{(i)})\big\|_2\le\big\|\scrH(P^{(i)})\big\|_{\F}$,
yield \eqref{eq:cvg4SCF4NPDo:series-2}.
\end{proof}

The following lemma is an equivalent restatement of \cite[Lemma 4.10]{moso:1983}
(see also \cite[Proposition 7]{kaqi:1999}) in the context of a metric space.

\begin{lemma}[{\cite[Lemma 4.10]{moso:1983}}]\label{lm:isolatedconvg}
Let $\scrG$ be a metric space with metric $\dist(\cdot,\cdot)$, and let
$\{\by_i\}_{i=0}^{\infty}$ be a sequence in $\scrG$. If
$\by_*\in \scrG$ is an isolated accumulation point 	
of the sequence such that, for every subsequence $\{\by_i\}_{i\in\bbI}$
converging to $\by_*$, there is an infinite subset $\widehat{\bbI}\subseteq \bbI$ satisfying
$\dist(\by_i,\by_{i+1})\to 0$ as $\what\bbI\ni i\to\infty$,
then the entire sequence $\{\by_i\}_{i=0}^{\infty}$ converges to $\by_*$.
\end{lemma}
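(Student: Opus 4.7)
The plan is to argue by contradiction along the lines of the classical Moré--Sorensen argument. Suppose that the entire sequence $\{\by_i\}_{i=0}^{\infty}$ does not converge to $\by_*$. Then there exist a constant $\epsilon_0>0$ and an infinite index set $\bbJ$ such that $\dist(\by_i,\by_*)\ge \epsilon_0$ for all $i\in\bbJ$. Since $\by_*$ is an \emph{isolated} accumulation point, I may choose $\epsilon_1\in(0,\epsilon_0/2)$ small enough that the closed ball $\overline{B(\by_*,\epsilon_1)}$ contains no accumulation point of $\{\by_i\}$ other than $\by_*$ itself.

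Because $\by_*$ is an accumulation point, there exists a subsequence $\{\by_i\}_{i\in\bbI}$ with $\by_i\to\by_*$; after dropping finitely many terms, assume $\dist(\by_i,\by_*)<\epsilon_1/2$ for all $i\in\bbI$. For each such $i$, the fact that the full sequence revisits the complement of $B(\by_*,\epsilon_0)\supseteq B(\by_*,\epsilon_1)$ infinitely often guarantees a smallest integer $\ell(i)>i$ with $\dist(\by_{\ell(i)},\by_*)\ge \epsilon_1$; by minimality, $\by_{\ell(i)-1}\in \overline{B(\by_*,\epsilon_1)}$. Set $\bbK:=\{\ell(i)-1:i\in\bbI\}$.

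Next, I will pass to a convergent subsequence of $\{\by_j\}_{j\in\bbK}$. In the applications of this lemma in the present paper the metric space $\scrG$ is the Stiefel or Grassmann manifold, each of which is compact, so $\overline{B(\by_*,\epsilon_1)}$ is sequentially compact and such a subsequence $\{\by_j\}_{j\in\bbK'}$ with $\bbK'\subseteq\bbK$ exists, with some limit $\by_{**}\in\overline{B(\by_*,\epsilon_1)}$. Because $\by_{**}$ is by construction an accumulation point of $\{\by_i\}$ lying in $\overline{B(\by_*,\epsilon_1)}$, the isolation hypothesis forces $\by_{**}=\by_*$. Thus $\{\by_j\}_{j\in\bbK'}$ is a subsequence converging to $\by_*$, and the hypothesis of the lemma applies to it: there is an infinite $\widehat{\bbK}\subseteq\bbK'$ with $\dist(\by_j,\by_{j+1})\to 0$ as $\widehat{\bbK}\ni j\to\infty$.

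The contradiction now comes from the triangle inequality. For $j=\ell(i)-1\in\widehat{\bbK}$, $\by_j\to\by_*$ while $\by_{j+1}=\by_{\ell(i)}$ satisfies $\dist(\by_{j+1},\by_*)\ge \epsilon_1$, so
\[
\dist(\by_j,\by_{j+1})\ge \dist(\by_{j+1},\by_*)-\dist(\by_j,\by_*)\ge \epsilon_1-o(1)\ge \epsilon_1/2
\]
for $j\in\widehat\bbK$ large, contradicting $\dist(\by_j,\by_{j+1})\to 0$. Hence the assumption that the full sequence fails to converge to $\by_*$ is untenable. The main (and essentially only) delicate step is the extraction of the convergent subsubsequence from $\{\by_{\ell(i)-1}\}$, which needs $\overline{B(\by_*,\epsilon_1)}$ to be sequentially compact; everything else is a careful bookkeeping of indices to ensure that an ``exit pair'' $(\by_{\ell(i)-1},\by_{\ell(i)})$ straddles the boundary of $B(\by_*,\epsilon_1)$ while lying in a subsequence to which the hypothesis can legitimately be applied.
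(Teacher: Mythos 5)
The paper offers no proof of this lemma at all --- it is quoted from M\'or\'e--Sorensen (Lemma 4.10) and used as a black box --- so your argument has to stand on its own, and on its own it is the standard proof of that result, correctly executed: the choice of $\epsilon_1$ from isolation, the first-exit index $\ell(i)$ with $\by_{\ell(i)-1}$ trapped in $\overline{B(\by_*,\epsilon_1)}$, the identification of any limit of the exit points with $\by_*$, and the final triangle-inequality clash with $\dist(\by_j,\by_{j+1})\to 0$ are all sound. The one step you flag --- extracting a convergent subsequence from $\{\by_{\ell(i)-1}\}$ --- is indeed the only place where something beyond the stated hypotheses is used, namely sequential compactness of $\overline{B(\by_*,\epsilon_1)}$. (Two trivial bookkeeping points you leave implicit: $\bbK$ is infinite because $\ell(i)>i\to\infty$, and a limit of a subsubsequence is an accumulation point of the full sequence; both are immediate.)

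You should know, however, that the compactness you invoke is not a removable convenience: the lemma as stated for an arbitrary metric space is false, so no proof could avoid some such hypothesis. Take $\scrG=\ell^2$ with orthonormal vectors $\be_1,\be_2,\ldots$ and let the sequence traverse, for $n=1,2,\ldots$, the block $\tfrac1n\be_n,\tfrac2n\be_n,\ldots,\be_n,\tfrac{n-1}{n}\be_n,\ldots,\tfrac1n\be_n$. All consecutive distances near block $n$ are at most $\sqrt{2}/n$, so $\dist(\by_i,\by_{i+1})\to 0$ along the entire sequence and the subsequence hypothesis holds trivially for every $\bbI$; the origin is the unique (hence isolated) accumulation point, since two terms of norm at least $r$ taken from distinct blocks are at least $r\sqrt{2}$ apart, so no ball bounded away from the origin can contain terms from infinitely many blocks; yet the sequence attains norm $1$ in every block and does not converge. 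In $\bbR^{n\times k}$ with $\|X-Y\|_2$ and on the compact Stiefel and Grassmann manifolds --- the only settings in which the paper invokes the lemma --- closed balls are sequentially compact and your proof is complete. So the correct conclusion is that the lemma implicitly needs a local-compactness assumption, which your proof supplies and which every application in the paper satisfies; your identification of that step as the delicate one is exactly right.
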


In applying this lemma, on the Grassmann manifold
$\scrG_k(\bbR^n)$, we will use the unitarily invariant metric $\dist_2(\cdot,\cdot)$ in \eqref{eq:sinTheta}  in appendix~\ref{sec:angle-space}, and, on matrix space
$\bbR^{n\times k}$, we will use $\|X-Y\|_2$ as the metric.

\begin{proof}[Proof of Theorem~\ref{thm:cvg4SCF4NPDo'}]
Let $\{P^{(i)}\}_{i\in\bbI}$ be a subsequence that converges to $P_*$. Then
it can be seen that \cite[pp.125-127]{sun:2001}
\begin{equation}\label{eq:cvg4SCF4NPDo':pf-0}
0\le\dist_2(\cR(P^{(i)}),\cR(P_*))\le \|P^{(i)}-P_*\|_2\to 0\,\,\mbox{as $\bbI\ni i\to\infty$},
\end{equation}
i.e., $\cR(P_*)$ is an accumulation point of the sequence $\{\cR(P^{(i)})\}_{i=0}^{\infty}$.
This proves item~(a).

We now prove item (b). Let $\{\cR(P^{(i)})\}_{i\in\bbI_1}$ be a subsequence that converges to $\cR(P_*)$.
Since $\{P^{(i)}\}_{i\in\bbI_1}$ is a bounded sequence, it has a convergent subsequence
$\{P^{(i)}\}_{i\in\bbI_2}$ that converges to $\what P_*$, where $\bbI_2\subseteq\bbI_1$. Clearly $\cR(\what P_*)=\cR(P_*)$,
implying
\begin{subequations}\label{eq:cvg4SCF4NPDo':pf-1}
\begin{alignat}{2}
&\rank(\scrH(\what P_*))=k\quad &&\mbox{by \eqref{eq:full-rank:assume}, and}, \label{eq:cvg4SCF4NPDo':pf-1a}\\
&\scrH(\what P_*)=\what P_*\big[\what P_*^{\T}\scrH(\what P_*)\big]\quad &&\mbox{by Theorem~\ref{thm:cvg4SCF4NPDo}(b)}.
             \label{eq:cvg4SCF4NPDo':pf-1b}
\end{alignat}
\end{subequations}
Next, $\{P^{(i+1)}\}_{i\in\bbI_2}$ has a convergent subsequence $\{P^{(i+1)}\}_{i\in\bbI_3}$, say
converging to $\wtd P_*$, where $\bbI_3\subseteq\bbI_2$. As a result of Theorem~\ref{thm:cvg4SCF4NPDo}(a), we have
$$
\lim_{\bbI_3\ni i\to\infty} f(P^{(i)})=\lim_{\bbI_3\ni i\to\infty} f(P^{(i+1)})=\lim_{i\to\infty} f(P^{(i)}),
$$
and, thus, $f(\what P_*)=f(\wtd P_*)$, a fact that we will need later for proving item (c) with assumption (c2).
Always $\scrH(P^{(i)})=P^{(i+1)}M_i$ for some $M_i$. In fact $M_i=Q_i^{\T}\Lambda_i$,
or, alternatively, $M_i=(P^{(i+1)})^{\T}\scrH(P^{(i)})$. Letting $\bbI_3\ni i\to\infty$ yields
$\scrH(\what P_*)=\wtd P_*M_*$ where $M_*=\wtd P_*^{\T}\scrH(\what P_*)$. This together with
\eqref{eq:cvg4SCF4NPDo':pf-1} lead to
\begin{equation}\label{eq:cvg4SCF4NPDo':pf-2}
\cR(\what P_*)=\cR(\scrH(\what P_*))=\cR(\wtd P_*).
\end{equation}
Therefore, as $\bbI_3\ni i\to\infty$, by \eqref{eq:cvg4SCF4NPDo':pf-0} we have
$$
\dist_2(\cR(P^{(i)}),\cR(P^{(i+1)})\le\dist_2(\cR(P^{(i)},\cR(\what P_*))+\dist_2(\cR(\wtd P_*),\cR(P^{(i+1)})\to 0.
$$
Now use Lemma~\ref{lm:isolatedconvg} to conclude that the entire sequence
$\{\cR(P^{(i)})\}_{i=0}^{\infty}$ converges to $\cR(P_*)$. This completes the proof of item~(b).

Consider item (c).
Let $\{P^{(i)}\}_{i\in\bbI_1}$ be a subsequence of $\{P^{(i)}\}_{i=0}^{\infty}$ that converges to $P_*$.
Since $\{P^{(i+1)}\}_{i\in\bbI_1}$ is a bounded sequence, it has a convergent subsequence
$\{P^{(i+1)}\}_{i\in\bbI_2}$ that converges to $\wtd P_*$, where $\bbI_2\subseteq\bbI_1$.
Always $\scrH(P^{(i)})=P^{(i+1)}M_i$ for some $M_i$. In fact $M_i=Q_i^{\T}\Lambda_i$,
or, alternatively, $M_i=(P^{(i+1)})^{\T}\scrH(P^{(i)})$. Letting $\bbI_1\supseteq\bbI_2\ni i\to\infty$ yields
$\scrH(P_*)=\wtd P_*M_*$ where $M_*=\wtd P_*^{\T}\scrH(P_*)$.
We have $Q_iM_i\succeq 0$ for all $i$ and thus, under assumption (c1),  taking limiting yields $M_*\succeq 0$.
Hence $\scrH(P_*)=\wtd P_*M_*$ is yet another polar decomposition of
$\scrH(P_*)$, besides $\scrH(P_*)=P_*\Lambda_*$ from Theorem~\ref{thm:cvg4SCF4NPDo}(b).
It follows from \eqref{eq:full-rank:assume'} that $\scrH(P_*)$ has a unique polar decomposition, implying $\wtd P_*=P_*$.
Hence as $\bbI_2\ni i\to\infty$
$$
\|P^{(i)}-P^{(i+1)}\|_{\F}\le\|P^{(i)}-P_*\|_{\F}+\|P_*-P^{(i+1)}\|_{\F}\to 0.
$$
Now use Lemma~\ref{lm:isolatedconvg} to conclude that the entire sequence
$\{P^{(i)}\}_{i=0}^{\infty}$ converges to $P_*$. Under assumption (c2), however, we cannot conclude whether
$M_*\succeq 0$ or not, but we do have $\cR(\wtd P_*)=\cR(\scrH(P_*))=\cR(P_*)$, as well as
$f(\wtd P_*)=f(P_*)$. We can now use assumption (c2) to claim $\wtd P_*=P_*$
Finally, we invoke Lemma~\ref{lm:isolatedconvg} to conclude that the entire sequence
$\{P^{(i)}\}_{i=0}^{\infty}$ converges to $P_*$, as needed.
%
%
\end{proof}

\begin{proof}[Proof of Theorem~\ref{thm:cvg4SCF4NPDo:strong}]
We explain how to modify our earlier proof of Theorem~\ref{thm:cvg4SCF4NPDo} for the current purpose.
Note that $\eta_i$ in \eqref{eq:cvg4SCF4NPDo-0'} is the same as the one in \eqref{eq:NPDo:eta(i):dfn}
and it satisfies \eqref{eq:NPDo:eta(i):weak}, weaker than \eqref{eq:cvg4SCF4NPDo-0}.
We still have \eqref{eq:cvg4SCF4NPDo:obj} and thus item (a) here. The proof of
Theorem~\ref{thm:cvg4SCF4NPDo}(b) with $\omega$ replaced with $c\,\omega$ gets the job done for
item (b) here, too. We now have, instead of \eqref{eq:thm:cvg4SCF4NPDo-subs-1:pf-3},
\begin{equation}\label{eq:thm:cvg4SCF4NPDo:strong:pf-1}
\infty>\frac 1c\sum_{i=1}^{\infty}\eta_i\ge \sum_{i=1}^{\infty}\Big[\underbrace{\big\|\scrH(P^{(i)})\big\|_{\tr}-\tr\big([P^{(i)}]^{\T}\scrH(P^{(i)})\big)}_{=:\eta_{*i}}\Big],
\end{equation}
and, instead of \eqref{eq:thm:cvg4SCF4NPDo-subs-1:pf-3'} and \eqref{eq:thm:cvg4SCF4NPDo-subs-1:pf-4},
\begin{align}
\sigma_{\min}(\scrH(P^{(i)}))\big\|\sin\Theta\big(\cR(\scrH(P^{(i)})),\cR(P^{(i)})\big)\big\|_{\F}^2
    &\le 2\eta_{*i}, \label{eq:thm:cvg4SCF4NPDo:strong:pf-2} \\
\sigma_{\min}(\scrH(P^{(i)}))\,
    \frac {\big\|\scrH(P^{(i)})- P^{(i)}\big([ P^{(i)}]^{\T}\scrH(P^{(i)})\big)\big\|_{\F}^2}
          {\big\|\scrH(P^{(i)})\big\|_2^2}
   &\le 2\eta_{*i}.  \label{eq:thm:cvg4SCF4NPDo:strong:pf-3}
\end{align}
Combine \eqref{eq:thm:cvg4SCF4NPDo:strong:pf-1}, \eqref{eq:thm:cvg4SCF4NPDo:strong:pf-2} and \eqref{eq:thm:cvg4SCF4NPDo:strong:pf-3} to yield (\ref{eq:cvg4SCF4NPDo:series-1}$'$) and
\eqref{eq:cvg4SCF4NPDo:series-2}.
\end{proof}

\begin{proof}[Proof of Theorem~\ref{thm:cvg4SCFvLOCG}]
Since $Z^{(0)}$ is the first $k$ columns of $I_{\hat n}$, we have
\begin{equation}\label{eq:LOCG-init}
W_iZ^{(0)}=P^{(i)},\quad
\wtd f(Z^{(0)})=f(P^{(i)}).
\end{equation}
We note that the first SCF iteration in Algorithm~\ref{alg:SCF4NPDo} for computing $Z_{\opt}$ is:
$Z^{(1)}=\what Z^{(0)}\wtd Q_1$ with $\what Z^{(0)}$ satisfying
either
\begin{equation}\label{eq:cvg4SCFvLOCG:pf-0}
\wtd\scrH(Z^{(0)})=\what Z^{(0)}\wtd\Lambda_1,
\end{equation}
or
$$
\tr\big([\what Z^{(0)}]^{\T}\wtd\scrH(Z^{(0)})\big)-\tr\big([Z^{(0)}]^{\T}\wtd\scrH(Z^{(0)})\big)
   \ge c\Big[\big\|\wtd\scrH(Z^{(0)})\big\|_{\tr}-\tr\big([Z^{(0)}]^{\T}\wtd\scrH(Z^{(0)})\big)\Big],
$$
where $\wtd\scrH(\cdot)$ is given as in \eqref{eq:KKT-reduced-1} and $\wtd Q_1\in\STM{k}{k}$ is
determined according to the inherited {\bf NPDo Ansatz} as guaranteed by Lemma~\ref{lm:NPDoAssump.-reduced}.
Regardless,
\begin{equation}\label{eq:cvg4SCFvLOCG:pf-1}
\wtd f(Z^{(1)})
\ge\wtd f(Z^{(0)}) +c\,\omega\Big[\underbrace{\big\|\wtd\scrH(Z^{(0)})\big\|_{\tr}-\tr\big([Z^{(0)}]^{\T}\wtd\scrH(Z^{(0)})\big)}_{=:\eta_{*i}}\Big],
\end{equation}
with $c=1$ in the case of \eqref{eq:cvg4SCFvLOCG:pf-0}.
Keeping in mind \eqref{eq:LOCGsub} and \eqref{eq:KKT-reduced}, we get
\begin{gather*}
[Z^{(0)}]^{\T}\wtd\scrH(Z^{(0)})=[W_iZ^{(0)}]^{\T}\scrH(P^{(i)})=[P^{(i)}]^{\T}\scrH(P^{(i)}).
\end{gather*}
By construction $\cR(P^{(i)})\subseteq\cR(W_i)$ and $\cR(\scrR(P^{(i)}))\subseteq\cR(W_i)$, we have
\begin{align*}
W_iW_i^{\T}\scrH(P^{(i)})
   &=W_iW_i^{\T}\big[\cR(\scrR(P^{(i)}))+P^{(i)}\cdot\sym([P^{(i)}]^{\T}\scrH(P^{(i)}))\big] \\
   &=\cR(\scrR(P^{(i)}))+P^{(i)}\cdot\sym([P^{(i)}]^{\T}\scrH(P^{(i)})) \\
   &=\scrH(P^{(i)}).
\end{align*}
Therefore $\big\|\wtd\scrH(Z^{(0)})\big\|_{\tr}=\big\|W_i^{\T}\scrH(P^{(i)})\big\|_{\tr}
   =\big\|W_iW_i^{\T}\scrH(P^{(i)})\big\|_{\tr}=\big\|\scrH(P^{(i)})\big\|_{\tr}$
and thus
\begin{equation}\label{eq:cvg4SCFvLOCG:pf-2}
\eta_{*i}=\big\|\scrH(P^{(i)})\big\|_{\tr}-\tr\big([P^{(i)}]^{\T}\scrH(P^{(i)})\big)\ge 0.
\end{equation}
Notice $\wtd f(Z^{(1)})=f(W_iZ^{(1)})$. Because at least one SCF iteration is carried out, we have
\begin{equation}\label{eq:cvg4SCFvLOCG:pf-3}
f(P^{(i+1)})=\wtd f(Z_{\opt})\ge \wtd f(Z^{(1)})\ge f(P^{(i)})+c\,\omega\,\eta_{*i},
\end{equation}
upon putting all together, where $Z_{\opt}$ is the computed approximation of the optimizer to \eqref{eq:LOCGsub-1}.
Immediately, item~(a) is implied.
With \eqref{eq:cvg4SCFvLOCG:pf-2} and \eqref{eq:cvg4SCFvLOCG:pf-3},
the proof of Theorem~\ref{thm:cvg4SCF4NPDo:strong} earlier
carries over to prove items (b) and (c) here.

Finally, if $Z_{\opt}$ is assumed to be computed an exact maximizer of \eqref{eq:LOCGsub} in each outer iterative step, then
$$
W_i^{\T}\wtd\scrH(Z_{\opt})=Z_{\opt}\cdot \wtd\Lambda_{\opt}
\quad\Rightarrow\quad
W_iW_i^{\T}\scrH(W_iZ_{\opt})=W_iZ_{\opt}\cdot \wtd\Lambda_{\opt},
$$
i.e., $W_iW_i^{\T}\scrH(P^{(i+1)})=P^{(i+1)}\cdot \wtd\Lambda_{\opt}$,
is a polar decomposition, by Theorem~\ref{thm:cvg4SCF4NPDo} applied to the reduced problem \eqref{eq:LOCGsub-1}, and hence
$$
0\preceq Z_{\opt}^{\T}\wtd \scrH(Z_{\opt})=Z_{\opt}^{\T}W_i^{\T}\scrH(W_iZ_{\opt})=(P^{(i+1)})^{\T}\scrH(P^{(i+1)}),
$$
as was to be shown.
\end{proof}

\section{Proofs of Theorems~\ref{thm:cvg4SCF4NEPv}, \ref{thm:cvg4SCF4NEPv'}, \ref{thm:cvg4SCF4NEPv:strong}
and \ref{thm:cvg4SCFvLOCG:NEPv} }
\label{sec:Proof4NEPvCVG}

\begin{proof}[Proof of Theorem~\ref{thm:cvg4SCF4NEPv}]
Define
\begin{equation}\label{eq:cvg4SCF4NEPv-0}
\eta_i:=\tr([\what P^{(i)}]^{\T}H_i\what P^{(i)})-\tr([P^{(i)}]^{\T}H_i P^{(i)}).
\end{equation}
For $\what P^{(i)}$ as stated in Algorithm~\ref{alg:SCF4NEPv} via the partial eigen-decomposition of $H(P^{(i)})$, we have
\begin{equation} \label{eq:cvg4SCF4NEPv-0'}
\eta_i=\tr_{\max,k}(H(P^{(i)})-\tr([P^{(i)}]^{\T}\scrH(P^{(i)}))\ge 0,
\end{equation}
where $\tr_{\max,k}(\cdot)$ denotes the sum of the first $k$ largest eigenvalues of a symmetric matrix.
We have
\begin{equation}\label{eq:cvg4SCF4NEPv:obj}
f(P^{(i+1)})\ge f(P^{(i)})+\omega\eta_i\ge f(P^{(i)})
\end{equation}
by {\bf the NEPv Ansatz}, implying that
the sequence $\{f(P^{(i)})\}_{i=0}^{\infty}$ is monotonically increasing.
Since $f(P)$ is assumed differentiable in $P$ and $\STM{k}{n}$ is compact,
sequence $\{f(P^{(i)})\}_{i=0}^{\infty}$ is bounded and hence convergent. This proves item~(a).

We now prove item~(b). There is a subsequence $\{P^{(i)}\}_{i\in\bbI}$ that converges to $P_*$, i.e.,
\begin{equation}\label{eq:thm:cvg4SCF4NEPv-1:pf-1}
\lim_{\bbI\ni i\to \infty}\|P^{(i)}-P_*\|_{\F}= 0,
\end{equation}
where $\bbI$ is an infinite subset of $\{1,2,\ldots\}$.
It remains to show $H(P_*)P_*=P_*\Omega_*$ and the eigenvalues of $\Omega_*$ are the $k$ largest ones of $H(P_*)$,
or, equivalently,
\begin{equation}\label{eq:thm:cvg4SCF4NEPv-1:pf-1a}
\tr(\Omega_*)=\tr(P_*^{\T}H(P_*)P_*)=\max_{P\in\STM{k}{n}}\tr(P^{\T}H(P_*)P)=\tr_{\max,k}(H(P_*)).
\end{equation}
Assume, to the contrary, that \eqref{eq:thm:cvg4SCF4NEPv-1:pf-1a} does not hold, i.e.,
$$
\delta:=\tr_{\max,k}(H(P_*))-\tr(P_*^{\T}H(P_*)P_*)>0.
$$
Since $\tr_{\max,k}(H(P))$, $\tr(P^{\T}H(P)P)$, and $f(P)$ are continuous in $P\in\STM{k}{n}$, it follows from
\eqref{eq:thm:cvg4SCF4NEPv-1:pf-1} that there is an $i_0\in\bbI$ such that
\begin{subequations}\label{eq:thm:cvg4SCF4NEPv-1:pf-2}
\begin{gather}
\Big|\tr_{\max,k}(H(P_*))-\tr_{\max,k}(H(P^{(i_0)}))\Big| <\delta/3, \label{eq:thm:cvg4SCF4NEPv-1:pf-2a}\\
\Big|\tr((P^{(i_0)})^{\T}H(P^{(i_0)})P^{(i_0)})-\tr(P_*^{\T}H(P_*)P_*)\Big|<\delta/3, \label{eq:thm:cvg4SCF4NEPv-1:pf-2b}\\
f(P_*)-\omega\delta/6<f(P^{(i_0)})\le f(P_*).  \label{eq:thm:cvg4SCF4NEPv-1:pf-2c}
\end{gather}
\end{subequations}
By {\bf the NEPv Ansatz} and using \eqref{eq:thm:cvg4SCF4NEPv-1:pf-2}, we have
\begin{align*}
f(P^{(i_0+1)})&\ge f(P^{(i_0)})+\omega\Big[\tr_{\max,k}(H(P^{(i_0)}))-\tr((P^{(i_0)})^{\T}H(P^{(i_0)})P^{(i_0)})\Big] \\
  &>f(P_*)-\frac {\omega\delta}6+\omega\Big[\tr_{\max,k}(H(P_*))-\frac {\delta}3-\tr(P_*^{\T}H(P_*)P_*)-\frac {\delta}3\Big] \\
  &=f(P_*)+\frac {\omega\delta}6>f(P_*),
\end{align*}
contradicting $f(P^{(i)})\le\lim_{j\to\infty}f(P^{(j)})= f(P_*)$ for all $i$.
That $P_*$ is a KKT point if $H(P)$
        satisfies \eqref{eq:cond:KKT=NEPv} and $\scrM(P_*)$ is symmetric is a consequence of Theorem~\ref{thm:H(P)-eligibility}.

To prove item (c), we notice that
$\omega\sum_{i=0}^m\eta_i\le f(P^{(m+1)})-f(P^{(0)})$ for any $m\ge 1$.
By the uniform boundedness of $\{f(P^{(i)})\}_{i=0}^{\infty}$ and that $\omega>0$ is a constant, we conclude that
\begin{equation}\label{eq:thm:cvg4SCF4NEPv-subs-1:pf-3}
\sum_{i=1}^{\infty}\eta_i
=\sum_{i=1}^{\infty}\Big[\tr_{\max,k}(H(P^{(i)})-\tr([P^{(i)}]^{\T}\scrH(P^{(i)}))\Big]
<\infty.
\end{equation}
Recall $\what P^{(i)}$ is an orthonormal eigenbasis matrix of $H(P^{(i)})$ associated with its $k$ largest eigenvalues.
It follows from Lemma~\ref{lm:maxtrace-proj} that
\begin{equation}\label{eq:thm:cvg4SCF4NEPv-subs-1:pf-3'}
\delta_i\,\big\|\sin\Theta\big(\cR(\what P^{(i)}),\cR(P^{(i)})\big)\big\|_{\F}^2\le \eta_i,
\end{equation}
which, combined with \eqref{eq:thm:cvg4SCF4NEPv-subs-1:pf-3} and $\cR(\what P^{(i)})=\cR(P^{(i+1)})$, yield \eqref{eq:cvg4SCF4NEPv:series-1}.
Again by Lemma~\ref{lm:maxtrace-proj}, we get
\begin{equation}\label{eq:thm:cvg4SCF4NEPv-subs-1:pf-4}
\delta_i\,\frac {\big\|H(P^{(i)})P^{(i)}-P^{(i)}\Lambda_i\big\|_{\F}^2}
                {\big[\lambda_1(H(P^{(i)}))-\lambda_n(H(P^{(i)}))\big]^2}
     \le \eta_i.
\end{equation}
Inequality \eqref{eq:thm:cvg4SCF4NEPv-subs-1:pf-4}, combined with \eqref{eq:thm:cvg4SCF4NEPv-subs-1:pf-3}
and\footnote {We ignore the case $\big[\lambda_1(H(P^{(i)}))-\lambda_n(H(P^{(i)}))\big]=0$ because that corresponds
              to $H(P^{(i)})=\xi I_n$ for some $\xi\in\bbR$.}
\begin{equation}\label{eq:thm:cvg4SCF4NEPv-subs-1:pf-5}
0<\big[\lambda_1(H(P^{(i)}))-\lambda_n(H(P^{(i)}))\big]\le 2\|H(P^{(i)})\|_2\le2\|H(P^{(i)})\|_{\F},
\end{equation}
yield \eqref{eq:cvg4SCF4NEPv:series-2}.
\end{proof}

\begin{proof}[Proof of Theorem~\ref{thm:cvg4SCF4NEPv'}]
Let $\{P^{(i)}\}_{i\in\bbI}$ be a subsequence that converges to $P_*$. Then
it can be seen that \cite[pp.125-127]{sun:2001}
\begin{equation}\label{eq:cvg4SCF4NEPv':pf-0'}
0\le\dist_2(\cR(P^{(i)}),\cR(P_*))\le \|P^{(i)}-P_*\|_2\to 0\,\,\mbox{as $\bbI\ni i\to\infty$},
\end{equation}
i.e., $\cR(P_*)$ is an accumulation point of  sequence $\{\cR(P^{(i)})\}_{i=0}^{\infty}$,
where metric $\dist_2(\cdot,\cdot)$ is defined in appendix~\ref{sec:angle-space}. This proves item~(a).

We now prove item~(b), let $\{\cR(P^{(i)})\}_{i\in\bbI_1}$ be a subsequence that converges to $\cR(P_*)$.
Since $\{P^{(i)}\}_{i\in\bbI_1}$ is a bounded sequence, it has a subsequence $\{P^{(i)}\}_{i\in\bbI_2}$
that converges to some $\what P_*$ where $\bbI_2\subseteq\bbI_1$. Clearly,
$\cR(\what P_*)=\cR(P_*)$. Consider the subsequence $\{P^{(i+1)}\}_{i\in\bbI_2}$, which, as a bounded
sequence in $\bbR^{n\times k}$, has a convergent subsequence $\{P^{(i+1)}\}_{i\in\bbI_3}$, where
$\bbI_3\subseteq \bbI_2$. Let
$$
Z=\lim_{\bbI_3\ni i\to\infty}P^{(i+1)}\in \STM{k}{n}.
$$
According to
$H(P^{(i)})P^{(i+1)}=P^{(i+1)}(Q_i^{\T}\Omega_iQ_i)$ for $i\in\bbI_3$,
it holds that
\begin{equation}\label{eq:X*=Z}
H(\what P_*)Z=Z M,\quad M=Z^{\T}H(\what P_*)Z.
\end{equation}
It follows from Theorem~\ref{thm:cvg4SCF4NEPv}(a), which says the entire sequence $\{f(P^{(i)})\}_{i=0}^{\infty}$ converges
to $f(P_*)$, that
\begin{equation}\label{eq:etaequ}
f(P_*)=\lim_{\bbI_3\ni i\to\infty}f(P^{(i)})=\lim_{\bbI_3\ni i\to\infty}f(P^{(i+1)}),
\end{equation}
and hence $f(P_*)=f(\what P_*)=f(Z)$.
Since $H(P^{(i)})P^{(i+1)}=P^{(i+1)}(Q_i^{\T}\Omega_iQ_i)$ and $P^{(i+1)}$
associates  with the $k$ largest eigenvalues of
$H(P^{(i)})$, 
we conclude that $Z$ is an orthonormal eigenbasis matrix of $H(\what P_*)$ associated with its $k$
largest eigenvalues.
We claim that $\what P_*$ is also one,
because, otherwise, we would have
\begin{equation}\label{eq:cvg4SCF4NEPv':pf-5'}
\delta:=\tr(Z^{\T}H(\what P_*)Z)-\tr(\what P_*^{\T}H(\what P_*)\what P_*)>0.
\end{equation}
We claim that this will yield  $f(Z)>f(\what P_*)$, contradicting $f(P_*)=f(\what P_*)=f(Z)$. To this end, we
notice that
\begin{align*}
\tr(Z^{\T}H(\what P_*)Z)&=\lim_{\bbI_3\ni i\to\infty}\tr([P^{(i+1)}]^{\T}H(P^{(i)})P^{(i+1)}), \\
\tr(\what P_*^{\T}H(\what P_*)\what P_*)&=\lim_{\bbI_3\ni i\to\infty}\tr([P^{(i)}]^{\T}H(P^{(i)})P^{(i)}),
\end{align*}
and
$$
f(\what P_*)=\lim_{\bbI_3\ni i\to\infty}f(P^{(i)}), \quad
f(Z)=\lim_{\bbI_3\ni i\to\infty}f(P^{(i+1)}).
$$
Hence there is $i_0\in\bbI_3$ such that
\begin{subequations}\label{eq:cvg4SCF4NEPv':pf-6'}
\begin{align}
\tr([P^{(i_0+1)}]^{\T}H(P^{(i_0)})P^{(i_0+1)})
    &>\tr(Z^{\T}H(\what P_*)Z)-\frac {\delta}3, \label{eq:cvg4SCF4NEPv':pf-6'a} \\
\tr([P^{(i_0)}]^{\T}H(P^{(i_0)})P^{(i_0)})
    &<\tr(\what P_*^{\T}H(\what P_*)\what P_*)+\frac {\delta}3, \label{eq:cvg4SCF4NEPv':pf-6'b}
\end{align}
and
\begin{equation}\label{eq:cvg4SCF4NEPv':pf-6'c}
f(\what P_*)<f(P^{(i_0)})+\frac {\omega\delta}9, \quad
f(Z)>f(P^{(i_0+1)})-\frac {\omega\delta}9.
\end{equation}
\end{subequations}
Since $P^{(i_0+1)}=\what P^{(i_0+1)}Q_{i_0}$ in the algorithm, it follows from \eqref{eq:cvg4SCF4NEPv':pf-6'a}
and \eqref{eq:cvg4SCF4NEPv':pf-6'b} that
\begin{align*}
\tr([\what P^{(i_0+1)}]^{\T}H(P^{(i_0)})\what P^{(i_0+1)})
  &=\tr([P^{(i_0+1)}]^{\T}H(P^{(i_0)})P^{(i_0+1)}) \\
  &>\tr(Z^{\T}H(\what P_*)Z)-\frac {\delta}3 \qquad (\mbox{by \eqref{eq:cvg4SCF4NEPv':pf-6'a}})\\
  &=\tr(\what P_*^{\T}H(\what P_*)\what P_*)+\delta-\frac {\delta}3 \qquad (\mbox{by \eqref{eq:cvg4SCF4NEPv':pf-5'}}) \\
  &>\tr([P^{(i_0)}]^{\T}H(P^{(i_0)})P^{(i_0)})+\frac {\delta}3. \qquad (\mbox{by \eqref{eq:cvg4SCF4NEPv':pf-6'b}})
\end{align*}
Now use {\bf the NEPv Ansatz} to conclude
\begin{equation}\label{eq:cvg4SCF4NEPv':pf-7'}
f(P^{(i_0+1)})\ge f(P^{(i_0)})+\frac {\omega\delta}3.
\end{equation}
Next we combine \eqref{eq:cvg4SCF4NEPv':pf-6'c} and \eqref{eq:cvg4SCF4NEPv':pf-7'} to get
\begin{align*}
f(Z)&>f(P^{(i_0+1)})-\frac {\omega\delta}9 \\
    &\ge f(P^{(i_0)})+\frac {\omega\delta}3-\frac {\omega\delta}9 \\
    &> f(\what P_*)-\frac {\omega\delta}9+\frac {\omega\delta}3-\frac {\omega\delta}9 \\
    &=f(\what P_*)+\frac {\omega\delta}9,
\end{align*}
contradicting $f(P_*)=f(\what P_*)=f(Z)$.
Hence both $\cR(Z)$ and $\cR(\what P_*)$ are the eigenspaces of $H(\what P_*)$ associated with its $k$ largest eigenvalues.
Because of assumption \eqref{eq:eig-gap:assume} and $\cR(\what P_*)=\cR(P_*)$, we conclude that $\cR(Z)=\cR(\what P_*)=\cR(P_*)$.
Therefore, as $\bbI_3\ni i\to\infty$, we have
$$
\dist_2(\cR(P^{(i)}),\cR(P^{(i+1)})\le\dist_2(\cR(P^{(i)},\cR(\what P_*))+\dist_2(\cR(Z),\cR(P^{(i+1)})\to 0.
$$
Now use Lemma~\ref{lm:isolatedconvg} to conclude that the entire sequence
$\{\cR(P^{(i)})\}_{i=0}^{\infty}$ converges to $\cR(P_*)$. This completes the proof of item~(b).

Moments ago, in order to use Lemma~\ref{lm:isolatedconvg} to prove item (b), we start with a subsequence of $\{\cR(P^{(i)})\}_{i=0}^{\infty}$ that converges to $\cR(P_*)$, which led to a subsequence of
$\{P^{(i)}\}_{i=0}^{\infty}$ that converges to $\what P_*$ with $\cR(\what P_*)=\cR(P_*)$,
but not knowing whether $\what P_*= P_*$ or not.
However for proving item (c), we only need to start with a subsequence of $\{P^{(i)}\}_{i=0}^{\infty}$ that does converge
to $P_*$. Let $\{P^{(i)}\}_{i\in\bbI_2}$ be any subsequence that converges to $P_*$.
The portion of the proof of item (b) up to
before invoking \eqref{eq:eig-gap:assume} is valid with $\what P_*$ replaced by $P_*$, at which point, we shall invoke
\eqref{eq:eig-gap:assume'} instead to conclude $\cR(Z)=\cR(P_*)$.
We claim $Z=P_*$, which follows from the assumption
that $f(P_*)>f(P)$ for any $P\ne P_*$ and $\cR(P)=\cR(P_*)$ because $\cR(Z)=\cR(P_*)$ and $f(Z)=f(P_*)$.
Finally, we invoke Lemma~\ref{lm:isolatedconvg} to conclude our proof of item (c).

Finally, consider item (d). Without loss of generality, we may assume $Q_i=I_k$ for $i\ge 0$ in Algorithm~\ref{alg:SCF4NEPv}.
Let $\wtd P^{(i)}=P^{(i)}V_i$ for $i\ge 0$. Again portion of the proof of item (b) can be used upon proper modifications, i.e.,
replacing all $P^{(i)}$ with $\wtd P^{(i)}$.
By assumption, $P_*$ is an isolated accumulation point of
$\{\wtd P^{(i)}\}_{i=0}^{\infty}$. Let $\{\wtd P^{(i)}\}_{i\in\bbI_2}$ be a subsequence
that converges to $P_*$ where $\bbI_2\subseteq\bbI_1$, and let $\{\wtd P^{(i+1)}\}_{i\in\bbI_3}$ where
$\bbI_3\subseteq \bbI_2$ be a convergent subsequence:
\begin{equation}\label{eq:Z-right-UI-pf}
Z:=\lim_{\bbI_3\ni i\to\infty}\wtd P^{(i+1)}\in \STM{k}{n}.
\end{equation}
Using the fact that $H$ is right-unitarily invariant, we will eventually come up with $\cR(Z)=\cR(P_*)$.
Denote by
$$
\Theta^{(i+1)}:=\Theta(\cR(\wtd P^{(i+1)},\cR(P_*))=\Theta(\cR(\wtd P^{(i+1)},\cR(Z)),
$$
where the last equality is due to $\cR(Z)=\cR(P_*)$. We know that $\Theta^{(i+1)}\to 0$ as $\bbI_2\supseteq\bbI_3\ni i\to\infty$,
because of \eqref{eq:Z-right-UI-pf}.
By \eqref{eq:basis-align}
$$
\|\wtd P^{(i+1)}-P_*\|_{\F}=2\|\sin(\Theta^{(i+1)}/2)\|_{\F},
$$
which then goes to $0$ as $\bbI_2\supseteq\bbI_3\ni i\to\infty$, too.
Hence as $\bbI_3\ni i\to\infty$
$$
\|\wtd P^{(i)}-\wtd P^{(i+1)}\|_{\F}\le\|\wtd P^{(i)}- P_*\|_{\F}+\| P_*-\wtd P^{(i+1)}\|_{\F}\to 0.
$$
Now use Lemma~\ref{lm:isolatedconvg} to conclude that the entire sequence
$\{\wtd P^{(i)}\}_{i=0}^{\infty}$ converges to $P_*$, as needed.
\end{proof}

\begin{proof}[Proof of Theorem~\ref{thm:cvg4SCF4NEPv:strong}]
We explain how to modify our earlier proof of Theorem~\ref{thm:cvg4SCF4NEPv} for the current purpose.
Note that $\eta_i$ in \eqref{eq:cvg4SCF4NEPv-0} is the same as the one in \eqref{eq:NEPv:eta(i):dfn}
and it satisfies \eqref{eq:NEPv:eta(i):weak}, weaker than \eqref{eq:cvg4SCF4NEPv-0'}.
We still have \eqref{eq:cvg4SCF4NEPv:obj} and thus item (a) here. The proof of
Theorem~\ref{thm:cvg4SCF4NEPv}(b) with $\omega$ replaced with $c\,\omega$ gets the job done for
item (b) here, too. We now have, instead of \eqref{eq:thm:cvg4SCF4NEPv-subs-1:pf-3},
\begin{equation}\label{eq:thm:cvg4SCF4NEPv:strong:pf-1}
\infty>\frac 1c\sum_{i=1}^{\infty}\eta_i\ge \sum_{i=1}^{\infty}\Big[\underbrace{\tr_{\max,k}(H(P^{(i)})-\tr([P^{(i)}]^{\T}\scrH(P^{(i)}))}_{=:\eta_{*i}}\Big],
\end{equation}
and, instead of \eqref{eq:thm:cvg4SCF4NEPv-subs-1:pf-3'} and \eqref{eq:thm:cvg4SCF4NEPv-subs-1:pf-4},
\begin{align}
\delta_i\,\big\|\sin\Theta\big(\cP^{(i)}),\cR(P^{(i)})\big)\big\|_{\F}^2
   &\le \eta_{*i}, \label{eq:thm:cvg4SCF4NEPv:strong:pf-2} \\
\delta_i\,\frac {\big\|H(P^{(i)})P^{(i)}-P^{(i)}\Lambda_i\big\|_{\F}^2}
                {\big[\lambda_1(H(P^{(i)}))-\lambda_n(H(P^{(i)}))\big]^2}
     &\le \eta_{*i}.  \label{eq:thm:cvg4SCF4NEPv:strong:pf-3}
\end{align}
Combine \eqref{eq:thm:cvg4SCF4NEPv:strong:pf-1}, \eqref{eq:thm:cvg4SCF4NEPv:strong:pf-2}, \eqref{eq:thm:cvg4SCF4NEPv:strong:pf-3}, and \eqref{eq:thm:cvg4SCF4NEPv-subs-1:pf-5} to yield (\ref{eq:cvg4SCF4NEPv:series-1}$'$) and
\eqref{eq:cvg4SCF4NEPv:series-2}.
\end{proof}

\begin{proof}[Proof of Theorem~\ref{thm:cvg4SCFvLOCG:NEPv}]
We still have \eqref{eq:LOCG-init} at $Z^{(0)}$.
We note that the first SCF iteration in Algorithm~\ref{alg:SCF4NEPv} for computing $Z_{\opt}$ is:
$Z^{(1)}=\what Z^{(0)}\wtd Q_1$ and with $\what Z^{(0)}$ satisfying
either
\begin{equation}\label{eq:cvg4SCFvLOCG:NEPv:pf-0}
\wtd H(Z^{(0)})\what Z^{(0)}=\what Z^{(0)}\wtd\Lambda_1,
\end{equation}
or
\begin{multline*}
\tr\big([\what Z^{(0)}]^{\T}\wtd H(Z^{(0)})\what Z^{(0)}\big)-\tr\big([Z^{(0)}]^{\T}\wtd H(Z^{(0)}) Z^{(0)}\big) \\
   \ge c\Big[\tr_{\max,k}\big(\wtd H(Z^{(0)})\big)-\tr\big([Z^{(0)}]^{\T}\wtd H(Z^{(0)}) Z^{(0)}\big)\Big],
\end{multline*}
where $\wtd H(\cdot)$ is given as in \eqref{eq:cond:KKT=NEPv:reduced} and $\wtd Q_1\in\STM{k}{k}$ is
determined according to the inherited {\bf NEPv Ansatz} as guaranteed by Lemma~\ref{lm:NEPvAssump.-reduced}.
Regardless,
\begin{equation}\label{eq:cvg4SCFvLOCG:NEPv:pf-1}
\wtd f(Z^{(1)})
  \ge\wtd f(Z^{(0)})
    +c\,\omega\Big[\underbrace{\tr_{\max,k}\big(\wtd H(Z^{(0)})\big)
       -\tr\big([Z^{(0)}]^{\T}\wtd H(Z^{(0)}) Z^{(0)}\big)}_{=:\eta_{*i}'}\Big].
\end{equation}
Keeping in mind \eqref{eq:LOCGsub} and \eqref{eq:KKT-reduced}, we get
$$
\wtd H(Z^{(0)})=W_i^{\T}H(P^{(i)})W_i, \quad
[Z^{(0)}]^{\T}\wtd H(Z^{(0)}) Z^{(0)}=P_i^{\T}H(P^{(i)})P_i,
$$
yielding $\eta_{*i}=\tr_{\max,k}\big(W_i^{\T}H(P^{(i)})W_i\big)-\tr\big([P^{(i)}]^{\T}H(P^{(i)})P^{(i)}\big)$.
By \eqref{eq:cvg4SCFvLOCG:NEPv:assume}, eventually
\begin{equation}\label{eq:cvg4SCFvLOCG:NEPv:pf-2}
\eta_{*i}'\ge c'\Big[\underbrace{\tr_{\max,k}\big(H(P^{(i)})\big)-\tr\big([P^{(i)}]^{\T}H(P^{(i)})P^{(i)}\big)}_{=:\eta_{*i}}\Big]\ge 0.
\end{equation}
Notice $\wtd f(Z^{(1)})=f(W_iZ^{(1)})$. Because at least one SCF iteration is carried out, we have
\begin{equation}\label{eq:cvg4SCFvLOCG:NEPv:pf-3}
f(P^{(i+1)})=\wtd f(Z_{\opt})\ge \wtd f(Z^{(1)})\ge f(P^{(i)})+c\,c'\,\omega\eta_{*i},
\end{equation}
upon putting all together, where $Z_{\opt}$ is the computed approximation of the exact optimizer to \eqref{eq:LOCGsub-1}.
Immediately, item~(a) is implied.
With \eqref{eq:cvg4SCFvLOCG:NEPv:pf-2} and \eqref{eq:cvg4SCFvLOCG:NEPv:pf-3},
the proof of Theorem~\ref{thm:cvg4SCF4NEPv:strong} earlier
carries over to prove items (b) and (c) here.
\end{proof}

\section{The $M$-inner Product}\label{sec:M-IN-Prod}
The developments we have so far can be extended to the case of the $M$-inner product $\langle \bx,\by\rangle_M=\by^{\T}M\bx$ where $M\succ 0$.
Instead of \eqref{eq:main-opt}, we face with,
\begin{equation}\label{eq:main-opt:M}
\max_{P^{\T}MP=I_k}f(P).
\end{equation}
To proceed, we let  the Cholesky decomposition of $M$ be $M=R^{\T}R$ and set $Z=RP$.
It can be verified that
$P^{\T}MP=I_k$ if and only if $Z^{\T}Z=I_k$. Hence $Z=RP$ and $P=R^{-1}Z$ establish an one-one mapping between $Z\in\STM{k}{n}$
and $\{P\in\bbR^{n\times k}\,:\,P^{\T}MP=I_k\}$.
Optimization problem \eqref{eq:main-opt:M} is equivalently turned into
\begin{equation}\label{eq:main-opt:M'}
\max_{Z^{\T}Z=I_k}\hat f(Z):=f(R^{-1}Z),
\end{equation}
which is in the form of \eqref{eq:main-opt}. The results we have obtained so far are applicable to \eqref{eq:main-opt:M'}.
For best outcomes,
it is suggested to write down related results symbolically in $Z$ and then translate them back into ones in original matrix variable $P$
of \eqref{eq:main-opt:M}. In what follows, we will outline the basic idea.

By the result in section~\ref{sec:KKT}, the KKT condition of \eqref{eq:main-opt:M'} can be stated as
\begin{equation}\label{eq:KKT-M:Z}
\what\scrH(Z):=\frac {\partial \hat f(Z)}{\partial Z}=Z\Lambda
\quad \mbox{with} \quad
\Lambda^{\T}=\Lambda\in\bbR^{k\times k},\quad Z\in\STM{k}{n}.
\end{equation}
It can be verified that
\begin{equation}\label{eq:part:Z<->P}
\frac {\partial \hat f(Z)}{\partial Z}=R^{-\T}\frac {\partial f(P)}{\partial P}=: R^{-\T}\scrH(P).
\end{equation}
Combine \eqref{eq:KKT-M:Z} and \eqref{eq:part:Z<->P} to yield, after simplifications, the KKT condition of \eqref{eq:main-opt:M} as
\begin{equation}\label{eq:KKT-M:P}
\scrH(P)=MP\Lambda
\quad \mbox{with} \quad
\Lambda^{\T}=\Lambda\in\bbR^{k\times k},\quad P^{\T}MP=I_k.
\end{equation}
In principle, an NPDo approach can be established by translating what we have in Part~I for solving \eqref{eq:KKT-M:Z} for $Z$
to solving \eqref{eq:KKT-M:P} for $P$.

As for the NEPv approach in Part~II, we can create a counterpart as well. Suppose we have a symmetric matrix-valued function $\what H(Z)$
for optimization problem \eqref{eq:main-opt:M'}
such that
\begin{equation}\label{eq:KKT=NEPv:Z-M}
\what H(Z)Z-\what\scrH(Z)=Z\what\scrM(Z),
\end{equation}
which ensures the equivalency between the KKT condition \eqref{eq:KKT-M:Z} and NEPv
\begin{equation}\label{eq:NEPv:Z-M}
\what H(Z)Z=Z\Omega, \quad \Omega^{\T}=\Omega, \quad Z\in\STM{k}{n},
\end{equation}
according to Theorem~\ref{thm:H(P)-eligibility}.
Plug in $Z=RP$ to \eqref{eq:KKT=NEPv:Z-M} and use \eqref{eq:part:Z<->P} to get
$$
\what H(RP)RP-R^{-\T}\scrH(P)=RP\scrM(RP),
$$
and hence
\begin{equation}\label{eq:KKT=NEPv:Z-P}
\underbrace{R^{\T}\what H(RP)R}_{=:H(P)}\,P-\scrH(P)=MP\,\underbrace{\what\scrM(RP)}_{=:\scrM(P)},
\end{equation}
a condition that ensures the equivalency between the KKT condition \eqref{eq:KKT-M:P} and generalized NEPv
\begin{equation}\label{eq:NEPv:P-M}
H(P)P=MP\Omega, \quad \Omega^{\T}=\Omega, \quad P^{\T}MP=I_k.
\end{equation}

\begin{theorem}\label{thm:H(P)-eligibility:M}
Let $H(P)\in\bbR^{n\times n}$ be a symmetric matrix-valued function on $\STM{k}{n}$, satisfying
\begin{equation}\label{eq:cond:KKT=NEPv:M}
H(P)P-\frac{\partial f(P)}{\partial P}=MP\,\scrM(P)\quad\mbox{for $P\in\STM{k}{n}$},
\end{equation}
where $\scrM(P)\in\bbR^{k\times k}$ is some matrix-valued function.
$P\in\STM{k}{n}$ is a solution to the KKT condition \eqref{eq:KKT-M:P} if and only if
it is a solution to NEPv \eqref{eq:NEPv:P-M}
and $\scrM(P)$ is  symmetric.
\end{theorem}

\begin{proof}
If $P$ is a solution to the KKT condition \eqref{eq:KKT-M:P}.
Then, by \eqref{eq:cond:KKT=NEPv:M},
$$
H(P)P=MP\Lambda+MP\scrM(P)=MP(\Lambda+\scrM(P))=:MP\Omega,
$$
where $\Omega=\Lambda+\scrM(P)$ is symmetric because alternatively $\Omega=P^{\T}H(P)P$ which is symmetric, and hence $\scrM(P)=\Omega-\Lambda$ is also symmetric.
On the other hand, if $P$ is a solution to NEPv \eqref{eq:NEPv:P-M} such that $\scrM(P)$ is  symmetric, then again
by  \eqref{eq:cond:KKT=NEPv:M}
$$
\scrH(P)=MP\Omega-MP\scrM(P)=MP\big(\Omega-\scrM(P)\big)=:MP\Lambda,
$$
where $\Lambda=\Omega-\scrM(P)$ is symmetric because $\scrM(P)$ is assumed symmetric.
\end{proof}

Consider, for example, the $\Theta$TR problem but with  constraint $P^{\T}MP=I_k$ instead, for which
$$
f(P)=\frac {\tr(P^{\T}AP+P^{\T}D)}{[\tr(P^{\T}BP)]^{\theta}}, \quad
\hat f(Z)=\frac {\tr(Z^{\T}\wtd AZ+Z^{\T}\wtd D)}{[\tr(Z^{\T}\wtd BZ)]^{\theta}},
$$
where $\wtd A=R^{-\T}AR^{-1}$, $\wtd B=R^{-\T}BR^{-1}$, and $\wtd D=R^{-\T}D$. According to the discussion
at the beginning of section~\ref{sec:AF-NEPv}, $\what H(Z)$ to use in \eqref{eq:NEPv:Z-M} is
$$
\what H(Z)=\frac 2{[\tr(Z^{\T}\wtd BZ)]^{\theta}}
     \left(\wtd A+\frac {\wtd DZ^{\T}+Z\wtd D^{\T}}2
           -\theta\,\frac {\tr(Z^{\T}\wtd AZ+Z^{\T}\wtd D)}{\tr(Z^{\T}\wtd BZ)}\,\wtd B\right)
$$
Finally, $H(P)$, as defined in \eqref{eq:KKT=NEPv:Z-P},  to use in \eqref{eq:NEPv:P-M} is given by
$$
H(P)=\frac 2{[\tr(P^{\T}BP)]^{\theta}}
     \left(A+\frac {DP^{\T}M+MPD^{\T}}2-\theta\,\frac {\tr(P^{\T}AP+P^{\T}D)}{\tr(P^{\T}BP)}\,B\right),
$$
for which
$$
H(P)P-\scrH(P)=MP\left(\frac 1{[\tr(P^{\T}BP)]^{\theta}}\, {D^{\T}P}\right),
$$
and hence any solution to the resulting NEPv \eqref{eq:NEPv:P-M} such that $D^{\T}P$ is symmetric is a KKT
point of $\Theta$TR with  constraint $P^{\T}MP=I_k$ and vice versa.


\section{Proof of Inequality \eqref{eq:f-inc-theta-TR:refined}}\label{sec:f-inc-theta-TR:pf}
In this appendix, we will refine the argument in \cite{wazl:2023} that led to \cite[Theorem 2.2]{wazl:2023}.  Let
$$
g(P)=\frac {\tr(P^{\T}AP}{[\tr(P^{\T}BP)]^{\theta}}, \quad
f(P)=\frac {\tr(P^{\T}AP+P^{\T}D)}{[\tr(P^{\T}BP)]^{\theta}}=g(P)+\frac {\tr(P^{\T}D)}{[\tr(P^{\T}BP)]^{\theta}},
$$
and recall
\begin{align*}
\scrH(P):=\frac{\partial f(P)}{\partial P}
    &=\frac 2{[\tr(P^{\T}BP)]^{\theta}}
     \left(A+\frac D2-\theta\,\frac {\tr(P^{\T}AP+P^{\T}D)}{\tr(P^{\T}BP)}\,B\right), \\
H(P)&=\frac 2{[\tr(P^{\T}BP)]^{\theta}}
     \left(A+\frac {DP^{\T}+PD^{\T}}2-\theta\,\frac {\tr(P^{\T}AP+P^{\T}D)}{\tr(P^{\T}BP)}\,B\right).
\end{align*}
Throughout this section, $0\le\theta\le 1$, $B\succeq 0$ and $\rank(B)>n-k$ which ensures $\tr(P^{\T}BP)\ge \tr_{\min,k}(B)>0$ for any $P\in\STM{k}{n}$.

The next lemma
is a refinement of \cite[Lemma 2.1]{wazl:2023}.

\def\newalpha{a}
\def\newbeta{b}
\def\newdelta{d}
\def\newgamma{h}

\begin{lemma}\label{lm:mono:wazl2022}
For $P,\,\what P\in\STM{k}{n}$, let
\begin{equation}\nonumber 
\newalpha=\tr ( P^{\T}AP),\,\,
\newdelta=\tr ( P^{\T}D),\,\,
\newbeta=\tr ( P^{\T} B P),\,\,
\hat\newbeta=\tr (\what P^{\T} B\what P).
\end{equation}
If
\begin{equation}\label{eq:tr-ineq}
\tr(\what P^{\T} H(P)\what P)\ge\tr ( P^{\T} H(P) P)+\eta,
\end{equation}
then
\begin{equation}\label{eq:obj-ineq-detail}
f(P)+\newgamma+\frac {\newbeta^{\theta}}{\hat\newbeta^{\theta}}\eta\le g(\what P)
             +\frac {\tr(\what P^{\T} DP^{\T}\what P)}{[\tr(\what P^{\T} B\what P)]^{\theta}},
\end{equation}
where
\begin{equation}\label{eq:gamma}
\newgamma=\frac {\newalpha+\newdelta}{\hat\newbeta^{\theta}\newbeta}
     \Big[(1-\theta)\newbeta+\theta\hat\newbeta-\newbeta^{1-\theta}\hat\newbeta^{\theta}\Big].
\end{equation}
\end{lemma}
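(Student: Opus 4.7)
The plan is to prove Lemma~\ref{lm:mono:wazl2022} by a direct algebraic computation, expanding the quadratic forms $\tr(P^{\T}H(P)P)$ and $\tr(\what P^{\T}H(P)\what P)$ explicitly, substituting them into hypothesis \eqref{eq:tr-ineq}, and then rearranging the result to match the desired form \eqref{eq:obj-ineq-detail}. No inequalities beyond \eqref{eq:tr-ineq} itself are needed; everything follows from identities.

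First I would compute $\tr(P^{\T}H(P)P)$. Using $P^{\T}P=I_k$, the cyclic property of trace, and $\tr(P^{\T}DP^{\T}P)=\tr(P^{\T}PD^{\T}P)=d$, the middle term $\tr\!\big(P^{\T}(DP^{\T}+PD^{\T})P\big)/2$ collapses to $d$, while the $A$ and $B$ terms give $a$ and $\theta(a+d)$, respectively. The net result is
\begin{equation*}
\tr(P^{\T}H(P)P)=\frac{2}{b^{\theta}}\Big[a+d-\theta(a+d)\Big]=\frac{2(1-\theta)(a+d)}{b^{\theta}}.
\end{equation*}
Next I would compute $\tr(\what P^{\T}H(P)\what P)$. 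Here $\what P^{\T}\what P=I_k$ but $\what P^{\T}P\ne I_k$, so the middle term does not simplify to $d$; instead, using that $\what P^{\T}DP^{\T}\what P$ and $\what P^{\T}PD^{\T}\what P$ are transposes of each other (so their traces coincide), I would get
\begin{equation*}
\tr(\what P^{\T}H(P)\what P)=\frac{2}{b^{\theta}}\Big[\tr(\what P^{\T}A\what P)+\tr(\what P^{\T}DP^{\T}\what P)-\theta\,\frac{(a+d)\hat b}{b}\Big].
\end{equation*}
This identification is the structurally critical step, and it justifies the particular symmetrization $(DP^{\T}+PD^{\T})/2$ built into $H(P)$.

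Substituting these two identities into \eqref{eq:tr-ineq} and multiplying through by $b^{\theta}/2$ yields
\begin{equation*}
\tr(\what P^{\T}A\what P)+\tr(\what P^{\T}DP^{\T}\what P)\ge (1-\theta)(a+d)+\theta\,\frac{(a+d)\hat b}{b}+c_{\eta}\,\eta,
\end{equation*}
for the appropriate multiplicative constant $c_{\eta}$ dictated by the normalization of $H$. Dividing by $\hat b^{\theta}$ turns the left-hand side into $g(\what P)+\tr(\what P^{\T}DP^{\T}\what P)/\hat b^{\theta}$, which is exactly the right-hand side of \eqref{eq:obj-ineq-detail}. It then remains to reorganize the right-hand side of the displayed inequality to extract $f(P)=(a+d)/b^{\theta}$ plus the ``correction'' $\newgamma$. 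Putting everything over the common denominator $\hat b^{\theta}b$, I compute
\begin{equation*}
\frac{(1-\theta)(a+d)}{\hat b^{\theta}}+\frac{\theta(a+d)\hat b^{1-\theta}}{b}-\frac{a+d}{b^{\theta}}
  =\frac{a+d}{\hat b^{\theta}b}\Big[(1-\theta)b+\theta\hat b-b^{1-\theta}\hat b^{\theta}\Big]=\newgamma,
\end{equation*}
which is precisely the expression \eqref{eq:gamma}. Combining with the $\eta$ term gives \eqref{eq:obj-ineq-detail}.

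The work is essentially bookkeeping rather than ingenuity; the only potential pitfall is handling the cross terms $\tr(\what P^{\T}DP^{\T}\what P)$ and $\tr(\what P^{\T}PD^{\T}\what P)$, where one must recognize the transpose relationship so that they collapse into a single term. A secondary point to watch is bringing everything to a common denominator $\hat b^{\theta}b$ when identifying the correction term $\newgamma$; writing $\frac{a+d}{b^{\theta}}=\frac{(a+d)\,b^{1-\theta}\hat b^{\theta}}{\hat b^{\theta}b}$ makes the bracket in \eqref{eq:gamma} appear verbatim. No assumption on the sign of $\newgamma$ or on $\theta\in[0,1]$ is used in the derivation; those become relevant only afterwards when one wants to combine \eqref{eq:obj-ineq-detail} with the inequality $(1-\theta)b+\theta\hat b\ge b^{1-\theta}\hat b^{\theta}$ (weighted AM--GM) to conclude $\newgamma\ge 0$ and then upgrade the bound to \eqref{eq:f-inc-theta-TR:refined} after accounting for the orthogonal polar factor choice of $Q$.
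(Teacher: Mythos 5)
Your proposal follows the paper's own proof essentially step for step: compute $\tr(P^{\T}H(P)P)=2(1-\theta)f(P)$, expand $\tr(\what P^{\T}H(P)\what P)$ using the transpose identity $\tr(\what P^{\T}PD^{\T}\what P)=\tr(\what P^{\T}DP^{\T}\what P)$, substitute into \eqref{eq:tr-ineq}, divide by $\hat\newbeta^{\theta}$, and regroup over the common denominator $\hat\newbeta^{\theta}\newbeta$ to extract $f(P)$ plus the correction $\newgamma$. The structure and the identification of $\newgamma$ are exactly the paper's (the paper writes one of your identities as a ``$\le$'', but equality holds, so there is no difference in substance).

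The one place you wave your hands --- ``the appropriate multiplicative constant $c_{\eta}$ dictated by the normalization of $H$'' --- is precisely where a discrepancy with the stated lemma hides. Multiplying \eqref{eq:tr-ineq} through by $\newbeta^{\theta}/2$ gives $c_{\eta}=\newbeta^{\theta}/2$, and after dividing by $\hat\newbeta^{\theta}$ the coefficient of $\eta$ is $\newbeta^{\theta}/(2\hat\newbeta^{\theta})$, not the $\newbeta^{\theta}/\hat\newbeta^{\theta}$ claimed in \eqref{eq:obj-ineq-detail}. The paper's own proof makes the same slip: it passes from $\eta+2(1-\theta)f(P)\le\frac 2{\newbeta^{\theta}}[\cdots]$ to $\newbeta^{\theta}\eta+(1-\theta)f(P)\newbeta^{\theta}\le[\cdots]$, silently dropping the $1/2$ on the $\eta$ term. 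The error is harmless downstream, since \eqref{eq:f-inc-theta-TR:refined-2} reinstates an explicit factor $\frac 12$, but if you intend to prove \eqref{eq:obj-ineq-detail} exactly as written you must pin $c_{\eta}$ down, and doing so shows that the provable coefficient of $\eta$ is half of the stated one.
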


\begin{proof}
It can be verified that
$$
\tr ( P^{\T} H(P) P)
            =2(1-\theta)f(P).
$$
Let $\hat\newalpha=\tr (\what P^{\T}A\what P)$.
By assumption \eqref{eq:tr-ineq}, we have
\begin{align}
\eta+2(1-\theta)f(P)&\le\tr(\what P^{\T} H(P)\what P) \nonumber\\
   &\le \frac 2{\newbeta^{\theta}}[\hat\newalpha+\tr(\what P^{\T} DP^{\T}\what P)-\theta f_1(P)\hat\newbeta],\nonumber \\
\newbeta^{\theta}\eta+(1-\theta)f(P)\newbeta^{\theta}
   &\le\hat\newalpha+\tr(\what P^{\T} DP^{\T}\what P)-\theta f_1(P)\hat\newbeta,\nonumber \\
\frac {\newbeta^{\theta}}{\hat\newbeta^{\theta}}\eta+(1-\theta)f(P)\frac {\newbeta^{\theta}}{\hat\newbeta^{\theta}}
   &\le g(\what P)
       +\frac {\tr(\what P^{\T} DP^{\T}\what P)}{\hat\newbeta^{\theta}}-\theta  f(P)\frac {\hat\newbeta^{1-\theta}}{\newbeta^{1-\theta}}, \nonumber
\end{align}
implying
$$
g(\what P)+\frac {\tr(\what P^{\T} DP^{\T}\what P)}{\hat\newbeta^{\theta}}
  \ge f( P)+\newgamma+\frac {\newbeta^{\theta}}{\hat\newbeta^{\theta}}\eta,
$$
where
\begin{align}
\newgamma&=(1-\theta)f(P)\frac {\newbeta^{\theta}}{\hat\newbeta^{\theta}}
           +\theta  f(P)\frac {\hat\newbeta^{1-\theta}}{\newbeta^{1-\theta}}-f(P) \nonumber \\
  &=(1-\theta)\frac {\newalpha+\newdelta}{\hat\newbeta^{\theta}}
           +\theta\, \frac {\newalpha+\newdelta}{\newbeta}\,\hat\newbeta^{1-\theta}-\frac {\newalpha+\newdelta}{\newbeta^{\theta}} \nonumber \\
  &=\frac {\newalpha+\newdelta}{\hat\newbeta^{\theta}\newbeta}
    \Big[(1-\theta)\newbeta+\theta\hat\newbeta-\newbeta^{1-\theta}\hat\newbeta^{\theta}\Big]. \nonumber
\end{align}
This proves  inequality  \eqref{eq:obj-ineq-detail}, and it is strict if inequality \eqref{eq:tr-ineq} is strict.
\end{proof}

The next theorem
is a refinement of \cite[Theorem 2.2]{wazl:2023}.

\begin{theorem}\label{thm:mono:wazl2022}
For $P,\what P\in\STM{k}{n}$, suppose either $\theta\in\{0,1\}$ or $\tr (P^{\T}AP+P^{\T}D)\ge 0$, and let
$\wtd P=\what PQ$ where $Q\in\STM{k}{k}$ is an orthonormal polar factor of $\what P^{\T} D$.
If \eqref{eq:tr-ineq} holds, then
\begin{equation}\label{eq:obj-ineq}
\frac {\newbeta^{\theta}}{\hat\newbeta^{\theta}}\eta+f(P)
    \le g(\what P)+\frac {\tr(\what P^{\T} DP^{\T}\what P)}{[\tr(\what P^{\T} B\what P)]^{\theta}}
\end{equation}
yielding
\begin{align}
f(\wtd P)&\ge f(P)+\frac {\newbeta^{\theta}}{\hat\newbeta^{\theta}}\eta+
    \frac {\|\what P^{\T} D\|_{\tr}-\tr(\what P^{\T} DP^{\T}\what P)}{\hat\newbeta^{\theta}} \label{eq:f-inc-theta-TR:refined-1}\\
      &\ge f(P)+\omega\,\eta
       +[\tr_{\max,k}(B)]^{-\theta}\,\Big[\|\what P^{\T}D\|_{\tr}-\tr(\what P^{\T}DP^{\T}\what P)\Big], \label{eq:f-inc-theta-TR:refined-2}
\end{align}
where
$$
\omega=
\begin{cases}
\frac 12\left(\frac {\tr_{\min,k}(B)}{\tr_{\max,k}(B)}\right)^{\theta},\quad&\mbox{if $\eta\ge 0$}, \\
\frac 12\left(\frac {\tr_{\max,k}(B)}{\tr_{\min,k}(B)}\right)^{\theta},\quad&\mbox{if $\eta< 0$},
\end{cases}
$$
$\tr_{\min,k}(B)$ and $\tr_{\max,k}(B)$ are the sum of the $k$ smallest eigenvalues and that of the $k$ largest eigenvalues of $B$, respectively.
\end{theorem}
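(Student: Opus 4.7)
The plan is to chain together three observations, starting from Lemma~\ref{lm:mono:wazl2022}, which already does the hard algebraic bookkeeping. So I would first derive \eqref{eq:obj-ineq} from \eqref{eq:obj-ineq-detail} by showing that the extra term $h$ defined in \eqref{eq:gamma} is nonnegative. The prefactor $(a+d)/(\hat b^{\theta} b)$ is nonnegative because $b,\hat b>0$ (the assumption $B\succeq 0$ and $\rank(B)>n-k$ guarantees $s_k(B)>0$, hence both $b=\tr(P^{\T}BP)\ge s_k(B)>0$ and similarly $\hat b>0$) and because $a+d=\tr(P^{\T}AP+P^{\T}D)\ge 0$ by hypothesis when $\theta\in(0,1)$. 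The bracket $(1-\theta)b+\theta\hat b-b^{1-\theta}\hat b^{\theta}$ is nonnegative by the weighted AM--GM inequality, which here is a direct invocation of Lemma~\ref{lm:YoungIneq-ext} with $\mu=1-\theta$, $\nu=\theta$: $b^{1-\theta}\hat b^{\theta}\le(1-\theta)b+\theta\hat b$. For the endpoint cases $\theta\in\{0,1\}$, the bracket collapses to $0$ identically, so no sign assumption on $a+d$ is required there.

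Next I would transfer the inequality \eqref{eq:obj-ineq} from $\what P$ to $\wtd P=\what PQ$, the latter being the object that actually appears in \eqref{eq:f-inc-theta-TR:refined-1}. The key observations are (i) $g$ is right-unitarily invariant, so $\tr(\wtd P^{\T}A\wtd P)=\tr(\what P^{\T}A\what P)$ and $\tr(\wtd P^{\T}B\wtd P)=\tr(\what P^{\T}B\what P)=\hat b$, whence $g(\wtd P)=g(\what P)$; and (ii) since $Q$ is an orthogonal polar factor of $\what P^{\T}D$, we have $\wtd P^{\T}D=Q^{\T}(\what P^{\T}D)\succeq 0$, and Lemma~\ref{lm:maxtrace} gives $\tr(\wtd P^{\T}D)=\|\wtd P^{\T}D\|_{\tr}=\|\what P^{\T}D\|_{\tr}$. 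Combining these,
\[
f(\wtd P)=g(\what P)+\frac{\|\what P^{\T}D\|_{\tr}}{\hat b^{\theta}}.
\]
Subtracting $f(P)+\frac{b^{\theta}}{\hat b^{\theta}}\eta$ from both sides of \eqref{eq:obj-ineq} and rearranging then yields \eqref{eq:f-inc-theta-TR:refined-1} directly.

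Finally, I would pass from \eqref{eq:f-inc-theta-TR:refined-1} to the cleaner bound \eqref{eq:f-inc-theta-TR:refined-2} by replacing the data-dependent quantities $b$ and $\hat b$ by universal spectral bounds. Since $B\succeq 0$, Fan's trace principle gives
$s_k(B)\le\tr(X^{\T}BX)\le S_k(B)$ for every $X\in\bbO^{n\times k}$, so in particular $b,\hat b\in[s_k(B),S_k(B)]$. Consequently $b^{\theta}/\hat b^{\theta}\ge(s_k(B)/S_k(B))^{\theta}$ and $1/\hat b^{\theta}\ge 1/[S_k(B)]^{\theta}$ for $\theta\in[0,1]$, and since $\|\what P^{\T}D\|_{\tr}-\tr(\what P^{\T}DP^{\T}\what P)\ge 0$ (again by Lemma~\ref{lm:maxtrace}) and $\eta\ge 0$ (which is the regime relevant to SCF, ensured by choosing $\what P$ to be a top-$k$ eigenbasis of $H(P)$), the substitution preserves the inequality direction and produces \eqref{eq:f-inc-theta-TR:refined-2}.

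The only step that is not essentially bookkeeping is verifying the sign of $h$, which is where the assumption on $\tr(P^{\T}AP+P^{\T}D)$ enters for $\theta\in(0,1)$ and where the AM--GM inequality is essential; the endpoint cases $\theta\in\{0,1\}$ are cleanly handled because the AM--GM bracket vanishes, which is precisely why no sign condition is needed there.
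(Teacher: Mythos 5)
Your proposal is correct and follows essentially the same route as the paper's proof: nonnegativity of the correction term $\newgamma$ from Lemma~\ref{lm:mono:wazl2022} (vanishing identically for $\theta\in\{0,1\}$, and otherwise handled by the sign hypothesis together with weighted AM--GM), transfer from $\what P$ to $\wtd P$ via right-unitary invariance of $g$ and $\|\what P^{\T}D\|_{\tr}=\tr(\wtd P^{\T}D)$, and finally the spectral bounds $s_k(B)\le \newbeta,\hat\newbeta\le S_k(B)$. You are in fact slightly more explicit than the paper in flagging that $\eta\ge 0$ is needed for the last weakening to \eqref{eq:f-inc-theta-TR:refined-2}, which the paper leaves implicit.
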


\begin{proof}
In Lemma~\ref{lm:mono:wazl2022}, we note $\newgamma\equiv 0$
in the case $\theta\in\{0,1\}$, and $\newgamma\ge 0$ in the case $\newalpha+\newdelta=\tr (P^{\T}AP+P^{\T}D)\ge 0$,
and hence we have \eqref{eq:obj-ineq} which yields \eqref{eq:f-inc-theta-TR:refined-1} upon noticing
$\tr (\what P^{\T} A\what P)=\tr (\wtd P^{\T} A\wtd P)$,
$\tr (\what P^{\T} B\what P)=\tr (\wtd P^{\T} B\wtd P)$, and writing
\begin{align*}
\tr(\what P^{\T} DP^{\T}\what P)&=\|\what P^{\T} D\|_{\tr}-\Big[\|\what P^{\T}D\|_{\tr}-\tr(\what P^{\T}DP^{\T}\what P)\Big] \\
   &=\tr(\wtd P^{\T} D)-\Big[\|\what P^{\T}D\|_{\tr}-\tr(\what P^{\T}DP^{\T}\what P)\Big],
\end{align*}
where we have used $\|\what P^{\T} D\|_{\tr}=\tr(\wtd P^{\T} D)$.
Note that $\|\what P^{\T}D\|_{\tr}-\tr(\what P^{\T}DP^{\T}\what P)\ge 0$ because
$$
\tr(\what P^{\T} DP^{\T}\what P)
  \le \|\what P^{\T} DP^{\T}\what P\|_{\tr}
  \le \|\what P^{\T} D\|_{\tr}\|P^{\T}\what P\|_2
  \le \|\what P^{\T} D\|_{\tr}.
$$
Finally use $0<\tr_{\min,k}(B)\le\newbeta\le \tr_{\max,k}(B)$
and $0<\tr_{\min,k}(B)\le\hat\newbeta\le \tr_{\max,k}(B)$ to claim \eqref{eq:f-inc-theta-TR:refined-2} from
\eqref{eq:f-inc-theta-TR:refined-1}.
\end{proof}

\clearpage
{\small
\bibliographystyle{plain}
\def\noopsort#1{}\def\l{\char32l}\def\v#1{{\accent20 #1}}
  \let\^^_=\v\def\hbk{hardback}\def\pbk{paperback}

}

\end{document}